\newtheorem{dfn}{Definition}[section]
\newtheorem{thm}[dfn]{Theorem}
\newtheorem{lemma}[dfn]{Lemma}
\newtheorem{prop}[dfn]{Proposition}
\newtheorem{cor}[dfn]{Corollary}
\newtheorem{rmk}[dfn]{Remark}
\newtheorem{con}[dfn]{Conjecture}
\theoremstyle{definition}
\theoremstyle{definition}
\newtheorem{exmp}[dfn]{Example}
\newcommand{\notmid}{\mathrel{\ooalign{$\mkern-5mu\not$\crcr$|$}}}
\title{Arithmetic period map and Complex multiplication  for cubic fourfolds}
\author{Rikuto Ito}
\address{Graduate School of Mathematics, Nagoya University, Furo-cho, Chikusa-ku, Nagoya, 464-8601, Japan}
\email{m21008a@math.nagoya-u.ac.jp}
\begin{document}
\begin{abstract} We construct an arithmetic period map for cubic fourfolds, in direct analogy with Rizov's work on K3 surfaces. For each $N\geq 1$, we introduce a Deligne-Mumford stack $\widetilde{\mathcal{C}^{[N]}}$ of cubic fourfolds with level structure and prove that the associated period map $j_{N}:\widetilde{\mathcal{C}^{[N]}}_{\mathbb{C}}\to {\rm Sh}_{K_{N}}(L)_{\mathbb{C}}$ is algebraic, \'etale, and descends to $\mathbb{Q}$ whenever $N$ is coprime to $2310$. As an application, we develop complex multiplication theory for cubic fourfolds and show that every cubic fourfold of CM type is  defined over an abelian extension of its reflex field. Moreover, using the CM theory  for rank-21 cubic fourfolds, we provide an alternative proof of the modularity of rank-21 cubic fourfolds established by Livn\'e. 
\end{abstract}
\maketitle

\section{Introduction}
The theory of arithmetic period maps plays a central role in understanding varieties whose Hodge structures are of K3 type. For polarized K3 surfaces, whose primitive cohomology carries a quadratic form of signature (2, 19), the Shimura variety associated with the special orthogonal group ${\rm SO}(2, 19)$ provides a natural period domain. Based on this viewpoint, Rizov \cite{Rizov10}, \cite{Rizov06} constructed the moduli space of polarized K3 surfaces with level structure, defined the corresponding period map, and proved that it descends to $\mathbb{Q}$, thereby establishing the complex multiplication theory for K3 surfaces.
\\\indent In a way independent of Rizov’s work, Madapusi Pera developed the arithmetic theory of the period map in mixed characteristic through the machinery of absolute Hodge cycles and the Kuga-Satake correspondence.  Moreover, in the same framework, he constructed an arithmetic period map for cubic fourfolds, and proved that it is étale (hence an open immersion under mild level assumptions) and that the Tate conjecture holds in odd characteristic (\cite[Theorem 5.14]{mp}). 
\\\indent In contrast, we follow Rizov’s moduli-geometric and complex-analytic approach, and develop
an independent arithmetic construction for cubic fourfolds without appealing to absolute Hodge cycles or the Kuga-Satake correspondence. Our method is purely moduli-theoretic: we explicitly build the Deligne-Mumford stack $\widetilde{\mathcal{C}^{[N]}}$ of cubic fourfolds with level structure,  and define the period morphism
\begin{align}
j_{N, \mathbb{Q}} : \widetilde{\mathcal{C}^{[N]}}_{\mathbb{Q}} \to {\rm Sh}_{K_{N}}(L)_{\mathbb{Q}}, \notag
\end{align}
proving that it is algebraic and an open immersion, and that its field of definition is $\mathbb{Q}$.
This gives a moduli-theoretic realization of the arithmetic period map for cubic fourfolds, complementary to Pera’s  approach via Hodge cycles. As an application, we obtain the corresponding complex multiplication theory (Corollaries 5.8 and 5.9).
\\\\\indent
Let $L$ be a lattice of signature $(n_{+}, n_{-})=(2, n)$ for a positive integer $n\geq 1$. The Shimura variety associated with the special orthogonal group ${\rm SO}(2, n)$ has been extensively studied by Deligne \cite{deligne}, Andr\'e \cite{and}, Kisin \cite{ki}, Madapusi Pera \cite{mp}, and Kim-Madapusi Pera \cite{kim}. For polarized K3 surfaces, whose  primitive cohomology carries a quadratic form of signature $(2, 19)$,  this Shimura variety provides  a natural period domain. It has been used to establish both  the complex multiplication theory for K3 surfaces (Rizov \cite[Corollary 3.9.2]{Rizov10}) and  the Tate conjecture for K3 surfaces in odd characteristic (Madapusi Pera \cite[Theorem 1]{mp}). 
\\\indent In this work, we turn  to $\mathit{cubic~ fourfolds}$, that is, smooth cubic hypersurfaces in $\mathbb{P}^{5}$. Their primitive cohomology has signature $(20, 2)$, and hence one can associate to them the  Shimura variety ${\rm Sh}(L)_{\mathbb{C}}$ corresponding to ${\rm SO}(2, 20)$. Our goal is to establish an analogue of the complex multiplication theory in this context (see Corollary \ref{cmtgen}). 
\\\indent Rizov's approach for K3 surfaces proceeds in three  steps: constructing the moduli space of K3 surfaces with  level structure (\cite{Rizov06}),  defining the period morphism (\cite{Rizov10}), and proving that its field of definition is $\mathbb{Q}$ (\cite{Rizov10}). We follow an analogous strategy for cubic fourfolds.
\\\indent  Let $\mathcal{C}$ denote the moduli stack of cubic fourfolds (see $\S$\ref{prec4}). It is a smooth Deligne-Mumford stack over $\mathbb{Z}$. For each integer $N\geq 1$, Javanpeykar and Loughran \cite{jal} constructed the moduli space $\mathcal{C}^{[N]}$ of cubic fourfolds with level-$N$ structures, which is a smooth Deligne-Mumford stack over $\mathbb{Z}[1/N]$, equipped with a canonical \'etale surjection $\mathcal{C}^{[N]}\to \mathcal{C}_{\mathbb{Z}[1/N]}$. 
\\\indent To define the natural domain of the period morphism for cubic fourfolds, we introduce a stack $\widetilde{\mathcal{C}^{[N]}}$ over $\mathbb{Z}[1/N]$ and construct  a canonical open immersion 
\begin{align}
   F: \widetilde{\mathcal{C}^{[N]}}\to \mathcal{C}^{[N]}. \notag 
\end{align}
This yields  our first main theorem (Theorem \ref{main1}).
 \begin{thm}\label{main1}Let $N\geq1$ be an integer. 
\\(1) The $\mathbb{Z}[1/N]$-stack $\widetilde{\mathcal{C}^{[N]}}$ is a Deligne-Mumford stack locally of finite type.
\\(2) If $N\geq 3$ and $N$ is coprime to $2310=2\cdot 3\cdot 5\cdot 7\cdot 11$, then  $\widetilde{\mathcal{C}^{[N]}}$ is a smooth affine scheme of finite type over $\mathbb{Z}[1/N]$. 
\end{thm}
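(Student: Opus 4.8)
The plan is to derive both parts from the open immersion $F:\widetilde{\mathcal{C}^{[N]}}\to\mathcal{C}^{[N]}$ constructed above, the known structure of $\mathcal{C}$ and of the Javanpeykar--Loughran stack $\mathcal{C}^{[N]}$, and two extra inputs: a rigidity statement for automorphisms of cubic fourfolds, and the affineness of the coarse moduli space of smooth cubic fourfolds. Part (1) is then formal: by \cite{jal}, $\mathcal{C}^{[N]}$ is a Deligne--Mumford stack of finite type over $\mathbb{Z}[1/N]$, and an open immersion preserves being a Deligne--Mumford stack and being locally of finite type, so $\widetilde{\mathcal{C}^{[N]}}$ inherits both properties.

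For part (2), I would first dispose of smoothness: $\mathcal{C}=[U/\mathrm{PGL}_6]$, where $U\subset\mathbb{P}(\mathrm{Sym}^3)$ is the open locus of smooth cubic forms in $6$ variables, is smooth over $\mathbb{Z}$; since $\mathcal{C}^{[N]}\to\mathcal{C}_{\mathbb{Z}[1/N]}$ is étale and $F$ is an open immersion, $\widetilde{\mathcal{C}^{[N]}}$ is smooth over $\mathbb{Z}[1/N]$. The hypothesis $\gcd(N,2310)=1$ enters in the rigidity step: I claim the objects of $\widetilde{\mathcal{C}^{[N]}}$ have trivial automorphism group, hence $\widetilde{\mathcal{C}^{[N]}}$ is an algebraic space. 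Let $X$ be a cubic fourfold over an algebraically closed field of residue characteristic prime to $N$, and let $\phi\in\mathrm{Aut}(X)$ act trivially on the level-$N$ structure, i.e.\ trivially on $H^4_{\mathrm{prim}}(X,\mathbb{Z}/N)$. The group $\mathrm{Aut}(X)$ is finite and acts faithfully on the rank-$22$ lattice $H^4_{\mathrm{prim}}(X,\mathbb{Z})$, and every prime divisor of $|\mathrm{Aut}(X)|$ divides $2310=2\cdot3\cdot5\cdot7\cdot11$; hence the order of $\phi$ is prime to $N$. A finite-order element of $\mathrm{GL}_{22}(\mathbb{Z})$ that is congruent to the identity modulo $N$ and has order prime to $N$ equals the identity (a standard lemma on torsion in congruence subgroups), so $\phi$ acts trivially on $H^4_{\mathrm{prim}}(X,\mathbb{Z})$ and therefore $\phi=\mathrm{id}$.

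It remains to upgrade ``algebraic space'' to ``affine scheme of finite type''. The coarse moduli space $M^{\circ}$ of smooth cubic fourfolds is an affine scheme: the GIT quotient $\mathbb{P}(\mathrm{Sym}^3)/\!\!/\mathrm{SL}_6$ is projective over $\mathbb{Z}$ with an ample descended polarization, and the $\mathrm{SL}_6$-invariant discriminant descends to a section of a power of that polarization whose non-vanishing locus is precisely $M^{\circ}$ (all smooth cubic fourfolds being GIT-stable --- over $\mathbb{Z}[1/N]$ with $N$ prime to $2310$ all residue characteristics exceed $11$, so the characteristic-zero analysis applies), exhibiting $M^{\circ}$ as the complement of an ample divisor in a projective scheme. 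Since $\mathcal{C}$ is separated with finite inertia, the coarse-space morphism $\mathcal{C}\to M^{\circ}$ is finite; $\mathcal{C}^{[N]}\to\mathcal{C}_{\mathbb{Z}[1/N]}$ is finite étale; and $F$ is an open \emph{and closed} immersion, since $\widetilde{\mathcal{C}^{[N]}}$ is cut out in $\mathcal{C}^{[N]}$ by the locally constant condition that the level structure be of the prescribed spinor/orientation type. Hence $\widetilde{\mathcal{C}^{[N]}}\to M^{\circ}_{\mathbb{Z}[1/N]}$ is finite, and an algebraic space finite over an affine scheme is an affine scheme; this also gives quasi-compactness and finite type, completing part (2).

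The main obstacle is the rigidity step, and within it the two facts that $\mathrm{Aut}(X)$ acts faithfully on $H^4_{\mathrm{prim}}(X,\mathbb{Z})$ and that the primes dividing $|\mathrm{Aut}(X)|$ for a smooth cubic fourfold are exactly those dividing $2310$; this classification of automorphism orders is the arithmetic content responsible for the coprimality hypothesis. A secondary point requiring care is that $F$ is closed and not merely open, which is what turns quasi-finiteness over the affine base $M^{\circ}$ into finiteness, hence affineness; if in the actual construction $F$ is only an open immersion, one should instead invoke a Looijenga-type result identifying the image of the period map with the complement of an ample arrangement divisor in a suitable compactification of the orthogonal Shimura variety.
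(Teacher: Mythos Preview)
Your approach is correct in outline but differs substantially from the paper's, and is in several respects cleaner. The paper does \emph{not} take $F$ as a pre-established open immersion: it first verifies directly that $\widetilde{\mathcal{C}^{[N]}}$ is a stack, then runs through Artin's representability criteria (\cite[Corollary 10.11]{lau}) in four steps to obtain the Deligne--Mumford property, and only afterwards proves that $F$ is \'etale and a monomorphism, hence an open immersion. Your route---observing that the defining condition (``$\alpha$ is an isometry with $\alpha(h^2)=v$'') is locally constant on the base, so that $\widetilde{\mathcal{C}^{[N]}}$ is a clopen substack of the known Deligne--Mumford stack $\mathcal{C}^{[N]}$---bypasses the Artin-axiom verification entirely and yields part (1) almost for free. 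For the rigidity step in part (2), the paper simply cites \cite[Theorem 3.2(3)]{jal} (which in turn rests on \cite[Theorem 1.1]{jal}) to conclude that objects of $\mathcal{C}^{[N]}$, and hence of $\widetilde{\mathcal{C}^{[N]}}$ via the fully faithful $F$, have trivial automorphisms; you are essentially re-deriving that result. For affineness, the paper uses that $\mathcal{C}^{[N]}$ is already an affine scheme by \cite{jal} and that $F$ is an open immersion; your clopen observation is exactly what is needed to make ``open in affine'' give ``affine'', and your GIT/coarse-moduli argument, while correct, is then unnecessary.

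Two small imprecisions in your write-up are worth flagging. First, the condition cutting out $\widetilde{\mathcal{C}^{[N]}}$ is not a ``spinor/orientation'' condition but the requirement that $\alpha$ be an isometry sending $h^2$ to $v$; it is nonetheless locally constant, so your clopen conclusion stands. Second, your rigidity argument invokes $H^4_{\mathrm{prim}}(X,\mathbb{Z})$ and $\mathrm{GL}_{22}(\mathbb{Z})$, which only make sense in characteristic zero; over $\overline{\mathbb{F}_p}$ (with $p\nmid N$) you must work with $\ell$-adic cohomology for some $\ell\mid N$ and use the torsion-freeness of the congruence kernel in $\mathrm{GL}_{22}(\mathbb{Z}_\ell)$. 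This is exactly the content packaged in \cite[Theorem 1.1]{jal}, which is why the paper cites it rather than reproving it.
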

\begin{rmk}(On the constant 2310).
\textnormal{The integer \(2310 = 2 \cdot 3 \cdot 5 \cdot 7 \cdot 11\) arises from the requirement that
the automorphism group of an object in the moduli stack of cubic fourfolds with level structure acts faithfully on the torsion \cite[Theorem 1.1]{jal}.
This ensures, as in Rizov’s construction for polarized K3 surfaces, that the moduli stack
\(\widetilde{\mathcal{C}^{[N]}}\) has trivial stabilizers when \(N\) is coprime to \(2310\),
so that it becomes an algebraic space (indeed, a smooth affine scheme).
The excluded primes \(2,3,5,7,11\) correspond to small torsion orders for which certain
automorphisms may act trivially on the associated cohomology lattice.
Hence the coprimality condition guarantees that the level structure kills all residual automorphisms,
making the stack representable by a scheme. (We thank B. Hassett  for raising this point, via private communication.)}
\end{rmk}
When $N\geq 3$ is coprime to $2310$, we use  $\widetilde{\mathcal{C}^{[N]}}$ to define the  $\mathit{period~ map}$
 \begin{align}
        j_{N, \mathbb{C}} : \widetilde{\mathcal{C}^{[N]}}_{\mathbb{C}}\to {\rm Sh}_{K_{N}}(L)_{\mathbb{C}}, \notag 
    \end{align}
    where $K_{N}\subset {\rm SO}(L)(\mathbb{A}_{f})$ is a compact open subgroup of finite index (see Definition \ref{levelfin}). 
    \begin{thm}\label{main2}
    The period map $j_{N, \mathbb{C}}$
    is algebraic and is an open immersion.
\end{thm}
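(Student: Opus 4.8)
The plan is to produce $j_{N, \mathbb{C}}$ in three stages: first as a holomorphic map of complex-analytic spaces, then as an algebraic morphism via Borel's extension theorem, and finally to recognize it as an open immersion by combining the infinitesimal and global Torelli theorems for cubic fourfolds. By Theorem \ref{main1}(2) the source $\widetilde{\mathcal{C}^{[N]}}_{\mathbb{C}}$ is a smooth affine $\mathbb{C}$-scheme of finite type of dimension $20=\dim\mathcal{C}$, and via $F$ and the universal family on $\mathcal{C}^{[N]}$ it carries a universal cubic fourfold $\pi:\mathcal{X}\to\widetilde{\mathcal{C}^{[N]}}_{\mathbb{C}}$ together with a level-$N$ structure. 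I would form the polarized variation of Hodge structure on the primitive part $(R^{4}\pi_{*}\mathbb{Z})_{\mathrm{prim}}(1)$; this is of K3 type, with Hodge numbers $(1,20,1)$ and an intersection form of signature $(20,2)$, so its period domain is the type IV Hermitian symmetric domain $\Omega^{\pm}$ attached to ${\rm SO}(2,20)$, on which Griffiths transversality is vacuous. Hence the classifying map of the variation is automatically holomorphic, and the level-$N$ structure trivializes the underlying local system compatibly with the quadratic form and the prescribed orientation; tracking the resulting actions of ${\rm SO}(L)(\mathbb{Q})$ and ${\rm SO}(L)(\mathbb{A}_{f})$ and checking, via the real spinor norm, that the period of a smooth cubic fourfold lands in the distinguished connected component, one obtains a well-defined holomorphic map $j_{N,\mathbb{C}}^{\mathrm{an}}:\widetilde{\mathcal{C}^{[N]}}_{\mathbb{C}}^{\mathrm{an}}\to {\rm Sh}_{K_{N}}(L)_{\mathbb{C}}^{\mathrm{an}}$.

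For algebraicity I would invoke Borel's theorem. By Baily--Borel the Shimura variety ${\rm Sh}_{K_{N}}(L)_{\mathbb{C}}$ is quasi-projective and admits a canonical minimal compactification, and any holomorphic map from a smooth quasi-projective complex variety into such an arithmetic quotient is automatically algebraic; applied to $j_{N,\mathbb{C}}^{\mathrm{an}}$ this produces the morphism $j_{N,\mathbb{C}}$ of $\mathbb{C}$-varieties, and GAGA identifies it with the analytic map above. Étaleness is then immediate from the local (infinitesimal) Torelli theorem for cubic fourfolds: the differential of the period map is injective, hence, both source and target being smooth of dimension $20$, an isomorphism, so $j_{N,\mathbb{C}}$ is smooth and unramified, i.e.\ étale.

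It remains to upgrade ``étale'' to ``open immersion''. The global Torelli theorem for cubic fourfolds (Voisin, with the subsequent correction; see also Looijenga and Charles) asserts that a smooth cubic fourfold is determined, together with a \emph{unique} isomorphism, by its polarized Hodge structure on $H^{4}_{\mathrm{prim}}$. Transported through the moduli interpretations of $\widetilde{\mathcal{C}^{[N]}}$ and ${\rm Sh}_{K_{N}}(L)$ — using once more the faithfulness of the action on the level-$N$ torsion guaranteed by coprimality to $2310$, so that the geometric isomorphism automatically respects the level structures and is unique — this shows that $j_{N,\mathbb{C}}$ is not merely injective on $\mathbb{C}$-points but a monomorphism. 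Since an étale monomorphism is an open immersion, the theorem follows. (Laza and Looijenga's description of the image as the complement of two Heegner divisors is not needed here, but it identifies the relevant open subset of ${\rm Sh}_{K_{N}}(L)_{\mathbb{C}}$.)

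I expect the main obstacle to lie in the first and third stages rather than in algebraicity. In the first stage one must pin down the correct connected component of $\Omega^{\pm}$ and match the lattice-plus-orientation data encoded by the level-$N$ structure on $\widetilde{\mathcal{C}^{[N]}}$ with the adelic double-coset description of ${\rm Sh}_{K_{N}}(L)$; in the third stage one must lift the global Torelli theorem — classically stated in terms of Hodge isometries of lattices and coarse moduli — to a statement about the stacks with level structure, ruling out any residual automorphism ambiguity. Once the analytic period map is in hand and the source is known to be a smooth affine scheme by Theorem \ref{main1}, algebraicity via Borel and étaleness via infinitesimal Torelli are essentially formal.
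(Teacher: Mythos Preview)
Your proposal is correct and follows essentially the same route as the paper: build the holomorphic period map from the polarized VHS on the universal family, algebraize via Borel's extension theorem, and upgrade \'etale to open immersion by proving injectivity on $\mathbb{C}$-points through Voisin's global Torelli theorem. The only minor difference is that you obtain \'etaleness from the infinitesimal Torelli theorem, whereas the paper deduces that the local period map is a local isomorphism by comparing with the classical period map $\mathcal{C}(\mathbb{C})\to {\rm O}(L)\backslash\Omega^{+}$, which is already known to be an open immersion by Voisin.
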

Theorem \ref{main2} admits a geometric application to $\mathit{rank}$-$\mathit{21~cubic~ fourfolds}$ (Definition \ref{def21}),  which correspond to $\mathit{singular ~K3~ surfaces}$-that is, K3 surfaces with  maximal Picard number $20$. According to Shioda-Inose \cite{si}, the transcendental lattice of a rank-21 cubic fourfold is isomorphic to that of a singular K3 surface, and its reflex field is an imaginary quadratic extension over $\mathbb{Q}$ (see Proposition \ref{21fun}). Rizov proved that, for a given imaginary quaratic field $K$, singular K3 surfaces whose reflex field is $K$ is dense in the moduli space of polarized K3 surfaces. By combining Theorem \ref{main2} with the theory of special points on Shimura varieties, we obtain the same statement for cubic fourfolds. 
\begin{thm}\label{rank21ex}Let $K\subset \mathbb{C}$ be an imaginary quadratic field. Then rank-21 cubic fourfolds with its reflex field $K$ are Zariski-dense in $\mathcal{C}_\mathbb{C}$. In particular, there exist infinitely many rank-21 cubic fourfolds with its reflex field $K$.
\end{thm}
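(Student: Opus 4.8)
The plan is to translate the assertion into a density statement for special points on the orthogonal Shimura variety, to prove that statement by an explicit lattice construction (the one Rizov uses in signature $(2,19)$, which carries over unchanged to signature $(2,20)$), and then to transport density back to $\mathcal{C}_{\mathbb{C}}$ along the period morphism.

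\emph{Reduction to the Shimura variety.} Fix $N\geq 3$ coprime to $2310$. By Theorems \ref{main1} and \ref{main2} the period morphism $j_{N,\mathbb{C}}$ is an open immersion onto a nonempty Zariski-open subvariety $U\subset {\rm Sh}_{K_{N}}(L)_{\mathbb{C}}$, while the composite $\widetilde{\mathcal{C}^{[N]}}_{\mathbb{C}}\xrightarrow{F}\mathcal{C}^{[N]}_{\mathbb{C}}\to\mathcal{C}_{\mathbb{C}}$ is open; since $\mathcal{C}_{\mathbb{C}}$ is irreducible, its image is Zariski-dense. Hence it suffices to exhibit a Zariski-dense subset of $\widetilde{\mathcal{C}^{[N]}}_{\mathbb{C}}$ all of whose points are rank-$21$ cubic fourfolds with reflex field $K$. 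Because both the rank of $H^{2,2}\cap H^{4}(X,\mathbb{Z})$ and the reflex field depend only on the associated $\mathbb{Q}$-Hodge structure (Proposition \ref{21fun}), transporting along $j_{N,\mathbb{C}}$ reduces this to: the set $\Sigma_{K}\subset{\rm Sh}_{K_{N}}(L)_{\mathbb{C}}$ of special points whose attached $\mathbb{Q}$-Hodge structure on $L$ has transcendental part a rational positive-definite plane $P\subset L_{\mathbb{Q}}$ with even Clifford algebra $C^{+}(P,q|_{P})\cong K$ --- so that its Mumford--Tate group is the norm-one torus of $K$ and its reflex field equals $K$ --- is dense in ${\rm Sh}_{K_{N}}(L)_{\mathbb{C}}$. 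Indeed $\Sigma_{K}\cap U$ is then dense in $U$, and every point of $\Sigma_{K}\cap U$ is the period point of a cubic fourfold whose transcendental lattice has rank $2$, hence a rank-$21$ cubic fourfold with reflex field $K$.

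\emph{Density of $\Sigma_{K}$.} Write $K=\mathbb{Q}(\sqrt{-d})$. As $L_{\mathbb{Q}}$ is indefinite of dimension $22\geq 5$ it is isotropic, so it represents $1$ and contains two orthogonal hyperbolic planes. For real vectors $v_{1},v_{2}$ with $q(v_{1})=1$, $q(v_{2})=d$, $v_{1}\perp v_{2}$, the line $[\sqrt{-d}\,v_{1}+v_{2}]$ lies in the period domain $D^{\pm}$ for an appropriate square root of $-d$, and conversely every point of $D^{\pm}$ arises this way: write $\omega=a+ib$ with $q(a)=q(b)>0$ and $a\perp b$, then rescale. When $v_{1},v_{2}$ are rational the plane $P=\langle v_{1},v_{2}\rangle_{\mathbb{Q}}$ is positive-definite with $C^{+}(P,q|_{P})\cong\mathbb{Q}\bigl(\sqrt{-q(v_{1})q(v_{2})}\bigr)=\mathbb{Q}(\sqrt{-d})=K$, so $[\sqrt{-d}\,v_{1}+v_{2}]\in\Sigma_{K}$. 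Now rational such pairs are dense among real ones: the affine quadric $\{q(v_{1})=1\}\subset L_{\mathbb{Q}}$ has a rational point, hence by projection from it its $\mathbb{Q}$-points are dense in its $\mathbb{R}$-points; for each rational $v_{1}$ the restriction of $q$ to $v_{1}^{\perp}$ is non-degenerate of signature $(1,20)$, hence isotropic and universal, so $\{v_{2}\in v_{1}^{\perp}:q(v_{2})=d\}$ likewise has $\mathbb{Q}$-points dense in its $\mathbb{R}$-points; combining these with the continuity of $(v_{1},v_{2})\mapsto[\sqrt{-d}\,v_{1}+v_{2}]$ yields that $\Sigma_{K}$ is dense in $D^{\pm}$. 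Passing to the quotient on each connected component of ${\rm Sh}_{K_{N}}(L)_{\mathbb{C}}$, and using the action of ${\rm O}(L)(\mathbb{Q})$ on $D^{\pm}$ together with Hecke translation --- both of which leave the Mumford--Tate cocharacter, hence the reflex field, unchanged --- spreads this density over every component, so $\Sigma_{K}$ is dense in ${\rm Sh}_{K_{N}}(L)_{\mathbb{C}}$ analytically, hence Zariski.

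\emph{Conclusion and main difficulty.} Pulling $\Sigma_{K}\cap U$ back along $j_{N,\mathbb{C}}$ and pushing forward to $\mathcal{C}_{\mathbb{C}}$ produces a Zariski-dense set of rank-$21$ cubic fourfolds with reflex field $K$; since $\dim\mathcal{C}_{\mathbb{C}}=20>0$ this set must be infinite, which gives the last assertion. I expect the main obstacle to be the density step for $\Sigma_{K}$: one must verify carefully that the explicit rational planes $P$ have even Clifford algebra exactly $K$ (not a proper subfield) and that the reflex field computed in this way coincides with the intrinsic reflex field of the corresponding rank-$21$ cubic fourfold from Proposition \ref{21fun}. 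The auxiliary density of $\mathbb{Q}$-points on the quadrics $\{q(v_{1})=1\}$ and $\{v_{2}\in v_{1}^{\perp}:q(v_{2})=d\}$, and the bookkeeping over the connected components of ${\rm Sh}_{K_{N}}(L)_{\mathbb{C}}$, should then be routine given the isotropy of $L_{\mathbb{Q}}$.
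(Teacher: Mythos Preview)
Your proposal is correct, but you take a genuinely different route to the density statement on the Shimura variety than the paper does. The paper's argument is much shorter: it constructs a \emph{single} special point $x\in X_{L}$ with reflex field $K$ (via the primitive embedding $(\mathcal{O}_{K},{\rm Tr}(x\bar y))\hookrightarrow U^{\oplus 2}\hookrightarrow L$, which is Lemma~\ref{pri2}/Lemma~\ref{21}), and then invokes the general fact from the theory of Shimura varieties (\cite[Lemma 13.5]{milne}) that for any $x\in X_{L}$ the Hecke orbit $\{[(x,a)]:a\in G(\mathbb{A}_{f})\}$ is Zariski-dense in ${\rm Sh}_{K_{N}}(L)_{\mathbb{C}}$. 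Since all points of this orbit share the same Hodge structure $x$ (hence the same transcendental rank~2 and the same reflex field~$K$), intersecting with the open image of $j_{N,\mathbb{C}}$ immediately gives Zariski-density in $\widetilde{\mathcal{C}^{[N]}}_{\mathbb{C}}$.

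By contrast, you produce an analytically dense family of \emph{distinct} special points in the period domain itself, by varying the rational positive plane $P=\langle v_{1},v_{2}\rangle$ over the rational points of an isotropic quadric and its orthogonal slices; only afterwards do you use Hecke translation to cover the remaining connected components. What the paper's approach buys is economy: one lattice embedding plus one black-box lemma on Hecke orbits, with no quadric-point density arguments and no need to parametrize all of $D^{\pm}$ by pairs $(v_{1},v_{2})$. What your approach buys is explicitness and a stronger topology (you get classical density in each component, not just Zariski density), and it avoids citing the Hecke-orbit lemma as the load-bearing step. Your concern about checking $C^{+}(P)\cong K$ exactly is well-placed but easy here, since $\dim_{\mathbb{Q}}P=2$ forces $C^{+}(P)$ to be a quadratic $\mathbb{Q}$-algebra, and $q(v_{1})q(v_{2})=d$ is not a square; the agreement with the intrinsic reflex field of Proposition~\ref{21fun} is then Valloni's computation (\cite[Proposition~2.2]{val}).
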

More strongly, we prove the density in  moduli space with a level structure (Theorem \ref{apl1}). 
\\\indent Yang \cite{yang} constructed for every imaginary quadratic field $K\subset\mathbb{C}$, there exists a rank-21 cubic fourfold whose reflex field $K$.  For singular K3 surfaces, the analogous statement was established by Piatetski-Shapiro and Shafarevich \cite{ps} and by  Taelman \cite{tae}. Yang's proof uses Fano correspondence for cubic fourfolds, and applies Rizov's CM-density for K3 surfaces (\cite[Proposition 3.8.1]{Rizov10}). On the other hand, \cite{ps} and \cite{tae} use the surjectivity of the global period map for K3 surfaces. 
We give a lattice-theoretic proof of the existence in Theorem \ref{apl1}. Since the global period map for cubic fourfolds is $not ~surjective$ (Laza \cite{laza}), our argument crucially depends on Theorem \ref{main2}. 
\\\indent Theorem \ref{rank21ex} is a key step toward  our final main result (Theorem \ref{main3}).
\\\\\indent Madapusi Pera established an arithmetic period map for cubic fourfolds by  absolute Hodge cycles. Yang \cite{yang}  further extended Pera’s  framework to irreducible symplectic varieties
of K$3^{[n]}$-type. 
\\\indent In contrast, our method is entirely moduli-theoretic, and offers a direct geometric realization of the arithmetic period map for cubic fourfolds, paralleling Rizov’s original treatment of K3 surfaces. Hence our results provide a complementary and fully self-contained realization of the arithmetic period map for Hodge structures of cubic fourfold type, \textit{from a moduli-theoretic and complex-analytic viewpoint\/}.
\begin{thm}\label{main3}
    The field of definition of the period morphism $j_{N, \mathbb{C}}$ is $\mathbb{Q}$.
\end{thm}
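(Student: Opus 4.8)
The plan is to carry out Rizov's Galois-descent argument \cite{Rizov10} in the setting of cubic fourfolds. Both $\widetilde{\mathcal{C}^{[N]}}$ (by its construction) and $\mathrm{Sh}_{K_N}(L)$ (by Deligne's theory of canonical models \cite{deligne}, the reflex field of the Shimura datum for $\mathrm{SO}(2,20)$ being $\mathbb{Q}$) carry $\mathbb{Q}$-structures. For $\sigma\in\mathrm{Aut}(\mathbb{C}/\mathbb{Q})$ let $\phi_\sigma$ and $\psi_\sigma$ denote the associated $\sigma$-linear automorphisms of $\widetilde{\mathcal{C}^{[N]}}_{\mathbb{C}}$ and $\mathrm{Sh}_{K_N}(L)_{\mathbb{C}}$, and put $j_N^{\sigma}:=\psi_\sigma\circ j_{N,\mathbb{C}}\circ\phi_\sigma^{-1}$, which is again a $\mathbb{C}$-morphism (indeed an open immersion, by Theorem \ref{main2}). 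It suffices to prove $j_N^{\sigma}=j_{N,\mathbb{C}}$ for every $\sigma$: the graph of $j_{N,\mathbb{C}}$ is then an $\mathrm{Aut}(\mathbb{C}/\mathbb{Q})$-stable closed subscheme of $(\widetilde{\mathcal{C}^{[N]}}\times_{\mathbb{Q}}\mathrm{Sh}_{K_N}(L))_{\mathbb{C}}$, so it descends to $\mathbb{Q}$, and its first projection, being an isomorphism after $\otimes_{\mathbb{Q}}\mathbb{C}$, is an isomorphism; the induced morphism is $j_{N,\mathbb{Q}}$. Since $\widetilde{\mathcal{C}^{[N]}}_{\mathbb{C}}$ is reduced and $\mathrm{Sh}_{K_N}(L)_{\mathbb{C}}$ is separated, the equality $j_N^{\sigma}=j_{N,\mathbb{C}}$ may be tested on any Zariski-dense set of closed points meeting every connected component; by Theorem \ref{rank21ex} (refined by Theorem \ref{apl1}) the points parametrising rank-$21$ cubic fourfolds form such a set. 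Thus the theorem reduces to the claim that $\sigma\bigl(j_{N,\mathbb{C}}(Z)\bigr)=j_{N,\mathbb{C}}(Z^{\sigma})$ for every rank-$21$ cubic fourfold $Z$ (with its level structure) and every $\sigma$.

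For this reduced claim I would argue as follows. The rank-$21$ locus in the moduli space is a zero-dimensional, $\mathrm{Aut}(\mathbb{C})$-stable algebraic subset, hence is defined over $\overline{\mathbb{Q}}$; so $\sigma$ acts on $Z$ through $\mathrm{Gal}(\overline{\mathbb{Q}}/\mathbb{Q})$ and $j_{N,\mathbb{C}}(Z)\in\mathrm{Sh}_{K_N}(L)(\overline{\mathbb{Q}})$ is a special point, attached to the rank-$1$ CM torus $\mathrm{SO}(T(Z))\subset\mathrm{SO}(L)$, where $T(Z)$ is the rank-$2$ transcendental Hodge structure of $Z$ — a polarised Hodge structure of K3 type with complex multiplication by the imaginary quadratic reflex field $K$ (cf. Proposition \ref{21fun}). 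By the reciprocity law of the canonical model, $\sigma\bigl(j_{N,\mathbb{C}}(Z)\bigr)$ is the special point obtained from this torus by translating the finite adelic datum (including the $K_N$-component) by the reflex norm of $\sigma$. On the other hand, Shioda--Inose \cite{si} identifies $T(Z)$ with the transcendental Hodge structure of a singular K3 surface $S$, and — through the Shioda--Inose and Kummer correspondences, together with the Beauville--Donagi relation between $Z$ and $S$ — with the transcendental part of $H^{2}$ of an abelian surface $A$ isogenous to $E_1\times E_2$ for elliptic curves $E_1,E_2$ with complex multiplication by $K$; these correspondences descend to $\overline{\mathbb{Q}}$ after a finite extension and are compatible with $\ell$-adic realisations and with the Artin comparison, whence $j_{N,\mathbb{C}}(Z)$ is the image of the CM point of $A$ under the induced morphism of Shimura data. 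For $A$, the classical main theorem of complex multiplication — equivalently, the fact that the Siegel modular variety is the moduli space of abelian varieties over $\mathbb{Q}$ — gives $\sigma\bigl(j'(A)\bigr)=j'(A^{\sigma})$, both sides being described by the reflex norm of $\sigma$. Transporting this identity back along the $\overline{\mathbb{Q}}$-algebraic correspondences, and noting that $Z^{\sigma}$ is again rank-$21$ with $T(Z^{\sigma})$ the Hodge structure of $A^{\sigma}$ and with the $\sigma$-conjugate level structure, yields $\sigma\bigl(j_{N,\mathbb{C}}(Z)\bigr)=j_{N,\mathbb{C}}(Z^{\sigma})$.

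The hard part will be the middle step: showing that the Galois action on the special point $j_{N,\mathbb{C}}(Z)$ coming from the arithmetic of $Z$ (equivalently, of the CM abelian surface $A$) agrees, on the nose and including the $K_N$-component, with the one prescribed by the canonical model of $\mathrm{Sh}_{K_N}(L)$. This needs three compatibilities: (i) the Shioda--Inose, Kummer and Beauville--Donagi correspondences genuinely descend to $\overline{\mathbb{Q}}$ and are Galois-equivariant on $\ell$-adic cohomology; (ii) reflex norms are compatible under the morphism of Shimura data relating the rank-$2$ CM sub-datum $\mathrm{SO}(T(Z))\subset\mathrm{SO}(L)$ to the CM datum of $A$; and (iii) the adelic identification of the level-$N$ structure on the cubic side with the $K_N$-orbit on the Shimura side is reciprocity-equivariant. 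Granting these, density (Theorems \ref{rank21ex} and \ref{apl1}) together with the descent formalism of the first paragraph complete the proof, so that the field of definition of $j_{N,\mathbb{C}}$ is $\mathbb{Q}$. As a final remark: once $j_{N,\mathbb{Q}}$ is available, complex multiplication theory for all CM cubic fourfolds (Corollary \ref{cmtgen}) follows formally from the reciprocity law at special points, with no further appeal to the Kuga--Satake construction.
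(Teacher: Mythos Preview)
Your overall architecture---Galois descent, reducing to a Zariski-dense set of special points, and verifying compatibility there via complex multiplication---is exactly the paper's strategy (and Rizov's). But two steps in your execution do not go through as written.

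First, the reciprocity law of the canonical model only describes $\sigma\bigl(j_{N,\mathbb{C}}(Z)\bigr)$ for $\sigma\in\mathrm{Aut}(\mathbb{C}/E)$, where $E$ is the reflex field of the special point; it does \emph{not} give a formula in terms of a ``reflex norm of $\sigma$'' for arbitrary $\sigma\in\mathrm{Aut}(\mathbb{C}/\mathbb{Q})$. Handling the remaining automorphisms would require the full Langlands--Deligne--Milne--Shih description of conjugates of special points, which is considerably more delicate than what you invoke. The paper sidesteps this entirely: it fixes an imaginary quadratic field $E$, uses Theorem~\ref{apl1} to get density of rank-$21$ points \emph{with that reflex field}, checks $j_N^{\sigma}=j_{N,\mathbb{C}}$ only for $\sigma\in\mathrm{Aut}(\mathbb{C}/E)$ (where the reciprocity law applies directly), concludes $j_{N,\mathbb{C}}$ is defined over $E$, and then observes that two distinct imaginary quadratic fields intersect in $\mathbb{Q}$.

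Second, your chain of correspondences linking a rank-$21$ cubic fourfold $Z$ to a K3 surface $S$ via ``the Beauville--Donagi relation'' is not available in general. Beauville--Donagi relates $Z$ to its Fano variety $F(Z)$, and one gets an honest K3 surface only when $F(Z)$ is birational to $S^{[2]}$; as the paper itself emphasises (discussing Addington's condition $(\ast\ast\ast)$ and the discriminant $d=74$), there are infinitely many rank-$21$ cubic fourfolds for which this fails. The paper instead uses B\"ulles' theorem to produce, for \emph{every} rank-$21$ cubic fourfold, an algebraic cycle $Z\in\mathrm{CH}^3(S\times X)_{\mathbb{Q}}$ realising an isomorphism of transcendental Chow motives $t(S)(-1)\cong t(X)$; the Galois-equivariance of the induced map on $\ell$-adic cohomology (Corollary~\ref{action}) then lets one import Rizov's main theorem for singular K3 surfaces directly, without going down to abelian surfaces at all.
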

In general, a rank-21 cubic fourfold has an $associated ~K3 ~surface$ in the sense of Hassett \cite{has}.  By combining Rizov's complex multiplication theory for singular K3 surfaces (\cite[Theorem 3.4.2]{Rizov10}) with the study of transcendental motives by Bolognesi-Pedrini \cite{bo}, Murre \cite{mur}, Kahn-Murre-Pedrini \cite{ka}, and B\"ulles \cite{bu}, we deduce the complex multiplication theory for rank-21 cubic fourfolds (Theorem \ref{cmt}). This theory shows that every rank-21 cubic fourfold, regarded as a point of ${\rm Sh}_{K_{N}}(L)_{\mathbb{C}}$,  is compatible  with the action of $\sigma\in{\rm Aut}(\mathbb{C})$ on ${\rm Sh}_{K_{N}}(L)_{\mathbb{C}}$. 
\\\indent Rank-21 cubic fourfolds thus belong to the class of cubic fourfolds  $of ~CM~ type$, a notion generalizing that of abelian varieties with complex multiplication. Piatetski-Shapiro and Shafarevich \cite{ps} formulated the following conjecture concerning  the arithmetic of varieties of CM type: 
\begin{con}(\cite[Section 1, Conjecture]{ps})
    Varieties of CM type are defined over  number fields. 
\end{con}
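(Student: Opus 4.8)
The plan is to establish the conjecture for cubic fourfolds, deducing it from the arithmetic period map constructed above (Theorems \ref{main2} and \ref{main3}) together with Deligne's theory of canonical models of Shimura varieties. Let $X/\mathbb{C}$ be a cubic fourfold of CM type, i.e.\ one whose transcendental Hodge structure $T(X)\subset H^{4}_{\mathrm{prim}}(X,\mathbb{Q})$ has Mumford--Tate group a torus $T\subset{\rm SO}(L)$. The first step is Hodge-theoretic: fix $N\geq 3$ coprime to $2310$ and a level-$N$ structure on $X$, giving a point $x\in\widetilde{\mathcal{C}^{[N]}}(\mathbb{C})$ with period point $p=j_{N,\mathbb{C}}(x)\in{\rm Sh}_{K_{N}}(L)(\mathbb{C})$, and observe that the torus condition on $T$ is precisely the statement that $p$ is a special point of ${\rm Sh}_{K_{N}}(L)$, with special subdatum $(T,h_{x})$ and reflex field $E=E(T,h_{x})$. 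One must then check that $E$ coincides with the reflex field of $X$ as in Proposition \ref{21fun}; for rank-$21$ fourfolds this is the imaginary quadratic field supplied by Shioda--Inose \cite{si}.

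Next I would invoke the arithmetic of special points. The Shimura variety ${\rm Sh}_{K_{N}}(L)$ has a canonical model over $\mathbb{Q}$ — the target of $j_{N,\mathbb{Q}}$ in Theorem \ref{main3} — and on such a model every special point is algebraic, with $p\in{\rm Sh}_{K_{N}}(L)(\overline{\mathbb{Q}})$; moreover $p$ is defined over an abelian extension of its reflex field $E$, and the Galois action on $p$ is given by the reciprocity morphism $r_{(T,h_{x})}$ attached to the special subdatum. Fix a finite abelian extension $E'/E$ inside $\overline{\mathbb{Q}}$ with $p\in{\rm Sh}_{K_{N}}(L)(E')$.

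The descent step then proceeds by transport along the period map. By Theorems \ref{main2} and \ref{main3}, $j_{N,\mathbb{Q}}\colon\widetilde{\mathcal{C}^{[N]}}_{\mathbb{Q}}\to{\rm Sh}_{K_{N}}(L)_{\mathbb{Q}}$ is an open immersion of $\mathbb{Q}$-schemes whose base change to $\mathbb{C}$ is $j_{N,\mathbb{C}}$. Since the $\mathbb{C}$-point $p$ lies in the open image of $j_{N,\mathbb{Q}}$ and is in fact an $E'$-point, it pulls back to a point $\widetilde{x}\in\widetilde{\mathcal{C}^{[N]}}(E')$, and because $j_{N,\mathbb{Q}}$ is an open immersion the complex point underlying $\widetilde{x}$ is $x$. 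As $\widetilde{\mathcal{C}^{[N]}}$ is an open subscheme of the moduli space $\mathcal{C}^{[N]}$ of cubic fourfolds with level-$N$ structure, $\widetilde{x}$ corresponds to a cubic fourfold $X_{0}/E'$ equipped with level structure whose base change to $\mathbb{C}$ recovers $(X,\text{level})$; hence $X$ is defined over the number field $E'$, an abelian extension of its reflex field. This is precisely the content of Corollaries 5.8 and 5.9, and it verifies the conjecture of Piatetski--Shapiro and Shafarevich in the case of cubic fourfolds.

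The purely formal descent to \emph{some} number field is automatic once Theorem \ref{main3} is in hand. I expect the two genuinely delicate points to be the following. First, the Hodge-theoretic input: one must verify that the period point of a CM cubic fourfold is a special point in the group-theoretic sense — matching the transcendental Mumford--Tate torus with a special subdatum of the Shimura datum defining ${\rm Sh}_{K_{N}}(L)$ — and, crucially, identify the two notions of reflex field (the cubic-fourfold side, read off from the transcendental lattice, and, for rank-$21$ fourfolds, via the associated K3 surface and Shioda--Inose). Second, and this is where I expect the main obstacle, pinning the field of definition down to a \emph{finite abelian} extension of $E$ rather than a general number field: this requires the explicit reciprocity law at special points of ${\rm Sh}_{K_{N}}(L)$ together with a careful analysis of how the level-$K_{N}$ structure is transformed by $\mathrm{Aut}(\mathbb{C})$ — that is, the compatibility of the $\mathbb{Q}$-descent of $j_{N}$ (Theorem \ref{main3}) with the Galois action on the canonical model. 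This is where the bulk of the work in Section 5 should lie.
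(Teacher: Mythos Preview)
Your proposal is correct and follows essentially the same route as the paper's own argument (Corollary~\ref{abelian ext}): pick a level-$N$ structure (the paper takes $N=13$), observe that the period image of a CM cubic fourfold is a special point, use the defining property of canonical models to conclude this point is $E^{ab}$-rational, and pull back along the $\mathbb{Q}$-rational open immersion $j_{N,\mathbb{Q}}$ to obtain a model of $X$ over a finite abelian extension of $E$. Your closing worry about ``pinning the field down to a finite abelian extension of $E$'' is misplaced: once Theorem~\ref{main3} is granted, the $E^{ab}$-rationality of special points is literally part of the \emph{definition} of the canonical model, so no further reciprocity analysis is needed---the genuine work (the content of Section~\ref{secmain3}) all goes into establishing Theorem~\ref{main3} itself, which you are taking as input.
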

For cubic fourfolds, we prove a stronger statement: any cubic fourfold of CM type is defined over an  $abelian ~extension$ of its reflex field (Corollary \ref{abelian ext}), as an application of Theorem \ref{main3}. Furthermore, Theorem \ref{main3} yields the  $complex ~multiplication ~theory ~for ~cubic ~fourfolds$ (Corollary \ref{cmtgen}).
\\\\\indent As an application of complex multiplication theory, we give a proof of the modularity of rank-21 cubic fourfolds defined over $\mathbb{Q}$, independent of Livn\'e's work \cite{livne}. Let $X/\mathbb{C}$ be a rank-21 cubic fourfold defined over $\mathbb{Q}$, and let $X_{0}$ be a model over $\mathbb{Q}$, i.e. $X=X_{0, \mathbb{C}}$. Let $l$ be a prime, and let $\overline{\mathbb{Q}}$ be the algebraic closure of $\mathbb{Q}$ in $\mathbb{C}$. Then, as explained in \S \ref{galoisrep} (see (\ref{galdef})), we obtain a 2-dimensional $l$-adic Galois representation
\begin{align}
    \rho_{l}:{\rm Gal}(\overline{\mathbb{Q}}/\mathbb{Q})\to {\rm GL}(T(X)_{\mathbb{Z}_{l}}),\notag
\end{align}
where $T(X)$ is the transcendental lattice of $X$. 
\\\indent Our main modularity result is the following:
\begin{thm}(Livn\'e \cite[Corollary 1.4]{livne}) \label{modular}
    Let $l$ be a prime. Let $X/\mathbb{C}$ be a rank-21 cubic fourfold defined over $\mathbb{Q}$, and let $X_{0}$ be a model over $\mathbb{Q}$. Then there exists a weight-$3$ CM newform $f$ such that 
    \begin{align}\label{L}
        L(\rho_{l}, s)=L(f, s-1). 
    \end{align}
\end{thm}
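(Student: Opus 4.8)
The plan is to reduce the statement to Rizov's complex multiplication theory for singular K3 surfaces together with the comparison between the transcendental motives of a rank-21 cubic fourfold and its associated K3 surface. First I would invoke Theorem \ref{cmt} (the complex multiplication theory for rank-21 cubic fourfolds established via Theorem \ref{main3}): the transcendental lattice $T(X)$ is a rank-2 positive-definite lattice whose Hodge structure carries complex multiplication by an imaginary quadratic field $E$, which coincides with the reflex field. Since $X$ is defined over $\mathbb{Q}$, the Galois representation $\rho_l$ on $T(X)_{\mathbb{Z}_l}$ is a $2$-dimensional $l$-adic representation of $\mathrm{Gal}(\overline{\mathbb{Q}}/\mathbb{Q})$ whose restriction to $\mathrm{Gal}(\overline{\mathbb{Q}}/E)$ is abelian, by the Main Theorem of Complex Multiplication applied through the period map (the compatibility of the $\mathrm{Aut}(\mathbb{C})$-action on $\mathrm{Sh}_{K_N}(L)_{\mathbb{C}}$ with CM points, Corollary \ref{cmtgen}). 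Thus $\rho_l$ is induced from an algebraic Hecke character $\chi$ of $E$: $\rho_l \cong \mathrm{Ind}_{E}^{\mathbb{Q}} \chi_l$ for a suitable $l$-adic avatar $\chi_l$.

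Next I would pin down the infinity type and conductor of $\chi$. The Hodge structure on $T(X)$ has Hodge numbers $(1,1)$ in bidegree $(3,1)+(1,3)$ (after the Hodge–Tate twist coming from $H^4$ of the cubic fourfold, which lives in weight $4$; concretely the primitive transcendental part of $H^4(X)$ sits in weights $(3,1),(2,2),(1,3)$ and the transcendental sub-Hodge-structure relevant here is of K3 type after a Tate twist). Comparing with the weight of the associated singular K3 surface, whose transcendental lattice gives rise to a weight-$3$ newform with CM by $E$ (this is exactly the classical Shioda–Inose picture, made modular by Livné's original argument for singular K3 surfaces, or alternatively by Serre's theory of CM newforms), I would read off that $\chi$ has infinity type $z \mapsto z^2$ (equivalently $\chi$ is the Hecke character attached to a weight-$3$ form), so that $L(\chi, s) = L(f, s)$ for a weight-$3$ CM newform $f$ of the appropriate level and nebentypus. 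The normalization $s \mapsto s-1$ in \eqref{L} then accounts for the Tate twist relating the geometric normalization of $\rho_l$ (coming from $H^4$, or rather its transcendental piece realized inside the Kuga–Satake/period picture at motivic weight $2$ after twist) to the analytic normalization of $L(f,s)$.

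To make the identification $\rho_l \cong \mathrm{Ind}_E^{\mathbb{Q}} \chi_l$ precise, I would use the transcendental-motive comparison: by Bolognesi–Pedrini \cite{bo}, Kahn–Murre–Pedrini \cite{ka}, and Bülles \cite{bu}, the transcendental motive $t(X)$ of a rank-21 cubic fourfold is isomorphic, as a Chow motive (up to Tate twist), to the transcendental motive $t_2(S)$ of its associated K3 surface $S$ in the sense of Hassett \cite{has}. Hence their $l$-adic realizations, with Galois action, agree up to twist; and $S$, being singular and defined over a number field once $X$ is, has (by Rizov's CM theory for singular K3 surfaces, \cite[Theorem 3.4.2]{Rizov10}, combined with Livné's theorem for singular K3 surfaces) a transcendental Galois representation that is modular of weight $3$. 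Transporting this along the motivic isomorphism gives \eqref{L}. One technical point I would need to check carefully is that $S$ is actually defined over $\mathbb{Q}$ (not merely over some number field): this follows because the associated K3 surface is obtained from $X$ by an algebraic construction (the Hassett correspondence, realized via a moduli space of twisted sheaves) which is defined over the base field, so a $\mathbb{Q}$-model of $X$ yields a $\mathbb{Q}$-model of $S$ — here one must be slightly careful about the Brauer class / twisting, but in the rank-21 case the transcendental lattice has rank $2$ and the relevant moduli space can be arranged over $\mathbb{Q}$.

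The main obstacle I anticipate is precisely this descent-and-comparison step: ensuring that the isomorphism of transcendental motives is compatible with the $\mathrm{Gal}(\overline{\mathbb{Q}}/\mathbb{Q})$-action (and not just the $\mathrm{Gal}(\overline{\mathbb{Q}}/K)$-action or an action over a larger field), and that the associated K3 surface can be chosen with a model over $\mathbb{Q}$. Once that compatibility is in hand, the modularity follows formally from the known modularity of singular K3 surfaces (equivalently, from the fact that a rank-$2$ $\mathrm{Gal}(\overline{\mathbb{Q}}/\mathbb{Q})$-representation that becomes abelian over an imaginary quadratic field, with the correct Hodge–Tate weights, is automorphic by the theory of Hecke characters and automorphic induction). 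Everything else — the infinity type bookkeeping and the shift $s \mapsto s-1$ — is a routine normalization computation using the Hodge numbers of $T(X)$ and the weight of $H^4$.
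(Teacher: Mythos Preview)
Your core strategy---use the complex multiplication theory (Theorem \ref{cmt}) to show that $\rho_l|_{\mathrm{Gal}(\overline{\mathbb{Q}}/E)}$ is abelian, extract an algebraic Hecke character, and then match with a weight-$3$ CM newform---is exactly the shape of the paper's argument. Where you diverge is in \emph{how} you propose to pin down the Hecke character and hence the newform: you want to transport modularity from the associated singular K3 surface $S$ via the motivic isomorphism $t(S)(-1)\cong t(X)$ of B\"ulles. That is the step with a genuine gap. The cycle $Z\in\mathrm{CH}^3(S\times X)$ furnished by Corollary \ref{singk3} is only known over $\mathbb{C}$; there is no reason the associated $S$ admits a model over $\mathbb{Q}$, nor that $Z$ descends, so the isomorphism of $l$-adic realizations need not be $\mathrm{Gal}(\overline{\mathbb{Q}}/\mathbb{Q})$-equivariant. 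Your own ``main obstacle'' is real, and your sketch of a fix (``the Hassett correspondence is defined over the base field'') is not justified---the relevant moduli of twisted sheaves involve a Brauer class which need not be trivial over $\mathbb{Q}$.

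The paper sidesteps this entirely. It never compares with $S$ at the level of Galois representations. Instead it reformulates Theorem \ref{cmt} as a factorization $\rho_0\circ\mathrm{art}_K = u\cdot r_X$ for a locally constant $u:\mathbb{A}_K^\times\to G(\mathbb{Q})$ (Corollary \ref{u}), computes $r_X(s)$ explicitly as multiplication by $\overline{s_f}/s_f$ (Proposition \ref{valloni}), and from this builds the Hecke character $\psi$ of infinity type $(2,0)$ directly (Theorem \ref{hecke}), with explicit formulas for the trace and determinant of $\rho_l(1)(\mathrm{Frob}_p)$ at split and inert primes. Shimura's lemma then produces a cusp form, Ribet extracts the newform $f$ and its Galois representation $\rho_{f,l}$, and a Frobenius-by-Frobenius comparison plus Chebotarev gives $\rho_l^{ss}\cong\rho_{f,l}(-1)$. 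The motivic comparison with $S$ is used only over $\mathbb{C}$, and only to establish Theorem \ref{cmt} itself; once that is in hand, everything is intrinsic to $X_0/\mathbb{Q}$. Your approach would work if you dropped the K3 detour and instead determined the infinity type from the explicit description of $r_X$, which is what the paper does.
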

Livn\'e determines the weight of the CM newform associated to a given motive by Faltings'$p$-adic Hodge theory \cite{falt}. As an application of this method, he proves that singular K3 surfaces defined over $\mathbb{Q}$ correspond to CM newforms of weight 3 (\cite[Remark 1.6]{livne}).  \\\indent On the other hand, Shioda and Inose \cite{si} construct the Hecke character associated with a singular K3 surface using the theory of complex multipkication for elliptic curves. Livn\'e observes that, independently of Fartings' result, the weight of the corresponding CM newform can be determined by this Shioda-Inose construction (\cite[Remark 1.7]{livne}). 
\\\indent For every rank-21 cubic fourfold defined over $\mathbb{Q}$, we  construct the algebraic Hecke character $\psi$ of infinite type $(2, 0)$ by using an analogue of the latter approach. The character  $\psi$ arises from the complex multiplication theory developed in Theorem \ref{cmt}. By a result of Shimura \cite[Lemma 3]{sh}, one associate to $\psi$ a $q$-series (in fact, weight-3 cusp form) whose coefficients coincide with  the special values of $\psi$. Finally, the Galois representation $\rho_{f, l}$ attached to this form is constructed using Ribet's theory \cite{rib}, and comparison with $\rho_{l}$ yields the claimed equality (\ref{L}).
\\\\\indent Hulek and Kloosterman \cite{hul} proved the same functional equation (\ref{L}) for those rank-21 cubic fourfolds whose Fano variety of lines is birational to $S^{[2]}$ for some singular K3 surface $S/\mathbb{C}$.  However, \textit{there exist infinitely many rank-21 cubic fourfolds whose Fano variety is not birational to $S^{[2]}$ for any singular K3 surface $S$\/}. 
\\\indent Indeed, by Hassett \cite[Theorem 1.0.2]{has}, for some $d\in \mathbb{Z}$, there is an open immersion $\mathcal{C}^{mar}_{d}\hookrightarrow \mathcal{M}_{d}$, where $\mathcal{C}^{mar}_{d}$ is the moduli space of marked cubic fourfolds of discriminant $d$ and $\mathcal{M}_{d}$ is the moduli space of polarized K3 surfaces of degree $d$. Such a discriminant $d$ satisfies the numerical condition:
\begin{align}
    (\ast\ast) ~d\textnormal{~divides}~2n^{2}+2n+2\textnormal{~for some }n\in\mathbb{Z}. \notag 
\end{align}On the other hand, Addington \cite[Theorem 2]{add0} characterized the discriminants $d$ for which the Fano variety is birational to $S^{[2]}$ for some singular K3 surface $S$:
\begin{align}
    (\ast\ast\ast)~d\textnormal{ is of the form } \frac{2n^{2}+2n+2}{a^{2}}\textnormal{ for some }n, a\in \mathbb{Z}. \notag 
\end{align}
Clearly, $(\ast\ast\ast)$ implies $(\ast\ast)$, but $d=74$ does not satisfy the converse (Hassett \cite[\S 6, 6.1]{has}). By Rizov's CM density  \cite{Rizov10}, singular K3 surfaces are dense in $\mathcal{M}_{d}$ for all $d\geq 1$. Therefore, via the open immersion $\mathcal{C}^{mar}_{74}\hookrightarrow \mathcal{M}_{74}$, one obtains infinitely many rank-21 cubic fourfolds which do not arise from the birational-K3 construction.  
\\\\\indent   Sch\"utt \cite{schutt} asked  the following question:
\\\\\textbf{Question.} Which weight 3 CM newforms with rational Fourier coefficients  have geometric realizations in a smooth projective variety $X$ over $\mathbb{Q}$ with $h^{2, 0}(X)={\rm dim}H^{0}(X, \Omega^{2}_{X})=1$ ?
\\\\\indent Elkies and Sch\"utt \cite[Theorem 1]{es} proved that if we suppose extended Riemann Hypothesis (ERH) for odd real Dirichlet characters, then every
newform of weight 3 with rational coefficients is associated to a singular K3 surface over $\mathbb{Q}$. 
\\\indent In Theorem \ref{modular}, the CM newform $f$  has rational coefficients by Theorem \ref{hecke}. For the cubic fourfold $X$ appearing in Theorem \ref{modular}, the Fano variety $F(X)$ is an irreducible holomorphic symplectic variety by Beauvill-Donagi \cite{bedo},
and $F(X_{0})$ is a model of $F(X)$ over $\mathbb{Q}$. According to \cite[Proposition 2.2]{hul}, the Galois representations on cohomology satisfy 
\begin{align}
    H^{2}_{et}(F(X_{0})_{\overline{\mathbb{Q}}}, \mathbb{Q}_{l}(1))\cong H^{4}_{et}(X_{0, \overline{\mathbb{Q}}}, \mathbb{Q}_{l}(2)), \notag 
\end{align}
as ${\rm Gal}(\overline{\mathbb{Q}}/\mathbb{Q})$-modules. Hence Theorem \ref{modular} gives another geometric realization of weight-3 CM newforms with rational coefficients. 
\\\\\indent $\textbf{Acknowledgements}$  
\\\indent The author expresses  his deepest gratitude to his supervisor,  Sho Tanimoto, for suggesting the study of cubic fourfolds, for his invaluable technical advice, and for his constant encouragement and support throughout the author's graduate studies. His support has been essential in enabling the author to lead a fulfilling and productive research life.
\\\indent The author would like to express his sincere gratitude to his supervisor, Noriko Yui at Queen's University Canada, for her constant guidance and support. Her help has been invaluable not only in the technical aspects of this work, but also in many practical matters related to living in Canada.  
\\\indent He is  sincerely grateful to Brendan Hassett, Fumiya Okamura,  Ichiro Shimada, and Matthias Sch\"utt for many stimulating discussions and valuable suggestions. 
\\\indent The author was partially supported by JST FOREST program Grant number JPMJFR212Z, and partially supported by NSERC through Yui's individual Discovery Grant. 
\section{Preliminaries}\label{pre}
\subsection{Hodge structures of K3 type}
Let $\mathbb{S}$ denote the Deligne torus over $\mathbb{R}$, that is, the $\mathbb{R}$-algebraic group ${\rm Res}_{\mathbb{C}/\mathbb{R}}(\mathbb{G}_{m, \mathbb{C}})$, so that $\mathbb{S}(\mathbb{R})=\mathbb{C}^{\times}$ and $\mathbb{S}(\mathbb{C})=\mathbb{C}^{\times}\times \mathbb{C}^{\times}$. Let $V$ be a free $\mathbb{Z}$-module of finite rank (or equivalently, a finite dimensional $\mathbb{Q}$-vector space). A $\mathbb{Z}\textendash Hodge ~structure$ (resp. $\mathit{\mathbb{Q}\textendash Hodge ~structure}$)  of weight $n\in \mathbb{Z}$ on $V$ is an algebraic $\mathbb{R}$-representation $\mathbb{S}\to {\rm GL}(V_{\mathbb{R}})$
 such that the restriction to $\mathbb{G}_{m, \mathbb{R}}\subset \mathbb{S}$ acts via  $t \mapsto t^{-n}$. We often abbreviate  “$\mathbb{Z}$-Hodge structure” and “$\mathbb{Q}$-Hodge structure” as $\mathbb{Z}$-HS and $\mathbb{Q}$-HS, respectively.
 \\\indent If $h : \mathbb{S}\to {\rm GL}(V_{\mathbb{R}})$ defines a $\mathbb{Z}$-HS (or $\mathbb{Q}$-HS) of weight $n$, then we obtain the canonical decomposition 
 \begin{align}\label{dec}
     V_{\mathbb{C}}=\oplus_{p+q=n}V^{p, q}, 
 \end{align}
 where each $V^{p, q}\subset V_{\mathbb{C}}$ is a complex subspace satisfying $h_{\mathbb{R}}(z)v=z^{-p}\overline{z}^{-q}v$ for $v\in V^{p, q}$. The $\mathit{type}$ of a $\mathbb{Z}$-HS (resp.  $\mathbb{Q}$-HS) $h$ is the set of all pairs $(p, q)\in\mathbb{Z}^{2}$ such that $V^{p, q}\neq 0$. We say that  $h$ is of type $\{(p_{i}, q_{i})\}_{1\leq i\leq m}$ if the finite set $\{(p_{i}, q_{i})\}_{1\leq i\leq m}$ contains the type of $h$.
\begin{exmp}\label{00}
    Let $h : \mathbb{S}\to {\rm GL}(V_{\mathbb{R}})$ be a $\mathbb{Z}$-HS (resp. $\mathbb{Q}$-HS) of type $\{(-1, 1), (0, 0), (1, -1)\}$. Then, 
    \begin{align}
        V_{\mathbb{C}}=V^{1, -1}\oplus V^{0, 0}\oplus V^{-1, 1}.\notag 
    \end{align}
    In this decomposition,  $h_{\mathbb{R}}(z)$ acts on $V^{1, -1}$ by $z^{-1}\overline{z}$, acts trivially on $V^{0, 0}$, and on $V^{-1. 1}$ by $z\overline{z}^{-1}$. 
\end{exmp}
\begin{dfn}(Hodge structures of K3 type)
    A $\mathbb{Z}$-HS or $\mathbb{Q}$-HS $h$ of type $\{(-1, 1), (0, 0), (1, -1)\}$ is said to be \textbf{of K3 type} if ${\rm dim}_{\mathbb{C}}(V^{1, -1})=1$.
\end{dfn}

    Let $h : \mathbb{S}\to {\rm GL}(V_{\mathbb{R}})$ be a $\mathbb{Z}$-HS (resp. $\mathbb{Q}$-HS) of type $\{(-1, 1), (0, 0), (1, -1)\}$, and let $\psi$ be a polarization of $h$. Since the weight of $h$ is 0, we obtain a factorization:
    \begin{align}
    \mathbb{S}\to {\rm SO}(V, \psi)_{\mathbb{R}}\to {\rm GL}(V_{\mathbb{R}}). \notag 
    \end{align}
   When discussing a  polarized $\mathbb{Z}$-HS (resp. $\mathbb{Q}$-HS) $(h, \psi)$ of type $\{(-1, 1), (0, 0), (1, -1)\}$,  we will freely identify it with the real representation $ h : \mathbb{S}\to {\rm SO}(V, \psi)_{\mathbb{R}}$ instead of $h : \mathbb{S}\to {\rm GL}(V_{\mathbb{R}})$.

\subsection{Cubic fourfolds}\label{prec4}
\begin{dfn}
   Let $k$ be an arbitrary field. A cubic fourfold over $k$ is a smooth hypersurface of degree $3$ in $\mathbb{P}^{5}_{k}$.
\end{dfn}

Let $X$ be a  cubic fourfold over $\mathbb{C}$. Its Hodge diamond (see \cite[Chapter 1, 1.4]{Huy2})  is given by
     $$
    \begin{tikzcd}[column sep=5pt,row sep=1pt]
&&&&&1&&&&& &\quad&b_0=1\phantom{2.}\\
&&&&0&&0&&&& &\quad&b_1=0\phantom{2.}\\
&&0&&&1&&&0&& &\quad&b_2=1\phantom{2.}\\
&0&&&0&&0&&&0& &\quad&b_3=0\phantom{2.}\\
0&&1&&&21&&&1&&0 &\quad&b_4=23.\\
    \end{tikzcd}
     $$
    which shows that  $H^{4}(X, \mathbb{Z}(2))$ carries a $\mathbb{Z}$-HS of  K3 type.
     \\\\\indent  By the  Bloch–Srinivas principle \cite{block}, the cycle map 
     \begin{align}
         {\rm CH}^{2}(X)\to H^{4}(X, \mathbb{Z}(2)) \notag
     \end{align}
     is injective; denote its image by $A(X)$. Since $A(X)\subset H^{0, 0}\cap H^{4}(X, \mathbb{Z}(2))$, one has  $1\leq {\rm rk}A(X)\leq 21$. Moreover, the Hodge conjecture holds for  cubic fourfolds (\cite{zuc}), so that
     $A(X)=H^{0, 0}(X)\cap H^{4}(X, \mathbb{\mathbb{Z}}(2)).$ 
    \\\indent Let $X\subset\mathbb{P}^{5}_{\mathbb{C}}$ be a  cubic fourfold, and set $h:=c_{1}(\mathcal{O}_{\mathbb{P}^{5}}(1)|_{X})\in H^{2}(X, \mathbb{Z}(1))$.  Consider the intersection pairing 
     \begin{align}
       -(-, -) :   H^{4}(X, \mathbb{Z}(2))\times H^{4}(X, \mathbb{Z}(2))\to H^{8}(X, \mathbb{Z}(4))\xrightarrow{\sim} \mathbb{Z}.\notag
     \end{align}by \cite[Chapter 1, 1.1, Remark 1.4]{Huy2},  the $\mathbb{Z}$-module $H^{4}(X, \mathbb{Z}(2))$ is torsion-free. Furthermore, the lattice $(H^{4}(X, \mathbb{Z}(2)), -(-, -))$ is unimodular  of   signature $(n_{+}, n_{-})=(21, 2)$ (\cite[Chapter 1, 1.3 and 1.5]{Huy2}). Since $-(h^{2}, h^{2})=\int_{X}h^{4}=3$, the lattice $(H^{4}(X, \mathbb{Z}(2)), (-,-))$ is an odd unimodular lattice of  signature $(2, 21)$. By the classification of indefinite unimodular lattices (\cite[Chapter 14, Corollary 1.3 (ii)]{Huy}), 
     \begin{align}
         (H^{4}(X, \mathbb{Z}(2)), (-, -))\cong \langle 1\rangle^{\oplus 2}\oplus \langle -1\rangle^{\oplus 21}\cong E_{8}(-1)^{\oplus 2}\oplus U^{\oplus 2}\oplus \langle-1\rangle^{\oplus 3}. \notag  
     \end{align}
     From \cite[Chapter 14, Example 1.4]{Huy}, the orthogonal complement of $h^{2}$ is 
     \begin{align}
         \langle h^{2}\rangle^{\perp}\cong E_{8}(-1)^{\oplus 2}\oplus U^{\oplus 2} \oplus A_{2}(-1). \notag 
     \end{align}
     \begin{dfn} \label{notlat}Let $L_{0}:=\langle 1\rangle^{\oplus2}\oplus \langle-1\rangle^{\oplus 21} $, and choose a vector $v\in L_{0}$ with $(v, v)=-3$ such that its orthogonal complement in $L_{0}$ is $L:=\langle v\rangle^{\perp}\cong E_{8}(-1)^{\oplus 2}\oplus U^{\oplus 2} \oplus A_{2}(-1)$.  Set $V_{0}=L_{0}\otimes \mathbb{Q}$ and $V=L\otimes \mathbb{Q}$.
     \end{dfn}
     The $\mathbb{Z}$-HS on the primitive cohomology 
     \begin{align}
         PH^{4}(X, \mathbb{Z})(2)=\langle h^{2}\rangle^{\perp}\notag 
         \end{align}
        together with the bilinear form $(-, -)$, defines a polarized $\mathbb{Z}$-HS of K3 type, which we denote by $h_{X}$. 
        \\\indent The $\mathit{transcendental ~part}$ is defined by
        \begin{align}T(X):=A(X)^{\perp},\notag
        \end{align}
         the orthogonal complement of $A(X)$ in $(H^{4}(X, \mathbb{Z}(2)), (-, -))$.
     \begin{exmp}\label{exac4}Suppose that the base field is $\mathbb{C}$
     \\(1) (Fermat cubic fourfold)
The Fermat cubic fourfold is defined by
    \begin{align}
\label{Fermat}        x_{0}^{3}+x_{1}^{3}+x_{2}^{3}+x_{3}^{3}+x_{4}^{3}+x_{5}^{3}=0. 
    \end{align}
    Shioda \cite{fermat} showed that ${\rm rk}A(X)=21$. 
    \\\indent Fermat cubic fourfold admits automorphisms of  orders $2, 3,$ and $5$:
    \begin{align}
        (x_{0}:x_{1}:\cdots x_{5})&\mapsto (x_{1}:x_{0}:\cdots x_{5})\notag 
        \\ (x_{0}:\cdots: x_{5})&\mapsto (\zeta_{3}x_{0}:\cdots : x_{5})\notag 
        \\ (x_{0} : \cdots : x_{5})&\mapsto (x_{0} : x_{2}:\cdots : x_{5}: x_{1}),\notag 
    \end{align}
    where $\zeta_{3}$ is a primitive cube root of unity.
    \\(2) (Klein cubic fourfold)
The Klein cubic fourfold is a cubic fourfold is defined by
\begin{align}\label{klein}
    x_{0}^{2}x_{1}+x^{2}_{1}x_{2}+\cdots+ x^{2}_{5}x_{0}=0. 
\end{align}
González-Aguilera and Liendo \cite{vic} showed that it admits an automorphim $\varphi$ of order 7 (\cite[Remark 1.11]{vic}). After a linear change of coordinates diagonalizing $\varphi$, the equation (\ref{klein}) becomes 
    \begin{align}
        x^2_0x^4 + x^2_1x_2 + x_0x^2_2 + x^2_3x^5 + x_3x^2_4 + x_1x^2_5 + ax_0x_1x_3 + bx_2x_4x_5=0
    \end{align}
    for some $a, b\in\mathbb{C}$. Setting $\sigma:=(1, 2, 3, 4, 5, 6)$,  the automorphism is 
    \begin{align}
        \varphi^{1}_{7} : (x_{0}:\cdots :x_{5})\mapsto (\zeta^{\sigma_{0}}_{7}x_{0} : \cdots : \zeta_{7}^{\sigma_{5}}x_{5}),\notag 
    \end{align}
    where $\zeta_{7}$ is a primitive 7th root of unity (\cite[Theorem 2.8]{vic}). Furthermore, Billi, Grossi, and Marquand \cite[Theorem 3.3, Table 1]{bi} computed ${\rm rk}A(X)=19$, $T(X)\cong U(7)\oplus \begin{pmatrix}
        -2 & 1\\ 1& 10
    \end{pmatrix}$.
    \\ (3) (González-Aguilera and Liendo \cite[Theorem 2.8]{vic})
    Consider the cubic fourfold
    \begin{align} x^3_0 + x^2_1x_5 + x^2_2x_4 + x_2x^2_3 + x_1x^2_4 + x_3x^2_5=0. \notag 
    \end{align}
    Setting $\sigma:=(0, 1, 3, 4, 5, 9)$, and letting $\zeta_{11}$ be a primitive 11th root of unity, define 
    \begin{align}
        \varphi^{1}_{11} : (x_{0} :  \cdots :x_{5})\mapsto (\zeta_{11}^{\sigma_{0}}x_{0} : \cdots : \zeta_{11}^{\sigma_{5}}x_{5}). \notag 
    \end{align}
    Then $\varphi^{1}_{11}$ is a symplectic automorphism of the order 11. Billi, Grossi, and Marquand  \cite[Theorem 3.3, Table 1]{bi} proved that ${\rm rk}A(X)=21$, and that  the intersection matrix of the transcendental part is  $\begin{pmatrix}
        -22& 11\\ 11& -22
    \end{pmatrix}$. 
\end{exmp}
\begin{rmk}
     In general,  a cubic fourfold over an algebraically closed field admits  an automorphism of  prime order $p$ if and only if $p\in\{2, 3, 5, 7, 11\}$ (\cite[Theorem 2.6]{vic}).
\end{rmk}

  The $moduli ~stack ~\mathcal{C} ~ of ~cubic~ fourfolds$ is defined as follows.
     \begin{itemize}\item{ Objects:
     \\\indent Pairs $(\pi : X\to S, \lambda)$, where $\pi : X\to S $ is a smooth proper morphism of schemes and  $\lambda\in {\rm Pic}_{X/S}(S)$ is a polarization such that every geometric fiber $X_{\overline{s}}$ is isomorphic to a cubic fourfold $\subset \mathbb{P}^{5}_{k(\overline{s})}$, and the polarization  $\lambda_{\overline{s}}\in{\rm Pic}_{X_{\overline{s}}/k(\overline{s})}(k(\overline{s}))$ is the restrictoin of $\mathcal{O}_{\mathbb{P}^{5}_{k(\overline{s})}}(1)$.}
      \item{Morphism: 
      \\\indent For two objects $\mathcal{X}_{1} =(\pi_{1} : X_{1}\to S_{1}, \lambda_{1})$ and $\mathcal{X}_{2}=(\pi_{2} : X_{2}\to S_{2}, \lambda_{2})$, a morphism is a pair $(f_{S}, f)$, where $f_{S} : S_{1}\to S_{2}$ is a morhism of schemes and $f : X_{1}\to X_{2}\times_{S_{2}, f_{S}}S_{1}$ is an isomorphism over $S_{1}$ satisfying $f^{\ast}\lambda_{2}=\lambda_{1}$.}
      \end{itemize}
 Via the correspondence $(\pi : X\to S, \lambda)\mapsto S$, the category $\mathcal{C}$ acquires the structure of a category over the category of schemes $(Sch)$. By Benoist's thesis \cite[Proposition 2.3.1]{be}, if $H$ denotes the open subscheme of the Hilbert scheme of $\mathbb{P}^{5}_{\mathbb{Z}}$ parameterizing smooth cubic fourfolds,  then 
    \begin{align}
        \mathcal{C}\cong [{\rm PGL}_{6}\backslash H],\notag
    \end{align}
    which is a smooth, separated Deligne-Mumford stack of finite type over $\mathbb{Z}$.
    
   \subsection{The Shimura variety associated with the special orthogonal group ${\rm SO}(2, 20)$\label{sh}}
      Let $(V, (-, -))$ be the 22-dimensional quadratic space defined in Definition \ref{notlat}, and set $G\coloneq{\rm SO}(V, (-, -))$. Define $X_{L}$ to be the collection of all $\mathbb{Q}$-HSs $h : \mathbb{S}\to G_{\mathbb{R}}={\rm SO}(V_{\mathbb{R}})$ such that $\pm(-, -)$ is a polarization and the Hodge numbers satisfy $h^{-1, 1}=h^{1, -1}=1$, $h^{0, 0}=20$.  Then the pair $(G, X_{L})$ forms a $Shimura~ datum$.
 \\\indent For each $h\in X_{L}$, let $F_{h}$ denote the associated Hodge filtration. The map $h\mapsto F^{1}_{h}$ identifies $X_{L}$ with the set 
      \begin{align}
        \{\omega\in \mathbb{P}(V\otimes \mathbb{C})| ~(\omega, \omega)=0~ {\rm and}~(\omega, \overline{\omega})>0 \notag 
\},      \end{align}
which naturally endows $X_{L}$ with a  complex structure. 
\\\indent For a compact open subgroup $K\subset G(\mathbb{A}_{f})$, define the double coset space
\begin{align}
    {\rm Sh}_{K}(L)(\mathbb{C})=G(\mathbb{Q})\backslash X_{L}\times (G(\mathbb{A}_{f})/K). \notag 
\end{align}
\indent Let $G^{{\rm ad}} : = G/Z$ be the quotient by its center $Z$, and let  $G^{{\rm ad}}(\mathbb{R})^{+}$ denote the connected component of the identity in $G^{{\rm ad}}(\mathbb{R})$. Let $G(\mathbb{R})_{+}$ be the image of $G(\mathbb{R})$ in $G^{{\rm ad}}(\mathbb{R})$ lying inside $G^{{\rm ad}}(\mathbb{R})^{+}$, and set $G(\mathbb{Q})_{+}=G(\mathbb{Q})\cap G(\mathbb{R})_{+}$.
\\\indent Choose a connected component $X^{+}_{L}\subset X_{L}$, and fix a set $\mathcal{C}$ of representatives for the double coset space $G(\mathbb{Q})_{+}\backslash G(\mathbb{A}_{f}) / K$. Then the map 
\begin{align}
    \Gamma_{g}\backslash X^{+}_{L}\to {\rm Sh}_{K}(L)(\mathbb{C});~ [x]\mapsto [(x, g)] \notag 
\end{align} induces the canonical homeomorphism 
\begin{align}\label{shdec}
    \coprod_{g\in \mathcal{C}}\Gamma_{g}\backslash X^{+}_{L}\to {\rm Sh}_{K}(L)(\mathbb{C}),
\end{align}
where $\Gamma_{g}:=gKg^{-1}\cap G(\mathbb{Q})_{+}$ is a subgroup of $G(\mathbb{Q})_{+}$ (cf. \cite[Lemma 5.13]{milne}). Since each quotient $\Gamma_{g}\backslash X^{+}$ is a quasi-projective variety, the space ${\rm Sh}_{K}(L)(\mathbb{C})$ acquires the structure of a quasi-projective scheme over $\mathbb{C}$. If $K$ is sufficiently small, the group $\Gamma_{g}$ is torsion-free for all $g\in \mathcal{C}$ (\cite[Proposition 3.5]{milne}). Hence the algebraic structure on ${\rm Sh}_{K}(L)(\mathbb{C})$ is uniquely determined (\cite[Theorem 3.10]{bor}), and  we denote this unique $\mathbb{C}$-scheme by ${\rm Sh}_{K}(L)_{\mathbb{C}}$.
\subsection{Canonical models of Shimura varieties}\label{rx}Let $(G, X)$ be a Shimura datum, and consider the set
\begin{align}
    \mathscr{C}(\mathbb{C}):=G(\mathbb{C})\backslash{\rm Hom}(\mathbb{G}_{m, \mathbb{C}}, G_{\mathbb{C}}).\notag
\end{align}
Since both $\mathbb{G}_{m, \mathbb{C}}$ and $G_{\mathbb{C}}$ are defined over $\mathbb{Q}$, the  group ${\rm Aut}(\mathbb{C})$ acts naturally on ${\rm Hom}(\mathbb{G}_{m, \mathbb{C}}, G_{\mathbb{C}})$, and hence  induces an action on $\mathscr{C}(\mathbb{C})$.
Let $\mu : \mathbb{G}_{m, \mathbb{C}}\to \mathbb{S}_{\mathbb{C}}$ be the morphism given by
\begin{align}
    \mathbb{G}_{m, \mathbb{C}}\to \mathbb{G}_{m, \mathbb{C}}\times \mathbb{G}_{m, \mathbb{C}} ;~z\mapsto(z, 1).\notag 
\end{align} For each $h\in X$, define
\begin{align}
    \mu_{h}=h\circ \mu : \mathbb{G}_{m, \mathbb{C}} \to G_{\mathbb{C}}. \notag 
\end{align}If $h_{0}\in X$ is another point, then $\mu_h$ and $\mu_{h_{0}}$ are conjugate under $G(\mathbb{C})$. The corresponding conjugacy class in $\mathscr{C}(\mathbb{C})$ is denoted by $c(X)$.
\begin{dfn}(Reflex field)
    The \textbf{reflex field} $E(G, X)$ is defined as the subfield of $\mathbb{C}$ such that the stabilizer  of $c(X)$ under the action of ${\rm Aut}(\mathbb{C})$ on $\mathcal{C}(\mathbb{C})$ is precisely ${\rm Aut}(\mathbb{C}/E(G, X))$.
\end{dfn}
\begin{exmp}\label{sp}
    (1) Let $T$ be a $\mathbb{Q}$-torus and $h : \mathbb{S}\to T_{\mathbb{R}}$ an $\mathbb{R}$-algebraic representation. Then the pair $(T, \{h\})$ forms a Shimura datum, and the associated cocharacter $\mu_{h}$ is defined over $E(T, \{h\})$.
    \\(2) For the Shimura datum $(G, X_{L})$ introduced in $\S$\ref{sh},  Andr\'e \cite{and} proved that the reflex field of $(G, X_{L})$ is $\mathbb{Q}$.
\end{exmp}
\begin{dfn}(Special points) Let $(G, X)$ be a Shimura datum.
        A point $h\in X$ is called \textbf{special} if there exists a $\mathbb{Q}$-torus $T\subset G$ such that the morphism $h : \mathbb{S}\to G_{\mathbb{R}}$ factors through an $\mathbb{R}$-morphism $\mathbb{S}\to T_{\mathbb{R}}$. In this case, we call $(T, h)$ a \textbf{special pair} in $(G, X)$. The reflex field $E(T, h)$ of such a pair is independent of the choice of $T$; we therefore write simply $E(h)$.
       
    \end{dfn}
  For any special pair $(T, h)$  of a Shimura datum $(G, X)$, the cocharacter $\mu_{h}$ is defined over $E(h)$ (see Example \ref{sp}). Composing $\mu_{h}: \mathbb{G}_{m, E(h)}\to T_{E(h)}$ with the norm map ${\rm Res}_{E(h)/\mathbb{Q}}(T_{E(h)})\to T$, we have a morphism of algebraic groups
    \begin{align}
        r(T, \mu_h) : {\rm Res}_{E(h)/\mathbb{Q}}(\mathbb{G}_{m, E(h)})\to T. \notag 
    \end{align}
     Passing to adelic points yields a continuous homomorphism  
        \begin{align}
        \mathbb{A}_{E(h)}^{\times}= {\rm Res}_{E(h)/\mathbb{Q}}(\mathbb{G}_{m, E(h)})(\mathbb{A}_{\mathbb{Q}})\xrightarrow{r(T, \mu_h)} T(\mathbb{A}_{\mathbb{Q}})\xrightarrow{{\rm proj}}T(\mathbb{A}_{f}).\notag
    \end{align}
\begin{dfn}(canonical models)
    Let $(G, X_{L})$ be the Shimura datum introduced in $\S$\ref{sh}, and let $K\subset G(\mathbb{A}_{f})$ be a compact open subgroup. A \textbf{canonical model} of ${\rm Sh}_{K}(L)_{\mathbb{C}}$ is an $E(G, X_{L})$-scheme $S$ equipped with an isomorphism 
   \begin{align}\label{canmod}
        S_{\mathbb{C}}\cong {\rm Sh}_{K}(L)_{\mathbb{C}}
    \end{align}
    such that the following condition holds: 
    \\ \indent For every special pair $(T, h)$ in $(G, X_{L})$ and every $a\in G(\mathbb{A}_{f})$,  the point $[h, a]$ is $E(h)^{ab}$-rational under the isomorphism (\ref{canmod}), and for all $\sigma\in{\rm Gal}(E(h)^{ab}/E(h))$ and $s\in \mathbb{A}^{\times}_{E(h)}$ satisfying ${\rm art}_{E(h)}(s)=\sigma$, one has 
    \begin{align}
        \sigma[h, a]=[h, r_{h}(s)a],
    \end{align},
    where $r_{h}=r(T, \mu_{h})$.
    \end{dfn}
A canonical model of ${\rm Sh}_{K}(G, X)_{\mathbb{C}}$ always exists and is unique up to a unique isomorphism (\cite[Section 14 and Theorem 13.7]{milne}).
\subsection{The endomorphism ring of $\mathbb{Q}$-Hodge structure of K3 type}
 Let $T$ be an $irreducible$ $\mathbb{Q}$-HS of K3 type, and let $K(T):={\rm End}_{{\rm Hdg}}(T)$ be its ring of Hodge endomorphisms. This  is a finite-dimensional $\mathbb{Q}$-algebra. Let  $\omega$ be a generator of $T^{1, -1}$. Then we obtain a  homomorphism of $\mathbb{Q}$-algebras  
    \begin{align}\label{dif}
        \varepsilon : K(T)\to \mathbb{C}\cong \mathbb{C}\omega=T^{1, -1} , ~a\mapsto \varepsilon(a),
    \end{align}
    defined by the rule $a(\omega)=\varepsilon(a)\omega$. Note that the morphism $\varepsilon$ is independ of the choice of $\omega$. 
    \begin{prop}(\cite[Chapter 3, Corollary 3.6]{Huy})
       The homomorphism $\varepsilon$ in (\ref{dif}) is injective, and $K(T)$ is a number field. 
    \end{prop}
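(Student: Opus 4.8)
The plan is to deduce the statement from Schur's lemma together with the elementary fact that $\varepsilon$ is a unital homomorphism of $\mathbb{Q}$-algebras. First I would verify multiplicativity: for $a,b\in K(T)$, extending scalars to $\mathbb{C}$ and using that a Hodge endomorphism acts $\mathbb{C}$-linearly on $T_{\mathbb{C}}$, one computes $(ab)(\omega)=a\bigl(b(\omega)\bigr)=a\bigl(\varepsilon(b)\omega\bigr)=\varepsilon(b)\,a(\omega)=\varepsilon(a)\varepsilon(b)\,\omega$, while additivity and $\varepsilon(1)=1$ are immediate, and the independence of $\varepsilon$ from the choice of $\omega$ holds because $\dim_{\mathbb{C}}T^{1,-1}=1$. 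Hence $\varepsilon\colon K(T)\to\mathbb{C}$ is a homomorphism of $\mathbb{Q}$-algebras, and in particular $\ker\varepsilon$ is a two-sided ideal of $K(T)$.

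Next I would invoke irreducibility. Since $T$ is a simple object in the abelian category of $\mathbb{Q}$-Hodge structures of weight $0$, Schur's lemma shows that every nonzero element of $K(T)={\rm End}_{{\rm Hdg}}(T)$ is an isomorphism; thus $K(T)$ is a finite-dimensional division $\mathbb{Q}$-algebra. Therefore a nonzero $a\in K(T)$ is invertible, so $a_{\mathbb{C}}$ is invertible on $T_{\mathbb{C}}$ and in particular $a(\omega)\neq 0$, i.e.\ $\varepsilon(a)\neq 0$; equivalently, the two-sided ideal $\ker\varepsilon$ is proper (as $\varepsilon(1)=1\neq 0$) hence zero. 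In either formulation $\varepsilon$ is injective.

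Finally, $\varepsilon$ realizes $K(T)$ as a subring of $\mathbb{C}$, so $K(T)$ is commutative; a commutative division ring is a field, and a field that is finite-dimensional over $\mathbb{Q}$ is a number field, which completes the argument. I expect the only point requiring genuine care is the application of Schur's lemma: one must know that the kernel and image of a morphism of $\mathbb{Q}$-Hodge structures are again $\mathbb{Q}$-Hodge structures, so that simplicity of $T$ forces every nonzero endomorphism to be invertible. Everything else is formal, and polarizability of $T$ — though available in our setting — is not needed here.
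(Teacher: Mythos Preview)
Your argument is correct and is essentially the standard proof from the cited reference \cite[Chapter~3, Corollary~3.6]{Huy}: Schur's lemma gives that $K(T)$ is a division algebra, the unital ring map $\varepsilon$ then has trivial kernel, and the resulting embedding into $\mathbb{C}$ forces $K(T)$ to be a number field. The paper itself gives no proof beyond this citation, so there is nothing further to compare.
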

    If the $\mathbb{Q}$-HS $T$ is $polarizable$, then the following stronger result holds.
    \begin{thm}(Zarhin \cite[Theorem 1.5.1]{Zarhin}) 
        Let  $T$ be an irreducible, polarizable $\mathbb{Q}$-HS of K3 type. Then the field $K(T)$ is either totally real or a CM field. 
    \end{thm}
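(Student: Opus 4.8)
The plan is to exhibit $K:=K(T)$ --- which the preceding proposition already shows to be a number field --- as a number field carrying a \emph{positive involution}, and then to invoke the classification of positive involutions on number fields (Albert), exactly as in the Rosati-involution analysis for abelian varieties. First I would fix a polarization $\psi$ of the Hodge structure $h$ on $T$ (replacing $\psi$ by $-\psi$ if necessary), so that $\psi$ is a symmetric $\mathbb{Q}$-bilinear form invariant under $h\colon\mathbb{S}\to{\rm SO}(T,\psi)_{\mathbb{R}}$. Then the Weil operator $C:=h(i)$ lies in ${\rm SO}(T,\psi)(\mathbb{R})$, satisfies $C^{2}=h(-1)={\rm id}$ (the weight of $h$ being $0$), and $(x,y)\mapsto\psi(x,Cy)$ is symmetric and positive definite on $T_{\mathbb{R}}$. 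Since $T$ is naturally a $K$-vector space and every element of $K={\rm End}_{{\rm Hdg}}(T)$ commutes with $h$, it commutes with $C$. I would then define the adjoint involution $a\mapsto a^{\dagger}$ on $K$ by $\psi(ax,y)=\psi(x,a^{\dagger}y)$ for all $x,y\in T$; because $K$ is commutative this is a well-defined $\mathbb{Q}$-algebra automorphism with $(a^{\dagger})^{\dagger}=a$.

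The central step is to prove that $\dagger$ is positive, i.e. ${\rm Tr}_{K/\mathbb{Q}}(aa^{\dagger})>0$ for every $a\in K\setminus\{0\}$. For this I would set $\langle x,y\rangle:=\psi(x,Cy)$, a positive definite symmetric form on $T_{\mathbb{R}}$; since each $a\in K$ commutes with $C$ and $\psi(ax,y)=\psi(x,a^{\dagger}y)$, one computes $\langle ax,y\rangle=\psi(x,a^{\dagger}Cy)=\psi(x,Ca^{\dagger}y)=\langle x,a^{\dagger}y\rangle$, so $\dagger$ is the adjoint with respect to $\langle\,,\,\rangle$. Taking a $\langle\,,\,\rangle$-orthonormal basis $(e_{i})$ of $T_{\mathbb{R}}$ gives ${\rm Tr}_{T_{\mathbb{R}}/\mathbb{R}}(aa^{\dagger})=\sum_{i}\langle aa^{\dagger}e_{i},e_{i}\rangle=\sum_{i}\langle a^{\dagger}e_{i},a^{\dagger}e_{i}\rangle$, which is $>0$ whenever $a\neq0$, since then $a^{\dagger}$ is invertible in $K$ and hence acts invertibly on $T_{\mathbb{R}}$. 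Because ${\rm Tr}_{T_{\mathbb{R}}/\mathbb{R}}(b)=(\dim_{K}T)\cdot{\rm Tr}_{K/\mathbb{Q}}(b)$ for $b\in K$, positivity of the trace form of $\dagger$ follows.

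It then remains to carry out the purely arithmetic step. Let $K_{0}=K^{\dagger}$ be the fixed field, so that either $\dagger={\rm id}$ or $[K:K_{0}]=2$. For $a\in K_{0}$ one has $aa^{\dagger}=a^{2}$ and ${\rm Tr}_{K/\mathbb{Q}}(a^{2})=[K:K_{0}]\cdot{\rm Tr}_{K_{0}/\mathbb{Q}}(a^{2})$, hence ${\rm Tr}_{K_{0}/\mathbb{Q}}(a^{2})>0$ for all $a\in K_{0}\setminus\{0\}$; extending this quadratic form continuously to $K_{0}\otimes_{\mathbb{Q}}\mathbb{R}\cong\mathbb{R}^{r_{1}}\times\mathbb{C}^{r_{2}}$ it takes the shape $\sum_{j}x_{j}^{2}+2\sum_{k}{\rm Re}(z_{k}^{2})$, which is positive definite only if $r_{2}=0$, so $K_{0}$ is totally real. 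If $\dagger={\rm id}$, then $K=K_{0}$ is totally real. If $[K:K_{0}]=2$, I would rule out a real place $v$ of $K_{0}$ splitting in $K$: otherwise $\dagger$ would act as the coordinate swap on $K\otimes_{K_{0},v}\mathbb{R}\cong\mathbb{R}\times\mathbb{R}$, and by density of $K$ in $K\otimes_{\mathbb{Q}}\mathbb{R}$ one could choose $a\in K$ with $v$-component near $(1,-1)$ and components near $0$ at all other infinite places of $K_{0}$, forcing ${\rm Tr}_{K_{0}/\mathbb{Q}}(aa^{\dagger})<0$ (note $aa^{\dagger}\in K_{0}$) and contradicting positivity. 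Hence $K$ is totally imaginary over the totally real field $K_{0}$, i.e. a CM field, and $\dagger$ is complex conjugation. As a consistency check, using $\dim_{\mathbb{C}}T^{1,-1}=1$ and a generator $\omega$ of $T^{1,-1}$ (so that $\psi(\omega,\overline{\omega})\neq0$ by the Hodge--Riemann relation) together with the fact that $a^{\dagger}$, being defined over $\mathbb{Q}$, commutes with complex conjugation on $T_{\mathbb{C}}$, the identity $\varepsilon(a)\psi(\omega,\overline{\omega})=\psi(a\omega,\overline{\omega})=\psi(\omega,a^{\dagger}\overline{\omega})=\overline{\varepsilon(a^{\dagger})}\,\psi(\omega,\overline{\omega})$ shows $\varepsilon(a^{\dagger})=\overline{\varepsilon(a)}$, so under the embedding $\varepsilon$ of (\ref{dif}) the involution $\dagger$ is induced by complex conjugation on $\mathbb{C}$.

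I expect the main obstacle to be precisely this transfer from Hodge-theoretic positivity (the Hodge--Riemann bilinear relations) to the arithmetic positivity of the trace form ${\rm Tr}_{K/\mathbb{Q}}(aa^{\dagger})$. The structural fact that unlocks it is that the Hodge endomorphisms commute with the Weil operator $C=h(i)$, which is exactly what permits the construction of the positive definite, $\dagger$-stable form $\langle x,y\rangle=\psi(x,Cy)$; once positivity is established, the dichotomy is a classical statement about positive involutions on number fields.
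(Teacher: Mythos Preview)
The paper does not give its own proof of this statement: it is quoted verbatim as Zarhin's theorem with a citation to \cite[Theorem 1.5.1]{Zarhin}, and no argument is supplied. Your proposal is a correct reconstruction of the standard proof (essentially Zarhin's), via the Rosati-type involution $a\mapsto a^{\dagger}$ on $K(T)$ induced by the polarization, its positivity coming from the Hodge--Riemann form $\psi(x,Cy)$, and the Albert classification of number fields with a positive involution. One point you use implicitly but do not spell out is that $a^{\dagger}$ is again a \emph{Hodge} endomorphism (so that $\dagger$ really is an involution on $K(T)$ rather than on a larger algebra); this follows because $h(s)\in{\rm SO}(T,\psi)(\mathbb{R})$ is $\psi$-orthogonal, hence $(h(s)ah(s)^{-1})^{\dagger}=h(s)a^{\dagger}h(s)^{-1}$, and $a$ commuting with $h(s)$ forces $a^{\dagger}$ to commute with $h(s)$ as well. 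With that remark added, your argument is complete.
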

   \begin{exmp}
       (1) Let $S$ be a K3 surface over $\mathbb{C}$. Then the transcendental part $T(S)$ is an irreducible , polarized $\mathbb{Q}$-HS of K3 type. 
       \\(2) Let $X$ be a cubic fourfold over $\mathbb{C}$. Then its transcendental part $T(X)$ is likewise an irreducible, polarized $\mathbb{Q}$-HS
 of K3 type.    \end{exmp}
 
 Let $h : \mathbb{S} \to {\rm GL}(V_{\mathbb{R}})$ be a $\mathbb{Q}$-HS. The $Mumford\textendash Tate~ group$ ${\rm MT}(h)$ is the smallest $\mathbb{Q}$-algebraic subgroup of ${\rm GL}(V)$ such that $h_{\mathbb{R}}(\mathbb{S}(\mathbb{R}))\subset {\rm MT}(h)(\mathbb{R})$. 
\begin{exmp}\label{tri}
    Let $h : \mathbb{S}\to {\rm GL}(V_{\mathbb{R}})$ be a $\mathbb{Q}$-HS of weight $0$, and  suppose that $V^{1, -1}=\{0\}$. Then ${\rm MT}(h)$ is the trivial subgroup of ${\rm GL}(V)$, since $h_{\mathbb{R}}(\mathbb{S}(\mathbb{R}))$ acts trivially.
    \\\indent In particular, for a cubic fourfold $X/\mathbb{C}$, the Mumford-Tate group ${\rm MT}(h_{X})$ coincides with that of its restriction $h_{X}|_{T(X)} : \mathbb{S}\to {\rm GL}(T(X)_{\mathbb{R}})$ 
\end{exmp}
\begin{thm}\label{zrfun}
    Let $(T, \psi)$ be an irreducible polarized $\mathbb{Q}$-HS of K3 type with endomorphism field $K=K(T)$. Then the Mumford-Tate group of $T$  is 
  the group of $K$-linear special isometries: 
  \begin{align}{\rm MT}(T)={\rm SO}_{K}(T)\subset {\rm SO}(T, \psi).\notag 
  \end{align} In particular ${\rm MT}(T)$ is commutative if and only if ${\rm dim}_{K}T=1$, in which case  $K$ is a CM field.
\end{thm}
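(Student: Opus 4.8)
The plan is to prove the equality ${\rm MT}(T)={\rm SO}_{K}(T)$ by establishing the two inclusions separately, and then to read off the commutativity criterion from the structure of ${\rm SO}_{K}(T)$ once the degenerate low-dimensional configurations have been excluded. The inclusion ${\rm MT}(T)\subseteq {\rm SO}_{K}(T)$ is formal, while the reverse inclusion is the substance and amounts to Zarhin's classification of the Hodge groups of Hodge structures of K3 type.

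For ${\rm MT}(T)\subseteq {\rm SO}_{K}(T)$: by the convention of \S2.1 the polarized Hodge structure $(T,\psi)$ is given by a homomorphism $h\colon \mathbb{S}\to {\rm SO}(T,\psi)_{\mathbb{R}}$, so ${\rm MT}(T)$, being the smallest $\mathbb{Q}$-subgroup of ${\rm GL}(T)$ whose real points contain $h(\mathbb{S}(\mathbb{R}))$, already lies in ${\rm SO}(T,\psi)$. Next, every $a\in K={\rm End}_{{\rm Hdg}}(T)$ is a Hodge class of the Hodge structure ${\rm End}(T)=T\otimes T^{\vee}$, hence is fixed by $h(\mathbb{S}(\mathbb{R}))$ and therefore by ${\rm MT}(T)$; conversely an endomorphism of $T$ commutes with $h(\mathbb{S}(\mathbb{R}))$ exactly when it commutes with ${\rm MT}(T)$, so ${\rm End}_{{\rm MT}(T)}(T)={\rm End}_{{\rm Hdg}}(T)=K$. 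Thus ${\rm MT}(T)$ acts $K$-linearly and preserves $\psi$, i.e. ${\rm MT}(T)\subseteq {\rm SO}_{K}(T)$.

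For ${\rm SO}_{K}(T)\subseteq {\rm MT}(T)$: since $(T,\psi)$ is polarizable, ${\rm MT}(T)$ is a connected reductive $\mathbb{Q}$-group (Deligne), and since $T$ is irreducible as a $\mathbb{Q}$-Hodge structure it is an irreducible ${\rm MT}(T)$-module with ${\rm End}_{{\rm MT}(T)}(T)=K$; recall moreover that $K$ is totally real or CM by Zarhin \cite[Theorem 1.5.1]{Zarhin}. Put $d=\dim_{K}T$ and extend scalars: $T\otimes_{\mathbb{Q}}\overline{\mathbb{Q}}=\bigoplus_{\tau}T_{\tau}$ over the embeddings $\tau\colon K\hookrightarrow\overline{\mathbb{Q}}$, under which ${\rm SO}_{K}(T)_{\overline{\mathbb{Q}}}$ becomes a product of copies of ${\rm SO}_{d}$ (if $K$ is totally real) or of ${\rm GL}_{d}$-type factors indexed by the conjugate pairs $\{\tau,\bar\tau\}$ (if $K$ is CM). The Hodge cocharacter $\mu_{h}$ is minuscule and concentrated in the factor containing the embedding $\varepsilon$ of \eqref{dif}, where on the standard representation it has the shape ${\rm diag}(z,1,\dots,1,z^{-1})$ (orthogonal case) or ${\rm diag}(z,1,\dots,1)$ (unitary case), reflecting $\dim T^{1,-1}=1$, and it is trivial on the remaining factors. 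Because $K$ is a field, ${\rm Gal}(\overline{\mathbb{Q}}/\mathbb{Q})$ permutes these factors transitively, so ${\rm MT}(T)_{\overline{\mathbb{Q}}}$, which contains all Galois conjugates of $h(\mathbb{S})$, surjects onto — and is governed by — the subgroup of a single factor generated by $\mu_{h}$ together with its Weil-operator conjugate. The decisive input is then the group-theoretic statement that a connected reductive subgroup of ${\rm SO}_{d}(\overline{\mathbb{Q}})$ (resp.\ of ${\rm GL}_{d}(\overline{\mathbb{Q}})$) acting irreducibly on the standard representation and containing a one-parameter subgroup conjugate to $t\mapsto{\rm diag}(t,1,\dots,1,t^{-1})$ (resp.\ $t\mapsto{\rm diag}(t,1,\dots,1)$) must equal ${\rm SO}_{d}$ (resp.\ contain ${\rm SL}_{d}$); combined with the irreducibility of $T$ over $\mathbb{Q}$, which prevents ${\rm MT}(T)$ from being a proper Galois-stable subgroup, this yields ${\rm MT}(T)={\rm SO}_{K}(T)$. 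I expect this last step — ruling out proper reductive subgroups compatible with a minuscule cocharacter of K3 type — to be the main obstacle: it is exactly where Zarhin's classification (equivalently, an analysis of the weight-multiplicity structure of the standard representation under such a cocharacter) is required.

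For the final assertion, argue from ${\rm MT}(T)={\rm SO}_{K}(T)$. If $\dim_{K}T=1$ then $K$ cannot be totally real, since then ${\rm SO}_{K}(T)$ would be trivial and hence $T=T^{0,0}$, contradicting $h^{1,-1}=1$; so $K$ is CM and ${\rm SO}_{K}(T)$ is the norm-one torus ${\rm Res}_{K_{0}/\mathbb{Q}}({\rm Res}^{1}_{K/K_{0}}\mathbb{G}_{m})$, which is commutative. Conversely, if $\dim_{K}T\geq 2$ then ${\rm SO}_{K}(T)$ is non-commutative: when $K$ is CM, the factor at $\varepsilon$ is a unitary group of a Hermitian form of rank $\geq 2$, hence non-abelian; when $K$ is totally real, one in fact has $\dim_{K}T\geq 3$, because $\dim_{K}T=2$ is impossible — the polarization forces the binary $K$-form $\phi_{K}$ to be definite at every real place (positive definite away from $\varepsilon$, negative definite at $\varepsilon$, since $\overline{T^{1,-1}}=T^{-1,1}$), hence of totally positive discriminant, so ${\rm SO}(\phi_{K})$ would split over the CM field $K':=K(\sqrt{-{\rm disc}\,\phi_{K}})\supsetneq K$, which would then embed into ${\rm End}_{{\rm Hdg}}(T)$, contradicting ${\rm End}_{{\rm Hdg}}(T)=K$ — and ${\rm SO}_{K}(T)={\rm Res}_{K/\mathbb{Q}}{\rm SO}(\phi_{K})$ with $\phi_{K}$ of rank $\geq 3$ is non-abelian. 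Therefore ${\rm MT}(T)$ is commutative if and only if $\dim_{K}T=1$, in which case $K$ is CM.
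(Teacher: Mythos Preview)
The paper does not supply a proof of this theorem: it is stated without proof immediately before Definition~2.17, and the label \texttt{zrfun} together with the surrounding context indicates that it is being quoted as Zarhin's result \cite{Zarhin} (specifically \cite[Theorems~2.2.1 and~2.3.1]{Zarhin}). There is therefore no ``paper's own proof'' to compare against.

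Your sketch is a faithful outline of Zarhin's argument and is essentially correct. The inclusion ${\rm MT}(T)\subseteq {\rm SO}_{K}(T)$ is indeed formal; the reverse inclusion is, as you say, the content, and you have correctly located it in the classification of reductive subgroups of ${\rm SO}_{d}$ (resp.\ ${\rm GL}_{d}$) containing a minuscule cocharacter of the displayed shape. Your handling of the commutativity criterion, including the exclusion of the case $K$ totally real with $\dim_{K}T=2$ via the splitting field of the binary form, also matches Zarhin's treatment (see \cite[\S2.1]{Zarhin}). One small comment: in your reverse-inclusion step you assert that ${\rm MT}(T)_{\overline{\mathbb{Q}}}$ ``surjects onto'' a single factor and is ``governed by'' it; this is the right intuition, but the precise mechanism in Zarhin's proof is that the projection of ${\rm MT}(T)_{\overline{\mathbb{Q}}}$ to each factor already contains the relevant cocharacter and its conjugates, and one then uses that a Galois-stable subgroup whose projections are full must itself be full (this is where the transitivity of the Galois action on the embeddings of $K$ and the simplicity of the factors enter). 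As you acknowledge, this is exactly the step requiring Zarhin's case analysis, so your caveat is well placed.
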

    \begin{dfn} (Cubic fourfolds over $\mathbb{C}$ of CM type)
 Let $X$ be a cubic fourfold over $\mathbb{C}$. We say that $X$ is of \textbf{CM type} if the Mumford-Tate group  of $h_{X}$ is commutative. 
 \end{dfn}
 By Theorem \ref{zrfun} and Example \ref{sp} (1),  whenever  a cubic fourfold $X$ is of CM type, the pair $({\rm MT}(h_{X}), \{h_{X}\})$ forms s a Shimura datum. 
 \\\indent Valloni described the  reflex field if such cubic fourfolds as follows.
 \begin{prop}(\cite[Proposition 2.2]{val})
    Let $X$ be a cubic fourfold of CM type. With the notation of (\ref{dif}), its reflex field is $\varepsilon(K(T(X))\subset \mathbb{C}$.
 \end{prop}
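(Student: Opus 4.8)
The plan is to compute the reflex field $E(h_X)=E\big({\rm MT}(h_X),\{h_X\}\big)$ directly from the cocharacter $\mu_{h_X}$, using the explicit description of the Mumford--Tate torus supplied by Theorem \ref{zrfun}. Write $T:=T(X)$, $K:=K(T)$, and let $\psi$ be the polarization. Since $X$ is of CM type, Example \ref{tri} gives ${\rm MT}(h_X)={\rm MT}\big(h_X|_T\big)$, and Theorem \ref{zrfun} identifies this with ${\rm SO}_K(T)$, where $\dim_K T=1$ and $K$ is a CM field.

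First I would make this torus concrete. A polarization makes the field of Hodge endomorphisms act through a positive involution, and on a CM field the unique positive involution is the intrinsic complex conjugation $a\mapsto\bar a$; hence $\psi(ax,y)=\psi(x,\bar a y)$ for $a\in K$, and a $K$-linear $\mathbb{Q}$-isometry of the one-dimensional $K$-space $(T,\psi)$ is multiplication by some $\lambda\in K^{\times}$ with $\lambda\bar\lambda=1$. Thus ${\rm MT}(h_X)\cong T^{(1)}:=\ker\!\big(N_{K/K_{0}}\colon{\rm Res}_{K/\mathbb{Q}}\mathbb{G}_m\to{\rm Res}_{K_{0}/\mathbb{Q}}\mathbb{G}_m\big)$, with $K_{0}\subset K$ the maximal totally real subfield, and $X_{*}(T^{(1)})$ is the submodule $\{\sum_{\tau}n_{\tau}[\tau] : n_{\tau}+n_{\bar\tau}=0\}$ of $X_{*}\big({\rm Res}_{K/\mathbb{Q}}\mathbb{G}_m\big)=\mathbb{Z}[\Sigma]$, where $\Sigma={\rm Hom}(K,\mathbb{C})$ and ${\rm Aut}(\mathbb{C})$ acts by $\sigma\cdot[\tau]=[\sigma\circ\tau]$.

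Next I would locate the Hodge line and the cocharacter. Since $K={\rm End}_{{\rm Hdg}}(T)$ preserves the Hodge decomposition, each $T^{p,q}$ is a sum of the $K$-eigenlines $T_{\tau}:=T\otimes_{K,\tau}\mathbb{C}$, $\tau\in\Sigma$; because $\dim_{\mathbb{C}}T^{1,-1}=1$ there is a unique $\tau_{0}\in\Sigma$ with $T_{\tau_{0}}=T^{1,-1}$, and then $T_{\bar\tau_{0}}=\overline{T^{1,-1}}=T^{-1,1}$ while $\bigoplus_{\tau\ne\tau_{0},\bar\tau_{0}}T_{\tau}=T^{0,0}$. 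As $K$ acts on the line $T_{\tau_{0}}$ through the character $\tau_{0}$, the defining relation $a\omega=\varepsilon(a)\omega$ of (\ref{dif}) for a generator $\omega$ of $T^{1,-1}$ shows $\varepsilon=\tau_{0}$, so it suffices to prove $E(h_X)=\tau_{0}(K)$. Now by Example \ref{00} the $\mathbb{C}$-linear extension of $h_X$ acts on $T^{1,-1}$, $T^{0,0}$, $T^{-1,1}$ by $(z_{1},z_{2})\mapsto z_{1}^{-1}z_{2},\ 1,\ z_{1}z_{2}^{-1}$ respectively, so $\mu_{h_X}(z)=h_{X,\mathbb{C}}(z,1)$ acts by $z^{-1}$ on $T_{\tau_{0}}$, by $z$ on $T_{\bar\tau_{0}}$, and trivially on the remaining $T_{\tau}$; that is, $\mu_{h_X}=[\bar\tau_{0}]-[\tau_{0}]\in\mathbb{Z}[\Sigma]$.

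Finally I would read off the reflex field. Since ${\rm MT}(h_X)$ is a torus, conjugation is trivial, so $c(X)$ is the single cocharacter $\mu_{h_X}$ and $E(h_X)$ is exactly its field of definition. The intrinsic conjugation of $K$ is compatible with the ${\rm Aut}(\mathbb{C})$-action on $\Sigma$ (as the image of every embedding of a CM field is stable under complex conjugation), so $\sigma\cdot\mu_{h_X}=[\overline{\sigma\tau_{0}}]-[\sigma\tau_{0}]$; comparing the two distinct basis vectors appearing, this equals $\mu_{h_X}$ if and only if $\sigma\tau_{0}=\tau_{0}$, i.e.\ $\sigma$ fixes $\tau_{0}(K)$ pointwise. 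Hence ${\rm Stab}(c(X))={\rm Aut}(\mathbb{C}/\tau_{0}(K))$, whose fixed field in $\mathbb{C}$ is the number field $\tau_{0}(K)=\varepsilon(K(T(X)))$, which is therefore the reflex field. The point that requires genuine care — and is essentially the whole content — is making $T^{(1)}$ and $\mu_{h_X}$ explicit on the level of $\Sigma={\rm Hom}(K,\mathbb{C})$: pinning down the Hermitian shape of $\psi$, checking that intrinsic CM conjugation is compatible with the Galois action on cocharacters, and tracking the weight-$0$ signs so that $\mu_{h_X}$ comes out as $[\bar\tau_{0}]-[\tau_{0}]$; after that the identification of the stabilizer, hence of the reflex field, is immediate.
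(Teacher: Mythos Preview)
The paper does not prove this proposition; it simply cites it as \cite[Proposition 2.2]{val}. So there is no ``paper's own proof'' to compare against.

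That said, your argument is correct and is essentially the standard computation of the reflex field of a CM torus specialized to the K3-type situation. The identifications ${\rm MT}(h_X)={\rm SO}_K(T)\cong T^{(1)}$, the description of $\mu_{h_X}$ as $[\bar\tau_0]-[\tau_0]$ in $X_*\!\big({\rm Res}_{K/\mathbb{Q}}\mathbb{G}_m\big)\cong\mathbb{Z}[\Sigma]$, and the stabilizer computation are all sound. The one step worth stating a touch more carefully is the compatibility $\overline{\sigma\tau_0}=\sigma\bar\tau_0$: this holds because for a CM field the intrinsic involution $\iota$ is characterized by $\bar\tau=\tau\circ\iota$ for every $\tau\in\Sigma$, so $\overline{\sigma\tau_0}=(\sigma\tau_0)\circ\iota=\sigma(\tau_0\circ\iota)=\sigma\bar\tau_0$; your phrasing via ``the image of every embedding of a CM field is stable under complex conjugation'' is correct but slightly oblique, since an arbitrary $\sigma\in{\rm Aut}(\mathbb{C})$ need not commute with complex conjugation on $\mathbb{C}$.
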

  In particular, for any isometry $a : H^{4}(X, \mathbb{Z}(2))\to L_{0}$ sending $h^{2}$ to $v$, the reflex field of $\mathbb{Q}$-HS $a\circ h_{X}\circ a^{-1}$ is also $\varepsilon(K(T(X))\subset \mathbb{C}$.
  \\\indent Finally, we record a lemma that will be used in the proof of Theorem \ref{main3}.
 \begin{lemma}\label{conja}
     Let $X$ be a cubic fourfold over $\mathbb{C}$ of CM type, and let $h_{X}$ be its polarized $\mathbb{Q}$-HS with  reflex field $E\subset \mathbb{C}$. If $a : H^{4}(X, \mathbb{Z}(2))\to L_{0}$ is an isometry sending $h^{2}$ to $v$, then for all $s\in \mathbb{A}^{\times}_{E}$ we have the equality in $V_{\mathbb{A}_{f}}$:
     \begin{align}
        a\circ r_{X}(s)\circ a^{-1}=r_{(a\circ h_{X}\circ a^{-1})}(s), \notag 
     \end{align}
 where $r_{X}$ denotes $r_{h_{X}}$.
 \end{lemma}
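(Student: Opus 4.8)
The plan is to deduce the identity from the functoriality of the reciprocity map $r(T,\mu_h)$ under isomorphisms of CM pairs: conjugating the Hodge structure $h_X$ by $a$ conjugates, simultaneously and compatibly, its Mumford--Tate torus, its Hodge cocharacter, and the attached reciprocity map. Write $V':=PH^{4}(X,\mathbb{Q})(2)$ for the rational vector space underlying $h_X$, so that $h_X\colon\mathbb{S}\to{\rm SO}(V')_{\mathbb{R}}$, and set $T:={\rm MT}(h_X)$. Since $X$ is of CM type, $T$ is a $\mathbb{Q}$-torus and $(T,\{h_X\})$ is a Shimura datum; let $E:=E(h_X)$ be its reflex field. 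Because $a$ is an isometry with $a(h^{2})=v$, it restricts to an isometry $V'\xrightarrow{\sim}\langle v\rangle^{\perp}\otimes\mathbb{Q}=V$, and conjugation by this restriction defines an isomorphism of $\mathbb{Q}$-algebraic groups
\begin{align}
\theta\colon{\rm SO}(V')\xrightarrow{\ \sim\ }{\rm SO}(V)=G,\qquad g\mapsto a g a^{-1},\notag
\end{align}
which is defined over $\mathbb{Q}$. (Passing to the full $\mathbb{Z}$-HS on $H^{4}(X,\mathbb{Z}(2))$ changes nothing, since by Example~\ref{tri} the Mumford--Tate group is the same and $h^{2}$ is fixed.)

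First I would record the compatibility of Mumford--Tate tori and Hodge cocharacters. As ${\rm MT}(-)$ is by definition the smallest $\mathbb{Q}$-subgroup whose real points contain the image of the $\mathbb{S}(\mathbb{R})$-action, and $(a\circ h_X\circ a^{-1})_{\mathbb{R}}=\theta_{\mathbb{R}}\circ(h_X)_{\mathbb{R}}\circ\theta_{\mathbb{R}}^{-1}$, applying $\theta$ yields $\theta(T)={\rm MT}(a\circ h_X\circ a^{-1})=:T'$. Moreover $a\circ h_X\circ a^{-1}$ is again of CM type, and its reflex field is again $E$: this is the remark following \cite[Proposition 2.2]{val}, and it also follows directly from the fact that $\theta$, being defined over $\mathbb{Q}$, is ${\rm Aut}(\mathbb{C})$-equivariant and hence carries the conjugacy class attached to $\{h_X\}$ to that attached to $\{a\circ h_X\circ a^{-1}\}$ with the same stabiliser in ${\rm Aut}(\mathbb{C})$. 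By Example~\ref{sp}(1) both cocharacters $\mu_{h_X}\colon\mathbb{G}_{m,E}\to T_E$ and $\mu_{a\circ h_X\circ a^{-1}}\colon\mathbb{G}_{m,E}\to T'_E$ are defined over $E$; over $\mathbb{C}$ one has $\mu_{a\circ h_X\circ a^{-1}}=\theta_{\mathbb{C}}\circ\mu_{h_X}\circ\theta_{\mathbb{C}}^{-1}$ because $a\circ h_X\circ a^{-1}=\theta\circ h_X$ as maps $\mathbb{S}\to G_{\mathbb{R}}$, and since both cocharacters descend to $E$ and $\theta$ is defined over $\mathbb{Q}\subset E$, the identity $\theta_E\circ\mu_{h_X}=\mu_{a\circ h_X\circ a^{-1}}$ already holds over $E$.

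Next I would invoke functoriality of the norm map: for a homomorphism $f\colon T\to T'$ of commutative $\mathbb{Q}$-groups the square built from ${\rm Res}_{E/\mathbb{Q}}(f_E)$, $f$, and the two norm maps ${\rm Res}_{E/\mathbb{Q}}(T_E)\to T$ and ${\rm Res}_{E/\mathbb{Q}}(T'_E)\to T'$ commutes, as one checks already on $R$-points, where the norm is the product over the $E$-embeddings. Taking $f=\theta|_{T}$ and precomposing with ${\rm Res}_{E/\mathbb{Q}}(\mu_{h_X})$, and using the cocharacter identity above, we get
\begin{align}
\theta\circ r(T,\mu_{h_X})=r\bigl(T',\mu_{a\circ h_X\circ a^{-1}}\bigr)\notag
\end{align}
as homomorphisms ${\rm Res}_{E/\mathbb{Q}}(\mathbb{G}_{m,E})\to T'$. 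Evaluating on $\mathbb{A}_{\mathbb{Q}}$-points, projecting to $\mathbb{A}_f$, and unwinding $\theta(g)=aga^{-1}$, we obtain $a\circ r_X(s)\circ a^{-1}=r_{(a\circ h_X\circ a^{-1})}(s)$ in $G(\mathbb{A}_f)\subset{\rm GL}(V_{\mathbb{A}_f})$ for every $s\in\mathbb{A}^{\times}_{E}$, which is the asserted equality.

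The argument is entirely formal and I do not anticipate a real obstacle; the only delicate point is that the two Hodge cocharacters must be compared over one and the same field, so that the equality of reflex fields $E(h_X)=E(a\circ h_X\circ a^{-1})$ — supplied by the ${\rm Aut}(\mathbb{C})$-equivariance of the $\mathbb{Q}$-rational isomorphism $\theta$, cf.\ the remark after \cite[Proposition 2.2]{val} — has to be established before the $\mathbb{C}$-rational identity $\mu_{a\circ h_X\circ a^{-1}}=\theta_{\mathbb{C}}\circ\mu_{h_X}\circ\theta_{\mathbb{C}}^{-1}$ may be descended to $E$.
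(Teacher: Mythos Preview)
Your argument is correct and is essentially the same as the paper's proof: both observe that conjugation by $a$ carries $\mu_{h_X}$ to $\mu_{a\circ h_X\circ a^{-1}}$ over $E$, then apply ${\rm Res}_{E/\mathbb{Q}}$ and evaluate on adelic points. Your version simply spells out the intermediate steps (equality of reflex fields, functoriality of the norm) that the paper leaves implicit; the stray ``$\circ\,\theta_{\mathbb{C}}^{-1}$'' in your displayed cocharacter identity is a harmless slip, since you immediately state and use the correct formula $\theta_E\circ\mu_{h_X}=\mu_{a\circ h_X\circ a^{-1}}$.
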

 \begin{proof}
     Set $\mu_{X}:=\mu_{h_{X}}$. Then the diagram 
     \[\xymatrix{{\rm MT}(h_{X})_{E}\ar[rr]^{a\cdot (-)\cdot a^{-1}}& & {\rm MT}(a\circ h_{X}\circ a^{-1})_{E}\\
     & \mathbb{G}_{m, E}\ar[lu]^{\mu_{X}}\ar[ru]_{\mu_{a\circ h_{X}\circ a^{-1}}}&}\]
     is commutative. Applying the restriction of scalars ${\rm Res}_{E/\mathbb{Q}}$  and evaluating on adelic points yields the desired equality.
 \end{proof}
  \subsection{Chow-K\"unneth decomposition of K3 surfaces and cubic fourfolds}
  We recall here some basic facts about $Chow ~motives$ of smooth projective varieties over $\mathbb{C}$.  Let  $\mathsf{SmProj}(\mathbb{C})$ denote the category of smooth projective varieties  over $\mathbb{C}$.
  \\\indent The category of $Chow ~motives$, denoted $\mathsf{Mot}_{{\rm rat}}(\mathbb{C})$, has objects of the form $(X, p, m)$, where $X$ is a smooth projective variety over $\mathbb{C}$, $p\in {\rm CH}^{{\rm dim}(X)}(X\times X)_{\mathbb{Q}}$ is a projector (i.e. $p\circ p=p$), and $m\in \mathbb{Z}$. Morphisms are defined by 
  \begin{align}
      {\rm Hom}((X, p, m), (Y, q, n))=q\circ {\rm CH}^{{\rm dim}(X)+n-m}(X\times Y)_{\mathbb{Q}}\circ p.\notag 
  \end{align}
  For each smooth projective variety $X/\mathbb{C}$,  the triple $h_{{\rm rat}}(X):=(X, \Delta_{X}, 0)$ defines an object of $\mathsf{Mot}_{{\rm rat}}(\mathbb{C})$. If $f : X\to Y$ is a morphism of complex varieties, then the transpose of its graph $^{\mathsf{T}}\Gamma_{f} $ defines a morphism  $h_{{\rm rat}}(Y)$ to $h_{{\rm rat}}(X)$. Thus one obtain a natural contravariant functor 
  \begin{align}
      h : \mathsf{SmProj}(\mathbb{C})\to \mathsf{Mot}_{{\rm rat}}(\mathbb{C}). \notag 
  \end{align} 
  \\\indent Let $\sigma\in {\rm Aut}(\mathbb{C})$. For any $X\in \mathsf{smProj}(\mathbb{C})$, define the variety $X^{\sigma}\in \mathsf{smProj}(\mathbb{C})$ by the Cartesian diagram 
  \[\xymatrix{X^{\sigma}\ar[r]\ar[d]& X\ar[d]\\ {\rm Spec}(\mathbb{C})\ar[r]^{\sigma^{\ast}} & {\rm Spec}(\mathbb{C}).}\]
  The projection $X^{\sigma}\to X$ is denoted  $\sigma^{\ast}$. 
  \\\indent The automorphism $\sigma$ induces a functor 
  \begin{align}
      \sigma :\mathsf{Mot}_{{\rm rat}}(\mathbb{C})&\to \mathsf{Mot}_{{\rm rat}}(\mathbb{C})
      \\ (X, p, m)&\mapsto  (X^{\sigma}, p^{\sigma}, m)\notag 
      \\ (f : (X, p, m)\to (Y, q, n))&\mapsto (f^{\sigma} : (X^{\sigma}, p^{\sigma}, m)\to (Y^{\sigma}, q^{\sigma}, n)).\notag 
  \end{align}
 
  \indent Fix a Weil cohomology theory $H^{\ast}$ with coefficients in a field of characteristic 0, and let $X$ be a smooth projective variety over $\mathbb{C}$. Denote by $\gamma_{X} : {\rm CH}^{i}(X)\to H^{i}(X)(i)$ the cycle map.
 \begin{dfn}(Chow-K\"unneth decomposition)
     A \textbf{Chow-K\"unneth decomposition} of $X$ is a collection of orthogonal projectors  $p^{0},\cdots, p^{2d}\in {\rm CH}^{d}(X\times X)_{\mathbb{Q}}$ such that $\gamma_{X}(p^{i})$ is the $i$-th K\"unneth projector and $\sum p^{i}=\Delta_{X}$.
 \end{dfn} 
 \begin{thm}A Chow-K\"unneth decomposition exists for every surface over $\mathbb{C}$ (\cite[Theorem 3]{mur}) and for every cubic fourfold over $\mathbb{C}$ (\cite[Section 2]{bo}).   
 \end{thm}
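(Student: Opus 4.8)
The plan is to handle the two cases independently: for surfaces I follow Murre, and for cubic fourfolds I give a far simpler explicit construction. For a smooth projective surface $S/\mathbb{C}$ I would reproduce Murre's argument \cite{mur}. Fix a $0$-cycle $e$ of degree $1$ on $S$ and a smooth very ample curve $j\colon C\hookrightarrow S$ (Bertini), and put $p^{0}=[e\times S]$, $p^{4}=[S\times e]$; these are orthogonal idempotents in ${\rm CH}^{2}(S\times S)_{\mathbb{Q}}$ with $\gamma_{S}(p^{0})=\pi^{0}$, $\gamma_{S}(p^{4})=\pi^{4}$. The real content is the construction of $p^{1}$ and $p^{3}$: restriction to $C$ induces isomorphisms $H^{1}(S)\xrightarrow{\sim}H^{1}(C)$ and ${\rm Pic}^{0}(S)\xrightarrow{\sim}{\rm Pic}^{0}(C)=J(C)$ (Lefschetz), so that the $h^{1}$-summand of the standard Chow--K\"unneth decomposition $h(C)=h^{0}(C)\oplus h^{1}(C)\oplus h^{2}(C)$ of the curve can be transported to $S$ through the graph $\Gamma_{j}$ and its transpose, after correcting by $p^{0}$ and $p^{4}$ so as to land in the correct K\"unneth bidegree; $p^{3}$ is obtained dually via the Albanese. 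One then checks that $p^{0},p^{1},p^{3},p^{4}$ are mutually orthogonal and sets $p^{2}=\Delta_{S}-p^{0}-p^{1}-p^{3}-p^{4}$, which is automatically an orthogonal idempotent with $\gamma_{S}(p^{2})=\pi^{2}$.

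For a cubic fourfold $X\subset\mathbb{P}^{5}_{\mathbb{C}}$ the construction is elementary, since the cohomology is concentrated in even degrees with $H^{0}=H^{8}=\mathbb{Q}$, $H^{2}(X)=\mathbb{Q}h$, $H^{6}(X)=\mathbb{Q}h^{3}$ and $H^{{\rm odd}}(X,\mathbb{Q})=0$; moreover $\frac{1}{3}h^{4}$ is a $0$-cycle class of degree $1$ (indeed $X$ is Fano, so ${\rm CH}_{0}(X)_{\mathbb{Q}}=\mathbb{Q}$). Writing $e:=\frac{1}{3}h^{4}\in{\rm CH}^{4}(X)_{\mathbb{Q}}$, I would set
\begin{align}
p^{0}=e\times X,\quad p^{8}=X\times e,\quad p^{2}=\tfrac{1}{3}\,(h^{3}\times h),\quad p^{6}=\tfrac{1}{3}\,(h\times h^{3}),\notag
\end{align}
all in ${\rm CH}^{4}(X\times X)_{\mathbb{Q}}$, together with $p^{4}=\Delta_{X}-(p^{0}+p^{2}+p^{6}+p^{8})$. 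Everything then reduces to the composition law for decomposable correspondences: if $a\times b$ and $c\times d$ are products of cycle classes on $X$, then $(c\times d)\circ(a\times b)=\langle b,c\rangle\,(a\times d)$, where $\langle b,c\rangle:=\deg(b\cdot c)\in\mathbb{Q}$ vanishes unless $\dim b+\dim c=4$, so that every composition occurring here is governed by the single number $\deg(h^{4})=3$.

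A direct check then shows that $p^{0},p^{2},p^{6},p^{8}$ are mutually orthogonal idempotents: for instance $p^{2}\circ p^{2}=\frac{1}{9}\deg(h^{4})\,(h^{3}\times h)=p^{2}$, while $p^{2}\circ p^{0}=0$ because $\dim X+\dim(h^{3})=4+1\neq 4$ and $p^{0}\circ p^{2}=0$ because it involves $h\cdot e=\frac{1}{3}h^{5}=0$ in ${\rm CH}^{5}(X)=0$; all remaining cross-terms vanish by one of these two mechanisms. Hence $p^{4}$ is an orthogonal idempotent. Cohomologically $\gamma_{X}(p^{0})=\pi^{0}$ and $\gamma_{X}(p^{8})=\pi^{8}$ are immediate, $\gamma_{X}(p^{2})=\pi^{2}$ and $\gamma_{X}(p^{6})=\pi^{6}$ follow from Poincar\'e duality normalised by $\deg(h^{4})=3$, and $\gamma_{X}(p^{4})=[\Delta_{X}]-\pi^{0}-\pi^{2}-\pi^{6}-\pi^{8}=\pi^{4}$ because $H^{{\rm odd}}(X,\mathbb{Q})=0$; this is the decomposition of \cite{bo}. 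I expect the only genuine obstacle to be the surface case, namely the construction of $p^{1}$ and $p^{3}$ and the verification of their mutual orthogonality, which rests on the theory of the Picard and Albanese varieties and on Lieberman-type facts about correspondences on abelian varieties. For cubic fourfolds there is essentially nothing to overcome: the vanishing of odd cohomology, the triviality of ${\rm CH}_{0}(X)_{\mathbb{Q}}$, and the fact that $H^{2}$ and $H^{6}$ are generated by powers of the hyperplane class make $p^{0},p^{2},p^{6},p^{8}$ available by the explicit formulas above, after which $p^{4}$ is forced.
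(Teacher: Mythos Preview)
The paper does not prove this statement at all; it simply records it with citations to Murre and to Bolognesi--Pedrini, so there is no ``paper's own proof'' to compare against. Your cubic fourfold construction is correct and is essentially the one in \cite{bo}: because $H^{\mathrm{odd}}(X,\mathbb{Q})=0$ and $H^{2i}(X,\mathbb{Q})=\mathbb{Q}\,h^{i}$ for $i\neq 2$, the projectors $p^{0},p^{2},p^{6},p^{8}$ can be written explicitly as decomposable correspondences, and the remaining $p^{4}$ is forced. Your verification of idempotence and orthogonality via the rule $(c\times d)\circ(a\times b)=\deg(b\cdot c)\,(a\times d)$ is fine.

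Your sketch for surfaces, however, contains a genuine error. You assert that for a very ample smooth curve $j\colon C\hookrightarrow S$ the restriction induces isomorphisms $H^{1}(S)\xrightarrow{\sim}H^{1}(C)$ and ${\rm Pic}^{0}(S)\xrightarrow{\sim}J(C)$. This is false: weak Lefschetz for a surface gives only that $j^{*}\colon H^{1}(S)\to H^{1}(C)$ is \emph{injective} (equivalently, ${\rm Pic}^{0}(S)\hookrightarrow J(C)$ is a closed immersion of abelian varieties), and in general $\dim J(C)=g(C)$ is much larger than the irregularity $q(S)$. Murre's actual construction does not transport the curve projector wholesale; rather, he uses the closed immersion ${\rm Pic}^{0}(S)\hookrightarrow J(C)$ together with Poincar\'e complete reducibility to split off ${\rm Pic}^{0}(S)$ up to isogeny inside $J(C)$, and then builds $p^{1}$ (and dually $p^{3}$) from the resulting idempotent endomorphism of $J(C)$ via the Weil correspondence between divisorial correspondences and homomorphisms of abelian varieties. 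You correctly flag that this is where the work lies, but the mechanism you describe would not produce the right projector as stated.
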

 \indent Given such a decomposition, define $h^{i}(X):=(X, p_{i}, 0)$. Then one has  
 \begin{align}
     h(X)\cong \bigoplus^{2d}_{i=0}h^{i}(X),
 \end{align} 
 and this splitting satisfies 
 \begin{align}
     H^{j}(h^{i}(X))=\begin{cases}
         H^{i}(X) &(i=j)
         \\ 0 &(i\neq j).
     \end{cases}\notag 
 \end{align}If $X$ is a K3 surface over $\mathbb{C}$, then the only nontrivial part of $h^{i}(X)$ is $h^{2}(X)$. 
 \begin{thm}(\cite[Proposition 14.2.3]{ka})\label{k3mot}
     Let $S$ be a K3 surface over $\mathbb{C}$. There exists a unique decomposition
     \begin{align}
         p^{2}=p^{2}_{alg}+p^{2}_{tr}\in {\rm CH}^{2}(S\times S)_{\mathbb{Q}} \notag 
     \end{align}
     which induces 
     \begin{align}
         h^{2}(S)=h^{2}_{alg}(S)\oplus t(S),\notag 
     \end{align}
     where $h^{2}_{alg}(S):=(S, p^{2}_{alg}, 0)$ and $t(S):=(S, p^{2}_{tr}, 0)$. Moreover, the motive $h^{2}_{alg}(S)$ is isomorphic to $\underline{{\rm NS}}(S)_{\mathbb{Q}}\otimes \mathbb{L}$, where $\underline{{\rm NS}}(S)_{\mathbb{Q}}$ denotes the Artin motive associated with the N\'eron-Severi group ${\rm NS}(S)_{\mathbb{Q}}$.
 \end{thm}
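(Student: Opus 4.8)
The plan is to construct the projector $p^{2}_{alg}$ by hand from the N\'eron-Severi lattice of $S$, check that it splits a sum of Lefschetz motives off $h^{2}(S)$, identify the complementary summand with the transcendental motive, and deduce uniqueness from a Hom-vanishing. First I would fix Murre's Chow-K\"unneth decomposition of the surface $S$: take $p^{0}$ and $p^{4}$ to be the classes of $\{o\}\times S$ and $S\times\{o\}$ for a chosen closed point $o\in S$, and construct $p^{1}$, $p^{3}$ from the Picard and Albanese varieties; since $H^{1}(S)=0$ these vanish, so $p^{2}=\Delta_{S}-p^{0}-p^{4}$ and $h^{2}(S)=(S,p^{2},0)$. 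Then I would pick a $\mathbb{Q}$-basis $D_{1},\dots,D_{\rho}$ of ${\rm NS}(S)_{\mathbb{Q}}$; by the Hodge index theorem the intersection matrix $M=((D_{i}\cdot D_{j}))_{i,j}$ is invertible, and I set
\begin{align}
    p^{2}_{alg}:=\sum_{i,j}(M^{-1})_{ij}\,D_{i}\times D_{j}\in{\rm CH}^{2}(S\times S)_{\mathbb{Q}},\qquad p^{2}_{tr}:=p^{2}-p^{2}_{alg}.\notag
\end{align}

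Next I would check this is the desired splitting. Using the composition rule $(a\times b)\circ(c\times d)=(b\cdot c)(a\times d)$ for product correspondences together with $MM^{-1}=I$, one verifies $p^{2}_{alg}\circ p^{2}_{alg}=p^{2}_{alg}$; a dimension count --- a divisor class is neither the class of a point nor of $[S]$ --- gives $p^{2}_{alg}\circ p^{0}=p^{0}\circ p^{2}_{alg}=p^{2}_{alg}\circ p^{4}=p^{4}\circ p^{2}_{alg}=0$, hence $p^{2}_{alg}\circ p^{2}=p^{2}\circ p^{2}_{alg}=p^{2}_{alg}$. Thus $p^{2}_{alg}$ and $p^{2}_{tr}$ are orthogonal idempotents with sum $p^{2}$, and $h^{2}(S)=h^{2}_{alg}(S)\oplus t(S)$. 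On realizations, $\gamma_{S\times S}(p^{2}_{alg})$ acts on $H^{\ast}(S)$ as the orthogonal projector onto the span of ${\rm NS}(S)_{\mathbb{Q}}$ inside $H^{2}(S)$ and annihilates $H^{\neq 2}$, so $H^{\ast}(h^{2}_{alg}(S))$ is that span placed in degree $2$, while $H^{\ast}(t(S))$ is the transcendental part of $H^{2}(S)$ in degree $2$ and zero otherwise.

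To identify $h^{2}_{alg}(S)$ with $\underline{{\rm NS}}(S)_{\mathbb{Q}}\otimes\mathbb{L}$, I would diagonalize the intersection form over $\mathbb{Q}$: choose a basis $f_{1},\dots,f_{\rho}$ of ${\rm NS}(S)_{\mathbb{Q}}$ with $f_{i}\cdot f_{j}=c_{i}\delta_{ij}$, $c_{i}\in\mathbb{Q}^{\times}$, so that $p^{2}_{alg}=\sum_{i}c_{i}^{-1}\,f_{i}\times f_{i}$ is a sum of mutually orthogonal idempotents. Under the natural identifications ${\rm Hom}(\mathbb{L},h(S))\cong{\rm CH}^{1}(S)_{\mathbb{Q}}\cong{\rm Hom}(h(S),\mathbb{L})$, for which the composition ${\rm Hom}(\mathbb{L},h(S))\times{\rm Hom}(h(S),\mathbb{L})\to{\rm End}(\mathbb{L})=\mathbb{Q}$ is the intersection pairing, the pair $(f_{i},\,c_{i}^{-1}f_{i})$ satisfies $(c_{i}^{-1}f_{i})\circ f_{i}={\rm id}_{\mathbb{L}}$, hence exhibits $\mathbb{L}$ as a direct summand of $h(S)$ cut out by the projector $c_{i}^{-1}f_{i}\times f_{i}$. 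Summing over $i$ gives $h^{2}_{alg}(S)\cong\mathbb{L}^{\oplus\rho}$; over $\mathbb{C}$ the Artin motive $\underline{{\rm NS}}(S)_{\mathbb{Q}}$ is a sum of $\rho$ copies of the unit motive, so $\underline{{\rm NS}}(S)_{\mathbb{Q}}\otimes\mathbb{L}\cong\mathbb{L}^{\oplus\rho}\cong h^{2}_{alg}(S)$.

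Finally, for uniqueness: because $S$ is a K3 surface, ${\rm CH}^{1}(S)_{\mathbb{Q}}={\rm Pic}(S)_{\mathbb{Q}}={\rm NS}(S)_{\mathbb{Q}}$, and the computation $p^{2}_{alg}\circ D=D$ for $D\in{\rm NS}(S)_{\mathbb{Q}}$ (once more a consequence of $MM^{-1}=I$) forces $p^{2}_{tr}\circ D=0$; together with its transpose this yields ${\rm Hom}(\mathbb{L},t(S))={\rm Hom}(t(S),\mathbb{L})=0$. Consequently, any decomposition $p^{2}=q_{alg}+q_{tr}$ into orthogonal idempotents with $(S,q_{alg},0)$ a sum of Lefschetz motives and $(S,q_{tr},0)$ having no Lefschetz direct summand must coincide with $p^{2}=p^{2}_{alg}+p^{2}_{tr}$ (independence of $p^{2}_{alg}$ from the chosen basis $\{D_{i}\}$ being routine linear algebra). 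No single step is deep; the part requiring the most care is the bookkeeping of the composition-of-correspondences conventions used in the middle two paragraphs, and the one genuinely structural input is the non-degeneracy of the intersection form on ${\rm NS}(S)$ --- the Hodge index theorem --- which is exactly what makes $M^{-1}$, and hence the whole construction, exist.
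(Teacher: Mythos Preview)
The paper does not give its own proof of this theorem: it is simply quoted from Kahn--Murre--Pedrini \cite[Proposition~14.2.3]{ka}, and the paper immediately moves on to the consequence~(\ref{deck3}). So there is no in-paper argument to compare against.

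That said, your construction is exactly the standard one underlying the cited reference: build $p^{2}_{alg}$ from a basis of ${\rm NS}(S)_{\mathbb{Q}}$ and the inverse intersection matrix, verify idempotence and orthogonality to $p^{0},p^{4}$ by the product-correspondence calculus, and split off $\mathbb{L}^{\oplus\rho}$. One small point on uniqueness: characterizing the splitting by ``$(S,q_{tr},0)$ has no Lefschetz direct summand'' is weaker than what you actually prove and does not by itself force $q_{tr}=p^{2}_{tr}$, since absence of a Lefschetz \emph{summand} does not a priori kill all maps to or from $\mathbb{L}$. The clean characterization (and the one in \cite{ka}) is that $p^{2}_{tr}$ is the unique idempotent factor of $p^{2}$ acting as zero on ${\rm CH}^{1}(S)_{\mathbb{Q}}$; your computation $p^{2}_{tr}\circ D=0$ for every divisor $D$ already gives this, so the fix is only a matter of stating the defining property correctly.
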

Thus
 \begin{align}\label{deck3}
     H^{2}(S)=H^{2}(h^{2}_{alg}(S))\oplus H^{2}(t(S))={\rm NS}(S)_{\mathbb{Q}}(-1)\oplus T(S)_{\mathbb{Q}}(-1),
 \end{align} 
 where $T(S)$ denotes the transcendental lattice of $S$.
\\ \indent If $X$ is a cubic fourfold, an analogous decomposition holds. 
 \begin{thm}(\cite[Section 2]{bo})\label{c4mot}
     There exists a unique decomposition 
     \begin{align}
         p^{4}=p^{4}_{alg}+p^{4}_{tr}\in {\rm CH}^{4}(X\times X)_{\mathbb{Q}} \notag 
     \end{align}
     which induces 
     \begin{align}
         h^{4}(X)=h^{4}_{alg}(X)\oplus t(X), \notag 
     \end{align}
     where $h^{4}_{akg}(X):=(X, p^{4}_{alg}, 0)$ and $t(X):=(X, p^{4}_{tr}, 0)$.  Moreover, the motive $h^{4}_{alg}(X)$ is isomorphic to $\underline{A}(X)_{\mathbb{Q}}\otimes \mathbb{L}^{\otimes 2}$, where   $\underline{A}(X)_{\mathbb{Q}}$ is the Artin motive associated with the group $A(X)_{\mathbb{Q}}$.
 \end{thm}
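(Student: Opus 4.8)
The plan is to copy, in codimension two, the refined Chow--K\"unneth decomposition of a K3 surface recorded in Theorem \ref{k3mot}, taking advantage of the fact that the cohomology of a cubic fourfold outside the middle degree is as simple as it can be. First I would pin down a Chow--K\"unneth decomposition $p^{0},\dots,p^{8}\in{\rm CH}^{4}(X\times X)_{\mathbb{Q}}$ of $X$: since $H^{1}=H^{3}=H^{5}=H^{7}=0$ one takes $p^{1}=p^{3}=p^{5}=p^{7}=0$, and since $H^{0},H^{2},H^{6},H^{8}$ are one-dimensional and spanned by powers of the hyperplane class $h$ one may take $p^{0}=[x_{0}]\times X$ and $p^{8}=X\times[x_{0}]$ for a general point $x_{0}\in X$, $p^{2}=\frac{1}{3}\,h^{3}\times h$ and $p^{6}=\frac{1}{3}\,h\times h^{3}$, and finally $p^{4}=\Delta_{X}-p^{0}-p^{2}-p^{6}-p^{8}$; one checks these are orthogonal projectors with the correct K\"unneth classes directly in the Chow group. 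Thus $h^{4}(X)=(X,p^{4},0)$, and the task is to split off from it the algebraic part.

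Next I would build $p^{4}_{alg}$. Choose codimension-two cycles $Z_{1},\dots,Z_{\rho}$ on $X$ whose classes form a $\mathbb{Q}$-basis of $A(X)_{\mathbb{Q}}$, which is possible by the very definition of $A(X)$ as the image of the cycle map, with $\rho={\rm rk}\,A(X)\le 21$. Because the intersection form on $H^{4}(X,\mathbb{Q})(2)$ polarizes the Hodge structure, its restriction to the sub-Hodge structure $A(X)_{\mathbb{Q}}$ is nondegenerate, so the Gram matrix $M=\bigl((Z_{i}\cdot Z_{j})\bigr)_{i,j}$ is invertible; writing $M^{-1}=(a_{ij})$ I set
\begin{align}
p^{4}_{alg}:=\sum_{i,j}a_{ij}\,Z_{i}\times Z_{j}\ \in\ {\rm CH}^{4}(X\times X)_{\mathbb{Q}}.\notag
\end{align}
Symmetry of $M$ makes $p^{4}_{alg}$ invariant under transposition, and the composition rule for correspondences of product type, $(Z_{i}\times Z_{j})\circ(Z_{k}\times Z_{l})=(Z_{i}\cdot Z_{l})\,Z_{k}\times Z_{j}$, together with $\sum_{l}a_{kl}(Z_{l}\cdot Z_{i})=\delta_{ki}$, yields $p^{4}_{alg}\circ p^{4}_{alg}=p^{4}_{alg}$ on the nose in the Chow group; so $p^{4}_{alg}$ is a symmetric projector whose realization is the orthogonal projector of $H^{4}(X,\mathbb{Q})$ onto $A(X)_{\mathbb{Q}}(-2)$.

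The key step, and the main obstacle, is to verify that $p^{4}_{alg}$ refines $p^{4}$, i.e. $p^{4}\circ p^{4}_{alg}=p^{4}_{alg}=p^{4}_{alg}\circ p^{4}$, equivalently $p^{j}\circ p^{4}_{alg}=0=p^{4}_{alg}\circ p^{j}$ for $j\ne 4$. This is immediate on cohomology, but a correspondence can be cohomologically trivial without being rationally trivial, so it must be checked in ${\rm CH}^{4}(X\times X)_{\mathbb{Q}}$ itself; one must resist the cohomological shortcut and compute honestly. Here the simple geometry of a cubic fourfold is exactly what makes this feasible: each of $p^{0},p^{2},p^{6},p^{8}$ is itself of product type, so each composite $p^{4}_{alg}\circ p^{j}$ (and its transpose) is an explicit manipulation of products of cycles, and in every case the cycle surviving the two pullbacks maps, under the projection $\pi_{13}$ on $X\times X\times X$, onto a cycle of strictly smaller dimension --- its fibres contain a positive-dimensional $Z_{i}$, a positive-dimensional linear section of $h$, or are empty for general $x_{0}$ --- so the pushforward vanishes. (For a general fourfold this device is unavailable, the non-middle K\"unneth projectors need not be decomposable.) Granting this, $p^{4}_{tr}:=p^{4}-p^{4}_{alg}$ is a projector orthogonal to $p^{4}_{alg}$ and to every $p^{j}$ with $j\ne 4$, and one obtains
\begin{align}
h^{4}(X)=h^{4}_{alg}(X)\oplus t(X),\qquad h^{4}_{alg}(X):=(X,p^{4}_{alg},0),\quad t(X):=(X,p^{4}_{tr},0),\notag
\end{align}
with uniqueness argued exactly as for K3 surfaces in Theorem \ref{k3mot}, since the refinement is pinned down by the requirement that the realization of $h^{4}_{alg}(X)$ be $A(X)_{\mathbb{Q}}(-2)$.

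Finally I would identify $h^{4}_{alg}(X)$. Put $Z_{i}^{\vee}:=\sum_{j}a_{ij}Z_{j}$, so that $Z_{i}\cdot Z_{k}^{\vee}=\delta_{ik}$ and $\sum_{i}Z_{i}^{\vee}\times Z_{i}=p^{4}_{alg}$. Each $Z_{i}\in{\rm CH}^{2}(X)$ defines a morphism $\mathbb{L}^{\otimes 2}\to h^{4}_{alg}(X)$ and each $Z_{i}^{\vee}\in{\rm CH}^{2}(X)$ a morphism $h^{4}_{alg}(X)\to\mathbb{L}^{\otimes 2}$; assembling these into $\Phi\colon\bigoplus_{i=1}^{\rho}\mathbb{L}^{\otimes 2}\to h^{4}_{alg}(X)$ and a map $\Psi$ in the opposite direction, the two displayed relations give $\Psi\circ\Phi={\rm id}$ and $\Phi\circ\Psi=p^{4}_{alg}={\rm id}_{h^{4}_{alg}(X)}$. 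Hence $h^{4}_{alg}(X)$ is a direct sum of $\rho$ copies of $\mathbb{L}^{\otimes 2}$, which is precisely $\underline{A}(X)_{\mathbb{Q}}\otimes\mathbb{L}^{\otimes 2}$, since over $\mathbb{C}$ the Artin motive $\underline{A}(X)_{\mathbb{Q}}$ is a sum of $\rho$ copies of the unit motive (and, keeping track of a field of definition $k\subset\mathbb{C}$, the same cycles are Galois-equivariant and produce the genuine Artin motive on the nose).
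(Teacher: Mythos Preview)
The paper does not give its own proof here; the theorem is quoted from Bolognesi--Pedrini \cite{bo}. Your argument is correct and is essentially the construction carried out there (itself the codimension-two analogue of the Kahn--Murre--Pedrini refinement for surfaces recorded in Theorem~\ref{k3mot}): explicit product-type Chow--K\"unneth projectors built from powers of $h$, the algebraic projector $p^{4}_{alg}=\sum a_{ij}\,Z_{i}\times Z_{j}$ with $(a_{ij})$ the inverse Gram matrix of a basis of $A(X)_{\mathbb{Q}}$, direct verification in ${\rm CH}^{4}(X\times X)_{\mathbb{Q}}$ that $p^{4}_{alg}$ is idempotent and orthogonal to each $p^{j}$ with $j\ne 4$, and identification of $h^{4}_{alg}(X)$ with $\rho$ copies of $\mathbb{L}^{\otimes 2}$ via the dual basis $Z_{i}^{\vee}$. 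One small point of polish: in the orthogonality step, half of the vanishings (for instance $p^{2}\circ p^{4}_{alg}$, whose middle factor $Z_{j}\cdot h^{3}$ lies in ${\rm CH}^{5}(X)=0$) follow simply because the middle intersection lands in a Chow group of excessive codimension, which is cleaner than invoking a fibre-dimension count or a genericity assumption on $x_{0}$.
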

Consequently,
 \begin{align}\label{decc4}
     H^{4}(X)=H^{4}(h^{4}_{alg}(X))\oplus H^{4}(t(X))=A(X)_{\mathbb{Q}}(-2)\oplus T(X)_{\mathbb{Q}}(-2).
 \end{align}

  \section{Moduli spaces with  level structure; the proof of Theorem \ref{main1}}\label{moduli main1}
     \subsection{ Level Structures on Cubic Fourfolds}
     Let $N\geq 1$ be an integer.
     \begin{dfn}(Javanpeykar-Loughran \cite{jal}) Let $S$ be a scheme over $\mathbb{Z}[1/N]$,  and let $(\pi : X\to S, \lambda)$ be an object of $\mathcal{C}_{\mathbb{Z}[1/N_{\mathscr{B}}]}$. 
         A $level\textendash N~structure$ on $(\pi, \lambda)$ is an isomorphism of \'etale sheaves on $S$
         \begin{align}\alpha : R^{4}_{et}\pi_{\ast}\mathbb{Z}/N\mathbb{Z}(2)\to (L_{0, \mathbb{Z}/N\mathbb{Z}})_{S}, \notag 
         \end{align}
         where the right-hand side denotes the constant \'etale sheaf on $S$.
         \\\indent Let $\mathcal{C}^{[N]}$ denote the category fibred in groupoids (CFG) over $\mathbb{Z}[1/N]$ whose objects are triples $(\pi:X\to S, \lambda, \alpha)$. A morphism between two such objects $(\pi_{i} : X_{i}\to S_{i}, \lambda_{i}, \alpha_{i}) (i=1, 2)$ is a triple $(f_{S}, f, \varphi)$ consisting of 
         \begin{itemize}
             \item{a morphism  $f_{S} : S_{1}\to S_{2}$ of schemes,}
             \item{an isomorphism $f : X_{1}\to X_{2}\times_{S_{2}, f_{S}} S{1}$ over $S_{1}$ with $f^{\ast}\lambda_{2}=\lambda_{1}$,}
             \item{an isomorphism of \'etale sheaves on $S_{1}$
         \[\varphi : R^{4}_{et}\pi_{1\ast}\mathbb{Z}/N\mathbb{Z}(2)\to f^{\ast}_{S}R^{4}_{et}\pi_{2\ast}\mathbb{Z}/N\mathbb{Z}(2)\]
         maiking the diagram commute:
        \[ \xymatrix{ R^{4}_{et}\pi_{1\ast}\mathbb{Z}/N\mathbb{Z}(2)\ar[rr]^{\varphi}\ar[rd]_{\alpha_{1}}&&f_{S}^{\ast}R^{4}_{et}\pi_{2 \ast}\mathbb{Z}/N\mathbb{Z}(2)\ar[ld]^{f_{S}^{\ast}\alpha_{2}}\\ & (L_{0, \mathbb{Z}/N\mathbb{Z}})_{S_{1}}.&}\]}
        \end{itemize}
     \end{dfn}
     By \cite[Theorem 3.2, (2)]{jal}, the CFG $\mathcal{C}^{[N]}$ is smooth, of finite type,  separated,  and Deligne-Mumford over $\mathbb{Z}[1/N]$.
     \\\\\indent We now prepare  a subcategory $\widetilde{\mathcal{C}^{[N]}}$ of $\mathcal{C}^{[N]}$.  Let $\mathscr{B}:=\{p_{1}, \cdots, p_{r}\}$ be the set of prime divisors of $N$ and write  $\mathbb{Z}_{\mathscr{B}}:=\prod_{p\in \mathscr{B}}\mathbb{Z}_{p}$.  
     Let $S$ be a scheme over $\mathbb{Z}[1/N]$ and  $(\pi : X\to S, \lambda)$ be an object of $\mathcal{C}_{\mathbb{Z}[1/N_{\mathscr{B}}]}$. The Kummer short exact sequence  of \'etale sheaves on $X$ (see \cite[Example 10.5.2 (2)]{ss})
    \begin{align}
        1\to \mu_{N}\to \mathbb{G}_{m}\xrightarrow{\times N} \mathbb{G}_{m}\to 1\notag 
    \end{align}
    induces a long exact sequence 
    \begin{align}
        R^{1}_{et}\pi_{\ast}\mathbb{Z}/N\mathbb{Z}(1)\to R^{1}_{et}\pi_{\ast}\mathbb{G}_{m}\to R^{1}_{et}\pi_{\ast}\mathbb{G}_{m}\to R^{2}_{et}\pi_{\ast}\mathbb{Z}/N\mathbb{Z}(1). \notag 
    \end{align}
    Since the Picard group of cubic fourfold is torsion-free in all characteristics (\cite[Corollary 1.9]{Huy2}), we have  $R^{1}_{et}\pi_{\ast}\mathbb{Z}/n\mathbb{Z}(1)=0$. Hence there is an exact sequence
    \begin{align}
        0\to {\rm Pic}_{X/S}\to R^{2}_{et}\pi_{\ast}\mathbb{Z}/n\mathbb{Z}(1). 
    \end{align}
    Passing to  $\mathbb{Z}_{\mathscr{B}}$ gives
    \begin{align}\label{highcyc}
        0\to {\rm Pic}_{X/S}\to R^{2}_{et}\pi_{\ast}\mathbb{Z}_{\mathscr{B}}(1).
    \end{align}
   
    Let $h$ denote the image of $\lambda$ in (\ref{highcyc}). Let $h^{2}$ be its square under the cup product 
    \begin{align}
        -(-, -) : R^{2}\pi_{\ast}\mathbb{Z}_{\mathscr{B}}(1)\times R^{2}\pi_{\ast}\mathbb{Z}_{\mathscr{B}}(1)\to R^{4}\pi_{\ast}\mathbb{Z}_{\mathscr{B}}(2).\notag 
        \end{align}
     Define the sheaf of primitive cohomology $P^{4}_{et}\pi_{\ast}\mathbb{Z}_{\mathscr{B}}(2)$ to be the orthogonal complement of $h^{2}$  with respect to the cup product $R^{4}\pi_{\ast}\mathbb{Z}_{\mathscr{B}}(2)\times R^{4}\pi_{\ast}\mathbb{Z}_{\mathscr{B}}(2)\to R^{8}\pi_{\ast}\mathbb{Z}_{\mathscr{B}}(4)\cong  \mathbb{Z}_{\mathscr{B}}$. 
     \begin{dfn}The category $\widetilde{\mathcal{C}^{[N]}}$ is the full subcategory of $\mathcal{C}^{[N]}$ whose objects are triples $(\pi : X\to S, \lambda, \alpha)$  such that $\alpha$ is an isometry of \'etale sheaves
     \begin{align}
         \alpha :(R^{4}_{et}\pi_{\ast}\mathbb{Z}/N\mathbb{Z}(2), (-, -))\to ((L_{0, \mathbb{Z}/N\mathbb{Z}})_{S}, (-, -)_{\mathbb{Z}/N\mathbb{Z}})\notag 
     \end{align}
     mapping $h^{2}$ to $v$.
     \end{dfn}
      From the definition  
       there is a canonical fully faithful functor 
    \begin{align}\label{Fd}
        F : \widetilde{\mathcal{C}^{[N]}}\to \mathcal{C}^{[N]}.
    \end{align}
    \indent Following Rizov \cite{Rizov06}, we describe the level-$N$ structures when $S$ is connected. 
    \begin{dfn}\label{levelfin} For each integer $N\geq 1$, set $K_{N}$
     \begin{align}
         K_{N}\coloneq\{\gamma\in {\rm SO}(V_{0})(\hat{\mathbb{Z}}) ~| ~\gamma\equiv 1 ~({\rm mod}~N),~\gamma(v)=v.\}, \notag 
     \end{align}
         and $K_{N, \mathscr{B}}:=\prod_{p\in \mathscr{B}} K_{N, p}$,  where $K_{N, p}$ is the $p$-component.
     \end{dfn}
      Assume $S$ is a connected $\mathbb{Z}[1/N]$-scheme.  Fix a geometric point $\overline{b} : {\rm Spec}(k(\overline{b}))\to S$ and  set $H^{4}(\overline{b})\coloneq\overline{b}^{\ast}R^{4}_{et}\pi_{\ast}\mathbb{Z}_{\mathscr{B}}(2)$, a free $\mathbb{Z}_{\mathscr{B}}$-module of rank $23$. The algebraic fundamental group $\pi^{alg}_{1}(S, \overline{b})$ acts on $H^{4}(\overline{b})$. 
      \\\indent Consider the isometry group ${\rm Isometry}(L_{0, \mathbb{Z}_{\mathscr{B}}}, H^{4}(\overline{b}))$ and its subgroup 
     \begin{align}
         \widetilde{{\rm Isometry}}(L_{0, \mathbb{Z}_{\mathscr{B}}}, H^{4}(\overline{b})):=\{\alpha\in {\rm Isometry}(L_{0, \mathbb{Z}_{\mathscr{B}}}, H^{4}(\overline{b}))~|~\alpha(v)=c_{1}(\lambda_{\overline{b}})^{2}\}. \notag 
     \end{align}
     The group $K_{\mathscr{B}}$ acts on $L_{0, \mathbb{Z}_{\mathscr{B}}}$,  hence on $\widetilde{{\rm Isometry}}(L_{0, \mathbb{Z}_{\mathscr{B}}}, H^{4}(\overline{b}))$. The monodoromy action $\pi^{alg}_{1}(S, \overline{b})\to {\rm O}(H^{4}(\overline{b}))$ then induces an action on the quotient 
     \begin{align}
         K_{N, \mathscr{B}}\backslash\widetilde{{\rm Isometry}}(L_{0, \mathbb{Z}_{\mathscr{B}}}, H^{4}(\overline{b})).\notag
     \end{align}
     We therefore obtain the set 
     \begin{align}\label{level}
        \Big\{K_{N, \mathscr{B}}\backslash \widetilde{{\rm Isometry}}(L_{0, \mathbb{Z}_{\mathscr{B}}}, H^{4}(\overline{b}))\Big\}^{\pi_{1}^{alg}(S, \overline{b})}. 
     \end{align}
     \begin{prop}
         Let $S$ be a connected scheme over $\mathbb{Z}[1/N]$ and  $\overline{b} : {\rm Spec}(k(\overline{b}))\to S$ a geometric point. For every object $\mathcal{X}=(\pi : X\to S, \lambda)$ of $\mathcal{C}_{\mathbb{Z}[1/N]}$,  there is a canonical bijection between the set of level-$N$ structures on $\mathcal{X}$ (in $\widetilde{\mathcal{C}^{[N]}}$) and the set (\ref{level}).
     \end{prop}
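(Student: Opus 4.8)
The plan is to run the standard dictionary between locally constant \'etale sheaves on the connected base $S$ and discrete $\pi^{alg}_{1}(S,\overline{b})$-sets, to translate a level-$N$ structure into that language, and then to match the outcome with the set (\ref{level}) by a $p$-adic lifting argument, one prime of $\mathscr{B}$ at a time. First, since $\pi:X\to S$ is smooth and proper and $N$ is invertible on $S$, smooth and proper base change show that $\mathcal{F}_{N}:=R^{4}_{et}\pi_{\ast}\mathbb{Z}/N\mathbb{Z}(2)$ is a locally constant constructible sheaf of $\mathbb{Z}/N\mathbb{Z}$-modules, that its cup-product pairing is a morphism of sheaves, and that $h^{2}$ is a global section. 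As $S$ is connected, the fibre functor $\overline{b}^{\ast}$ is an equivalence onto the category of finite $\mathbb{Z}/N\mathbb{Z}[\pi^{alg}_{1}(S,\overline{b})]$-modules; under it $\mathcal{F}_{N}$, together with its pairing and its section $h^{2}$, corresponds to $M:=H^{4}(\overline{b})/NH^{4}(\overline{b})=H^{4}_{et}(X_{\overline{b}},\mathbb{Z}/N\mathbb{Z}(2))$ equipped with its monodromy action $\rho$, its reduced pairing, and the class $c_{1}(\lambda_{\overline{b}})^{2}\bmod N$, while $(L_{0,\mathbb{Z}/N\mathbb{Z}})_{S}$ with its pairing and $v$ corresponds to $L_{0,\mathbb{Z}/N\mathbb{Z}}$ with the trivial action. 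Hence a level-$N$ structure on $\mathcal{X}$ in $\widetilde{\mathcal{C}^{[N]}}$ — an isometry of sheaves $\alpha:\mathcal{F}_{N}\to(L_{0,\mathbb{Z}/N\mathbb{Z}})_{S}$ with $\alpha(h^{2})=v$ — is precisely a $\pi^{alg}_{1}(S,\overline{b})$-equivariant isometry $M\to L_{0,\mathbb{Z}/N\mathbb{Z}}$ sending $c_{1}(\lambda_{\overline{b}})^{2}$ to $v$, the target carrying the trivial action; passing to inverses, it is the same as an element $\psi$ of the set $I_{N}$ of isometries $L_{0,\mathbb{Z}/N\mathbb{Z}}\to M$ sending $v$ to $c_{1}(\lambda_{\overline{b}})^{2}$ which is fixed by the $\pi^{alg}_{1}(S,\overline{b})$-action $g\cdot\psi:=\rho(g)\circ\psi$. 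So the level-$N$ structures on $\mathcal{X}$ form canonically the set $I_{N}^{\pi^{alg}_{1}(S,\overline{b})}$.

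It then remains to construct a $\pi^{alg}_{1}(S,\overline{b})$-equivariant bijection of sets $K_{N,\mathscr{B}}\backslash\widetilde{{\rm Isometry}}(L_{0,\mathbb{Z}_{\mathscr{B}}},H^{4}(\overline{b}))\xrightarrow{\ \sim\ }I_{N}$ and to pass to invariants; naturality in $\overline{b}$, and with respect to morphisms in $\widetilde{\mathcal{C}^{[N]}}$, is then formal, the path ambiguity disappearing on invariant subsets. The map is reduction modulo $N$: since only primes of $\mathscr{B}$ divide $N$, one has $H^{4}(\overline{b})/N=M$ and $L_{0,\mathbb{Z}_{\mathscr{B}}}/N=L_{0,\mathbb{Z}/N\mathbb{Z}}$ compatibly with the pairings and with $c_{1}(\lambda_{\overline{b}})^{2}$ and $v$, so reduction sends $\widetilde{{\rm Isometry}}(L_{0,\mathbb{Z}_{\mathscr{B}}},H^{4}(\overline{b}))$ into $I_{N}$; it is equivariant for the monodromy (acting on $\widetilde{{\rm Isometry}}$ by postcomposition) and $K_{N,\mathscr{B}}$-invariant (since $K_{N,\mathscr{B}}$ reduces to $1$ modulo $N$ and acts by precomposition), so it descends to an equivariant map on the quotient. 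Its bijectivity rests on three points: (a) $\widetilde{{\rm Isometry}}(L_{0,\mathbb{Z}_{\mathscr{B}}},H^{4}(\overline{b}))$ is non-empty, because the $\mathbb{Z}_{\mathscr{B}}$-cohomology lattice of a cubic fourfold together with its polarization class is $(L_{0},v)$ — true over $\mathbb{C}$ by the computation of $(H^{4}(X,\mathbb{Z}(2)),(h^{2}))$ recalled above, and propagated to all geometric fibres by smooth and proper base change; (b) reduction modulo $N$ is surjective onto $I_{N}$, because for each $p\in\mathscr{B}$ the stabilizer of $v$ in ${\rm SO}(L_{0,\mathbb{Z}_{p}})$ surjects onto its points modulo $p^{v_{p}(N)}$, so every class in $I_{N}$ lifts $p$-adically; and (c) two lifts of the same mod-$N$ isometry differ by an element of ${\rm SO}(L_{0,\mathbb{Z}_{\mathscr{B}}})$ fixing $v$ and congruent to $1$ modulo $N$, i.e. by an element of $K_{N,\mathscr{B}}$, which acts simply transitively on the fibres. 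Combining (a)--(c) produces the desired bijection, and passing to $\pi^{alg}_{1}(S,\overline{b})$-invariants identifies it with the set of level-$N$ structures on $\mathcal{X}$, as claimed.

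The one delicate input is the $p$-adic lifting in (b) — equivalently the analogue for cubic fourfolds of Rizov's surjectivity lemma for K3 lattices — together with the orbit statements in (a) and (c), at the small primes dividing $N$, most notably $p=2$ and $p=3$ (at $p=3$ the orthogonal complement $\langle v\rangle^{\perp}$ is not unimodular, so the stabilizer of $v$ is not visibly smooth over $\mathbb{Z}_{3}$). For odd $p$ prime to the discriminant this is the standard smoothness of the orthogonal group scheme of a nondegenerate $\mathbb{Z}_{p}$-lattice together with Hensel's lemma; at the remaining primes one instead uses that $L_{0}$ itself is unimodular and runs a Witt-type extension argument for the pair $(L_{0},v)$ to produce and to compare $p$-adic lifts, exactly as in Rizov's treatment of the analogous lattice attached to polarized K3 surfaces. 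Everything else is the formal \'etale-sheaf/$\pi_{1}$ dictionary and presents no real difficulty.
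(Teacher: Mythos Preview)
Your approach is essentially the same as the paper's. Both arguments run the \'etale-sheaf/$\pi_{1}$ dictionary to convert a level-$N$ structure into a $\pi^{alg}_{1}(S,\overline{b})$-invariant isometry on the stalk at $\overline{b}$, and both identify this with the set (\ref{level}) via the reduction-mod-$N$ map. The only substantive difference is in the converse direction: where you carefully decompose the lifting problem into non-emptiness, surjectivity of reduction on the stabilizer of $v$, and simple transitivity of $K_{N,\mathscr{B}}$ on fibres, the paper dispatches the existence of a lift in a single phrase (``by strong approximation''). Your more explicit treatment --- including the honest flag that the stabilizer of $v$ is not obviously smooth over $\mathbb{Z}_{3}$ and that $p=2$ needs a separate Witt-type argument --- is exactly the content hiding behind that phrase, and is arguably a more accurate description of what is actually required; strong approximation in the usual sense (density of rational points in $G(\mathbb{A}_{f})$) does not by itself produce $\mathbb{Z}_{\mathscr{B}}$-integral lifts.
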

     \begin{proof}
         Given an element $\alpha K_{N, \mathscr{B}}$ in (\ref{level}), reducing  modulo $N$  yields a $\pi^{alg}_{1}(S, \overline{b})$-invariant isometry $\alpha:L_{0, \mathbb{Z}/N\mathbb{Z}}\to \overline{b}^{\ast}R^{4}_{et}\pi_{\ast}\mathbb{Z}/N\mathbb{Z}(2)$, independent of the choice of representative of  $\alpha K_{N, \mathscr{B}}$. By \cite[Lemma 59.65.1 (1)]{stk}, this extends uniquely to an isometry of \'etale sheaves $(L_{0, \mathbb{Z}/N\mathbb{Z}})_{S}\to R^{4}_{et}\pi_{\ast}\mathbb{Z}/N\mathbb{Z}(2)$ inducing $\alpha$.
         \\\indent Conversely, any such isometry  $(L_{0, \mathbb{Z}/N\mathbb{Z}})_{S}\to R^{4}_{et}\pi_{\ast}\mathbb{Z}/N\mathbb{Z}(2)$ induces an isometry $L_{0, \mathbb{Z}/N\mathbb{Z}}\to \overline{b}^{\ast}R^{4}_{et}\pi_{\ast}\mathbb{Z}/N\mathbb{Z}(2)$, which is $\pi^{alg}_{1}(X, \overline{b})$-invariant (\cite[Lemma 59.65.1 (1)]{stk}). By strong approximation, there exists a marking $\alpha : L_{0, \mathbb{Z}_{\mathscr{B}}}\to H^{4}(\overline{b})$ inducing the isometry $L_{0, \mathbb{Z}/N\mathbb{Z}}\to \overline{b}^{\ast}R^{4}_{et}\pi_{\ast}\mathbb{Z}/N\mathbb{Z}(2)$. Then the orbit $\alpha K_{N, \mathscr{B}}$ is $\pi^{alg}_{1}(S, \overline{b})$-invariant.
     \end{proof}
   \begin{rmk}
       
    If  $\overline{b'}$ is another geometric point of $S$, there is a canonical bijection between the set (\ref{level}) and 
    \begin{align}\label{b^'}
         \Big\{ K_{N, \mathscr{B}}\backslash \widetilde{{\rm Isometry}}(L_{0, \mathbb{Z}_{\mathscr{B}}}, H^{4}(\overline{b^{'}}))\Big\}^{\pi^{alg}_{1}(S, \overline{b^{'}})}.
    \end{align}
   Indeed, let $h:=c_{1}(\lambda_{\overline{b}})$ and $h^{'}:=c_{1}(\lambda_{\overline{b^{'}}})$. We can choose an isomorphism
    \begin{align}\label{fun}
        \delta_{\pi} : \pi^{alg}_{1}(S, \overline{b})\to \pi^{alg}_{1}(S, \overline{b^{'}})
    \end{align}
    and a unique isometry
    \begin{align}
     \delta_{et}:   H^{4}_{et}(X_{\overline{b}}, \mathbb{Z}_{\mathscr{B}}(2))\to H^{4}_{et}(X_{\overline{b^{'}}}, \mathbb{Z}_{\mathscr{B}}(2))\notag 
    \end{align}
    mapping $h^{2}$ to $h^{'2}$,  compatible with $\delta_{\pi}$. For $\alpha$ in (\ref{level}), set $\delta_{et}\circ \alpha$ in $K_{N, \mathscr{B}}\backslash \widetilde{{\rm Isometry}}(L_{0, \mathbb{Z}_{\mathscr{B}}}, H^{4}(\overline{b^{'}}))$. For any $\gamma\in \pi^{alg}_{1}(S, \overline{b^{'}})$, 
    \begin{align}
        \gamma(\delta_{et}\circ \alpha)=(\gamma\delta_{et})\circ \alpha=\delta_{et}\circ(\delta_{\pi}^{-1}(\gamma)\alpha)=\delta_{et}\circ \alpha, \notag 
    \end{align}
so $\delta_{et}\circ \alpha$ is $\pi^{alg}_{1}(S, \overline{b^{'}})$-invariant. The deference between two choices of $\delta_{\pi}$  is an inner automorphism of $\pi^{alg}_{1}(S, \overline{b})$ (\cite[Complement 9.5.14]{ss}), hence the construction  is independent of  choices. This gives  the canonical bijection between (\ref{level}) and (\ref{b^'}). 
\end{rmk}
    Via the functor $p : \widetilde{\mathcal{C}^{[N]}}\ni (\pi:X\to S, \lambda, \alpha)\mapsto S\in (Sch/\mathbb{Z}[1/N])$, the category $\widetilde{\mathcal{C}^{[N]}}$ is a category over $\mathbb{Z}[1/N]$.
   
     \begin{lemma}
         The category $\widetilde{\mathcal{C}^{[N]}}$ is a CFG over $\mathbb{Z}[1/N]$.
     \end{lemma}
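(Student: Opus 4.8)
The plan is to deduce the lemma from the fact, already recorded above, that $\mathcal{C}^{[N]}$ is a CFG (indeed a Deligne--Mumford stack) over $\mathbb{Z}[1/N]$ by \cite[Theorem 3.2, (2)]{jal}. Since $\widetilde{\mathcal{C}^{[N]}}$ is by construction a \emph{full} subcategory of $\mathcal{C}^{[N]}$ and $p$ is the restriction of the structure functor $\mathcal{C}^{[N]}\to (Sch/\mathbb{Z}[1/N])$, the two axioms of a category fibered in groupoids will follow once we check that $\widetilde{\mathcal{C}^{[N]}}$ is stable under pullback inside $\mathcal{C}^{[N]}$: namely, if $\mathcal{X}_{2}=(\pi_{2}:X_{2}\to S_{2},\lambda_{2},\alpha_{2})$ lies in $\widetilde{\mathcal{C}^{[N]}}$ and $(f_{S},f,\varphi):\mathcal{X}_{1}\to\mathcal{X}_{2}$ is a cartesian morphism of $\mathcal{C}^{[N]}$ lying over a morphism $f_{S}:S_{1}\to S_{2}$ of $\mathbb{Z}[1/N]$-schemes, then $\mathcal{X}_{1}=(\pi_{1}:X_{1}\to S_{1},\lambda_{1},\alpha_{1})$ again lies in $\widetilde{\mathcal{C}^{[N]}}$. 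Granting this, cartesian lifts in $\widetilde{\mathcal{C}^{[N]}}$ exist because they do in $\mathcal{C}^{[N]}$, and the unique-lifting axiom is inherited verbatim: any lift produced by the CFG $\mathcal{C}^{[N]}$ between objects of $\widetilde{\mathcal{C}^{[N]}}$ is automatically a morphism of $\widetilde{\mathcal{C}^{[N]}}$, since the latter is full. The same fullness shows that the fiber category of $\widetilde{\mathcal{C}^{[N]}}$ over any $\mathbb{Z}[1/N]$-scheme is a groupoid, being a full subcategory of the groupoid fiber of $\mathcal{C}^{[N]}$.

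It thus remains to prove stability under pullback, and here the point is that every ingredient in the definition of $\widetilde{\mathcal{C}^{[N]}}$ is compatible with arbitrary base change. For a cartesian morphism, the isomorphism $\varphi:R^{4}_{et}\pi_{1\ast}\mathbb{Z}/N\mathbb{Z}(2)\xrightarrow{\sim}f_{S}^{\ast}R^{4}_{et}\pi_{2\ast}\mathbb{Z}/N\mathbb{Z}(2)$ is the canonical base-change isomorphism attached to $X_{1}\cong X_{2}\times_{S_{2}}S_{1}$, and $\alpha_{1}=(f_{S}^{\ast}\alpha_{2})\circ\varphi$. By the smooth proper base change theorem, together with the functoriality of the cup product and of the trace isomorphism $R^{8}_{et}\pi_{i\ast}\mathbb{Z}/N\mathbb{Z}(4)\cong\mathbb{Z}/N\mathbb{Z}$, the map $\varphi$ is an isometry for the cup-product pairings; and since $\lambda_{1}=f^{\ast}\lambda_{2}$ and first Chern classes commute with pullback, $\varphi$ carries the class $h_{1}^{2}$ to $f_{S}^{\ast}(h_{2}^{2})$. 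Composing isometries, $\alpha_{1}$ is an isometry of \'etale sheaves $(R^{4}_{et}\pi_{1\ast}\mathbb{Z}/N\mathbb{Z}(2),(-,-))\to((L_{0,\mathbb{Z}/N\mathbb{Z}})_{S_{1}},(-,-)_{\mathbb{Z}/N\mathbb{Z}})$, and $\alpha_{1}(h_{1}^{2})=(f_{S}^{\ast}\alpha_{2})(f_{S}^{\ast}(h_{2}^{2}))=f_{S}^{\ast}(\alpha_{2}(h_{2}^{2}))=f_{S}^{\ast}(v)=v$, the last equality because $v$ is a (locally constant) section of the constant sheaf $(L_{0,\mathbb{Z}/N\mathbb{Z}})$. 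Hence $\mathcal{X}_{1}\in\widetilde{\mathcal{C}^{[N]}}$, as required.

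The main obstacle is not conceptual but bookkeeping: one must identify, for a cartesian arrow of $\mathcal{C}^{[N]}$, the transition isomorphism $\varphi$ with the canonical \'etale base-change map, and then invoke the correct compatibility statements — smooth proper base change, functoriality of the cup product, naturality of the trace/orientation isomorphism, and naturality of $c_{1}$ — rather than reprove them. Once stability under pullback is in place, the verification that $p:\widetilde{\mathcal{C}^{[N]}}\to(Sch/\mathbb{Z}[1/N])$ satisfies the two CFG axioms is purely formal, as explained above.
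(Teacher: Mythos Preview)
Your argument is correct. Both you and the paper ultimately verify the same thing---that the standard pullback of an object of $\widetilde{\mathcal{C}^{[N]}}$ again lies in $\widetilde{\mathcal{C}^{[N]}}$---but you organise the remainder differently. The paper re-checks the cartesian property of the pullback morphism and the groupoid condition on fibres by hand, essentially reproving for $\widetilde{\mathcal{C}^{[N]}}$ what is already known for $\mathcal{C}^{[N]}$. You instead invoke that $\mathcal{C}^{[N]}$ is a CFG and use fullness of $F:\widetilde{\mathcal{C}^{[N]}}\hookrightarrow\mathcal{C}^{[N]}$ to transport both the cartesian-lift axiom and the groupoid-fibre axiom for free, reducing the entire lemma to the stability-under-pullback check. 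This is cleaner and avoids duplication; moreover, you actually spell out \emph{why} the conditions defining $\widetilde{\mathcal{C}^{[N]}}$ (isometry, $h^{2}\mapsto v$) are preserved under base change, a point the paper's proof takes for granted. One small remark: your phrase ``for a cartesian morphism, $\varphi$ is the canonical base-change isomorphism'' is slightly stronger than needed---since you only need to exhibit \emph{one} cartesian lift landing in $\widetilde{\mathcal{C}^{[N]}}$, it would suffice to take the standard pullback (where $\varphi$ is the base-change map by construction) rather than argue that every cartesian arrow has this form; but in a CFG all morphisms are cartesian and the triangle forces $\varphi$ anyway, so no harm is done.
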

     \begin{proof}
         We must show that the category $p : \widetilde{\mathcal{C}^{[N]}}\to (Sch/\mathbb{Z}[1/N])$ is a fibered category over $(Sch/\mathbb{Z}[1/N])$. Let $f : S_{1}\to S_{2}$ be a morphism in $(Sch/\mathbb{Z}[1/N])$ and let $(\pi : X\to S_{2}, \lambda, \alpha) \in \widetilde{\mathcal{C}^{[N]}}$ be an object with $p(\pi, \lambda, \alpha)=S_{2}$. Set  $X_{S_{1}}\coloneq X\times_{S_{2}, f}S_{1}$,  and denote by $\lambda_{S_{1}}$  and $\alpha_{S_{1}}$ the corresponding pullbacks. Then the triple $(X_{S_{1}}, \lambda_{S_{1}}, \alpha_{S_{1}})$ defines an object of $\widetilde{\mathcal{C}^{[N]}}$ with $p(X_{S_{1}}, \lambda_{S_{1}}, \alpha_{S_{1}})=S_{1}$. The canonical morphism $\varphi : (X_{S_{1}}, \lambda_{S_{1}}, \alpha_{S_{1}})\to (X, \lambda, \alpha)$ induced by $f : S_{1}\to S_{2}$ satisfies 
         $p(\varphi)=f$. It remains to verify that $\varphi$ is cartesian. That is, for any object $(Y\to S, \mu, \beta)\in \widetilde{\mathcal{C}^{[N]}}$ with a morphism $\psi : (Y, \mu, \beta)\to (X, \lambda, \alpha)$ whose projeciton $p(\psi)$ factors as 
         \begin{align}
             \notag 
             S\xrightarrow{h} S_{1}\xrightarrow{f} S_{2},
         \end{align}
          there exists a unique morphism $ (Y, \mu, \beta)\to (X_{S_{1}}, \lambda_{S_{1}}, \alpha_{S_{1}})$ such that the following diagram commutes:
     \[    \begin{tikzcd}
             (Y, \mu, \beta)\arrow[bend left]{rr}{\psi}\arrow[dashed]{r}\arrow[maps to]{d}& (X_{S_{1}}, \lambda_{S_{1}}, \alpha_{S_{1}})\arrow[maps to]{d} \arrow[r, "\varphi"]& (X, \lambda, \alpha)\arrow[maps to]{d}
             \\ S\arrow[r, "h"']\arrow[rr, bend right, "p(\psi)"']& S_{1}\arrow[r, "f"']& S_{2}.
         \end{tikzcd}
         \]
         By definition, $\psi$ consists of the morphism $p(\psi) : S\to S_{2}$ and an isomorphism $Y\xrightarrow{\sim} X_{S}$ over $S$. Since $X_{S}=X_{S_{1}}\times_{S_{1}, h}S$, the pair $(h, Y\to (X_{S_{1}})_{S})$ yields the required morphism $(Y, \mu, \beta)\to (X_{S_{1}}, \lambda_{S_{1}}, \alpha_{S_{1}})$. Hence  $\widetilde{\mathcal{C}^{[N]}}$ is a fibered category over $(Sch/\mathbb{Z}[1/N])$.
         \\\indent Finally,  fix a scheme $S$ over $\mathbb{Z}[1/N]$. Any morphism in the fiber category $\widetilde{\mathcal{C}^{[N]}}(S)$ is  of the form $(id_{S}, X_{1}\cong_{S} X_{2})$; thus $\widetilde{\mathcal{C}^{[N]}}$ is a groupoid over $\mathbb{Z}[1/N]$.
     \end{proof}
     \begin{lemma}
         The CFG $\widetilde{\mathcal{C}^{[N]}}$ is a stack over $\mathbb{Z}[1/N]$ for the \'etale topology; that is,  the following conditions hold:
         \\(1) For any $S\in (Sch/\mathbb{Z}[1/N])$ and any two objects $\mathcal{X}, \mathcal{Y}\in \widetilde{\mathcal{C}^{[N]}}(S)$, the presheaf 
         \begin{align}
             {\rm Isom}_{S}(\mathcal{X}, \mathcal{Y}) : (Sch/S)&\to Sets \notag
             \\(\pi : S^{'}\to S)&\mapsto {\rm Hom}_{\widetilde{\mathcal{C}^{[N]}}(S^{'})}(\pi^{\ast}\mathcal{X}, \pi^{\ast}\mathcal{Y}) \notag 
             \end{align}
         is a sheaf for the \'etale topology.
         \\(2) Let $S\in(Sch/\mathbb{Z}[1/N])$  and  $(S_{i})_{i\in I}$ be an  \'etale covering of $S$. Suppose we are given objects $\mathcal{X}_{i}\in \widetilde{\mathcal{C}^{[N]}}(S_{i})$ and isomorphisms of $\widetilde{\mathcal{C}^{[N]}}(S_{ij})$
         \begin{align}
             \varphi_{ij} : \mathcal{X}_{j}|_{S_{ij}}\to \mathcal{X}_{i}|_{S_{ij}}, ~\varphi_{ij}|_{S_{ijk}}\circ\varphi_{jk}|_{S_{ijk}}=\varphi_{ik}|_{S_{ijk}}, \notag 
         \end{align}
          where $S_{ij}=S_{i}\times_{S}S_{j}$ and $S_{ijk}=S_{i}\times_{S}S_{j}\times_{S}S_{k}$. 
         Then there exist an object $\mathcal{X}\in \widetilde{\mathcal{C}^{[N]}}(S)$ and isomorphism of $\widetilde{\mathcal{C}^{[N]}}(S_{i})$
             $\psi_{i}: \mathcal{X}|_{S_{i}}\to \mathcal{X}_{i}$ 
        such that 
        \begin{align}\label{desce}\varphi_{ij}=\psi_{i}|_{S_{ij}}\circ (\psi_{j}|_{S_{ij}})^{-1}.
        \end{align}
       
     \end{lemma}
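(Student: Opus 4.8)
The plan is to reduce both conditions to the corresponding facts for $\mathcal{C}^{[N]}$, which is already known to be a Deligne--Mumford stack --- in particular a stack for the \'etale topology --- over $\mathbb{Z}[1/N]$ by \cite[Theorem 3.2]{jal}. The three ingredients I would use are: that the functor $F$ of (\ref{Fd}) is fully faithful; that $\widetilde{\mathcal{C}^{[N]}}$ is stable under pullback along morphisms of $\mathbb{Z}[1/N]$-schemes (this was already needed to see it is a CFG); and that the property defining $\widetilde{\mathcal{C}^{[N]}}$ inside $\mathcal{C}^{[N]}$, namely ``$\alpha$ is an isometry of \'etale sheaves sending $h^{2}$ to $v$'', is local for the \'etale topology on the base.

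For (1), I would argue that for objects $\mathcal{X},\mathcal{Y}\in\widetilde{\mathcal{C}^{[N]}}(S)$ and any $\pi:S'\to S$, the pullbacks $\pi^{\ast}\mathcal{X}$ and $\pi^{\ast}\mathcal{Y}$ still lie in $\widetilde{\mathcal{C}^{[N]}}(S')$ by pullback-stability, and the morphism sets $\mathrm{Hom}_{\widetilde{\mathcal{C}^{[N]}}(S')}(\pi^{\ast}\mathcal{X},\pi^{\ast}\mathcal{Y})$ and $\mathrm{Hom}_{\mathcal{C}^{[N]}(S')}(\pi^{\ast}\mathcal{X},\pi^{\ast}\mathcal{Y})$ coincide because $F$ is full. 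Hence the presheaf ${\rm Isom}_{S}(\mathcal{X},\mathcal{Y})$ computed in $\widetilde{\mathcal{C}^{[N]}}$ agrees with the one computed in $\mathcal{C}^{[N]}$, which is an \'etale sheaf since $\mathcal{C}^{[N]}$ is a stack.

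For (2), I would start from an \'etale cover $(S_{i}\to S)_{i\in I}$, objects $\mathcal{X}_{i}\in\widetilde{\mathcal{C}^{[N]}}(S_{i})$ and a cocycle $(\varphi_{ij})$ as in the statement; regard the $\mathcal{X}_{i}$ as objects of $\mathcal{C}^{[N]}(S_{i})$ and the $\varphi_{ij}$ as morphisms of $\mathcal{C}^{[N]}(S_{ij})$ (legitimate by fullness of $F$), and apply descent in the stack $\mathcal{C}^{[N]}$ to obtain an object $\mathcal{X}=(\pi:X\to S,\lambda,\alpha)\in\mathcal{C}^{[N]}(S)$ with gluing isomorphisms $\psi_{i}:\mathcal{X}|_{S_{i}}\xrightarrow{\sim}\mathcal{X}_{i}$ satisfying (\ref{desce}). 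The remaining --- and essentially the only --- thing to check is that $\mathcal{X}$ lands in $\widetilde{\mathcal{C}^{[N]}}(S)$, i.e. that $\alpha$ is an isometry sending $h^{2}$ to $v$; since both ``$\alpha$ preserves the pairing'' and ``$\alpha(h^{2})=v$'' are equalities of sections of \'etale sheaves on $S$, they may be verified after restriction to each $S_{i}$, where $\psi_{i}$ identifies $\alpha|_{S_{i}}$ with $\alpha_{i}$, which has both properties by hypothesis. Each $\psi_{i}$ is then automatically a morphism of $\widetilde{\mathcal{C}^{[N]}}(S_{i})$ by fullness, completing the descent.

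The main (and essentially the only substantive) obstacle is verifying that ``isometry sending $h^{2}$ to $v$'' is an \'etale-local condition, equivalently that $\widetilde{\mathcal{C}^{[N]}}$ is stable under \'etale base change and that the above equalities of sections descend: this comes down to the compatibility with \'etale base change of the cup-product pairing on $R^{4}_{et}\pi_{\ast}\mathbb{Z}/N\mathbb{Z}(2)$, of the trace isomorphism $R^{8}_{et}\pi_{\ast}\mathbb{Z}/N\mathbb{Z}(4)\cong\mathbb{Z}/N\mathbb{Z}$, and of the first Chern class $h$ of $\lambda$ (hence of $h^{2}$), all of which are standard. Once this is in place, the entire statement follows formally from $F$ being fully faithful and from $\mathcal{C}^{[N]}$ already being a stack.
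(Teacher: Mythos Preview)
Your argument is correct. For (1) it coincides with the paper's: both identify ${\rm Isom}_{S}(\mathcal{X},\mathcal{Y})$ with ${\rm Isom}_{S}(F(\mathcal{X}),F(\mathcal{Y}))$ via the fully faithful $F$ and invoke that $\mathcal{C}^{[N]}$ is a stack. For (2) there is a small but genuine difference in route: you descend the full triple $(\pi,\lambda,\alpha)$ in $\mathcal{C}^{[N]}$ and then check that the resulting $\alpha$ satisfies the extra isometry and $h^{2}\mapsto v$ conditions \'etale-locally, whereas the paper descends only the underlying polarized family $(\pi,\lambda)$ in $\mathcal{C}_{\mathbb{Z}[1/N]}$ and then glues the level structures $\alpha_{i}$ by hand using the sheaf property of $R^{4}_{et}\pi_{\ast}\mathbb{Z}/N\mathbb{Z}(2)$. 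Both arguments ultimately rest on the same \'etale-local nature of the defining conditions; your version is slightly more streamlined since it uses the stack property of $\mathcal{C}^{[N]}$ in full, while the paper's version makes the construction of $\alpha$ more explicit but leaves the verification that the glued $\alpha$ lands in $\widetilde{\mathcal{C}^{[N]}}$ implicit.
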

     \begin{proof}(1) The functor (\ref{Fd}) induces an isomorphism of \'etale presheaves 
     \begin{align}\label{F}
                 {\rm Isom}_{S}(\mathcal{X}, \mathcal{Y}) \to {\rm Isom}_{S}(F(\mathcal{X}), F(\mathcal{Y})).
            \end{align}
            Since ${\rm Isom}_{S}(F(\mathcal{X}), F(\mathcal{Y}))$ is a sheaf, so is ${\rm Isom}_{S}(\mathcal{X}, \mathcal{Y})$.
    \\ (2) Write $\mathcal{X}_{i}=(\pi_{i}:X\to S_{i}, \lambda_{i}, \alpha_{i})$. Because  $\mathcal{C}_{\mathbb{Z}[1/N]}$ is a stack, the given data yield an object $(\pi : X\to S , \lambda)\in \mathcal{C}_{\mathbb{Z}[1/N]}$ and isomorphisms $(f_{S_{i}}, f_{i}) : (\pi :X\to S, \lambda)|_{S_{i}}\to (\pi_{i} : X\to S_{i}, \lambda_{i})$ in $\mathcal{C}_{\mathbb{Z}[1/N]}$ satisfying (\ref{desce}). For each $i \in I$, consider the isomorphism of \'etale sheaves:
    \begin{align}
        R^{4}_{et}\pi_{\ast}\mathbb{Z}/N\mathbb{Z}(2)|_{S_{i}}\xrightarrow{f^{\ast}_{i}} R^{4}_{et}\pi_{i\ast}\mathbb{Z}/N\mathbb{Z}(2)\xrightarrow{\alpha_{i}} (L_{0, \mathbb{Z}/N\mathbb{Z}})_{S}. \notag 
    \end{align}
         The family $(f^{\ast}_{i}\circ \alpha_{i})_{i}$  satisfies the gluing condition for the \'etale covering $(S_{i})_{i\in I}$ of $S$. Thus we  obtain a level-$N$ structure $\alpha$ on  $(\pi : X\to S, \lambda)$, and the triple $(\pi :X\to S, \lambda, \alpha)$ satisfies (\ref{desce}).
     \end{proof}

        \subsection{The proof of Theorem \ref{main1}}

        \begin{lemma}\label{dm}For every integer $N\geq 1$, 
            the $\mathbb{Z}[1/N]$-stack $\widetilde{\mathcal{C}^{[N]}}$ is a Deligne-Mumford stack locally of finite type. If $N\geq 3$ and $N$ is coprime to $2310$, then $\widetilde{\mathcal{C}^{[N]}}$ is an algebraic space over $\mathbb{Z}[1/N]$.
        \end{lemma}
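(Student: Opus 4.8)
The plan is to realize $\widetilde{\mathcal{C}^{[N]}}$ as an open and closed substack of $\mathcal{C}^{[N]}$ through the fully faithful functor $F$ of (\ref{Fd}), inherit the Deligne-Mumford property and local finite type from $\mathcal{C}^{[N]}$, and then obtain the algebraic space statement by showing that, under the hypotheses $N\geq 3$ and $(N,2310)=1$, the level structure rigidifies every object. For the first point I would show that $F$ is representable by open and closed immersions. Let $T$ be a $\mathbb{Z}[1/N]$-scheme and $(\pi\colon X\to T,\lambda,\alpha)\in\mathcal{C}^{[N]}(T)$; one must see that the $2$-fibre product $\widetilde{\mathcal{C}^{[N]}}\times_{\mathcal{C}^{[N]}}T$ is represented by an open and closed subscheme of $T$. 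Since $\pi$ is smooth and proper and $N$ is invertible on $T$, smooth-proper base change shows that $R^{4}_{et}\pi_{\ast}\mathbb{Z}/N\mathbb{Z}(2)$ is a locally constant constructible sheaf of free $\mathbb{Z}/N\mathbb{Z}$-modules whose formation commutes with base change; the same holds for the cup-product pairing on it and for the global section $h^{2}$. Transporting the constant pairing and the vector $v$ on $(L_{0,\mathbb{Z}/N\mathbb{Z}})_{T}$ through $\alpha$, the condition ``$\alpha$ is an isometry'' becomes the equality of two global sections of a locally constant constructible sheaf of finite sets on $T_{et}$, and ``$\alpha(h^{2})=v$'' becomes an equality of two global sections of $(L_{0,\mathbb{Z}/N\mathbb{Z}})_{T}$. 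The locus on which two sections of a locally constant constructible sheaf of finite sets agree is represented by an open and closed subscheme: this can be checked étale-locally, where the sheaf is constant and the sections are locally constant functions, and then descended. Intersecting the two loci yields the required open and closed $T'\subseteq T$.

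Granting this, $\widetilde{\mathcal{C}^{[N]}}$ is identified with an open and closed substack of $\mathcal{C}^{[N]}$, which is Deligne-Mumford, separated and of finite type over $\mathbb{Z}[1/N]$ by \cite[Theorem 3.2, (2)]{jal}; hence $\widetilde{\mathcal{C}^{[N]}}$ is a Deligne-Mumford stack of finite type over $\mathbb{Z}[1/N]$, in particular locally of finite type, which is the first assertion. For the second assertion I would use the criterion that a Deligne-Mumford stack is an algebraic space exactly when its inertia is trivial, i.e.\ when every object over an algebraically closed field has trivial automorphism group. So let $k$ be an algebraically closed field with $\mathrm{char}(k)$ coprime to $N$ and let $(X/k,\lambda,\alpha)\in\widetilde{\mathcal{C}^{[N]}}(k)$. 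An automorphism of this object is an automorphism $f$ of $X$ with $f^{\ast}\lambda=\lambda$ whose compatibility with $\alpha$ forces $f$ to act trivially on $H^{4}_{et}(X,\mathbb{Z}/N\mathbb{Z}(2))$. Then one invokes \cite[Theorem 1.1]{jal}: since $N\geq 3$ and $N$ is coprime to $2310=2\cdot3\cdot5\cdot7\cdot11$ (the set of primes $p$ for which some cubic fourfold over an algebraically closed field carries an automorphism of order $p$), the automorphism group of a polarized cubic fourfold over such a $k$ acts faithfully on mod-$N$ cohomology, whence $f=\mathrm{id}_{X}$. Thus every object has trivial automorphisms and $\widetilde{\mathcal{C}^{[N]}}$ is an algebraic space over $\mathbb{Z}[1/N]$.

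The steps concerning $F$ being an open-closed immersion, and the inheritance of the Deligne-Mumford and finite-type properties, are essentially formal, once one has observed that the ``isometry'' and ``$h^{2}\mapsto v$'' conditions are cut out by the coincidence of sections of locally constant constructible sheaves. The genuine input — and the only place where the hypotheses $N\geq 3$ and $(N,2310)=1$ enter — is the rigidity statement \cite[Theorem 1.1]{jal} used in the last step: no nontrivial automorphism of a polarized cubic fourfold can act trivially on its $N$-torsion cohomology once $N$ avoids the small primes $2,3,5,7,11$. I expect the main care to be needed in pinning down that this is exactly what is required, and (if one wishes to argue over a general base $T$ rather than only on geometric points) in checking that fibrewise triviality of $f$ propagates to $f=\mathrm{id}_{X}$, which follows from separatedness of $X\to T$ together with the unramifiedness of the automorphism group scheme of a cubic fourfold.
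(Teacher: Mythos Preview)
Your proposal is correct and takes a genuinely different route from the paper. The paper verifies Artin-type representability criteria for $\widetilde{\mathcal{C}^{[N]}}$ directly, following \cite[Corollary 10.11]{lau}: it checks separately that the $\mathrm{Isom}$-sheaves are algebraic spaces (via the fully faithful $F$), that the stack is locally of finite presentation, that effective formal versal deformations exist, and that formal smoothness is open, and then concludes Deligne--Mumford from unramifiedness of the diagonal. For the algebraic-space assertion it quotes that $\mathcal{C}^{[N]}$ is already an algebraic space under the hypotheses \cite[Theorem 3.2(3)]{jal}, transfers trivial automorphisms through $F$, and invokes \cite[Corollary 8.3.5]{ols}.

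By contrast, you bypass the Artin axioms entirely by proving up front that $F$ is representable by open and closed immersions---the key observation being that the isometry condition and the condition $\alpha(h^{2})=v$ are both equalities of global sections of locally constant constructible sheaves of finite sets, hence cut out open-and-closed loci. This is cleaner: it immediately identifies $\widetilde{\mathcal{C}^{[N]}}$ with an open and closed substack of $\mathcal{C}^{[N]}$, so all structural properties (Deligne--Mumford, separated, finite type) are inherited at once, and in fact you get ``finite type'' rather than merely ``locally of finite type''. Your argument also anticipates and strengthens what the paper establishes only later, in the proof of Theorem \ref{main1}, where $F$ is shown to be an open immersion via a more circuitous \'etale-plus-monomorphism argument. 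For the second assertion the two approaches coincide in substance---both rest on the rigidity input from \cite{jal}---though you appeal to \cite[Theorem 1.1]{jal} directly while the paper routes through the algebraic-space statement for $\mathcal{C}^{[N]}$.
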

        \begin{proof}
            We verify that $\widetilde{\mathcal{C}^{[N]}}$ satisfies  conditions (i)-(iv) of \cite[Corollary 10.11]{lau}. 
            \\\textbf{Step 1}. For any $\mathbb{Z}[1/N]$-scheme $S$ and objects $\mathcal{X}, \mathcal{Y}\in \widetilde{\mathcal{C}^{[N]}}$, the functor 
            \begin{align}
                {\rm Isom}_{S}(\mathcal{X}, \mathcal{Y}) : (Sch/S)\to Sets \notag 
            \end{align}is a separated, unramified algebraic space of finite type over $S$. 
            \\ \indent Since $\mathcal{C}^{[N]}$ is algebraic and locally of finite type,  ${\rm Isom}_{S}(F(\mathcal{X}), F(\mathcal{Y}))$ is a separated algebraic space of finite type over $S$ by \cite[Corollary 10.11]{lau}. Hence, by (\ref{F}), ${\rm Isom}_{S}(\mathcal{X}, \mathcal{Y})$ has the same properties. 
            \\\textbf{Step 2}. The $\mathbb{Z}[1/N]$-stack $\widetilde{\mathcal{C}^{[N]}}$ is locally of finite presentation.
            \\\indent The natural morphism $p : \widetilde{\mathcal{C}^{[N]}}\to \mathcal{C}$ is limit-preserving on objects; therefore $p$ is locally of finite presentation. Since $\mathcal{C}$ itself is locally of finite presentation, so is $\widetilde{\mathcal{C}^{[N]}}$.
            \\\textbf{Step 3}. Let $k$ be a field of finite type over $\mathbb{Z}[1/N]$ and $\overline{x}\in \widetilde{\mathcal{C}^{[N]}}({\rm Spec}(k))$. There exist a finite separable extension $k^{'}/k$, a complete local noetherian $\mathcal{O}_{S}$-ring $A^{'}$ with  residue field $k^{'}$, and a point $x^{'}\in \widetilde{\mathcal{C}^{[N]}}({\rm Spec}(A^{'}))$ such that: 
            \\(a) the diagram of $\mathbb{Z}[1/N]$-stacks 
            \[\xymatrix{{\rm Spec}(k^{'})\ar[r]\ar[d]& {\rm Spec}(k)\ar[d]^{\overline{x}}\\ {\rm Spec}(A^{'})\ar[r]_{x^{'}}& \widetilde{\mathcal{C}^{[N]}}}\]
            is 2-commutative; and
            \\(b) for every local artinian $\mathcal{O}_{S}$-ring $B^{'}$ with  residue field $k^{'}$, every square-zero ideal $I^{'}\subset B^{'}$, any local homomorphism $A^{'}\to B^{'}/I^{'}$ of $\mathcal{O}_{S}$-rings, and any $y^{'}\in\widetilde{\mathcal{C}^{[N]}}({\rm Spec}(B^{'}))$ making the corresponding diagram 
            \[\xymatrix{{\rm Spec}(B^{'}/I^{'})\ar[r]\ar[d]& {\rm Spec}(A^{'})\ar[d]^{x^{'}}\\ {\rm Spec}(B^{'})\ar[r]_{y^{'}}&\widetilde{\mathcal{C}^{[N]}}}\]
             2-commutative, there exists a lifting $A^{'}\to B^{'}$ of $A^{'}\to B^{'}/I$ which makes the following diagram 
            \[\xymatrix{{\rm Spec}(B^{'}/I^{'})\ar[r]\ar[d]& {\rm Spec}(A^{'})\ar[d]^{x^{'}}\\ {\rm Spec}(B^{'})\ar[ru]\ar[r]_{y^{'}}&\widetilde{\mathcal{C}^{[N]}}}\]2-commutative.
            \\$\indent$ Indeed, because  $\mathcal{C}$ is an algebraic $\mathbb{Z}[1/N]$-stack locally of finite type, we can find   $k^{'}, A^{'}$ satisfying (a) and (b) for $k$ and $p\circ \overline{x}$ by \cite[Corollary 10.11]{lau}. Write $\overline{x}=(X\to k, \alpha)\in\widetilde{\mathcal{C}^{[N]}}({\rm Spec}(k))$ and let $x^{'}=(X^{'}\to {\rm Spec}(A^{'}))\in \mathcal{C}({\rm Spec}(A^{'}))$. Condition (a) gives $X_{k^{'}}=X^{'}_{k^{'}}$.  The pair  $(X^{'}_{k^{'}}\to X^{'}, {\rm Spec}(k^{'})\to {\rm Spec}(A^{'}))$ yields a morphism $X^{'}_{k^{'}}\to X^{'}$ in $\mathcal{C}$. Using this morphism, we can introduce the level structure $\alpha$ on $X_{k^{'}}$ to a level stucture on $X^{'}$. The pair $(X^{'}\to {\rm Spec}(A^{'}), \alpha)$ defines a point $x^{''}\in \widetilde{\mathcal{C}^{[N]}}({\rm Spec}(A^{'}))$, and by construction we obtain the commutative diagram as in (a).
           Statement (b) is proved similarly.
            \\\textbf{Step 4}. Let $U$ be an affine $S$-scheme  of finite type,  $u\in U$ a closed point, and $x\in \widetilde{\mathcal{C}^{[N]}}(U)$  formally smooth at $u$. Then there exists an affine open set $ V\subset U$ containing $u$ such that the induced morphism $V\to \widetilde{\mathcal{C}^{[N]}}$ is smooth.
            \\$\indent$Applying  \cite[Corollary 10.11]{lau} to $F\circ x\in \mathcal{C}^{[N]}(U)$ yields such a $ V $ with  $V\to \mathcal{C}^{[N]}$ smooth. For any morphism $f : T\to \widetilde{\mathcal{C}^{[N]}}$ from a scheme $T$, there is a canonical isomorphism of algebraic spaces $V\times_{f, \widetilde{\mathcal{C}^{[N]}}}T\to V\times_{F\circ f, \mathcal{C}^{[N]}}T,$ hence  $V\to \widetilde{\mathcal{C}^{[N]}}$ is also smooth.
            \\\indent\textbf{The proof of Lemma \ref{dm}}. By steps 1-4 and  \cite[Corollary 10.11]{lau}, $\mathbb{Z}[1/N]$-stack $\widetilde{\mathcal{C}^{[N]}}$ is an algebraic stack locally of finite type. Its diagonal  $\widetilde{\mathcal{C}^{[N]}}\to \widetilde{\mathcal{C}^{[N]}}\times_{\mathbb{Z}[1/N]}\widetilde{\mathcal{C}^{[N]}}$ is unramified; hence  $\widetilde{\mathcal{C}^{[N]}}$ is a Deligne-Mumford stack  (\cite[Theorem 8.1]{lau}).
            \\\indent If $N\geq 3$ is coprime to $2310$, $\mathcal{C}^{[N]}$ is  algebraic (\cite[Theorem 3.2, (3)]{jal}). By \cite[Corollary 8.3.5]{ols}, each object of  $\mathcal{C}^{[N]}(U)$ has trivial automorphism group for every $\mathbb{Z}[1/N]$-scheme $U$; the isomorphism (\ref{F}) shows the same for $\widetilde{\mathcal{C}^{[N]}}(U)$. Applying   \cite[Corollary 8.3.5]{ols} again, we conclude that  $\widetilde{\mathcal{C}^{[N]}}$ is an algebraic space.
        \end{proof}
        \textbf{Proof of Theorem \ref{main1}.}
        \\\indent Note first that the morphism $F : \widetilde{\mathcal{C}^{[N]}}\to \mathcal{C}^{[N]}$ of algebraic stacks is representable. We prove that  $F$ is \'etale. Let $T\to \mathcal{C}^{[N]}$ be a morphism from a scheme $T$, and choose an \'etale surjection $U\to \widetilde{\mathcal{C}^{[N]}}$ from a scheme $U$. Consider the  commutative diagram 
        \[\xymatrix{T\times_{\mathcal{C}^{[N]}}\widetilde{\mathcal{C}^{[N]}}\ar[r]& T\\  T\times_{\mathcal{C}^{[N]}}U\ar[r]\ar[u]& T\times_{\widetilde{\mathcal{C}^{[N]}}}U.\ar[u]}
        \]
        The right vertical morphism is \'etale (since $U\to \widetilde{\mathcal{C}^{[N]}}$ is \'etale) and the bottom horizontal morphism is an isomorphism. Hence the composite $T\times_{\mathcal{C}^{[N]}}U\to T\times_{\widetilde{\mathcal{C}^{[N]}}}U\to T$ is  \'etale. By  definition of $U$, the left vertical arrow is \'etale and surjective;  therfore the top horizontal morphism is \'etale. This shows that $F$ is \'etale (\cite[Lemma 35.14.5]{stk}). 
        \\\indent Consequently, the $\mathbb{Z}[1/N]$-stack $\widetilde{\mathcal{C}^{[N]}}$ is a scheme (\cite[II, Corollary 6.17]{knu}). Moreover, $F$ is an open immersion because it is a monomorphism (\cite[Theorem 41.14.1]{stk}); thus the scheme $\widetilde{\mathcal{C}^{[N]}}$ is affine. 
        
        \section{The arithmetic period Map of Cubic Fourfolds; the Proof of Theorem \ref{main2} and Theorem\ref{rank21ex}}\label{period main2}
     
\subsection{Definitions of the period map}\label{defperiod}Let $N\geq 3$ be an integer  coprime to 2310. By Theorem \ref{main1}, the stack $\widetilde{\mathcal{C}^{[N]}}$ is represented by a smooth affine scheme of finite type over $\mathbb{Z}[1/N]$. 
\\\indent In this section, we  define the period map:
\begin{align}\label{pdef}
    j_{N, \mathbb{C}} : \widetilde{\mathcal{C}^{[N]}}_{\mathbb{C}}\to {\rm Sh}_{K_N}(L)_{\mathbb{C}}
\end{align}Take $(X, h, \alpha)\in \widetilde{\mathcal{C}^{[N]}}(\mathbb{C})$. Let $\widetilde{\alpha} : L_{0, \mathbb{Z}_{\mathscr{B}}}\to H^{4}_{et}(X, \mathbb{Z}_{\mathscr{B}}(2))$ be a representative of the class $\alpha$. Choose an isometry $a : H^{4}(X, \mathbb{Z}(2))\to L_{0}$ such that $a(h^{2})=v$ and $a\circ \widetilde \alpha \in{\rm SO}(V)(\hat{\mathbb{Z}})$. Let $h_{X} : \mathbb{S}\to {\rm SO}(P^{4}(X, \mathbb{R}(2)))$ be the polarized Hodge structure of $(X, h^{2})$. 
\\\indent  We claim that the correspondence 
\begin{align}\label{const j}\widetilde{\mathcal{C}^{[N]}}(\mathbb{C})\ni (X, h, \alpha)\mapsto [a\circ h_{X}\circ a^{-1}, a\circ \widetilde{\alpha}]\in{\rm Sh}_{K_{N}}(L)(\mathbb{C})
\end{align}
is independent of the choices of $a$ and $\widetilde{\alpha}$. 
\\\indent Indeed, suppose  $a^{'} : H^{4}(X, \mathbb{Z}(2))\to L_{0}$ is another isometry with $a^{'}(h^{2})=v$ and $a^{'}\circ (\widetilde{\alpha}\circ\kappa) \in {\rm SO}(V)(\hat{\mathbb{Z}})$ for some $\kappa\in K_{N}$. Then we can find  $g\in {\rm SO}(V)(\mathbb{Z})$ with $a^{'}=g\circ a$, since $g\circ (a\circ \widetilde{\alpha})\circ \kappa\in {\rm SO}(V)(\hat{\mathbb{Z}})$. Hence
\begin{align}
    [(a^{'}\circ h_{X}\circ a^{'-1}), a^{'}\circ (\widetilde{\alpha}\circ \kappa)]
   =&[g\cdot(a\circ h_{X}\circ a^{-1}, (a\circ\widetilde{\alpha})\circ \kappa)]\notag 
   \\=& [(a\circ h_{X}\circ a^{-1}, a\circ \widetilde{\alpha})].\notag 
    \end{align} Thus (\ref{const j}) is well defined;  we denote it by $j_{N, \mathbb{C}}$.
    \begin{rmk}\label{mod1}
        Under the correspondence (\ref{const j}), if $a^{'} : H^{4}(X, \mathbb{Q}(2))\to V_{0}$ is an isometry with $a^{'}(h^{2})=v$ and $a^{'}\circ \widetilde{\alpha}\in {\rm SO}(V)(\mathbb{A}_{f})$, then  $[(a\circ h_{X}\circ a^{-1}, a\circ \widetilde{\alpha})]=[(a^{'}\circ h_{X}\circ a^{'-1}, a^{'}\circ \widetilde{\alpha})]$. In fact, 
        \begin{align}
            a^{'}\circ a^{-1}=(a^{'}\circ \widetilde{\alpha})\circ (\widetilde{\alpha}^{-1}\circ a^{-1})\in {\rm O}(V)(\mathbb{Q})\cap {\rm SO}(V)(\mathbb{A}_{f})=G(\mathbb{Q}), \notag 
            \end{align}and therefore 
        \begin{align}
            [(a\circ h_{X}\circ a^{-1}, a\circ \widetilde{\alpha})]&=[(a\circ a^{'-1})\circ a^{'}\circ h_{X}\circ a^{'-1}\circ (a\circ a^{'-1}), (a\circ a^{'-1})\circ a^{'}\circ \widetilde{\alpha})] \notag 
            \\& =[(a^{'}\circ h_{X}\circ a^{'-1}, a^{'}\circ \widetilde{\alpha})]. \notag 
        \end{align}
    \end{rmk}
      \begin{lemma}
           The map $j_{N, \mathbb{C}} : \widetilde{\mathcal{C}^{[N]}}(\mathbb{C})\to {\rm Sh}_{K_{N}}(L)(\mathbb{C})$ is an algebraic \'etale morphism. Hence $j_{N, \mathbb{C}}$ induces a unique \'etale morphism (\ref{pdef}) of $\mathbb{C}$-schemes.
      \end{lemma}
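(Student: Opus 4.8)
\emph{Plan.} The statement has three parts---holomorphicity, algebraicity, \'etaleness---and I would treat the first two by general facts about variations of Hodge structure and the third by infinitesimal Torelli for cubic fourfolds. First I would realize $j_{N,\mathbb{C}}$ as a map of complex manifolds: over $\widetilde{\mathcal{C}^{[N]}}_{\mathbb{C}}$ the universal family carries the variation of $\mathbb{Z}$-Hodge structures $R^{4}\pi_{\ast}\mathbb{Z}(2)$, and its primitive part $P^{4}\pi_{\ast}\mathbb{Z}(2)$ is a polarized $\mathbb{Z}$-VHS of K3 type of weight $0$. Fix a connected component $M\subset\widetilde{\mathcal{C}^{[N]}}_{\mathbb{C}}$ with a base point; the level structure $\alpha$ together with a choice of isometry $a$ trivializes the underlying local system on the universal cover $\widetilde{M}$, and the Hodge filtration defines a holomorphic map $\widetilde{M}\to X_{L}^{+}$ into the chosen connected component, by Griffiths' theorem that period maps of VHS are holomorphic (Griffiths transversality is vacuous here, since $X_{L}$ is the Hermitian symmetric domain for $\mathrm{SO}(2,20)$, whose horizontal tangent bundle is the full tangent bundle). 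This map is equivariant for the monodromy representation $\pi_{1}(M)\to \Gamma_{g}:=gK_{N}g^{-1}\cap G(\mathbb{Q})_{+}$ attached to the appropriate coset $g$, so it descends to a holomorphic map $M\to \Gamma_{g}\backslash X_{L}^{+}$; assembling over components via the decomposition (\ref{shdec}) yields $j_{N,\mathbb{C}}$ as a holomorphic map of analytic spaces, and the well-definedness checked in the construction of (\ref{const j}) together with Remark \ref{mod1} identifies it with the stated set-theoretic map.

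Next, algebraicity. Since $N\geq 3$ is coprime to $2310$, Theorem \ref{main1} gives that $\widetilde{\mathcal{C}^{[N]}}_{\mathbb{C}}$ is a smooth quasi-projective $\mathbb{C}$-variety, and since $K_{N}$ is small each $\Gamma_{g}$ is torsion-free (\cite[Proposition 3.5]{milne}), so ${\rm Sh}_{K_{N}}(L)_{\mathbb{C}}$ is a smooth quasi-projective variety whose analytification is the arithmetic quotient $\coprod_{g}\Gamma_{g}\backslash X_{L}^{+}$ of a Hermitian symmetric domain. Borel's algebraicity theorem for holomorphic maps from a smooth quasi-projective variety to a quotient of a Hermitian symmetric domain then shows that $j_{N,\mathbb{C}}$ is the analytification of a unique morphism of $\mathbb{C}$-schemes $\widetilde{\mathcal{C}^{[N]}}_{\mathbb{C}}\to {\rm Sh}_{K_{N}}(L)_{\mathbb{C}}$, which we again denote $j_{N,\mathbb{C}}$; this gives both the algebraicity and the uniqueness claimed in the lemma.

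Finally, \'etaleness. It suffices to show that the differential of $j_{N,\mathbb{C}}$ is an isomorphism at every point, since source and target are smooth of dimension $20$ over $\mathbb{C}$ ($X_{L}$ is the quadric $\{(\omega,\omega)=0,\ (\omega,\bar\omega)>0\}\subset\mathbb{P}(V_{\mathbb{C}})$ of dimension $20$, matching $\dim\mathcal{C}=20$). This is infinitesimal Torelli for cubic fourfolds: at $[X]$ the tangent space of $\widetilde{\mathcal{C}^{[N]}}_{\mathbb{C}}$ is $H^{1}(X,T_{X})$ (the cover $\widetilde{\mathcal{C}^{[N]}}\to\mathcal{C}$ being \'etale), the tangent space of $X_{L}$ at the period point is $\mathrm{Hom}(H^{3,1}(X),H^{2,2}_{\mathrm{prim}}(X))$, and the derivative of the period map is cup product with the Kodaira--Spencer class followed by contraction; by the Carlson--Griffiths description of the period map for smooth hypersurfaces via the Jacobian ring, this map is an isomorphism for cubic fourfolds. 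Hence $j_{N,\mathbb{C}}$ is a morphism of smooth $\mathbb{C}$-schemes of the same dimension with everywhere invertible differential, so it is \'etale, and its analytification is a local biholomorphism---this is the asserted \'etale morphism (\ref{pdef}).

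I expect the main obstacle to be bookkeeping rather than depth: carefully matching the connected-component decomposition of $\widetilde{\mathcal{C}^{[N]}}_{\mathbb{C}}$ with the decomposition (\ref{shdec}) of ${\rm Sh}_{K_{N}}(L)(\mathbb{C})$---that is, verifying that the monodromy of the polarized VHS equipped with its level structure lands in the arithmetic group $\Gamma_{g}$ attached to the correct coset $g$, so that the descended maps glue to the adelically-defined $j_{N,\mathbb{C}}$---and in applying Borel's theorem and infinitesimal Torelli in the precise forms needed. The Hodge-theoretic inputs themselves (holomorphicity of period maps, Borel algebraicity, infinitesimal Torelli for cubics) are classical; the real work is assembling them compatibly with the moduli-theoretic and adelic formalism set up above.
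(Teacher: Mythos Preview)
Your proposal is correct and follows the same overall architecture as the paper---holomorphicity via Griffiths, algebraicity via Borel's extension theorem---but diverges in how \'etaleness is obtained. You invoke \emph{infinitesimal} Torelli for cubic fourfolds directly (the Carlson--Griffiths Jacobian-ring computation that $H^{1}(X,T_{X})\to\mathrm{Hom}(H^{3,1},H^{2,2}_{\mathrm{prim}})$ is an isomorphism), whereas the paper deduces that the local period map is a local isomorphism from Voisin's \emph{global} Torelli theorem: it factors $j$ on a simply connected patch through the classical period map $\mathcal{C}(\mathbb{C})\to \mathrm{O}(L)\backslash\Omega^{+}$, which is an open immersion by Voisin, and uses that $\widetilde{\mathcal{C}^{[N]}}\to\mathcal{C}$ is \'etale. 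Your route is more elementary at this stage, since infinitesimal Torelli for hypersurfaces is lighter than Voisin's theorem; the paper's route is economical in a different way, because Voisin's theorem is invoked again immediately afterward to prove injectivity of $j_{N,\mathbb{C}}$ (Theorem \ref{main2}), so no new ingredient is introduced. The bookkeeping you flag---matching connected components of the source to the cosets $g$ in the decomposition (\ref{shdec}) so that the local maps glue to the adelic $j_{N,\mathbb{C}}$---is exactly what the paper spells out by tracking the representative $\widetilde{\alpha}$ and the marking $a$ along paths in $V^{an}$; your sketch of this step is adequate.
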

      \begin{proof}
           Let $(X\to \widetilde{\mathcal{C}^{[N]}}_{\mathbb{C}}, \lambda, \alpha)$ be the universal object of  $\widetilde{\mathcal{C}^{[N]}}_{\mathbb{C}}$, and let $V$ be a connected component of $\widetilde{\mathcal{C}^{[N]}}_{\mathbb{C}}$. Write $(\pi : X_{V}\to V, \lambda_{V}, \alpha_{V})$ for the pullback to $V$. Since $\widetilde{\mathcal{C}^{[N]}}_{\mathbb{C}}$ is noetherian, each $V$ is a Zariski open set in $\widetilde{\mathcal{C}^{[N]}}_{\mathbb{C}}$. We show that the map of analytic spaces
        \begin{align}
         j_{N, \mathbb{C}}|_{V^{an}} : V^{an}\to {\rm Sh}_{K_{N}}(L)(\mathbb{C}), s\mapsto j_{N, \mathbb{C}}(X_{s}, \lambda_{s}, \alpha_{s})
        \end{align}
        is holomorphic and  a locla  isomorphism. Here $\alpha_{s}\in K_{N}\backslash{\rm Isometry}(L_{0, \mathbb{Z}_{\mathscr{B}}}, H^{4}_{et}(X_{s}, \mathbb{Z}_{\mathscr{B}}(2)))$ is the level-$N$ structure $\alpha_{V}$ corresponding to the geometric point $s$.
        \\\indent For each $s\in V^{an}$, choose a simply connected neighborhood $V_{s}$ on which  the local system $R^{4}\pi_{\ast}\mathbb{Z}(2)|_{V_{s}}$ is constant. Fix a marking $R^{4}\pi_{\ast}\mathbb{Z}(2)|_{V_{s}}\cong L_{0, V_{s}}$ sending $\lambda_{V_{s}}\mapsto v$; then the VHS $R^{4}\pi_{\ast}\mathbb{Z}(2)|_{V_{s}}$ is polarizable. By Griffiths \cite[Proposition 9.3]{gri}, there is a (holomorphic) local period map 
        \begin{align}
j :       V_{s}\to \Omega^{+}    \notag   
        \end{align}
        given by  $s_{1}\mapsto a\circ h_{X_{s_{1}}}\circ a^{-1}$, where $h_{X_{s_{1}}} : \mathbb{S}\to {\rm SO}(P^{4}(X_{V_{s_{1}}}, \mathbb{R}(2)))$ is the polarized Hodge structure of $(X_{V_{s_{1}}}, \lambda_{V_{s_{1}}})$ and $a : H^{4}(X_{V_{s_{1}}}, \mathbb{Z}(2))\to L_{0}$ is the stalk at $s_{1}$ of the chosen marking. From the proof of Theorem \ref{main1} we know that $F : \widetilde{\mathcal{C}^{[N]}}\to \mathcal{C}^{[N]}$ is an open immersion; the forgetful morphism $\mathcal{C}^{[N]}\to \mathcal{C}$ is \'etale (\cite[the proof of Theorem 3.2]{jal}). Thus the natural morphism $p : \widetilde{\mathcal{C}^{[N]}}\to \mathcal{C}$ is \'etale, and the induced holomorphic map $p^{an} : \widetilde{\mathcal{C}^{[N]}}(\mathbb{C})\to \mathcal{C}(\mathbb{C})$ is a local isomorphism.  Consider the commutative diagram 
        \[\xymatrix{\mathcal{C}(\mathbb{C})\ar[r]&{\rm O}(L)\backslash\Omega^{+}\\ V_{s}\ar[u]^{p^{an}|_{V_{s}}}\ar[r]_{j}&\Omega^{+}\ar[u]}\]
        in which  the top horizontal morphism $\mathcal{C}(\mathbb{C})\to {\rm O}(L)\backslash\Omega^{+}$ is the global period map, an open immersion by Voisin's Torelli thorem for cubic fourfolds \cite{voi}. Hence  $j$ is a local isomorphism. 
       \\\indent Recall  the decomposition  (\ref{shdec}) of ${\rm Sh}_{K_{N}}(L)_{\mathbb{C}}$ into  connected components:  ${\rm Sh}_{K_{N}}(L)_{\mathbb{C}}=\coprod_{g\in \mathscr{C}} \Gamma_{g}\backslash \Omega^{+}$, where $\mathscr{C}=G(\mathbb{Q})_{+}\backslash G(\mathbb{A}_{f})/K$ and $\Gamma_{g}=G(\mathbb{Q})\cap gKg^{-1}$. To see that $j_{N, \mathbb{C}}|_{V_{s}}$ is holomorphic and a local isomorphism, it suffices to show that there exists $g\in \mathscr{C}$ such that $j_{N, \mathbb{C}}|_{V_{s}}$ is the composition of $j : V_{s}\to \Omega$ and $\Omega^{+}\to \Gamma_{g}\backslash\Omega^{+}$ .
        \\\indent By (\ref{const j}), let $\widetilde{\alpha}$ represent $\alpha_{s}$ and choose $a : H^{4}(X_{s}, \mathbb{Z}(2))\to L_{0}$ with $\alpha(\lambda_{s})=v$. Take $s_{1}\in V_{s}$. Since $V^{an}$ is path connected, there exist an isomorphism of fundamental groups
        \begin{align}
           \delta_{\pi}: \pi_{1}(V^{an}, s_{1})\to \pi_{1}(V^{an}, s)\notag 
        \end{align}
        and an isometry
        \begin{align}
            \delta_{B} : H^{4}(X_{s_{1}}, \mathbb{Z}(2))\to H^{4}(X_{s}, \mathbb{Z}(2)),\notag 
        \end{align}
        compatible with $\delta_{\pi}$  sending $\lambda_{s_{1}}$ to $\lambda_{s}$. Then the isometry
        \begin{align}
          \delta_{B}^{-1}\circ \widetilde{\alpha} : L_{0} \to H^{4}(X_{s_{1}}, \mathbb{Z}(2))\notag   
        \end{align}
        represents $\alpha_{s_{1}}$, and \begin{align}a\circ \delta_{B} : H^{4}(X_{s_{1}}, \mathbb{Z}(2))\to L_{0}. \notag 
        \end{align}
         is a marking. 
        Since $(a\circ\delta_{B})\circ (\delta^{-1}_{B}\circ \widetilde{\alpha})=a\circ \widetilde{\alpha}\in {\rm SO}(V)(\hat{\mathbb{Z}})$, we have $j_{N, \mathbb{C}}(s_{1})=[((a\circ \delta_{B})\circ h_{X_{s_{1}}}\circ (a\circ \delta_{B})^{-1}, a\circ \widetilde{\alpha})]$. Setting $g:=a\circ \widetilde{\alpha}\in \mathscr{C}$, we get $j_{N, \mathbb{C}}(s_{1})\in \Gamma_{g}\backslash\Omega^{+}$. Hence $j_{N, \mathbb{C}}|_{V_{s}}$ is  the composition of $j$ and $\Omega^{+}\to \Gamma_{g}\backslash \Omega^{+}$.
        \\\indent By \cite[Theorem 3.10]{bor}, the holomorphic map $j_{N, \mathbb{C}}|_{V^{an}}$ is algebraic. Since $j_{N, \mathbb{C}}|_{V^{an}}$ is a local isomorphism, the algebraic morphism $j_{N, \mathbb{C}}|_{V}$ is \'etale. Therefore $j_{N, \mathbb{C}}$ is  \'etale.
        \end{proof}
        \textbf{Proof of Theorem \ref{main2} :} It suffices to show that $j_{N, \mathbb{C}} : \widetilde{\mathcal{C}^{[N]}}(\mathbb{C})\to {\rm Sh}_{K_{N}}(L)(\mathbb{C})$ is injective. Let $(X_{i}, \lambda_{i}, \alpha_{i})\in\widetilde{\mathcal{C}^{[N]}}(\mathbb{C})$ for $i=1, 2$, and  suppose  $j_{N, \mathbb{C}}(X_{1}, \lambda_{1}, \alpha_{1})=j_{N, \mathbb{C}}(X_{2}, \lambda_{2}, \alpha_{2})$. Choose repreentatives $\widetilde{\alpha_{i}}$ of $\alpha_{i}$ and markings $a_{i} : H^{4}(X_{i}, \mathbb{Z}(2))\to L_{0}$ with $a_{i}(\lambda_{i})=v$. Then in ${\rm Sh}_{K_{N}}(L)_{\mathbb{C}}$ we have 
       \begin{align}
           [(a_{1}\circ h_{X_{1}}\circ a_{1}^{-1}, a_{1}\circ\widetilde{\alpha_{1}})]=[(a_{2}\circ h_{X_{2}}\circ a^{-1}_{2}, a_{2}\circ \widetilde{\alpha_{2}})].\notag 
       \end{align}
        Thus, for some $q\in G(\mathbb{Q}),$ 
        \begin{align}
            (a_{1}\circ h_{X_{1}}\circ a_{1}^{-1}, a_{1}\circ\widetilde{\alpha_{1}})=((q\circ a_{2})\circ h_{X_{2}}\circ (q\circ a_{2})^{-1}, q\circ a_{2}\circ \widetilde{\alpha_{2}}) \notag 
        \end{align}
       in $X_{L}\times (G(\mathbb{A}_{f})/K)$. Hence there exists $\kappa\in K_{N}$ such that 
        \begin{align}
            (a_{1}\circ h_{X_{1}}\circ a_{1}^{-1},  a_{1}\circ\widetilde{\alpha_{1}}\circ \kappa)=((q\circ a_{2})\circ h_{X_{2}}\circ (q\circ a_{2})^{-1}, q\circ a_{2}\circ \widetilde{\alpha_{2}}) \notag 
        \end{align}
       It follows that $q=(a_{1}\circ\widetilde{\alpha_{1}})\circ \kappa\circ (a_{2}\circ \widetilde{\alpha_{2}})^{-1}\in G(\mathbb{Z})=G(\hat{\mathbb{Z}})\cap G(\mathbb{Q})\subset G(\mathbb{A}_{f})$. Therefore $ h_{X_{1}}= (a^{-1}_{1}\circ q\circ a_{2})\circ h_{X_{2}}\circ (a_{1}^{-1}\circ q\circ a_{2})^{-1}$ induces a $\mathbb{Z}$-Hodge isometry
        \begin{align}
          a^{-1}_{1}\circ q\circ a_{2} :   H^{4}(X_{2}, \mathbb{Z}(2))\to H^{4}(X_{1}, \mathbb{Z}(2)) \notag 
        \end{align}
        sending $\lambda_{2}$ to $\lambda_{1}$. By Voisin's Torelli theorem \cite{voi}, the Hodge isometry $a^{-1}_{1}\circ q\circ a_{2}$ is induced by  an isomorphism of polarized varieties $f : (X_{1}, \lambda_{1})\to (X_{2}, \lambda_{2})$. Moreover, the relation $f^{\ast}\circ \widetilde{\alpha_{2}}=\widetilde{\alpha_{1}}\circ \kappa$ shows that $f$ respects the level structures, yielding  an isomorphism of $(X_{1}, \lambda_{1}, \alpha_{1})\cong(X_{2}, \lambda_{2}, \alpha_{2})$.  Since  morphisms in  $\widetilde{\mathcal{C}^{[N]}}(\mathbb{C})$ are identities, we conclude  $(X_{1}, \lambda_{1}, \alpha_{1})=(X_{2}, \lambda_{2}, \alpha_{2})$.
   \subsection{Application of Theorem \ref{main2}:  Rank-21 cubic fourfolds}
       \begin{dfn}\label{def21}
           A cubic fourfold $X/\mathbb{C}$ is said to be \textbf{rank-21 cubic fourfold} if its algebraic rank satisfies ${\rm rk}A(X)=21$.
       \end{dfn}
       \begin{exmp}
           (1) The Fermat cubic fourfold (\ref{Fermat}) is  rank-21. The automorphism in Example \ref{exac4},
           \begin{align}\label{ferauto}
               (x_0:\cdots:x_{5})\mapsto (\zeta_{3}x_{0}:\cdots : x_{5}) 
           \end{align}
           is non-symplectic (\cite[Lemma 6.2]{boi}). Hence its induced action on $H^{-1, 1}(X)$ is multiplication by $\zeta_{3}$. Consequently, the image of (\ref{dif}) is $\mathbb{Q}(\zeta_{3})$.
           \\(2) The cubic fourfold in Example \ref{exac4} (3) is also rank-21. The matrix $\begin{pmatrix}
               1& -1 \\ 1& 0
           \end{pmatrix}$ defines an orthogonal transformation of $T(X)\cong\begin{pmatrix}
               -22& 11\\ 11& -22.
           \end{pmatrix}$ The relation 
           \begin{align}
               \begin{pmatrix}
                   1&-1\\ 1&0
               \end{pmatrix}^{3}=-\begin{pmatrix}
                   1&0\\ 0& 1
               \end{pmatrix} \notag 
           \end{align}
        shows that the image of (\ref{dif}) is generated by a root of  $x^{2}-x+1=0$, i.e. by $\zeta_{3}$.
       \end{exmp}
       Rank-21 cubic fourfolds correspond to $singular~ K3~ surfaces$, i.e. complex K3 surfaces with Picard rank $\rho(S)=20$. According to Shioda-Inose \cite{si}, assigning to a singular K3 surface $S$ its transcendental lattice $T(S)$ yields a bijection 
       \begin{align}\label{shioda}
           \{{\rm singular~K3~surfaces}\}\leftrightarrow \{ T~|~{\rm positive~ definite,~even,~oriented~lattice~with~rank ~}2\},
       \end{align}
       where both sides are taken up to isomorphism.
       \\\indent On the other hand, for a rank-21 cubic fourfold $X$, the lattice $T(X)$ is contained in the even lattice $\langle h^{2}\rangle^{\perp}\subset H^{4}(X, \mathbb{Z}(2))$. Thus, for rank-21 cubic fourfolds, the correspondence $X\mapsto T(X)$ defines a map
       \begin{align}\label{cla}
           \{{\rm rank}~21~{\rm cubic~fourfolds}\}\to \{T~|~{\rm positive~definite,~even,~oriented~lattice~with~rank ~}2\}.
       \end{align}
        Compositing (\ref{shioda}) with (\ref{cla}), we obtain
       \begin{align}
            \{{\rm rank}~21~{\rm cubic~fourfolds}\}\to \{{\rm singular~K3~surfaces}\},
       \end{align}
       In other words, for each rank-21 cubic fourfold $X$, there exists a singular K3 surface $S$ such that $T(X)$ and $T(S)$ are isometric as oriented lattices. Since the period domain $X_{T}\subset \mathbb{P}(T_{\mathbb{C}})$ of $T$ consists of only two points,  this isometry is a Hodge isometry.  Hence, we obtain the following statement (2).
        \begin{prop}\label{21fun}
           Let $X$ be a rank-21 cubic fourfold. 
           \\(1) $X$ is of CM type, and the reflex field is an imaginary quadratic field. 
           \\(2) There exists a singular K3 surface $S$ such that we have a Hodge isometry
           \begin{align}
                H^{4}(X, \mathbb{Z}(2))\supset T(X)\cong T(S)\subset H^{2}(S, \mathbb{Z}(1)). \notag 
           \end{align}
       \end{prop}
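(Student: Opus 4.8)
The plan is to derive both statements from the single structural fact that $\mathrm{rk}\,A(X)=21$ forces $T(X)=A(X)^{\perp}$ to be a rank-$2$ lattice carrying an irreducible, polarized Hodge structure of K3 type with $T(X)^{0,0}=0$.

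For part (1), I would start by observing that, since $\mathrm{rk}\,A(X)=21=h^{0,0}(X)$ and the Hodge conjecture holds for cubic fourfolds (\cite{zuc}), we have $A(X)_{\mathbb{C}}=H^{0,0}(X)$; hence $T(X)_{\mathbb{C}}=\bigl(H^{0,0}(X)\bigr)^{\perp}=H^{1,-1}(X)\oplus H^{-1,1}(X)$ is two-dimensional with $T(X)^{0,0}=0$, so $T(X)$ is a rank-$2$ polarized K3-type Hodge structure. By Example \ref{tri}, $\mathrm{MT}(h_{X})=\mathrm{MT}(T(X))$, and by Theorem \ref{zrfun} this group is contained in $\mathrm{SO}(T(X),\psi)$, which---being the special orthogonal group of a binary quadratic space---is a one-dimensional torus, hence commutative; so $X$ is of CM type. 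The final clause of Theorem \ref{zrfun} then makes $K:=K(T(X))$ a CM field with $\dim_{K}T(X)=1$, whence $[K:\mathbb{Q}]=\dim_{\mathbb{Q}}T(X)=2$, i.e.\ $K$ is imaginary quadratic. Finally, by \cite[Proposition 2.2]{val} the reflex field of $X$ is $\varepsilon(K)\subset\mathbb{C}$, and $\varepsilon$ is injective (\cite[Chapter 3, Corollary 3.6]{Huy}), so the reflex field is isomorphic to $K$ and hence imaginary quadratic. This proves (1).

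For part (2), I would first verify that $T(X)$ has precisely the three features required by the Shioda--Inose bijection (\ref{shioda}): it is even, being a sublattice of $\langle h^{2}\rangle^{\perp}\cong E_{8}(-1)^{\oplus2}\oplus U^{\oplus2}\oplus A_{2}(-1)$ (note $\langle h^{2}\rangle\subset A(X)$, so $T(X)=A(X)^{\perp}\subset\langle h^{2}\rangle^{\perp}$); it is positive definite, since for a generator $\omega$ of $T(X)^{1,-1}$ the relations $(\omega,\omega)=0$ and $(\omega,\bar\omega)>0$ force $\mathbb{R}\,\mathrm{Re}\,\omega\oplus\mathbb{R}\,\mathrm{Im}\,\omega$ to be a positive-definite plane, which coincides with $T(X)\otimes\mathbb{R}$ because $\mathrm{rk}\,T(X)=2$; and it is canonically oriented by the ordered basis $(\mathrm{Re}\,\omega,\mathrm{Im}\,\omega)$. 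Applying the surjectivity half of (\ref{shioda}) to this oriented lattice yields a singular K3 surface $S$ together with an orientation-preserving isometry of lattices $\phi\colon T(X)\xrightarrow{\sim}T(S)$.

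The last point---and the only place I expect any subtlety---is to promote $\phi$ to a Hodge isometry. For a positive-definite rank-$2$ lattice $\Lambda$, the period domain $\{[\eta]\in\mathbb{P}(\Lambda_{\mathbb{C}}):(\eta,\eta)=0,\ (\eta,\bar\eta)>0\}$ consists of exactly two points, interchanged by complex conjugation and distinguished precisely by the two orientations of $\Lambda\otimes\mathbb{R}$; since the Hodge structures on $T(X)$ and on $T(S)$ are the ones attached to their respective orientations and $\phi$ preserves orientations, $\phi_{\mathbb{C}}$ must carry the period point of $X$ to that of $S$, so $\phi$ is a Hodge isometry. Composing with the inclusions of sub-Hodge-structures $T(X)\hookrightarrow H^{4}(X,\mathbb{Z}(2))$ and $T(S)\hookrightarrow H^{2}(S,\mathbb{Z}(1))$ then gives the asserted Hodge isometry. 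The hard part, such as it is, is not any deep input but the careful matching of orientation conventions on the transcendental lattices of cubic fourfolds and of K3 surfaces, so that ``orientation-preserving'' on the lattice side genuinely identifies the two zero-dimensional period domains; all the substance is already contained in Theorem \ref{zrfun}, \cite[Proposition 2.2]{val}, and the Shioda--Inose classification.
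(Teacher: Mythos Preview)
Your proof is correct. For part (2) you follow essentially the same line as the paper (which establishes it in the discussion immediately preceding the proposition): verify that $T(X)$ is an even, positive-definite, oriented rank-$2$ lattice, apply the Shioda--Inose bijection (\ref{shioda}), and then use that the period domain of such a lattice consists of two points to upgrade the lattice isometry to a Hodge isometry. For part (1), however, you take a genuinely different route. The paper deduces (1) \emph{from} (2): having the Hodge isometry $T(X)\cong T(S)$ in hand, it transports the question to the singular K3 surface $S$ and invokes the known facts that singular K3 surfaces are of CM type with imaginary quadratic endomorphism field (\cite[Chapter 3, Remark 3.10]{Huy}, \cite[Remark 1.5.3 and Theorem 2.3.1]{Zarhin}), then applies \cite[Proposition 2.10]{val}. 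Your argument is more direct and self-contained: since $\mathrm{rk}\,T(X)=2$, the group $\mathrm{SO}(T(X),\psi)$ is already a one-dimensional $\mathbb{Q}$-torus, so $\mathrm{MT}(h_X)=\mathrm{MT}(T(X))\subset\mathrm{SO}(T(X),\psi)$ is commutative for free, and the last clause of Theorem \ref{zrfun} then forces $K(T(X))$ to be a CM field of degree $2$. This shows that (1) is logically independent of (2) and avoids the detour through K3 surfaces; the paper's approach, by contrast, foregrounds the structural parallel with singular K3 surfaces, which is thematically convenient for the motivic arguments in \S\ref{secmain3}. One small omission: your appeal to Theorem \ref{zrfun} requires $T(X)_{\mathbb{Q}}$ to be irreducible, which you do not state; this is immediate from $T(X)^{0,0}=0$ and is in any case recorded in the paper just before Theorem \ref{zrfun}.
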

       \begin{proof}
           It suffices to prove (1). By (2), the reflex field of $X$ equals that of $S$.  Singular K3 surfaces are always of CM type  \cite[Chapter 3, Remark 3.10]{Huy}. Thus  $K(T(S))$ is an imaginary quadratic field over $\mathbb{Q}$ by \cite[Remark 1.5.3 and Theorem 2.3.1]{Zarhin}.  Valloni \cite[Proposition 2.10]{val} shows that the reflex field is induced from the ring of Hodge endomorphisms via $K(T(X))\hookrightarrow \mathbb{C}$; hence (1) follows.
        
      \end{proof}
       An application of Theorem \ref{main2} is to give the converse of Proposition \ref{21fun} (1):
    \begin{thm}\label{apl1} For any imaginary quadratic field $E\subset \mathbb{C}$ over $\mathbb{Q}$, there exists a rank-21 cubic fourfold whose reflex field is $E\subset \mathbb{C}$. Moreover, for $N\geq 3$   coprime to 2310, the inverse image of the set $\{$rank-21 cubic fourfolds with the reflex field $E\subset \mathbb{C}$ $\}$ under the surjection $\widetilde{\mathcal{C}^{[N]}}_{\mathbb{C}}\to \mathcal{C}_{\mathbb{C}}$ is Zariski-dense  in the $\mathbb{C}$-scheme $\widetilde{\mathcal{C}^{[N]}}_{\mathbb{C}}$.
    \end{thm}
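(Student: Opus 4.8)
The plan is to exhibit rank-$21$ cubic fourfolds with reflex field $E$ as exactly the cubic-fourfold locus inside a Zariski-dense set of CM points on ${\rm Sh}_{K_{N}}(L)_{\mathbb{C}}$, and then to use Theorem~\ref{main2} to transport this density to $\widetilde{\mathcal{C}^{[N]}}_{\mathbb{C}}$. First I would manufacture one CM point. Writing $E=\mathbb{Q}(\sqrt{-d})$, a short lattice computation inside $L\cong E_{8}(-1)^{\oplus 2}\oplus U^{\oplus 2}\oplus A_{2}(-1)$ produces a rank-$2$ positive-definite rational subspace $W\subset V$ whose Gram determinant lies in $d\cdot\mathbb{Q}^{\times 2}$; for instance, using the two hyperbolic summands one may take orthogonal vectors of square $2$ and $2d$. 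Then ${\rm SO}(W)$ is the norm-one torus of $E$, and equipping $W$ with an orientation and $W^{\perp}$ with the trivial weight-$0$ Hodge structure defines a point $h_{0}\in X_{L}$ with ${\rm MT}(h_{0})={\rm SO}(W)$, hence a special point; by the description of the reflex field of a CM Hodge structure of K3 type recalled in $\S\ref{pre}$ (together with the fact that an imaginary quadratic field coincides with the image of either of its embeddings into $\mathbb{C}$), the reflex field of $h_{0}$ is exactly $E$. Set $x_{0}=[h_{0},1]\in{\rm Sh}_{K_{N}}(L)(\mathbb{C})$. This is the lattice-theoretic input.

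Next I would show that the Hecke orbit $\{[h_{0},g]:g\in G(\mathbb{A}_{f})\}$ of $x_{0}$ is dense in ${\rm Sh}_{K_{N}}(L)_{\mathbb{C}}$ and consists entirely of special points with reflex field $E$. For the first assertion, in the decomposition (\ref{shdec}) a translate $[h_{0},qgk]$ with $q\in G(\mathbb{Q})_{+}$, $g\in\mathscr{C}$, $k\in K_{N}$ is identified with $\Gamma_{g}\cdot(q^{-1}h_{0})\in\Gamma_{g}\backslash X_{L}^{+}$, so the orbit meets every connected component and its trace on the component indexed by $g$ is $\Gamma_{g}\backslash\big(G(\mathbb{Q})_{+}\cdot h_{0}\big)$; by real approximation for $G={\rm SO}(V)$ (which is $\mathbb{Q}$-rational, $V$ being isotropic) the group $G(\mathbb{Q})_{+}$ is dense in $G(\mathbb{R})_{+}$, and $G(\mathbb{R})_{+}$ acts transitively on the Hermitian symmetric domain $X_{L}^{+}$, so $G(\mathbb{Q})_{+}\cdot h_{0}$ is dense in $X_{L}^{+}$; hence the orbit is analytically, therefore Zariski-, dense. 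For the second assertion, each orbit point is the class of a Hodge structure of the form $q^{-1}h_{0}$ with $q\in G(\mathbb{Q})$, which is again special and whose cocharacter $\mu_{q^{-1}h_{0}}=q^{-1}\mu_{h_{0}}q$ is $G(\mathbb{Q})$-, hence $G(\mathbb{C})$-, conjugate to $\mu_{h_{0}}$; since the reflex field of a special point depends only on the $G(\mathbb{C})$-conjugacy class of its cocharacter, every orbit point has reflex field $E$. Thus $D_{E}:=\{$ special points of ${\rm Sh}_{K_{N}}(L)_{\mathbb{C}}$ with reflex field $E\}$ is Zariski-dense.

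Then I would cut down by the image of the period map. By Theorem~\ref{main2} the morphism $j_{N,\mathbb{C}}$ is an open immersion, so $U:={\rm im}(j_{N,\mathbb{C}})$ is a nonempty open subscheme of ${\rm Sh}_{K_{N}}(L)_{\mathbb{C}}$, and since $D_{E}$ is dense, $D_{E}\cap U$ is dense in $U$ (every nonempty open of $U$ is open in ${\rm Sh}_{K_{N}}(L)_{\mathbb{C}}$ and hence meets $D_{E}$). By construction of $j_{N,\mathbb{C}}$ a point of $U$ is the period of a cubic fourfold $X$; if it moreover lies in $D_{E}$, then $h_{X}$ is of CM type with reflex field $\varepsilon(K(T(X)))=E$, so $[K(T(X)):\mathbb{Q}]=2$ by injectivity of $\varepsilon$, whence $\dim_{K(T(X))}T(X)=1$ by Theorem~\ref{zrfun}, so ${\rm rk}\,T(X)=2$ and ${\rm rk}\,A(X)=21$; that is, $X$ is a rank-$21$ cubic fourfold with reflex field $E$, and conversely Proposition~\ref{21fun} shows every such $X$ gives a point of $D_{E}\cap U$. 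Transporting through the isomorphism $\widetilde{\mathcal{C}^{[N]}}_{\mathbb{C}}\xrightarrow{\sim}U$, the set $j_{N,\mathbb{C}}^{-1}(D_{E})$ — which is exactly the preimage under $\widetilde{\mathcal{C}^{[N]}}_{\mathbb{C}}\to\mathcal{C}_{\mathbb{C}}$ of the rank-$21$ cubic fourfolds with reflex field $E$ — is Zariski-dense and nonempty, giving both the density and the existence statements (and Theorem~\ref{rank21ex} follows by pushing forward to $\mathcal{C}_{\mathbb{C}}$).

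I expect the main obstacle to be the density of the Hecke orbit together with the bookkeeping of connected components in (\ref{shdec}): one must verify that real approximation applies to $G={\rm SO}(V)$ and that $G(\mathbb{Q})_{+}$ has dense image in the stabilizer $G(\mathbb{R})_{+}$ of $X_{L}^{+}$, and that passing to the reduced open subscheme $U\cong\widetilde{\mathcal{C}^{[N]}}_{\mathbb{C}}$ preserves Zariski density — the point where the non-surjectivity of the global period map (Laza) forces us to use Theorem~\ref{main2} rather than work on the full Shimura variety. A secondary point needing care is the identification, for a CM point in $U$, of the underlying cubic fourfold as rank-$21$, which rests on Zarhin's theorem (Theorem~\ref{zrfun}) and the injectivity of $\varepsilon$.
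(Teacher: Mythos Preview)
Your proposal is correct and follows essentially the same route as the paper: construct a single special point $h_{0}\in X_{L}$ with reflex field $E$ by embedding the rank-$2$ lattice attached to $E$ into $U^{\oplus 2}\subset L$ (this is the paper's Lemmas \ref{pri2}--\ref{21}), use the Zariski density of the Hecke orbit $\{[h_{0},a]:a\in G(\mathbb{A}_{f})\}$ (the paper simply cites \cite[Lemma 13.5]{milne}, whereas you sketch its proof via real approximation), and then intersect with the open image of $j_{N,\mathbb{C}}$ from Theorem~\ref{main2}. Your argument is in fact more complete than the paper's rather terse proof: you spell out why every Hecke translate retains reflex field $E$ and why a CM point in the image with reflex field $E$ forces ${\rm rk}\,T(X)=2$ via Theorem~\ref{zrfun}, points the paper leaves implicit.
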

    To prove this, we recall some  lattice-theoretic lemmas.
   \\\\ \indent Fix  $E=\mathbb{Q}(\sqrt{-d})$ with $d>0$  square-free, and let $\mathcal{O}_{E}$ be its ring of integers. The $\mathbb{Z}$-module $\mathcal{O}_{E}$ is  free of rank 2. An integral basis of $\mathcal{O}_{E}$ is  $(1, (1+\sqrt{-d})/2)$ if $-d\equiv 1$ (mod $4)$, and $(1, \sqrt{-d})$ otherwise (\cite[IV, \S 2, Theorem 5]{lang}). Let $z\mapsto \overline{z}$ br  complex conjugation of $E$, which  induces an automorphism of $\mathcal{O}_{E}$. Consider 
    \begin{align}
        q : \mathcal{O}_{E}\times \mathcal{O}_E\to \mathbb{Z}, (x, y)\mapsto {\rm Tr}_{E/\mathbb{Q}}( x\overline{y}) \notag 
    \end{align}
     a non-degenerate symmetric $\mathbb{Z}$-bilinear form. Thus $(\mathcal{O}_{E}, q)$ is a lattice. 
    \\\indent We compute  $(\mathcal{O}_{E}, q)$ explicitly. If $-d\equiv 1$ (mod $4)$, then the self-intersection of $(1+\sqrt{-d})/2$ is $(1+d)/2$. The intersection matrix in the basis $(1, (1+\sqrt{-d})/2)$ is  $\begin{pmatrix}
        2& 0\\ 0& (1+d)/2
    \end{pmatrix}$. If $-d\not\equiv 1$ (mod $4)$, then $q(\sqrt{-d}, \sqrt{-d})=2d$, so the matrix in the basis $(1, \sqrt{-d})$ is
    $\begin{pmatrix}
        2& 0\\ 0 &2d 
    \end{pmatrix}$. In either cases, $(\mathcal{O}_{E}, q)$ is even of  signature $(2, 0)$. By \cite[Chapter 14, Proposition 1.8]{Huy}, there is a primitive embedding
    \begin{align}\label{pri}
        (\mathcal{O}_{E}, q)\hookrightarrow U^{\oplus 2}.
    \end{align}
    Compositing (\ref{pri}) with $U^{\oplus 2}\hookrightarrow L$,  we obtain:
    \begin{lemma}\label{pri2}
        There exists an embedding of lattices:
        \begin{align}
            (\mathcal{O}_{E}, q)\hookrightarrow L. \notag 
        \end{align}
    \end{lemma}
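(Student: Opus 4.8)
The plan is to obtain the embedding as the composition of the primitive embedding $(\mathcal{O}_E, q)\hookrightarrow U^{\oplus 2}$ supplied by (\ref{pri}) with the inclusion of $U^{\oplus 2}$ as an orthogonal direct summand of $L$.

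Concretely, I would argue in three short steps. First, recall from Definition \ref{notlat} that $L\cong E_8(-1)^{\oplus 2}\oplus U^{\oplus 2}\oplus A_2(-1)$; fixing such an isomorphism gives a canonical isometric embedding $\iota\colon U^{\oplus 2}\hookrightarrow L$ onto the middle summand. Second, I would invoke (\ref{pri}): we have already verified that $(\mathcal{O}_E, q)$ is an even lattice of signature $(2,0)$ whose discriminant group is $\mathbb{Z}/2\oplus\mathbb{Z}/\tfrac{1+d}{2}$ or $\mathbb{Z}/2\oplus\mathbb{Z}/2d$, hence generated by at most two elements, so that \cite[Chapter 14, Proposition 1.8]{Huy} — Nikulin's criterion for primitive embeddings into an even unimodular lattice, here $U^{\oplus 2}$ of signature $(2,2)$ — produces a primitive isometric embedding $j\colon(\mathcal{O}_E, q)\hookrightarrow U^{\oplus 2}$. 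Third, the composite $\iota\circ j\colon(\mathcal{O}_E, q)\hookrightarrow L$ is then an isometric embedding of lattices, which is exactly the assertion.

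I do not expect a genuine obstacle here: all the real content is already packaged in (\ref{pri}), and what remains is the formal composition with $\iota$. If one preferred to avoid appealing to Nikulin's theorem and keep the argument entirely elementary, one could instead exhibit an embedding $(\mathcal{O}_E, q)\hookrightarrow U^{\oplus 2}$ explicitly from the Gram matrix of $(\mathcal{O}_E, q)$ computed above — mapping the integral basis of $\mathcal{O}_E$ into a standard hyperbolic basis of $U\oplus U$ so as to reproduce all the entries of that matrix — and then compose with $\iota$ as before, giving a self-contained proof of the same statement.
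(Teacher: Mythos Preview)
Your proposal is correct and follows exactly the paper's approach: the paper simply states that the lemma is obtained by composing the primitive embedding (\ref{pri}) with the inclusion $U^{\oplus 2}\hookrightarrow L$ coming from the decomposition in Definition~\ref{notlat}. Your added remarks on the discriminant group and the explicit alternative are accurate but go beyond what the paper records.
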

  Extending  scalars from $\mathbb{Z}$ to $\mathbb{Q}$, Lemma \ref{pri2} implies:

    \begin{lemma}\label{v1}
        Let $E/\mathbb{Q}$ be an imaginary quadratic field. Then there is a quadratic space $V_{1}$ over $\mathbb{Q}$ such that  
        \begin{align}
            (E, q)\oplus V_{1}\cong V . \notag 
        \end{align}
        as quadratic spaces over $\mathbb{Q}$.
    \end{lemma}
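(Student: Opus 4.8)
The plan is to deduce this at once from Lemma \ref{pri2} by extending scalars to $\mathbb{Q}$ and splitting off a non-degenerate subspace, so the argument is essentially linear algebra over a field of characteristic $\neq 2$.

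First I would invoke Lemma \ref{pri2} to obtain an isometric embedding of lattices $(\mathcal{O}_{E}, q)\hookrightarrow L$. Tensoring with $\mathbb{Q}$ yields an isometric embedding of quadratic spaces $\iota : (E, q)\hookrightarrow V$ over $\mathbb{Q}$; write $W:=\iota(E)\subset V$ for its image, a $2$-dimensional subspace isometric to $(E, q)$.

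Next I would record that $(E, q)$ is non-degenerate: since $q(x, x)={\rm Tr}_{E/\mathbb{Q}}(x\overline{x})=2\, N_{E/\mathbb{Q}}(x)>0$ for every $x\neq 0$, the form $q$ is in fact positive definite, hence anisotropic, so $W$ is a non-degenerate subspace of $V$. Moreover $V=L\otimes\mathbb{Q}$ is itself non-degenerate, since $L\cong E_{8}(-1)^{\oplus 2}\oplus U^{\oplus 2}\oplus A_{2}(-1)$ has nonzero discriminant. By the standard orthogonal decomposition of a non-degenerate quadratic space along a non-degenerate subspace (valid since ${\rm char}\,\mathbb{Q}\neq 2$), one has $V=W\perp W^{\perp}$. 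Setting $V_{1}:=W^{\perp}$, a quadratic space over $\mathbb{Q}$, gives $(E, q)\oplus V_{1}\cong W\perp W^{\perp}=V$, as desired.

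I do not expect any genuine obstacle here: the substantive input, namely the existence of a primitive embedding of $(\mathcal{O}_{E}, q)$ into $L$, has already been provided by Lemma \ref{pri2} (via \cite[Chapter 14, Proposition 1.8]{Huy}). The only point to verify is that the embedded copy of $(E, q)$ is non-degenerate, so that $V$ splits orthogonally; and this is immediate from the positive definiteness of $q$. (Alternatively, one could bypass Lemma \ref{pri2} and construct the embedding $(E,q)\hookrightarrow V$ directly by comparing local invariants via Hasse--Minkowski, but routing through the lattice statement is cleaner.)
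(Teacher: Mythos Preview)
Your proof is correct and follows exactly the approach indicated in the paper: the paper simply states that Lemma \ref{v1} follows from Lemma \ref{pri2} by extending scalars from $\mathbb{Z}$ to $\mathbb{Q}$, and you have spelled out precisely this argument, including the (implicit) verification that $(E,q)$ is non-degenerate so that $V$ splits orthogonally.
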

    \begin{lemma}\label{21} Let $N\geq 1$.
        For any imaginary quadratic field $E\subset \mathbb{C}$, there exists a CM point in ${\rm Sh}_{K_{N}}(L)(\mathbb{C})$ whose reflex field  is $E\subset \mathbb{C}$.
    \end{lemma}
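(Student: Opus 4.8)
The plan is to construct the CM point explicitly on the quadratic space $V$, in exact parallel with the Hodge structure of a singular $\mathrm{K3}$ surface. Write $E=\mathbb{Q}(\sqrt{-d})$ with $d>0$ square-free. By Lemma~\ref{v1} there is an orthogonal decomposition of $\mathbb{Q}$-quadratic spaces $V\cong (E,q)\oplus V_{1}$ in which $(E,q)$ has signature $(2,0)$ (the form $q$ on $E$ being positive definite, cf.\ the computation before Lemma~\ref{pri2}) and $V_{1}$ is negative definite of rank $20$. Since $q(zx,zy)={\rm Nm}_{E/\mathbb{Q}}(z)\,q(x,y)$, multiplication by elements of the norm-one torus
\[
 T:=\ker\bigl({\rm Nm}_{E/\mathbb{Q}}:{\rm Res}_{E/\mathbb{Q}}(\mathbb{G}_{m,E})\to \mathbb{G}_{m}\bigr)
\]
preserves $q$; this identifies $T$ with ${\rm SO}(E,q)$, and, letting $T$ act trivially on $V_{1}$, realizes the one-dimensional $\mathbb{Q}$-torus $T$ as a closed subtorus of $G={\rm SO}(V)$. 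Because $E$ is imaginary quadratic, $T$ is anisotropic and splits precisely over $E$.

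The next step is to write down a special point supported on $T$. From $E\otimes_{\mathbb{Q}}\mathbb{R}\cong \mathbb{C}$ we get $T_{\mathbb{R}}\cong \mathbb{S}^{1}$; let $h:\mathbb{S}\to T_{\mathbb{R}}\subset G_{\mathbb{R}}$ be the composite of the standard weight-zero homomorphism $\mathbb{S}\to \mathbb{S}^{1}$, $z\mapsto z/\overline{z}$, with this isomorphism, extended by the trivial action on $V_{1}$. Then $h$ is a $\mathbb{Q}$-HS of weight $0$ with $h^{1,-1}=h^{-1,1}=1$ (coming from $E\otimes\mathbb{R}$) and $h^{0,0}=20$ (coming from $V_{1}$), and $\pm(-,-)$ is a polarization for $h$ since $(-,-)$ is positive definite on $E$ and negative definite on $V_{1}$; hence $h\in X_{L}$ and $(T,h)$ is a special pair in the Shimura datum $(G,X_{L})$ of \S\ref{sh}. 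In particular, for every $a\in G(\mathbb{A}_{f})$ the point $[h,a]\in {\rm Sh}_{K_{N}}(L)(\mathbb{C})$ is a CM point, its Mumford-Tate group being contained in the commutative torus $T$.

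It then remains to identify the reflex field $E(h)=E(T,h)$ with $E\subset\mathbb{C}$. As $T$ is a $\mathbb{Q}$-torus split over $E$, the class $c(\{h\})$ is simply the cocharacter $\mu_{h}=h_{\mathbb{C}}\circ\mu\in X_{*}(T)$, and ${\rm Aut}(\mathbb{C})$ acts on $X_{*}(T)$ through ${\rm Gal}(E/\mathbb{Q})$. A short computation with $X^{*}({\rm Res}_{E/\mathbb{Q}}(\mathbb{G}_{m,E}))=\mathbb{Z}[{\rm Gal}(E/\mathbb{Q})]$ shows that $X_{*}(T)\cong\mathbb{Z}$ with the nontrivial element of ${\rm Gal}(E/\mathbb{Q})$ acting by $-1$, and that $\mu_{h}$ is a generator of $X_{*}(T)$; hence the stabilizer of $\mu_{h}$ in ${\rm Aut}(\mathbb{C})$ is exactly ${\rm Aut}(\mathbb{C}/E)$, so $E(h)=E$. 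There is no ambiguity about the embedding of $E$ into $\mathbb{C}$ since the two embeddings of an imaginary quadratic field have the same image. Thus $[h,1]\in {\rm Sh}_{K_{N}}(L)(\mathbb{C})$ is a CM point with reflex field $E$, as required.

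The step I expect to be the main obstacle is the last one: pinning down $\mu_{h}$ as a generator of the rank-one Galois module $X_{*}(T)$ — equivalently, verifying that $T$ is genuinely non-split so that the reflex field comes out to be $E$ rather than $\mathbb{Q}$ — and keeping the conventions for $\mu$, $h$ and complex conjugation mutually consistent. The construction of $h$ and the check that $h\in X_{L}$, together with the passage from the special point to the asserted CM point on ${\rm Sh}_{K_{N}}(L)(\mathbb{C})$, should be routine.
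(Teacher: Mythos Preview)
Your proposal is correct and follows essentially the same approach as the paper: embed $(E,q)$ into $V$ via Lemma~\ref{v1}, construct the Hodge structure $h\in X_{L}$ from the $E$-action on that summand (trivial on $V_{1}$), and identify the reflex field with $E$ via the splitting of the norm-one torus $T\cong{\rm SO}(E,q)$ over $E$. Your reflex-field computation through the Galois action on $X_{*}(T)\cong\mathbb{Z}$ is in fact more explicit than the paper's, which simply asserts the conclusion from the isomorphism ${\rm SO}(E,q)_{E}\cong\mathbb{G}_{m,E}$.
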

    \begin{proof}
        By Lemma \ref{v1}, write $(E, q)\oplus V_{1}\cong V_{0}$. Fix an embedding  $\sigma : E\hookrightarrow \mathbb{C}$. The other embedding  is given by the composition of $\sigma$ and complex conjugation. Set $\mathbb{C}_{\sigma}:=E\otimes_{E, \sigma}\mathbb{C}$,  $\mathbb{C}_{\overline{\sigma}}:=E\otimes_{E, \overline{\sigma}}\mathbb{C}$; these are one-dimensional $\mathbb{C}$-vector spaces. There is a canonical isomorphism 
\begin{align}\label{cc}
            E\otimes_{\mathbb{Q}}\mathbb{C}\cong \mathbb{C}_{\sigma}\oplus \mathbb{C}_{\overline{\sigma}}.
        \end{align}
        given by $x\otimes c\mapsto (1\otimes \sigma(x)c, 1\otimes \overline{\sigma}(x)c)$.
       Choose a square root $\sqrt{-d}$ of $-d$ with $\sigma(\sqrt{-d})=\sqrt{-d}$. Then (\ref{cc}) identifies 
        \begin{align}
            \Big\{1\otimes\frac{c}{2} + \sqrt{-d}\otimes \frac{c}{2\sqrt{-d}}\in E\otimes_{\mathbb{Q}}\mathbb{C} ~|~ c\in\mathbb{C}\Big\}&\cong \mathbb{C}_{\sigma},  \notag 
            \\ \Big\{1\otimes\frac{c}{2} - \sqrt{-d}\otimes \frac{c}{2\sqrt{-d}}\in E\otimes_{\mathbb{Q}}\mathbb{C}~ | ~c\in\mathbb{C}\Big\}&\cong \mathbb{C}_{\overline{\sigma}}.\notag 
        \end{align}
      Define a $\mathbb{Q}$-HS of K3 type on $V$ by
       \begin{align}
           V^{p, q}:=\begin{cases}
               \mathbb{C}_{\sigma}, &(p, q)=(1, -1)
               \\ V_{1}\otimes_{\mathbb{Q}}\mathbb{C}, &(p, q)=(0, 0)
               \\ \mathbb{C}_{\overline{\sigma}}, &(p, q)=(-1, 1).
           \end{cases}\notag 
       \end{align}
       Complex conjugation on $E\otimes_{\mathbb{Q}}\mathbb{C}$ interchanges $\mathbb{C}_{\sigma}$ and $\mathbb{C}_{\overline{\sigma}}$, so the above defines a polarized $\mathbb{Q}$-HS $h\in X_{L}$: for  any nonzero  $x\in V^{1, -1}$, one has $x^{2}=0$ and $(x, \overline{x})>0$. 
       \\\indent The action of $\mathbb{S}$ on $V^{1}$ by $h$ is trivial (Example \ref{00}),  hence $h$ factors through  $\mathbb{S}\to {\rm SO}(E, q)$. In the basis $(1, \sqrt{-d})$ for $E$,  we have an isomorphism of $\mathbb{Q}$-algebraic groups ${\rm SO}_{2}(E, q)\cong \Big\{\begin{pmatrix}
           a& -db\\ b& a
       \end{pmatrix}~|~a^{2}+db^{2}=1\Big\}$, so over $E$ there is a splitting 
       \begin{align}
           {\rm SO}(E, q)_{E}\cong \mathbb{G}_{m, E}. \notag 
       \end{align}
      In particular,  $h$ is special  and its reflex field is $\sigma:E\to \mathbb{C}$.
       
    \end{proof}
    We will use the following lemma to prove Theorem \ref{apl1}.
     \begin{lemma}(\cite[Lemma 13.5]{milne}) For any $x\in X_{L}$, the set $\{[(x, a)] ~|~ a\in G(\mathbb{A}_{f})\}$ is Zariski-dense in ${\rm Sh}_{K_{N}}(L)(\mathbb{C})$.
    \end{lemma}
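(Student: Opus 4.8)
The plan is to prove this Hecke-orbit density statement one connected component of ${\rm Sh}_{K_{N}}(L)(\mathbb{C})$ at a time, using the complex-analytic presentation recalled in \S\ref{sh}. Set $\mathscr{C}=G(\mathbb{Q})_{+}\backslash G(\mathbb{A}_{f})/K_{N}$, a finite set; fix a connected component $X_{L}^{+}\subset X_{L}$; and recall from (\ref{shdec}) the homeomorphism $\coprod_{g\in\mathscr{C}}\Gamma_{g}\backslash X_{L}^{+}\xrightarrow{\sim}{\rm Sh}_{K_{N}}(L)(\mathbb{C})$ onto the decomposition of the right-hand side into its finitely many connected components, where $\Gamma_{g}=gK_{N}g^{-1}\cap G(\mathbb{Q})_{+}$. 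Since a subset of a complex quasi-projective scheme meeting each of finitely many connected components in a Zariski-dense set is itself Zariski-dense, it suffices to show that for every $g\in\mathscr{C}$ the Hecke orbit $\mathcal{H}:=\{\,[(x,a)]\mid a\in G(\mathbb{A}_{f})\,\}$ meets $\Gamma_{g}\backslash X_{L}^{+}$ in a Zariski-dense subset. I will assume $x\in X_{L}^{+}$ without loss of generality: if not, simply rerun the argument using the component of $X_{L}$ containing $x$ in place of $X_{L}^{+}$ in (\ref{shdec}), which changes neither ${\rm Sh}_{K_{N}}(L)_{\mathbb{C}}$ nor $\mathcal{H}$.

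Next I would carry out the orbit computation inside a fixed component. Writing any $a\in G(\mathbb{A}_{f})$ as $a=qgk$ with $q\in G(\mathbb{Q})_{+}$, $g\in\mathscr{C}$ and $k\in K_{N}$, one has $[(x,a)]=[(q^{-1}x,gk)]=[(q^{-1}x,g)]$; since $q^{-1}\in G(\mathbb{Q})_{+}\subset G(\mathbb{R})_{+}$ preserves the component $X_{L}^{+}$, the point $q^{-1}x$ again lies in $X_{L}^{+}$, and under (\ref{shdec}) the class $[(q^{-1}x,g)]$ corresponds to the image of $q^{-1}x$ in $\Gamma_{g}\backslash X_{L}^{+}$. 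Therefore $\mathcal{H}\cap(\Gamma_{g}\backslash X_{L}^{+})$ is precisely the image of the orbit $G(\mathbb{Q})_{+}\cdot x\subset X_{L}^{+}$ under the projection $X_{L}^{+}\to\Gamma_{g}\backslash X_{L}^{+}$, so it suffices to prove that $G(\mathbb{Q})_{+}\cdot x$ is dense in $X_{L}^{+}$ for the analytic topology: its image in $\Gamma_{g}\backslash X_{L}^{+}$ is then analytically dense, hence Zariski-dense in that complex quasi-projective variety, and this holds for every $g\in\mathscr{C}$.

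Finally I would invoke real approximation for $G={\rm SO}(V)$, which is a connected linear algebraic group over $\mathbb{Q}$, so that $G(\mathbb{Q})$ is dense in $G(\mathbb{R})$; since $G(\mathbb{R})_{+}$ is open in $G(\mathbb{R})$, it follows that $G(\mathbb{Q})_{+}=G(\mathbb{Q})\cap G(\mathbb{R})_{+}$ is dense in $G(\mathbb{R})_{+}$, and as $X_{L}^{+}$ is a single $G(\mathbb{R})_{+}$-orbit (a connected Hermitian symmetric domain of $(G,X_{L})$), the orbit $G(\mathbb{Q})_{+}\cdot x$ is dense in $X_{L}^{+}$, which finishes the proof. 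This reproduces Milne's argument for \cite[Lemma 13.5]{milne}; I do not foresee a serious obstacle, the only substantive inputs being real approximation for $G$ and the transitivity of $G(\mathbb{R})_{+}$ on each connected component of $X_{L}$, with the one delicate point the bookkeeping of the two connected components of $X_{L}$. In the intended application (Theorem \ref{apl1}), this lemma is combined with Lemmas \ref{v1} and \ref{21}: Lemma \ref{21} produces a special point $h\in X_{L}$ whose associated points $[h,a]$, $a\in G(\mathbb{A}_{f})$, all have reflex field the prescribed imaginary quadratic field $E$, and the density proved here, together with the open immersion $j_{N,\mathbb{C}}$ of Theorem \ref{main2}, then yields Theorem \ref{apl1}.
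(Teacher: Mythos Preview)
Your argument is correct and is precisely Milne's proof of \cite[Lemma~13.5]{milne}; the paper itself gives no proof for this lemma, simply citing Milne, so there is nothing further to compare.
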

    \textbf{Proof of Theorem \ref{apl1} :}
        Let $\sigma : E\subset \mathbb{C}$ be an imaginary quadratic field over $\mathbb{Q}$. By Lemma \ref{21}, there exists a CM point $x\in X_{L}$ with   reflex field  $\sigma : E\to \mathbb{C}$. By Theorem \ref{main2}, the image of $j_{N, \mathbb{C}}$ is open  and  contains a point $[(x, a)]$ for some $a\in G(\mathbb{A}_{f})$. Hence there exists  a cubic fourfold $X$ of CM type with  reflex field $E\subset \mathbb{C}$. Since the transcendental rank is $2$, the algebraic rank of $X$ is 21.

    \section{The proof of Theorem \ref{main3} }\label{secmain3}
   
    \subsection{Complex multiplication for rank-21 cubic fourfolds}
     In this section  we prove the following.
    \begin{thm}(Complex multiplication for rank-21 cubic fourfolds)\label{cmt}
Let $X/\mathbb{C}$ be a rank-21 cubic fourfold, and let $E\subset \mathbb{C}$ be its reflex field. For $\tau\in{\rm Aut}(\mathbb{C}/E)$ and $s\in \mathbb{A}^{\times}_{E}$ satisfying ${\rm art}_{E}(s)=\tau_{E^{ab}}$, there exists a unique Hodge isometry $\eta : T(X)_{\mathbb{Q}}\to T(X^{\tau})_{\mathbb{Q}}$ such that the diagram commutes:
\[\xymatrix{T(X)_{\mathbb{A}_{f}}\ar[r]^{\eta\otimes \mathbb{A}_{f}}& T(X^{\tau})_{\mathbb{A}_{f}}\\ T(X)_{\mathbb{A}_{f}}\ar[u]^{r_{X}(s)}\ar[ur]_{\tau}.&}\]
\end{thm}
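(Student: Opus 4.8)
The plan is to reduce Theorem~\ref{cmt} to Rizov's complex multiplication theorem for singular K3 surfaces \cite[Theorem 3.4.2]{Rizov10} by transporting the statement along an algebraic correspondence between $X$ and its associated singular K3 surface. By Proposition~\ref{21fun} there is a singular K3 surface $S$ together with a Hodge isometry $T(X)_{\mathbb{Q}}\cong T(S)_{\mathbb{Q}}$ of weight-$0$ rational Hodge structures, and with the same reflex field $E$. The first step is to make this isometry \emph{algebraic}. Using the transcendental Chow-K\"unneth component $t(X)$ of Bolognesi-Pedrini \cite{bo}, the transcendental motive $t(S)$ of Murre \cite{mur} and Kahn-Murre-Pedrini \cite{ka}, and the comparison of transcendental motives of B\"ulles \cite{bu}, one obtains an isomorphism of Chow motives $t(X)\cong t(S)(-1)$; fixing one, its realizations furnish a correspondence $\Phi\in{\rm CH}^{\bullet}(S\times X)_{\mathbb{Q}}$ whose Betti action is a Hodge isometry $\phi\colon T(S)_{\mathbb{Q}}\xrightarrow{\sim}T(X)_{\mathbb{Q}}$ and whose $\mathbb{A}_{f}$-\'etale action is the induced isometry on $T(-)_{\mathbb{A}_{f}}$, the two being matched by the comparison isomorphism of the cycle class. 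Since ${\rm End}_{\rm Hdg}(T(X)_{\mathbb{Q}})=E$, the particular choice of $\Phi$ will not matter below.

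Next I apply \cite[Theorem 3.4.2]{Rizov10} to $S$. As the reflex field of $S$ is $E$, for $\tau\in{\rm Aut}(\mathbb{C}/E)$ and $s\in\mathbb{A}_{E}^{\times}$ with ${\rm art}_{E}(s)=\tau|_{E^{ab}}$ there is a unique Hodge isometry $\eta_{S}\colon T(S)_{\mathbb{Q}}\to T(S^{\tau})_{\mathbb{Q}}$ with $(\eta_{S}\otimes\mathbb{A}_{f})\circ r_{S}(s)=\tau^{S}$ on $T(S)_{\mathbb{A}_{f}}$, where $\tau^{S}\colon T(S)_{\mathbb{A}_{f}}\to T(S^{\tau})_{\mathbb{A}_{f}}$ is the canonical isomorphism induced by $\tau$ on $\mathbb{A}_{f}$-\'etale cohomology; it preserves the transcendental part because it preserves cycle classes, and $r_{S}=r_{h_{S}}$ is built from the Mumford-Tate group acting on $T(S)$ as in Example~\ref{tri}.

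Now I transport back. Applying the automorphism-induced functor $\tau$ on $\mathsf{Mot}_{{\rm rat}}(\mathbb{C})$ to $\Phi$ yields a correspondence $\Phi^{\tau}\in{\rm CH}^{\bullet}(S^{\tau}\times X^{\tau})_{\mathbb{Q}}$ realizing $t(X^{\tau})\cong t(S^{\tau})(-1)$, hence a Hodge isometry $\phi^{\tau}\colon T(S^{\tau})_{\mathbb{Q}}\xrightarrow{\sim}T(X^{\tau})_{\mathbb{Q}}$; here $X^{\tau}$ is again a rank-$21$ cubic fourfold, since $\tau$ preserves the algebraic rank. Put $\eta:=\phi^{\tau}\circ\eta_{S}\circ\phi^{-1}$, a Hodge isometry $T(X)_{\mathbb{Q}}\to T(X^{\tau})_{\mathbb{Q}}$. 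To check the triangle (whose diagonal map is $\tau^{X}$, the analogue for $X$ of $\tau^{S}$) I work on $\mathbb{A}_{f}$-points using two facts: (a) $\phi\circ r_{S}(s)=r_{X}(s)\circ\phi$, since $\phi$ is a Hodge isometry $T(S)_{\mathbb{Q}}\cong T(X)_{\mathbb{Q}}$, so it conjugates the Hodge structure on $T(S)$ to that on $T(X)$, and $r_{h}=r({\rm MT}(h),\mu_{h})$ depends only on $\mu_{h}$ --- this is the computation of Lemma~\ref{conja} applied to $\phi$, together with Example~\ref{tri}; and (b) $\phi^{\tau}\circ\tau^{S}=\tau^{X}\circ\phi$, since the $\mathbb{A}_{f}$-\'etale class of an algebraic cycle is compatible with the $\tau$-action on \'etale cohomology ($\tau^{\ast}[Z]=[Z^{\tau}]$). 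Then on $T(X)_{\mathbb{A}_{f}}$,
\[(\eta\otimes\mathbb{A}_{f})\circ r_{X}(s)=\phi^{\tau}\circ(\eta_{S}\otimes\mathbb{A}_{f})\circ r_{S}(s)\circ\phi^{-1}=\phi^{\tau}\circ\tau^{S}\circ\phi^{-1}=\tau^{X},\]
which is the asserted commutativity. Uniqueness is then immediate: the triangle forces $\eta\otimes\mathbb{A}_{f}=\tau^{X}\circ r_{X}(s)^{-1}$, and a $\mathbb{Q}$-linear map of the finite-dimensional space $T(X)_{\mathbb{Q}}$ is determined by its $\mathbb{A}_{f}$-linear extension.

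I expect the first step to be the main obstacle: producing the correspondence $\Phi$ that realizes the Hodge isometry $T(S)_{\mathbb{Q}}\cong T(X)_{\mathbb{Q}}$ \emph{and} is compatible on $\mathbb{A}_{f}$-\'etale realizations. The Hodge-theoretic isometry itself is essentially free (Shioda-Inose, as packaged in Proposition~\ref{21fun}), but making it motivic for \emph{every} rank-$21$ cubic fourfold --- not only those admitting an associated K3 surface in Hassett's sense --- is exactly where the transcendental-motive results of \cite{bo,mur,ka,bu} are indispensable; one must also check that the Betti realization of the motivic isomorphism is a genuine Hodge isometry of the $T$-lattices rather than an isomorphism off by a non-isometric scalar, which is harmless since ${\rm End}_{\rm Hdg}(T(X)_{\mathbb{Q}})=E$ and one may rescale. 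Everything after that is formal manipulation with the reciprocity morphism and with functoriality of cycle classes under ${\rm Aut}(\mathbb{C})$.
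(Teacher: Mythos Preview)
Your proposal is correct and follows essentially the same route as the paper: both obtain a motivic isomorphism $t(X)\cong t(S)(-1)$ via B\"ulles' theorem (the paper packages this as Corollary~\ref{singk3}), apply Rizov's CM theorem to $S$, and transport back using the $\tau$-conjugate of the correspondence; your compatibility (a) is the paper's Lemma~\ref{r} and your compatibility (b) is its Corollary~\ref{action}. The paper assembles these into a single diagram chase rather than the line-by-line computation you give, and does not address uniqueness explicitly, but the substance is identical.
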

For a cubic fourfold $X$ over $\mathbb{C}$, let $\mathcal{A}_{X}\subset D^{b}(X)$ denote the Kuznetov component of the derived category \cite{ku}. Write $\widetilde{H}(\mathcal{A}_{X}, \mathbb{Z})$ for the Hodge structure introduced by Addington-Thomas \cite{add}. For a K3 surface $S$ and $\alpha\in {\rm Br}(S)$, the notation $\widetilde{H}(S, \alpha, \mathbb{Z})$ refers to the cohomology of the twisted K3 surface $(S, \alpha)$ defined by Huybrechts-Stellari \cite{Huy0}.
\begin{thm}(B\"ulles \cite[Theorem 0.4]{bu})
    Let $X\in \mathcal{C}_{d}$ be a special cubic fourfold. Assume  there exist a K3 surface $S,$ a Brauer class $\alpha\in {\rm Br}(S)$, and a Hodge isometry $\widetilde{H}(S, \alpha, \mathbb{Z})\cong \widetilde{H}(\mathcal{A}_{X}, \mathbb{Z})$. Then there is a cycle $Z\in {\rm CH}^{3}(S\times X)$ inducing an isomorphism of Chow groups ${\rm CH}_{0}(S)_{hom}\xrightarrow{\sim}{\rm CH}_{1}(X)_{hom}$ and hence an isomorphism of transcendental motives $t(S)(-1)\cong t(X)$. Furthermore, $h(X)\cong 1\oplus h(S)(-1)\oplus \mathbb{L}^{2}\oplus \mathbb{L}^{4},$ so $h(S)$ is finite-dimensional if and only if $h(X)$ is finite-dimensional.  
\end{thm}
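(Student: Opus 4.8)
The plan is to realize the given Hodge isometry by a Fourier--Mukai equivalence, extract a correspondence from the kernel of that equivalence, and then transport it successively to Chow groups and to transcendental motives. The first step is to upgrade the Hodge isometry $\widetilde{H}(S,\alpha,\mathbb{Z})\cong\widetilde{H}(\mathcal{A}_X,\mathbb{Z})$ to an exact $\mathbb{C}$-linear equivalence $\Phi\colon D^b(S,\alpha)\xrightarrow{\sim}\mathcal{A}_X$. This is the twisted derived global Torelli theorem for Kuznetsov components: for a special cubic fourfold $X\in\mathcal{C}_d$ the lattice $\widetilde{H}(\mathcal{A}_X,\mathbb{Z})$ is a Mukai-type K3 lattice carrying a weight-two Hodge structure, and --- after possibly precomposing the isometry with shifts and spherical twists so that it respects the canonical orientation of the positive-definite four-space --- every such Hodge isometry is induced by a Fourier--Mukai equivalence, combining the untwisted result of Addington--Thomas \cite{add} with its twisted extension and the twisted derived Torelli theorem of Huybrechts--Stellari \cite{Huy0}. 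The orientation bookkeeping is harmless here because the isometry matches $\widetilde{H}^{2,0}(S,\alpha)$ with $\widetilde{H}^{2,0}(\mathcal{A}_X)$ by hypothesis.

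Next I would produce the cycle $Z$. Composing $\Phi$ with the inclusion $\mathcal{A}_X\hookrightarrow D^b(X)$ gives a Fourier--Mukai functor $D^b(S,\alpha)\to D^b(X)$ with kernel $\mathcal{E}\in D^b(S\times X,\alpha^{-1}\boxtimes 1)$. To take Chern characters I would trivialize the Brauer class rationally --- for instance by replacing $\alpha$ with an Azumaya algebra, equivalently a $\mathbb{P}^n$-bundle $P\to S$ on which $\alpha$ pulls back to $0$ --- so that $\mathcal{E}$ becomes an honest complex whose Mukai vector $v(\mathcal{E})={\rm ch}(\mathcal{E})\sqrt{{\rm td}}$ descends to a well-defined class in $\bigoplus_i{\rm CH}^i(S\times X)_{\mathbb{Q}}$. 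Let $Z$ be its component in ${\rm CH}^3(S\times X)_{\mathbb{Q}}$; a dimension count shows this is precisely the component sending ${\rm CH}_0(S)$ to ${\rm CH}_1(X)$. Then I would show $Z_*\colon{\rm CH}_0(S)_{hom}\to{\rm CH}_1(X)_{hom}$ is an isomorphism: on the K3 side one uses that any Fourier--Mukai equivalence of (twisted) K3 surfaces preserves the Beauville--Voisin filtration and hence acts isomorphically on ${\rm CH}_0(-)_{hom}$; on the cubic side one identifies ${\rm CH}_1(X)_{hom}$ with the analogous ``deepest'' Chow invariant attached to $\mathcal{A}_X$, observing that the excluded line bundles $\mathcal{O}_X,\mathcal{O}_X(1),\mathcal{O}_X(2)$ contribute only algebraic classes, so passing to $\mathcal{A}_X$ does not disturb ${\rm CH}_{hom}$ in the relevant degree. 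The inverse is given by the kernel of $\Phi^{-1}$, and the two composites act trivially on homologically trivial cycles because the error kernels $\Phi^{-1}\circ\Phi-{\rm id}$ and $\Phi\circ\Phi^{-1}-{\rm id}$ are supported on algebraic pieces.

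To pass to motives, I would cut $Z$ and its transpose by the transcendental projectors $p^2_{tr}(S)$ and $p^4_{tr}(X)$ of Theorems \ref{k3mot} and \ref{c4mot}, obtaining a morphism $\Psi\colon t(S)(-1)\to t(X)$ of Chow motives. Its Betti realization is the transcendental part of the given isometry, since $\widetilde{H}^{tr}(S,\alpha,\mathbb{Z})=T(S)$ and $\widetilde{H}^{tr}(\mathcal{A}_X,\mathbb{Z})=T(X)$ up to a Tate twist, and its action on ${\rm CH}_0$ is the isomorphism just obtained. By the rigidity of transcendental motives of K3 type (Kahn--Murre--Pedrini \cite{ka}, Murre \cite{mur}), a morphism between $t(S)(-1)$ and $t(X)$ that is a Hodge isometry on realizations and an isomorphism on the deepest Chow group is itself an isomorphism of motives; hence $t(S)(-1)\cong t(X)$. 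Combining this with the Chow--K\"unneth decompositions $h(S)=1\oplus(\underline{{\rm NS}}(S)_{\mathbb{Q}}\otimes\mathbb{L}\oplus t(S))\oplus\mathbb{L}^2$ and $h(X)=1\oplus\mathbb{L}\oplus(\underline{A}(X)_{\mathbb{Q}}\otimes\mathbb{L}^2\oplus t(X))\oplus\mathbb{L}^3\oplus\mathbb{L}^4$ (Theorems \ref{k3mot}, \ref{c4mot}), and using $\underline{A}(X)_{\mathbb{Q}}\cong\underline{{\rm NS}}(S)_{\mathbb{Q}}\oplus 1$ --- both are trivial Artin motives over $\mathbb{C}$ and the ranks match by the isometry on algebraic parts --- one rearranges terms to get $h(X)\cong 1\oplus h(S)(-1)\oplus\mathbb{L}^2\oplus\mathbb{L}^4$. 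Since $1$, the Lefschetz motives, and Artin motives are finite-dimensional in the sense of Kimura--O'Sullivan, $h(X)$ is finite-dimensional if and only if $t(X)$ is, and since $t(X)\cong t(S)(-1)$ this holds if and only if $h(S)$ is finite-dimensional.

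The hard part will be the two non-formal inputs: Step 1, i.e.\ the twisted derived global Torelli theorem for Kuznetsov components together with the orientation bookkeeping that allows a Hodge isometry to be realized by an actual equivalence; and the rigidity statement in the motivic step, which upgrades a cohomological-plus-${\rm CH}_0$ isomorphism to an isomorphism of Chow motives and rests on the nilpotence and finite-dimensionality machinery of \cite{ka}, \cite{mur}, \cite{bo}. The remaining steps --- manipulating Fourier--Mukai kernels, twisted Chern characters, and Chow--K\"unneth projectors --- are essentially bookkeeping.
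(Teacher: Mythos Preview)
The paper does not prove this theorem: it is quoted verbatim as a result of B\"ulles \cite[Theorem~0.4]{bu} and used as a black box (to deduce Corollary~\ref{singk3}). There is therefore no ``paper's own proof'' to compare against.

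That said, your sketch is broadly in line with B\"ulles's actual argument, so a brief comment is in order. The strategy of realizing the Hodge isometry by a Fourier--Mukai equivalence $D^b(S,\alpha)\simeq\mathcal{A}_X$ (via Huybrechts \cite{Huy3} in the twisted setting), extracting a correspondence from the Mukai vector of the kernel, and then cutting by the transcendental projectors is exactly what B\"ulles does. One point where your outline is looser than the published proof: the passage from ``isomorphism on ${\rm CH}_0(S)_{hom}$ and on Hodge realizations'' to ``isomorphism of Chow motives $t(S)(-1)\cong t(X)$'' is not a general rigidity principle but rests on the specific fact, due to Kahn--Murre--Pedrini, that for a surface $S$ the endomorphism algebra of $t(S)$ injects into ${\rm End}({\rm CH}_0(S)_{hom})$, so that a correspondence acting as the identity there is already the identity on $t(S)$. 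B\"ulles also needs an analogous statement on the cubic side, which he obtains from the Chow--K\"unneth decomposition of \cite{bo}. Your invocation of ``nilpotence and finite-dimensionality machinery'' is not quite the mechanism used; finite-dimensionality of $h(S)$ is a \emph{consequence}, not an input.
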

Huybrechts \cite[Theorem 1.3]{Huy3} proved that for $X\in \mathcal{C}_{d}$, the existence of an isometry $\widetilde{H}(S, \alpha, \mathbb{Z})\cong \widetilde{H}(\mathcal{A}_{X}, \mathbb{Z})$ is equivalent to:
\\$\indent(\ast\ast^{'})$ 
For some $k, d_{0}, n\in \mathbb{Z}$,  $d_{0}|2n^{2}+2n+2$ and $d=k^{2}d_{0}$.
\\ According to Addington \cite[Theorem 1]{add0}, the existence of a K3 surface satisfying $H^{4}(X, \mathbb{Z}(2))\supset T(X)\cong T(S)\subset H^{2}(S, \mathbb{Z}(1))$ is equivalent to: 
\\$\indent (\ast\ast)$ 
For some $n\in\mathbb{Z}$, $d$ divides $2n^{2}+2n+2$.
\\Since $(\ast\ast)\Rightarrow(\ast\ast^{'})$, rank-21 cubic fourfolds satisfy   $(\ast\ast^{'})$ by Proposition \ref{21fun} (2). Hence:
\begin{cor}\label{singk3}
    For any rank-21 cubic fourfold $X$, there exist a singular K3 surface $S$ and a cycle $Z\in {\rm CH}^{3}(S\times X)$  suth that $Z$ induces an isomorphism of motives $t(S)(-1)\cong t(X)$. In particular, for every $\sigma\in {\rm Aut}(\mathbb{C})$, the cycle $Z^{\sigma}\in {\rm CH}^{3}(S^{\sigma}\times X^{\sigma})$ induces an isomorphism $t(S^{\sigma})(-1)\cong t(X^{\sigma})$.
   
\end{cor}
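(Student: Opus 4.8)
The plan is to produce the cycle $Z$ by concatenating the results quoted just above, and then to transport it along $\sigma$ using the functoriality of the Chow--K\"unneth decomposition.

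First I would observe that a rank-$21$ cubic fourfold $X$ is in particular special, since ${\rm rk}A(X)=21>1$, so $X\in\mathcal{C}_{d}$ for a suitable labelling of discriminant $d$. By Proposition \ref{21fun} (2) there is a singular K3 surface whose transcendental lattice is Hodge-isometric to $T(X)\subset H^{4}(X,\mathbb{Z}(2))$; by Addington's characterization \cite[Theorem 1]{add0} the existence of such a surface is precisely the condition $(\ast\ast)$ for $d$, and since $(\ast\ast)\Rightarrow(\ast\ast^{'})$, Huybrechts' criterion \cite[Theorem 1.3]{Huy3} provides a K3 surface $S$, a Brauer class $\alpha\in{\rm Br}(S)$, and a Hodge isometry $\widetilde{H}(S,\alpha,\mathbb{Z})\cong\widetilde{H}(\mathcal{A}_{X},\mathbb{Z})$. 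Feeding this into B\"ulles' theorem \cite[Theorem 0.4]{bu}, applied to the special cubic fourfold $X$, yields a cycle $Z\in{\rm CH}^{3}(S\times X)$ inducing an isomorphism ${\rm CH}_{0}(S)_{hom}\xrightarrow{\sim}{\rm CH}_{1}(X)_{hom}$ and hence an isomorphism $t(S)(-1)\cong t(X)$ in $\mathsf{Mot}_{{\rm rat}}(\mathbb{C})$. To see that $S$ is singular I would realize $t(S)(-1)\cong t(X)$ in Betti cohomology: by (\ref{deck3}) and (\ref{decc4}) this gives $T(S)_{\mathbb{Q}}\cong T(X)_{\mathbb{Q}}$ as $\mathbb{Q}$-Hodge structures, and since $X$ is rank-$21$ one has ${\rm rk}\,T(X)=2$, so ${\rm rk}\,T(S)=2$, that is $\rho(S)=20$. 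This establishes the first assertion.

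For the ``in particular'' statement I would apply the base-change functor $\sigma\colon\mathsf{Mot}_{{\rm rat}}(\mathbb{C})\to\mathsf{Mot}_{{\rm rat}}(\mathbb{C})$ to the isomorphism $Z\colon t(S)(-1)\xrightarrow{\sim}t(X)$. Being a functor, $\sigma$ sends this to an isomorphism $\sigma(t(S)(-1))\xrightarrow{\sim}\sigma(t(X))$ represented by $Z^{\sigma}\in{\rm CH}^{3}(S^{\sigma}\times X^{\sigma})$, so it suffices to identify $\sigma(t(X))$ with $t(X^{\sigma})$ and $\sigma(t(S))$ with $t(S^{\sigma})$; since $\sigma$ fixes the Lefschetz motive $\mathbb{L}$ we have $\sigma(t(S)(-1))=\sigma(t(S))(-1)$, and once the identifications are in hand we conclude that $Z^{\sigma}$ induces $t(S^{\sigma})(-1)\cong t(X^{\sigma})$, with $X^{\sigma}$ again rank-$21$ and $S^{\sigma}$ again a singular K3 surface. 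The identification $\sigma(t(X))\cong t(X^{\sigma})$ is the only place requiring genuine work: base change commutes with the cycle class map, hence carries the $i$-th K\"unneth projector of $X$ to that of $X^{\sigma}$ and carries $p^{4}_{alg}$ to a projector whose motive is $\sigma(\underline{A}(X)_{\mathbb{Q}}\otimes\mathbb{L}^{\otimes 2})$; since $\sigma$ identifies the Artin motive $\underline{A}(X)_{\mathbb{Q}}$ with $\underline{A}(X^{\sigma})_{\mathbb{Q}}$, this projector is the algebraic projector $p^{4}_{alg}$ of $X^{\sigma}$, and the uniqueness statement in Theorem \ref{c4mot} applied to $X^{\sigma}$ forces $(p^{4}_{tr})^{\sigma}=p^{4}_{tr,X^{\sigma}}$. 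The analogous argument, using Theorem \ref{k3mot} and the identification $\underline{{\rm NS}}(S)_{\mathbb{Q}}^{\sigma}\cong\underline{{\rm NS}}(S^{\sigma})_{\mathbb{Q}}$, gives $\sigma(t(S))\cong t(S^{\sigma})$. I expect this compatibility of the transcendental projector with base change --- a consequence of the uniqueness in Theorems \ref{k3mot} and \ref{c4mot} --- to be essentially the only step needing argument; the existence of $Z$ is a direct chain of the cited theorems.
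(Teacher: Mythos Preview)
Your proposal is correct and follows essentially the same route as the paper: deduce $(\ast\ast)$ from Proposition~\ref{21fun}(2) via Addington, pass to $(\ast\ast')$, invoke Huybrechts to obtain $(S,\alpha)$, and apply B\"ulles to get $Z$; the ``in particular'' is just the functoriality of $\sigma$ on $\mathsf{Mot}_{{\rm rat}}(\mathbb{C})$. You supply two details the paper leaves implicit --- the Betti-rank argument showing that the K3 surface coming out of B\"ulles' theorem is actually singular, and the verification via the uniqueness in Theorems~\ref{k3mot} and~\ref{c4mot} that $(p^{4}_{tr})^{\sigma}$ and $(p^{2}_{tr})^{\sigma}$ are the intrinsic transcendental projectors of $X^{\sigma}$ and $S^{\sigma}$ --- both of which are correct and worth making explicit.
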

    For a prime $l$, let $H_{et, l}$ denote  $l$-adic \'etale cohomology. By (\ref{deck3}) and (\ref{decc4}),  for a K3 surface $S$  and a cubic fourfold $X$ over $\mathbb{C}$, we have the decompositions
    \begin{align}
       H^{2}(S, \mathbb{Q}_{l}(1))=&H^{2}_{et, l}(S)(1)={\rm NS}(S)_{\mathbb{Q}_{l}}\oplus T(S)_{\mathbb{Q}_{l}}
        \\ H^{4}(X, \mathbb{Q}_{l}(2))=&H^{4}_{et, l}(X)(2)=A(X)_{\mathbb{Q}_{l}}\oplus T(X)_{\mathbb{Q}_{l}}. 
    \end{align}
    \begin{cor}\label{action}Let $X$ be a rank-21 cubic fourfold,  and let $S$ be a singular K3 surface for which there exists an isomorphism of motives $t(S)(-1)\cong t(X)$ induced by some algebraic cycle $Z\in {\rm CH}^{3}(S\times X)_{\mathbb{Q}}$. Fix a prime $l$, and let $H_{et}$ denote  $l$-adic \'etale cohomology. For all $\sigma\in {\rm Aut}(\mathbb{C})$, the  diagram

        \[\xymatrix{T(S)_{\mathbb{Q}_{l}}\ar[r]^{\sim}\ar[d]_{\sigma}& T(X)_{\mathbb{Q}_{l}}\ar[d]^{\sigma}\\ T(S^{\sigma})_{\mathbb{Q}_{l}}\ar[r]^{\sim}& T(X^{\sigma})_{\mathbb{Q}_{l}}}\]
        is commutative.
    \end{cor}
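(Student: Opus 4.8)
The plan is to derive the commutativity of the square from two standard compatibilities of the natural action of ${\rm Aut}(\mathbb{C})$ on $l$-adic étale cohomology. For $\sigma\in{\rm Aut}(\mathbb{C})$ and a smooth projective variety $Y/\mathbb{C}$, the Cartesian square defining $Y^{\sigma}$ induces a canonical $\mathbb{Q}_{l}$-linear isomorphism $H^{i}_{et, l}(Y)\xrightarrow{\sim}H^{i}_{et, l}(Y^{\sigma})$, which is the isomorphism labelled ``$\sigma$'' on the vertical arrows of the diagram. This isomorphism is compatible with Tate twists, cup products, pullback and pushforward along morphisms of $\mathbb{C}$-varieties, and Künneth decompositions, and it is compatible with cycle classes in the sense that $\sigma\bigl({\rm cl}_{l}(W)\bigr)={\rm cl}_{l}(W^{\sigma})$ for every $W\in{\rm CH}^{j}(Y)_{\mathbb{Q}}$. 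Equivalently, the $l$-adic realization functor on $\mathsf{Mot}_{{\rm rat}}(\mathbb{C})$ intertwines the functor $\sigma$ of the excerpt with the comparison isomorphism ``$\sigma$'', via a natural isomorphism.

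First I would check that $\sigma$ carries transcendental motives to transcendental motives. Applying the functor $\sigma$ to the decomposition $h^{2}(S)=h^{2}_{{\rm alg}}(S)\oplus t(S)$ of Theorem \ref{k3mot} yields a decomposition of $h^{2}(S^{\sigma})$; since $\sigma$ preserves algebraicity of cycles and induces an isomorphism ${\rm NS}(S)\xrightarrow{\sim}{\rm NS}(S^{\sigma})$, it sends $h^{2}_{{\rm alg}}(S)=\underline{{\rm NS}}(S)_{\mathbb{Q}}\otimes\mathbb{L}$ to $\underline{{\rm NS}}(S^{\sigma})_{\mathbb{Q}}\otimes\mathbb{L}=h^{2}_{{\rm alg}}(S^{\sigma})$, so the uniqueness assertion in Theorem \ref{k3mot} forces $\sigma(t(S))=t(S^{\sigma})$. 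Likewise Theorem \ref{c4mot} gives $\sigma(t(X))=t(X^{\sigma})$, while $\sigma(\mathbb{L})=\mathbb{L}$. Passing to $l$-adic realizations, the comparison isomorphism ``$\sigma$'' then restricts to the two vertical maps $T(S)_{\mathbb{Q}_{l}}\to T(S^{\sigma})_{\mathbb{Q}_{l}}$ and $T(X)_{\mathbb{Q}_{l}}\to T(X^{\sigma})_{\mathbb{Q}_{l}}$ appearing in the diagram.

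It then remains to note that the isomorphism of motives $\phi\colon t(S)(-1)\xrightarrow{\sim}t(X)$ of Corollary \ref{singk3} is the morphism in $\mathsf{Mot}_{{\rm rat}}(\mathbb{C})$ determined by the correspondence $Z$, so that, by the definition of the functor $\sigma$ recalled in the excerpt, $\sigma(\phi)$ is the morphism determined by $Z^{\sigma}$ — precisely the isomorphism $t(S^{\sigma})(-1)\xrightarrow{\sim}t(X^{\sigma})$ of Corollary \ref{singk3} that forms the bottom arrow. The horizontal arrows of the diagram are the $l$-adic realizations of $\phi$ and of $\sigma(\phi)$, and the square commutes by naturality of the comparison isomorphism applied to $\phi$; concretely, writing $[Z]_{\ast}={\rm pr}_{X, \ast}\bigl({\rm pr}_{S}^{\ast}(-)\cup{\rm cl}_{l}(Z)\bigr)$ for the correspondence action, this amounts to the identity $\sigma\circ[Z]_{\ast}=[Z^{\sigma}]_{\ast}\circ\sigma$ on $T(S)_{\mathbb{Q}_{l}}$, which follows immediately from the compatibilities of the first paragraph. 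There is no conceptual obstacle; the only delicate point is the bookkeeping that identifies the abstract motivic isomorphisms of Corollary \ref{singk3} with explicit correspondence actions and tracks the transcendental projectors through $\sigma$, and all the required inputs — invariance of étale cohomology under extension of algebraically closed fields and the $\sigma$-functoriality of cycle classes — are classical.
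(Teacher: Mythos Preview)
Your proposal is correct and follows essentially the same approach as the paper: both arguments reduce the commutativity to the compatibility of the cycle class map with $\sigma$ and then unwind the correspondence action $[Z]_{\ast}$ via K\"unneth and Poincar\'e duality to obtain $\sigma\circ[Z]_{\ast}=[Z^{\sigma}]_{\ast}\circ\sigma$. Your version is in fact slightly more complete, since you explicitly verify via the uniqueness in Theorems~\ref{k3mot} and~\ref{c4mot} that $\sigma$ carries $t(S)$ and $t(X)$ to $t(S^{\sigma})$ and $t(X^{\sigma})$, a point the paper leaves implicit.
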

    \begin{proof}The algebraic cycle $Z\in {\rm CH}^{3}(X\times S)$ determines a  morphism of motives $Z : h(X)\to h(S)(-1)$, which induces an isomorphism $t(X)\xrightarrow{\sim} t(S)(-1)$. 
        \\ \indent By \cite[Theorem 12. 7. 2]{ss}, we have a  commutative diagram
       
   \[\xymatrix{{\rm CH}^{3}(X\times S)\ar[rr]^{\gamma_{X\times S}(Z)}\ar[d]_{\sigma}&& H^{6}(X\times S)(3) \ar[d]^{\sigma}\\ {\rm CH}^{3}(X^{\sigma}\times S^{\sigma})\ar[rr]_{\gamma_{X^{\sigma}\times S^{\sigma}}(Z^{\sigma})} && H^{6}(X^{\sigma}\times S^{\sigma})(3).}\]
   Combining the K\"unneth formula, Poincare duality, and their functoriality yields a commutative diagram
   \[\xymatrix{H^{6}_{et, l}(X\times S)(3) \ar[d]_{\sigma}\ar[r]^{\sim}& \bigoplus_{i\in\mathbb{Z}} H^{8-i}_{et, l}(X)\otimes H^{i-2}_{et, l}(S)(-1)\ar[d]_{\sigma}\ar[r]^{\sim}& \bigoplus_{i\in \mathbb{Z}}{\rm Hom}(H^{i}_{et, l}(X), H^{i-2}_{et, l}(S)(-1))\ar[d]^{\sigma}\\ H^{6}_{et, l}(X^{\sigma}\times S^{\sigma})(3)\ar[r]^{\sim}& \bigoplus_{i\in \mathbb{Z}} H^{8-i}_{et, l}(X^{\sigma})\otimes H^{i-2}_{et, l}(S^{\sigma})(-1)\ar[r]^{\sim}& \bigoplus_{i\in \mathbb{Z}}{\rm Hom}(H^{i}_{et, l}(X^{\sigma}), H^{i-2}_{et, l}(S^{\sigma})(-1)).}\]
    Here ${\rm Hom}(-, -)$ denotes the homomorphisms of $\mathbb{Q}_{l}$-vector spaces, and the morphism ${\rm Hom}(H^{i}(X), H^{i-2}(S))\to {\rm Hom}(H^{i}(X), H^{i-2}(S)(-1))$  (sending $f\mapsto f^{'}$)  fits into the  commutative square
    \begin{align}\label{dia}\xymatrix{H^{4}_{et}(X, \mathbb{Q}_{l})\ar[r]^{f}\ar[d]_{\sigma}& H^{2}_{et}(S, \mathbb{Q}_{l}(-1))\ar[d]^{\sigma} \\H^{4}_{et}(X^{\sigma}, \mathbb{Q}_{l})\ar[r]_{f^{'}}& H^{2}_{et}(S^{\sigma}, \mathbb{Q}_{l}(-1)).}
    \end{align}
    Therefore $Z$ and $Z^{\sigma}$ satisfy  (\ref{dia}), yielding the claim.
    \end{proof}
    
    \begin{lemma}\label{r}
        Let $h_{i} : \mathbb{S}\to {\rm GL}(V_{i, \mathbb{R}})$ be $\mathbb{Q}$-HSs of K3 type of CM type.  Suppose there is  a Hodge isometry between $h_{1}$ and $h_{2}$ (hence $E\coloneq E(h_{1})=E(h_{2})\subset \mathbb{C}$). Then  for all $s\in \mathbb{A}^{\times}_{E}$  the  diagram 
        \[\xymatrix{V_{1, \mathbb{A}_{f}}\ar[r]^{\sim}&V_{2, \mathbb{A}_{f}}\\ V_{1, \mathbb{A}_{f}}\ar[u]^{r_{h_{1}}(s)}\ar[r]^{\sim}&V_{2, \mathbb{A}_{f}}\ar[u]_{r_{h_{2}}(s)}}\]
        is commutative.
    \end{lemma}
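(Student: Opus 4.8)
The plan is to reproduce the proof of Lemma \ref{conja} almost verbatim, with the fixed isometry $a$ there replaced by the given Hodge isometry between $h_1$ and $h_2$. Let $\phi\colon V_1\xrightarrow{\sim}V_2$ denote that Hodge isometry; it is $\mathbb{Q}$-linear and, since it identifies the two Hodge decompositions, it satisfies $h_2=\phi_{\mathbb{R}}\,h_1\,\phi_{\mathbb{R}}^{-1}$ as homomorphisms $\mathbb{S}\to{\rm GL}(V_{2,\mathbb{R}})$. Because $h_1$ and $h_2$ are of CM type, their Mumford--Tate groups $T_i:={\rm MT}(h_i)\subseteq{\rm GL}(V_i)$ are $\mathbb{Q}$-tori (cf.\ Theorem \ref{zrfun}), the pairs $(T_i,\{h_i\})$ are torus Shimura data, and $r_{h_i}=r(T_i,\mu_{h_i})$ with $\mu_{h_i}=h_{i,\mathbb{C}}\circ\mu$. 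Conjugation by $\phi$ gives an isomorphism of $\mathbb{Q}$-tori $\mathrm{ad}(\phi)\colon T_1\xrightarrow{\sim}T_2$, $g\mapsto\phi g\phi^{-1}$.

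The first substantive step is to check that $\mathrm{ad}(\phi)$ intertwines the cocharacters $\mu_{h_1}$ and $\mu_{h_2}$ over the common reflex field $E$. Over $\mathbb{C}$ this is immediate: $\mathrm{ad}(\phi)_{\mathbb{C}}\circ\mu_{h_1}=\mathrm{ad}(\phi)_{\mathbb{C}}\circ h_{1,\mathbb{C}}\circ\mu=h_{2,\mathbb{C}}\circ\mu=\mu_{h_2}$. To descend this to $E$ I would invoke Example \ref{sp}(1): each $\mu_{h_i}$ is defined over $E(h_i)$, so, $\mathrm{ad}(\phi)$ being defined over $\mathbb{Q}$, the composite $\mathrm{ad}(\phi)\circ\mu_{h_1}$ is defined over $E(h_1)$; since it agrees with $\mu_{h_2}$ after base change to $\mathbb{C}$ this forces $E(h_2)\subseteq E(h_1)$, and by symmetry $E(h_1)=E(h_2)=:E$ with $\mathrm{ad}(\phi)_E\circ\mu_{h_1}=\mu_{h_2}$ over $E$. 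This yields a commutative triangle
\[\xymatrix{T_{1,E}\ar[rr]^{\mathrm{ad}(\phi)_E}&&T_{2,E}\\ &\mathbb{G}_{m,E}\ar[lu]^{\mu_{h_1}}\ar[ru]_{\mu_{h_2}}&}\]
exactly as in the proof of Lemma \ref{conja}.

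Finally, I would apply ${\rm Res}_{E/\mathbb{Q}}$ to this triangle and post-compose the two slanted edges with the norm maps ${\rm Res}_{E/\mathbb{Q}}(T_{i,E})\to T_i$; by functoriality of ${\rm Res}_{E/\mathbb{Q}}$ and naturality of the norm map applied to $\mathrm{ad}(\phi)_E$, this produces a commutative triangle of homomorphisms of $\mathbb{Q}$-algebraic groups with apex ${\rm Res}_{E/\mathbb{Q}}(\mathbb{G}_{m,E})$, slanted edges $r(T_i,\mu_{h_i})$, and base $\mathrm{ad}(\phi)$. Passing to $\mathbb{A}_{\mathbb{Q}}$-points and projecting onto $\mathbb{A}_f$ then gives, for every $s\in\mathbb{A}_E^{\times}$, the identity $r_{h_2}(s)=\phi\circ r_{h_1}(s)\circ\phi^{-1}$ inside $T_2(\mathbb{A}_f)\subseteq{\rm GL}(V_{2,\mathbb{A}_f})$, i.e.\ $\phi\circ r_{h_1}(s)=r_{h_2}(s)\circ\phi$, which is precisely the commutativity of the square in the statement. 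The only step demanding genuine care---exactly as in Lemma \ref{conja}---is the descent of the cocharacter identity from $\mathbb{C}$ down to the reflex field $E$; once that is in hand, everything else is formal bookkeeping with restriction of scalars and norm maps, with no further geometric input.
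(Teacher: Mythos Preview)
Your proposal is correct and follows essentially the same approach as the paper: both obtain the isomorphism $\mathrm{ad}(\phi)\colon{\rm MT}(h_1)\xrightarrow{\sim}{\rm MT}(h_2)$ over $\mathbb{Q}$, establish the commutative triangle of cocharacters over $E$, and then apply ${\rm Res}_{E/\mathbb{Q}}$ and pass to adelic points. You spell out the descent step and the role of the norm map more explicitly than the paper does, but the underlying argument is identical to (and modeled on) Lemma~\ref{conja}.
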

    \begin{proof}
        By assumption there is an isomorphism of $\mathbb{Q}$-tori
        ${\rm MT}(h_{1})\cong {\rm MT}(h_{2})$ making 
            \[\xymatrix{\mathbb{S}_{\mathbb{C}}\ar[r]^{h_{1}}\ar[dr]_{h_{2}}& {\rm MT}(h_{1})_{\mathbb{C}}\\  &{\rm MT}(h_{2})_{ \mathbb{C}}\ar[u] }\]
            commute. Passing to cocharacters gives
             \[\xymatrix{\mathbb{G}_{m, \mathbb{C}}\ar[r]^{\mu_{h_{1}}}\ar[dr]_{\mu_{h_{2}}}& {\rm MT}(h_{1})_{ \mathbb{C}}\\  &{\rm MT}(h_{2})_{ \mathbb{C}}.\ar[u] }\]
             Since the vertical morphism is defined over $\mathbb{Q}$, this  holds  over the reflex field $E$. Applying  ${\rm Res}_{E/\mathbb{Q}}$ and evaluating on ad\'eles yields the asserted commutativity.
    \end{proof}
   
    \begin{thm}(Rizov)\label{rizov}
Let $S/\mathbb{C}$ be a polarized K3 surface with complex multiplication, and let $E\subset \mathbb{C}$ be its reflex field. Let $\tau\in{\rm Aut}(\mathbb{C}/E)$ and $s\in \mathbb{A}^{\times}_{E}$ be an id\'ele with ${\rm art}_{E}(s)=\tau_{E^{ab}}$. Then there exists a unique Hodge isometry $\eta : T(S)_{\mathbb{Q}}\to T(S^{\tau})_{\mathbb{Q}}$ such that
\[\xymatrix{T(S)_{\mathbb{A}_{f}}\ar[r]^{\eta\otimes \mathbb{A}_{f}}& T(S^{\tau})_{\mathbb{A}_{f}}\\ T(S)_{\mathbb{A}_{f}}\ar[u]^{r_{S}(s)}\ar[ur]_{\tau}&}\]
commutes.
\end{thm}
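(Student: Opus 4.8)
The plan is to deduce the statement from the theory of canonical models of the orthogonal Shimura variety attached to $\mathrm{SO}(2,19)$, together with Rizov's theorem that the period morphism for polarized K3 surfaces with level structure is defined over $\mathbb{Q}$ --- the K3 analogue of Theorem \ref{main3}. First I would pass to a moduli problem of polarized K3 surfaces of the given degree equipped with a level-$N$ structure, so that $S$, together with such a structure and a marking $a$ of its primitive $H^2$, determines a point $x = j_N(S,\dots)\in\mathrm{Sh}_{K_N}(\mathbb{C})$. Since $S$ has complex multiplication, the $\mathbb{Q}$-HS $h_S$ on primitive cohomology has commutative Mumford--Tate group (Theorem \ref{zrfun}, Example \ref{tri}), so $(\mathrm{MT}(h_S),h_S)$ is a special pair and $x$ is a special point whose reflex field is $E$.

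The heart of the argument is to combine two inputs with this setup. On the one hand, Rizov's $\mathbb{Q}$-descent of the K3 period morphism makes the assignment $S\mapsto x$ equivariant for $\mathrm{Aut}(\mathbb{C})$: for $\tau\in\mathrm{Aut}(\mathbb{C}/E)$ the conjugate $S^{\tau}$, with the transported level structure, maps to $\tau(x)$, where the transport is via the canonical comparison isometry $c_{\tau}\colon H^{2}_{et}(S_{\overline{\mathbb{Q}}},\mathbb{A}_{f}(1))\xrightarrow{\sim}H^{2}_{et}(S^{\tau}_{\overline{\mathbb{Q}}},\mathbb{A}_{f}(1))$; this $c_{\tau}$ preserves Chern classes, hence restricts to the isometry $T(S)_{\mathbb{A}_{f}}\to T(S^{\tau})_{\mathbb{A}_{f}}$ labelled $\tau$ in the diagram. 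On the other hand, the canonical-model property of $\mathrm{Sh}_{K_N}$ over $E(G,X_L)=\mathbb{Q}$ (Example \ref{sp}) --- whose existence in the orthogonal case rests on the Kuga--Satake/GSpin construction reducing to Shimura--Taniyama reciprocity for CM abelian varieties, after Deligne and Milne--Shih --- gives, for $s\in\mathbb{A}_{E}^{\times}$ with $\mathrm{art}_{E}(s)=\tau|_{E^{ab}}$, the identity $\tau(x)=[h_{S},r_{h_S}(s)\,a]$ in $\mathrm{Sh}_{K_N}(\mathbb{C})$.

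Unwinding these two descriptions of $\tau(x)$ through the period dictionary of $\S\ref{defperiod}$, applied to K3 surfaces, then yields a $\mathbb{Q}$-rational Hodge isometry $\eta\colon h_{S}\xrightarrow{\sim}h_{S^{\tau}}$ together with the relation $(\eta\otimes\mathbb{A}_{f})\circ r_{h_S}(s)=c_{\tau}$, up to the harmless adjustments by elements of $K_N$ and by $\mathbb{Q}$-markings already used in Remark \ref{mod1}. Restricting to transcendental parts --- on which $r_{h_S}(s)$ acts through its $\mathrm{MT}(h_S)$-component while $\mathrm{NS}(S)$ is fixed --- gives the asserted Hodge isometry $\eta\colon T(S)_{\mathbb{Q}}\to T(S^{\tau})_{\mathbb{Q}}$ and the commuting triangle, $r_{S}(s)$ being by definition the restriction of $r_{h_S}(s)$ to $T(S)_{\mathbb{A}_{f}}$. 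Uniqueness of $\eta$ is then immediate: two such isometries differ by a Hodge automorphism of $T(S^{\tau})_{\mathbb{Q}}$ which becomes the identity after $\otimes\,\mathbb{A}_{f}$, hence is the identity.

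The main obstacle is the combination carried out in the middle step: pinning down the canonical model of the orthogonal Shimura variety together with the precise formula for the $\mathrm{Gal}(E^{ab}/E)$-action on special points, and verifying that Rizov's "defined over $\mathbb{Q}$" theorem is equivariant in exactly the way needed to convert that abstract reciprocity law into the concrete compatibility between $T(S)_{\mathbb{A}_{f}}$ and $T(S^{\tau})_{\mathbb{A}_{f}}$. This rests on the full compatibility of the Betti, \'etale and de Rham realizations of the K3 motive with the Kuga--Satake correspondence, and is the technical core of Rizov's work \cite{Rizov10}.
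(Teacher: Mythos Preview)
The paper does not supply its own proof of this theorem: it is stated as a result of Rizov and cited from \cite{Rizov10}, then used as an input to deduce Theorem~\ref{cmt}. So there is no in-paper argument to compare against directly.

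That said, your sketch is correct and is exactly the strategy Rizov employs. It also mirrors, verbatim, the argument the paper gives for the cubic-fourfold analogue in Corollary~\ref{cmtgen}: pick a level structure, use that the period map is defined over $\mathbb{Q}$ to get $\tau(j_N(S,\alpha))=j_N(S^{\tau},\alpha^{\tau})$, invoke the reciprocity law at the special point to rewrite $\tau(j_N(S,\alpha))$ as $[h_S,\,r_{h_S}(s)\cdot a\circ\widetilde{\alpha}]$, and then unwind the double-coset equality to extract the $\mathbb{Q}$-rational Hodge isometry $\eta$ satisfying $(\eta\otimes\mathbb{A}_f)\circ r_{h_S}(s)=\tau$. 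Your restriction to the transcendental part and the uniqueness argument are also correct. One small comment: in the paper's logical architecture, Rizov's theorem is an \emph{input} used to prove Theorem~\ref{main3}, so when you describe it as ``the K3 analogue of Theorem~\ref{main3}'' you should be clear that you are invoking Rizov's independent $\mathbb{Q}$-descent result for the K3 period map \cite{Rizov10}, not anything proved in this paper.
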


\textbf{Proof of Theorem \ref{cmt}.}    Let $S/\mathbb{C}$ be the singular K3 surface from Corollary \ref{singk3}. Given $s\in\mathbb{A}^{\times}_{E}$, choose $\tau\in {\rm Aut}(\mathbb{C}/E)$ with  $\tau|_{E^{ab}}={\rm art}_{E}(s)$. By Theorem \ref{rizov}  there exists a unique Hodge isometry $\eta : T(S)_{\mathbb{Q}}\to T(S^{\tau})_{\mathbb{Q}}$ with 
\[\xymatrix{T(S)_{\mathbb{A}_{f}}\ar[r]^{\eta\otimes \mathbb{A}_{f}}& T(S^{\tau})_{\mathbb{A}_{f}}\\ T(S)_{\mathbb{A}_{f}}.\ar[u]^{r_{S}(s)}\ar[ur]_{\tau}&}\]
 Define the $\mathbb{Q}$-Hodge isometry $\eta_{1} : T(X)_{\mathbb{Q}}\to T(X^{\tau})_{\mathbb{Q}}$  by the commutative diagram 
\[\xymatrix{T(X)_{\mathbb{Q}}\ar[r]^{\sim}\ar[d]_{\eta_{1}}& T(S)_{\mathbb{Q}}\ar[d]^{\eta}\\ T(X^{\tau})_{\mathbb{Q}}\ar[r]^{\sim}& T(S^{\tau})_{\mathbb{Q}},}\]
where the horizontal isomorphisms come from $t(S)(-1)\cong t(X)$ and its $\tau$-conjugate (Corollary \ref{singk3}).
\\\indent Consider now 
  \[
\begin{tikzcd}[row sep=30pt,column sep=20pt]
T(X)_{\mathbb{A}_f}\arrow[rrr,"\sim"]\arrow[dd,"\eta_1\otimes\mathbb{A}_f"']&&&T(S)_{\mathbb{A}_f}\arrow[dd,"\eta\otimes\mathbb{A}_f"]\\
&T(X)_{\mathbb{A}_f}\arrow[r,"\sim"]\arrow[r,"\circlearrowright",yshift=10mm,phantom]\arrow[r,"\circlearrowright",yshift=-10mm,phantom]\arrow[ld,"\tau"]\arrow[lu,"r_X(s)"']&T(S)_{\mathbb{A}_f}\arrow[r,"\circlearrowright",xshift=1mm,phantom]\arrow[ru,"r_S(s)"]\arrow[rd,"\tau"']&\phantom{A}\\
T(X^{\tau})_{\mathbb{A}_f}\arrow[rrr,"\sim"']&&&T(S^{\tau})_{\mathbb{A}_f}.
\end{tikzcd}   
   \]
   By construction,  the right triangle commutes; by Lemma \ref{r} and Corollary \ref{action}, the upper and lower trapezoids commute; hence the left triangle commutes as well. This proves Theorem \ref{cmt}. 
    \begin{cor}\label{pcmt}
Let $X/\mathbb{C}$ be a rank-21 cubic fourfold with  reflex field $E\subset \mathbb{C}$. Let $\tau\in{\rm Aut}(\mathbb{C}/E)$ and $s\in \mathbb{A}^{\times}_{E}$ be an id\'ele with ${\rm art}_{E}(s)=\tau_{E^{ab}}$. Then there exists a unique Hodge isometry $\eta : PH^{4}(X, \mathbb{Q})(2)\to PH^{4}(X^{\tau}, \mathbb{Q})(2)$ such that
\[\xymatrix{PH^{4}(X)(2)_{\mathbb{A}_{f}}\ar[r]^{\eta\otimes \mathbb{A}_{f}}& PH^{4}(X^{\tau})(2)_{\mathbb{A}_{f}}\\ PH^{4}(X)(2)_{\mathbb{A}_{f}}\ar[u]^{r_{X}(s)}\ar[ur]_{\tau}&}\]
     commutes.
    \end{cor}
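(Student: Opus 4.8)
The plan is to reduce Corollary~\ref{pcmt} to Theorem~\ref{cmt} by splitting the primitive cohomology into its algebraic and transcendental parts. Since $X$ is rank-$21$, the class $h^{2}$ is non-isotropic and algebraic, so, writing $B(X):=A(X)\cap PH^{4}(X,\mathbb{Z})(2)$ (a rank-$20$ lattice), one has $A(X)_{\mathbb{Q}}=\mathbb{Q}h^{2}\oplus B(X)_{\mathbb{Q}}$ and an orthogonal direct sum decomposition of polarized $\mathbb{Q}$-Hodge structures
\begin{align}
 PH^{4}(X,\mathbb{Q})(2)=B(X)_{\mathbb{Q}}\oplus T(X)_{\mathbb{Q}},\notag
\end{align}
in which $B(X)_{\mathbb{Q}}$ is the rational $(0,0)$-part — it carries the trivial Hodge structure because $T(X)$ has rank $2$ and hence no $(0,0)$-component — and $T(X)_{\mathbb{Q}}$ is the transcendental part; the same holds for $X^{\tau}$, and the comparison isomorphism ``$\tau$'' on $PH^{4}(X)(2)_{\mathbb{A}_{f}}\to PH^{4}(X^{\tau})(2)_{\mathbb{A}_{f}}$, being an isometry, respects both summands. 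I would then set $\eta:=\eta_{B}\oplus\eta_{T}$, where $\eta_{T}:T(X)_{\mathbb{Q}}\to T(X^{\tau})_{\mathbb{Q}}$ is the Hodge isometry provided by Theorem~\ref{cmt} and $\eta_{B}$ is constructed below; since $B$ and $T$ are orthogonal and $\eta_{B},\eta_{T}$ are isometries of the respective summands, $\eta$ is a Hodge isometry of $PH^{4}(X,\mathbb{Q})(2)$.

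For the algebraic summand, I would use that $\tau\in\mathrm{Aut}(\mathbb{C}/E)$ induces a bijection $Z\mapsto Z^{\tau}$ between codimension-$2$ algebraic cycles on $X$ and on $X^{\tau}$ (as $X^{\tau}$ is the base change of $X$ along $\mathrm{Spec}\,\tau$), preserving intersection numbers and sending $h$ to the hyperplane class of $X^{\tau}$; by injectivity of the cycle map (Bloch--Srinivas) this descends to an isometry $A(X)_{\mathbb{Q}}\xrightarrow{\sim}A(X^{\tau})_{\mathbb{Q}}$, hence to an isometry $\eta_{B}:B(X)_{\mathbb{Q}}\to B(X^{\tau})_{\mathbb{Q}}$, which is automatically a Hodge isometry (both sides are pure of type $(0,0)$). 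By the $\tau$-compatibility of the cycle class maps used in the proof of Corollary~\ref{action}, the $\mathbb{Q}_{\ell}$-component of $\eta_{B}\otimes\mathbb{A}_{f}$ coincides, for every prime $\ell$, with the action of $\tau$ on the span $B(X)_{\mathbb{Q}_{\ell}}$ of the $\ell$-adic algebraic classes; thus $\eta_{B}\otimes\mathbb{A}_{f}=\tau|_{B(X)_{\mathbb{A}_{f}}}$.

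To verify the commutative diagram I would exploit that $X$ is of CM type (Proposition~\ref{21fun}(1)) with $\mathrm{MT}(h_{X})$ equal to the Mumford--Tate group of $h_{X}|_{T(X)}$ (Example~\ref{tri}): since $h_{X,\mathbb{R}}(\mathbb{S}(\mathbb{R}))$ acts trivially on $B(X)_{\mathbb{R}}$, which lies in the $(0,0)$-part of $PH^{4}(X,\mathbb{C})(2)$, the group $\mathrm{MT}(h_{X})$ acts trivially on $B(X)_{\mathbb{Q}}$, so the reciprocity homomorphism $r_{X}(s)\in\mathrm{MT}(h_{X})(\mathbb{A}_{f})$ is the identity on $B(X)_{\mathbb{A}_{f}}$, while on $T(X)_{\mathbb{A}_{f}}$ it restricts to the reciprocity map of the transcendental Hodge structure appearing in Theorem~\ref{cmt}. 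The required identity $(\eta\otimes\mathbb{A}_{f})\circ r_{X}(s)=\tau$ then splits along $B\oplus T$: on $T$ it is precisely the diagram of Theorem~\ref{cmt}, and on $B$ it reduces to $\eta_{B}\otimes\mathbb{A}_{f}=\tau|_{B(X)_{\mathbb{A}_{f}}}$, already established. For uniqueness, any Hodge isometry $\eta'$ satisfying the diagram must preserve the splitting — it carries the trivial $(0,0)$-rational part to the trivial $(0,0)$-rational part and its orthogonal complement to its orthogonal complement — so $\eta'=\eta'_{B}\oplus\eta'_{T}$; then $\eta'_{T}$ satisfies the diagram of Theorem~\ref{cmt}, whence $\eta'_{T}=\eta_{T}$ by the uniqueness there, while $\eta'_{B}\otimes\mathbb{A}_{f}=\tau|_{B(X)_{\mathbb{A}_{f}}}=\eta_{B}\otimes\mathbb{A}_{f}$ forces $\eta'_{B}=\eta_{B}$ because $B(X)_{\mathbb{Q}}\hookrightarrow B(X)_{\mathbb{A}_{f}}$.

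\textbf{Expected main obstacle.} The substantive input is Theorem~\ref{cmt}, which is already available, and the rest is largely bookkeeping with orthogonal direct sums. The one point demanding genuine care is to make rigorous that the Galois action of $\tau$ on the algebraic $\ell$-adic classes is induced by a single $\mathbb{Q}$-rational Hodge isometry, uniformly in $\ell$, and that $r_{X}(s)$ acts trivially there — that is, to reconcile the adelic normalization of Theorem~\ref{cmt} (stated on $T(X)$, with $r_{X}$ active) with the present statement on all of $PH^{4}(X,\mathbb{Q})(2)$.
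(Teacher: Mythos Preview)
Your proposal is correct and follows essentially the same route as the paper: decompose $PH^{4}(X,\mathbb{Q})(2)$ orthogonally into its algebraic primitive part and $T(X)_{\mathbb{Q}}$, apply Theorem~\ref{cmt} on the transcendental summand, handle the algebraic summand via the pullback $\tau^{\ast}$ on ${\rm CH}^{2}$ (which preserves $h^{2}$), and invoke Example~\ref{tri} to see that $r_{X}(s)$ is trivial on the algebraic part. Your write-up is more thorough than the paper's --- you spell out why $\eta_{B}\otimes\mathbb{A}_{f}$ agrees with $\tau$ on algebraic classes and you supply the uniqueness argument that the paper leaves implicit --- but the underlying strategy is identical.
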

    \begin{proof}
        For $s\in \mathbb{A}^{\times}_{E}$ and $\tau$ as above, take the $\mathbb{Q}$-Hodge isometry $\eta : T(X)_{\mathbb{Q}}\to T(X^{\tau})_{\mathbb{Q}}$ from Theorem \ref{cmt}. Moreover,  the pullback $\tau^{\ast} : {\rm CH}^{2}(X)_{\mathbb{Q}}\to {\rm CH}^{2}(X^{\tau})_{\mathbb{Q}}$ sends $c_{1}(\mathcal{O}_{X}(1))^{2}$ to $c_{1}(\mathcal{O}_{X^{\tau}}(1))^{2}$. Hence we obtain a $\mathbb{Q}$-Hodge isometry $\eta_{1}:PH^{4}(X, \mathbb{Q})(2)\to PH^{4}(X^{\tau}, \mathbb{Q})(2)$. Since $r_{h}(s)$  acts trivially on the algebraic part of $PH^{4}(X, \mathbb{Q})(2)$  (Example \ref{tri}), the diagram commutes. 
    \end{proof}
     \subsection{The proof of Theorem 1.4}
     In this section we complete  the proof of the main theorem (Theorem \ref{main3}).
     \\ $\textbf{Step 1}.$ For any imaginary quadratic field $E\subset \mathbb{C}$,  the period map $j_{N, \mathbb{C}} : \widetilde{\mathcal{C}^{[N]}}_{\mathbb{C}}\to {\rm Sh}_{K_N}(L)_{\mathbb{C}}$ is defined over $E$.
     \\\indent Take a point $(X, h, \alpha)\in \widetilde{\mathcal{C}^{[N]}}({\mathbb{C}})$, where $X$ is a rank-21 cubic fourfold $X$ with  reflex field $E\subset\mathbb{C}$. Fix  $\sigma\in {\rm Aut}(\mathbb{C}/E)$. We claim that  
     \begin{align}\label{com j}
         j_{N, \mathbb{C}}(\sigma(X, h, \alpha))=\sigma(j_{N, \mathbb{C}}(X, h, \alpha))
     \end{align}
     in ${\rm Sh}_{K_{N}}(L)(\mathbb{C})$.
     Write  $\sigma(X, h, \alpha)=(X^{\sigma}, h^{ \sigma}, \alpha^{\sigma})$. 
     \\\indent Let $\widetilde{\alpha} : L_{0, \mathbb{Z}_{\mathscr{B}}}\to H^{4}_{et}(X, \mathbb{Z}_{\mathscr{B}}(2))$ represent $\alpha$, and choose an isometry $a : H^{4}(X, \mathbb{Z}(2))\to L_{0}$ with $a(h^{2})=v$ and $a\circ \widetilde{\alpha}\in {\rm SO}(V)(\hat{\mathbb{Z}})$. By Corollary \ref{pcmt}, there exists a $\mathbb{Q}$-Hodge isometry $\eta : PH^{4}(X, \mathbb{Q}(2))\to PH^{4}(X^{\sigma}, \mathbb{Q}(2))$ such that $(\eta\otimes\mathbb{A}_{f})\circ r_{X}(s)=\sigma^{\ast}$ for some id\'ele $s\in \mathbb{A}^{\times}_{E}$. Consider 
     \begin{align}
         a_{X^{\sigma}}:=a_{X}\circ \eta^{-1}: PH^{4}(X^{\sigma}, \mathbb{Q}(2))\to V.
     \end{align}
     Then, by Lemma \ref{conja},
     \begin{align}
         a_{X^{\sigma}}\circ \widetilde{\alpha}^{\sigma}&=a_{X}\circ \eta^{-1}\circ \sigma^{\ast}\circ \widetilde{\alpha} \notag \\
         &=a_{X}\circ r_{X}(s)\circ a_{X}^{-1}\circ a_{X}\circ \widetilde{\alpha} \notag \\
         &=r_{(a_{X}\circ h_{X}\circ a_{X}^{-1})}(s)\circ a_{X}\circ \widetilde{\alpha} \in{\rm SO}(V)(\mathbb{A
         }_{f}).\notag 
     \end{align}
     By Remark \ref{mod1},  
     \begin{align}
         j_{N, \mathbb{C}}([X^{\sigma}, h^{\sigma},\alpha^{\sigma}])&=[a_{X^{\sigma}}\circ h_{X^{\sigma}}\circ a^{-1}_{X^{\sigma}}, a_{X^{\sigma}}\circ \widetilde{\alpha}^{\sigma}] \notag \\
         &=[a_{X}\circ (\eta^{-1}\circ h_{X^{\sigma}}\circ\eta)\circ a_{X}^{-1}, a_{X}\circ \alpha^{-1}\circ \sigma^{\ast}\circ \widetilde{\alpha}] \notag \\
         &=[a_{X}\circ h_{X}\circ a_{X}^{-1}, r_{(a_{X}\circ h_{X}\circ a_{X}^{-1})}(s)\circ a_{X}\circ \widetilde{\alpha}] \notag \\
         &=\sigma(j_{N, \mathbb{C}}(X, h, \alpha)). \notag 
     \end{align}
     This proves (\ref{com j}).
    \\ $\textbf{Step 2}.$ By Theorem \ref{apl1}, the points $(X, h, \alpha)$ with  $X$ rank-21 and reflex field $E\subset \mathbb{C}$ form a Zariski-dense set of $\widetilde{\mathcal{C}^{[N]}}$. Hence the morphism $j_{N, \mathbb{C}}$ is defined over $E\subset \mathbb{C}$.
    \\$\textbf{Step 3. }$ If $E_{1}$ and $E_{2}$ are distinct imaginary quadratic field, Step 2 shows the period map is defined over both $E_{1}$ and $E_{2}$, hence over their intersection $\mathbb{Q}=E_{1}\cap E_{2}$. This completes the proof of Theorem \ref{main3}.
    \subsection{Application of Theorem \ref{main3}: arithmetic of cubic fourfolds of CM type}
   
    \begin{cor}(Complex multiplication for cubic fourfolds)\label{cmtgen}
        Let $X/\mathbb{C}$ be a cubic fourfold with reflex field $E\subset \mathbb{C}$. Let $\sigma\in{\rm Aut}(\mathbb{C}/E)$ and $s\in \mathbb{A}^{\times}_{E}$ be an id\'ele with ${\rm art}_{E}(s)=\sigma |_{E^{ab}}$. Then there exists a unique Hodge isometry $\eta : PH^{4}(X, \mathbb{Q})(2)\to PH^{4}(X^{\sigma}, \mathbb{Q})(2)$ such that the diagram
\[\xymatrix{PH^{4}(X)(2)_{\mathbb{A}_{f}}\ar[r]^{\eta\otimes \mathbb{A}_{f}}& PH^{4}(X^{\sigma})(2)_{\mathbb{A}_{f}}\\ PH^{4}(X)(2)_{\mathbb{A}_{f}}\ar[u]^{r_{h}(s)}\ar[ur]_{\sigma}&}\]
commutes.
      
    \end{cor}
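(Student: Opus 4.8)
The plan is to transport the reciprocity law for special points of ${\rm Sh}_{K_N}(L)$ across the period map, using that Theorem \ref{main3} makes $j_{N,\mathbb{C}}$ defined over $\mathbb{Q}$; this is Step 1 of the proof of Theorem \ref{main3} run backwards, with the imaginary quadratic field occurring there replaced by the (now arbitrary) reflex field $E$ here.

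Since the statement refers to the reflex field of $X$, we may assume $X$ is of CM type — otherwise ${\rm MT}(h_X)$ is not a torus (Theorem \ref{zrfun}) and $({\rm MT}(h_X),\{h_X\})$ is not a Shimura datum. Fix $N\geq 3$ coprime to $2310$ and a level-$N$ structure $\alpha$ on $X$; such an $\alpha$ exists because $H^4(X,\mathbb{Z}(2))$ is isometric to $L_0$ compatibly with $h^2\mapsto v$ (\S\ref{prec4}). Choose a representative $\widetilde\alpha$ of $\alpha$ and an isometry $a:H^4(X,\mathbb{Z}(2))\to L_0$ with $a(h^2)=v$ and $a\circ\widetilde\alpha\in{\rm SO}(V)(\hat{\mathbb{Z}})$, so that $(X,h,\alpha)\in\widetilde{\mathcal{C}^{[N]}}(\mathbb{C})$ and, by construction (\ref{const j}), $j_{N,\mathbb{C}}(X,h,\alpha)=[h',\,a\circ\widetilde\alpha]$ with $h':=a\circ h_X\circ a^{-1}$. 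Here $(a\,{\rm MT}(h_X)\,a^{-1},\,h')$ is a special pair in $(G,X_L)$, and the reflex field of $h'$ is $\varepsilon(K(T(X)))=E$ by \cite[Proposition 2.2]{val} and the remark following it. As $E(G,X_L)=\mathbb{Q}\subset E$ (Example \ref{sp}(2)), the defining property of the canonical model of ${\rm Sh}_{K_N}(L)$ over $\mathbb{Q}$ gives that $j_{N,\mathbb{C}}(X,h,\alpha)$ is $E^{ab}$-rational, and, for $s$ with ${\rm art}_E(s)=\sigma|_{E^{ab}}$, that $\sigma$ acts on it through $\sigma|_{E^{ab}}$ by
\[
\sigma\bigl(j_{N,\mathbb{C}}(X,h,\alpha)\bigr)=\bigl[h',\ r_{h'}(s)\cdot(a\circ\widetilde\alpha)\bigr],\qquad r_{h'}:=r\bigl(a\,{\rm MT}(h_X)\,a^{-1},\mu_{h'}\bigr).
\]

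On the other hand, $j_{N,\mathbb{C}}$ is defined over $\mathbb{Q}$ (Theorem \ref{main3}) and the induced action of $\sigma$ on the moduli space is $(X,h,\alpha)\mapsto(X^\sigma,h^\sigma,\alpha^\sigma)$, so the left-hand side also equals $j_{N,\mathbb{C}}(X^\sigma,h^\sigma,\alpha^\sigma)$. Choosing a marking $b:H^4(X^\sigma,\mathbb{Z}(2))\to L_0$ adapted to the polarization with $b\circ\widetilde{\alpha}^\sigma\in{\rm SO}(V)(\hat{\mathbb{Z}})$, and recalling $\widetilde{\alpha}^\sigma=\sigma^{\ast}\circ\widetilde\alpha$, this point equals $[b\circ h_{X^\sigma}\circ b^{-1},\ b\circ\widetilde{\alpha}^\sigma]$ (independence of the choices: Remark \ref{mod1}). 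Comparing the two expressions for $\sigma(j_{N,\mathbb{C}}(X,h,\alpha))$ in $G(\mathbb{Q})\backslash X_L\times(G(\mathbb{A}_f)/K_N)$ produces $q\in G(\mathbb{Q})$ with $q\,(b\circ h_{X^\sigma}\circ b^{-1})\,q^{-1}=h'$; hence $\eta:=b^{-1}\circ q^{-1}\circ a:PH^4(X,\mathbb{Q})(2)\to PH^4(X^\sigma,\mathbb{Q})(2)$ is a $\mathbb{Q}$-Hodge isometry (it carries $h^2$ to the corresponding class on $X^\sigma$ because $q$ fixes $v$). That $\eta$ makes the asserted triangle commute is the computation of the $G(\mathbb{A}_f)$-components of the displayed equality: using $\widetilde{\alpha}^\sigma=\sigma^{\ast}\circ\widetilde\alpha$ and Lemma \ref{conja} to rewrite $r_{h'}(s)=a\circ r_h(s)\circ a^{-1}$, one recovers $(\eta\otimes\mathbb{A}_f)\circ r_h(s)=\sigma^{\ast}$ on $PH^4(X,\mathbb{Q})(2)_{\mathbb{A}_f}$ — the same manipulation as in Step 1 of the proof of Theorem \ref{main3}, now read in the reverse direction.

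Uniqueness of $\eta$ is immediate: if $\eta'$ also makes the triangle commute then $\eta\otimes\mathbb{A}_f=\eta'\otimes\mathbb{A}_f$ (as $r_h(s)$ is invertible), whence $\eta=\eta'$ since $PH^4(X,\mathbb{Q})(2)$ injects into $PH^4(X,\mathbb{Q})(2)\otimes\mathbb{A}_f$. The step that needs the most care is this last adelic bookkeeping — identifying the reflex-norm map $r_{h'}$ of the canonical-model formalism with the intrinsic $r_h$ of the statement, and tracking the comparison isomorphism $\sigma^{\ast}$ through the chosen markings and representatives so that the level-$N$ coset ambiguity is absorbed — but precisely this computation is carried out (in the opposite direction) in the proof of Theorem \ref{main3}, so the entire arithmetic input here is supplied by that theorem together with the existence of the canonical model of ${\rm Sh}_{K_N}(L)$ over $\mathbb{Q}$.
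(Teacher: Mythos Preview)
Your proof is correct and follows essentially the same route as the paper's: both arguments feed a level structure on $X$ through the period map, invoke Theorem \ref{main3} together with the reciprocity law defining the canonical model to obtain an equality of points in ${\rm Sh}_{K_N}(L)(\mathbb{C})$, extract a $q\in G(\mathbb{Q})$ from that equality, and read off $\eta$ from the chosen markings via $q$ (the paper uses $N=13$ and writes $\eta=a_{X^\sigma}^{-1}\circ q\circ a_X$, which matches your $\eta=b^{-1}\circ q^{-1}\circ a$ up to the direction of the equality). Your treatment is slightly more explicit in two places---you spell out why one must assume $X$ is of CM type, and you supply the uniqueness argument that the paper's proof omits---but otherwise the arguments coincide.
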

    \begin{proof}
        Choose a level-13 structure $\alpha$ on $(X, h)$; then $(X, h, \alpha)\in\widetilde{\mathcal{C}^{[13]}}(\mathbb{C})$. Let $\widetilde{\alpha}$ represent $\alpha$. Choose markings $a_{X} : P^{4}(X, \mathbb{Z}(2))\to L$ and $a_{X^{\sigma}} : PH^{4}(X, \mathbb{Z})(2)\to L$ with $a_{X}\circ \widetilde{\alpha}\in {\rm SO}(V)(\hat{\mathbb{Z}})$ and $a_{X^{\sigma}}\circ \widetilde{\alpha}^{\sigma}\in {\rm SO}(V)(\hat{\mathbb{Z}})$ (existence  follows from the proof of Theorem \ref{main3} and Remark \ref{mod1}). 
        \\\indent By Theorem \ref{main3} and
        the  definition of canonical models,
        \begin{align}
             [(a_{X}\circ h_{X}\circ a_{X}^{-1}, (a_{X}\circ r_{X}(s)\circ a_{X}^{-1})\circ (a_{X}\circ \widetilde{\alpha}))]=[(a_{X^{\sigma}}\circ h_{X^{\sigma}}\circ a^{-1}_{X^{\sigma}}, a_{X^{\sigma}}\circ \widetilde{\alpha}^{\sigma})] \notag 
        \end{align}
        in ${\rm Sh}_{K_{13}}(L)(\mathbb{C})$.
        Hence there exists $q\in G(\mathbb{Q})$ with 
        \begin{align}
            (q\cdot (a_{X}\circ h_{X}\circ a_{X}^{-1}), q\cdot (a_{X}\circ r_{X}(s)\circ \widetilde{\alpha}))=(a_{X^{\sigma}}\circ h_{X^{\sigma}}\circ a^{-1}_{X^{\sigma}}, a_{X^{\sigma}}\circ \sigma\circ \widetilde{\alpha}). \notag 
        \end{align}
       Define the Hodge isometry $\eta : PH^{4}(X, \mathbb{Q}(2))\to PH^{4}(X^{\sigma}, \mathbb{Q}(2))$ by $\eta :=a^{-1}_{X^{\sigma}}\circ q\circ a_{X}$. Then 
        \begin{align}
            a_{X^{\sigma}}\circ (\eta\otimes \mathbb{A}_{f})\circ r_{X}(s)= q\circ a_{X}\circ r_{X}(s)=a_{X^{\sigma}}\circ \sigma,\notag
        \end{align}
       so $(\eta\otimes \mathbb{A}_{f})\circ r_{X}(s)=\sigma$.
    \end{proof}
    Let   $\overline{\mathbb{Q}}$ denote the algebraic closure of $\mathbb{Q}$ in $\mathbb{C}$.
    \begin{cor}\label{abelian ext}
        Let $X$ be a cubic fourfold of CM type with reflex field $E\subset \overline{\mathbb{Q}}$. Then there exists an abelian extension $k/E$ such that $X$ is defined over $k$.
    \end{cor}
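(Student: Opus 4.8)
The plan is to realise $X$ as a special point of the Shimura variety ${\rm Sh}_{K_N}(L)_{\mathbb C}$ through the period map, to deduce from the theory of canonical models that this point is rational over an abelian extension of its reflex field, and then to transport that rationality back to the moduli scheme via the $\mathbb Q$-model of the period map supplied by Theorem \ref{main3}.

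First I would fix $N=13$ (any $N\geq 3$ coprime to $2310$ works), choose a level-$N$ structure $\alpha$ on $(X,h)$, so that $(X,h,\alpha)$ is a $\mathbb C$-point of the scheme $\widetilde{\mathcal{C}^{[N]}}_{\mathbb C}$, pick a representative $\widetilde\alpha$ of $\alpha$ and an isometry $a:H^4(X,\mathbb Z(2))\to L_0$ with $a(h^2)=v$ and $a\circ\widetilde\alpha\in {\rm SO}(V)(\hat{\mathbb Z})$. Set $h':=a\circ h_X\circ a^{-1}$. Since $X$ is of CM type, ${\rm MT}(h_X)$ is a torus (Theorem \ref{zrfun}, Example \ref{tri}), hence $h'$ factors through the torus $a\cdot{\rm MT}(h_X)\cdot a^{-1}\subset G$ and is therefore a special point of $(G,X_L)$; by Valloni's description of the reflex field (\cite[Proposition 2.2]{val}) one has $E(h')=\varepsilon(K(T(X)))=E$. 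Thus $j_{N,\mathbb C}(X,h,\alpha)=[h',\,a\circ\widetilde\alpha]$ is a special point of ${\rm Sh}_{K_N}(L)(\mathbb C)$ whose reflex field is $E$.

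Next, since the reflex field of the Shimura datum $(G,X_L)$ is $\mathbb Q$ (André, Example \ref{sp}(2)), a canonical model ${\rm Sh}_{K_N}(L)_{\mathbb Q}$ over $\mathbb Q$ exists, and by its defining property the special point $[h',\,a\circ\widetilde\alpha]$ is $E^{ab}$-rational; as $E^{ab}$ is the union of the finite abelian extensions of $E$, this point is already defined over some finite abelian $k/E$ inside $\overline{\mathbb Q}$. By Theorem \ref{main2} and Theorem \ref{main3} the period map descends to an open immersion $j_{N,\mathbb Q}:\widetilde{\mathcal{C}^{[N]}}_{\mathbb Q}\hookrightarrow {\rm Sh}_{K_N}(L)_{\mathbb Q}$; write $U_{\mathbb Q}$ for its (open) image. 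The $k$-point $P$ corresponding to $[h',\,a\circ\widetilde\alpha]$ has $P_{\mathbb C}=j_{N,\mathbb C}(X,h,\alpha)\in U_{\mathbb C}(\mathbb C)$, and since ${\rm Spec}\,\mathbb C\to{\rm Spec}\,k$ is surjective the morphism ${\rm Spec}\,k\to {\rm Sh}_{K_N}(L)_{\mathbb Q}$ factors through the open $U_{\mathbb Q}$; composing with $U_{\mathbb Q}\xrightarrow{\sim}\widetilde{\mathcal{C}^{[N]}}_{\mathbb Q}$ yields a $k$-point $Q=(X_0\to{\rm Spec}\,k,\lambda_0,\alpha_0)$ of $\widetilde{\mathcal{C}^{[N]}}$, and injectivity of $j_{N,\mathbb C}$ forces $Q_{\mathbb C}=(X,h,\alpha)$. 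Forgetting $\lambda_0$ and $\alpha_0$, $X_0$ is a cubic fourfold over $k$ with $X_{0,\mathbb C}\cong X$, so $X$ is defined over the abelian extension $k/E$.

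All the substantive inputs are imported: Theorem \ref{main3} (the $\mathbb Q$-descent of the period map), André's computation $E(G,X_L)=\mathbb Q$ giving a canonical model over $\mathbb Q$, and Valloni's identification of the reflex field of a CM cubic fourfold. Given these, the only point needing care — and the main obstacle — is the formal but essential passage from rationality of the period point over $k$ to descent of the moduli point, and hence of $X$ itself, to $k$; this relies on $\widetilde{\mathcal{C}^{[N]}}$ being a genuine scheme with no residual automorphisms (coprimality to $2310$, Theorem \ref{main1}) and on $j_{N,\mathbb Q}$ being an open immersion defined over $\mathbb Q$, so that $k$-points lift uniquely along it after the evident base changes.
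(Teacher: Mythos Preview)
Your proposal is correct and follows essentially the same approach as the paper: realise $(X,h,\alpha)$ as a special point of ${\rm Sh}_{K_{13}}(L)$ via $j_{13,\mathbb C}$, invoke the canonical-model property to get $E^{ab}$-rationality, and use Theorem~\ref{main3} (and the open-immersion property from Theorem~\ref{main2}) to transport the resulting $k$-point back to $\widetilde{\mathcal{C}^{[13]}}$. Your write-up is in fact more careful than the paper's about why the period point is special with reflex field exactly $E$ and about the scheme-theoretic lifting through the open subscheme, but the strategy is identical.
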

    \begin{proof}
        Choose a level-13 structure. We have an open immersion over $\mathbb{Q}$:
        \begin{align}j_{13, \mathbb{Q}} : \widetilde{\mathcal{C}^{[13]}}_{\mathbb{Q}}\to {\rm Sh}_{K_{13}}(L)_{\mathbb{Q}}. \notag 
        \end{align}
        For a level-13 structure $\alpha$ on $(X, h)$, the point $j_{13, \mathbb{C}}(X, h, \alpha)$ is  special in ${\rm Sh}_{K_{13}}(L)(\mathbb{C})$. 
        \\\indent By the definition of the canonical models, this point $j_{13, \mathbb{C}}(X, h, \alpha)$ is $E^{ab}$-rational in ${\rm Sh}_{K_{13}}(L)_{\mathbb{C}}$. Since $j_{13, \mathbb{C}}$ is defined over $\mathbb{Q}$, the point lies in $\widetilde{\mathcal{C}^{[13]}}(k)$ for some  abelian extension $k/E$ contained in $\overline{\mathbb{Q}}$. Thus there exists a cubic fourfold $X^{'}$ defined over $k$ with $X=X^{'}\times_{k}\mathbb{C}$.\end{proof}
        \section{Modularity of rank-21 cubic fourfolds over $\mathbb{Q}$} 
        \subsection{Reformulation of the complex multiplication theory for rank-21 cubic fourfolds}\label{galoisrep}
Let $X/\mathbb{C}$ be a rank-21 cubic fourfold with the reflex field $K\subset\mathbb{C}$. Suppose that $X$ is defined over $\mathbb{Q}$, and  fix a model $X_{0}$  over $\mathbb{Q}$. Let $\overline{\mathbb{Q}}$ denote the algebraic closure of $\mathbb{Q}$ in $\mathbb{C}$. For every prime $l$, there is a  canonical isomorphism
\begin{align}\label{basechange}
   H^{4}_{et}(X, \mathbb{Z}_{l}(2))\cong H^{4}_{et}(X_{0, \overline{\mathbb{Q}}}, \mathbb{Z}_{l}(2)).
\end{align} 
Via (\ref{basechange}), the Galois group ${\rm Gal}(\overline{\mathbb{Q}}/\mathbb{Q})$ acts on $H^{4}_{et}(X, \mathbb{Z}_{l}(2))$. Since (\ref{basechange}) induces the canonical ${\rm Gal}(\overline{\mathbb{Q}}/\mathbb{Q})$-equivariant isomorphism 
\begin{align}
    A(X)_{\mathbb{Z}_{l}}\cong A(X_{0, \overline{\mathbb{Q}}})_{\mathbb{Z}_{l}},\notag 
\end{align} we obtain a  2-dimensional $l$-adic representation 
\begin{align}
    \rho_{l}: {\rm Gal(\overline{\mathbb{Q}}/\mathbb{Q})}\to {\rm GL}(T(X)_{\mathbb{Z}_{l}}).
\end{align}Taking the product over all prime $l$ (and extending  scalars to $\mathbb{Q}$), we obtain an $\mathbb{A}_{f}$-representation  \begin{align}\label{galdef}
    \rho:{\rm Gal}(\overline{\mathbb{Q}}/\mathbb{Q})\to {\rm GL}(T(X)_{\mathbb{A}_{f}}).
\end{align}
\begin{thm}\label{refcmt}Let $X/\mathbb{C}$ be a rank-21 cubic fourfold with the reflex field $K$. Suppose that $X$ has a model $X_{0}$ over $\mathbb{Q}$. Let $G$ be the Mumford-Tate group of $T(X)$. Then: 
\\(1) The restriction $\rho|_{{\rm Gal}(\overline{\mathbb{Q}}/K)}$ factors as 
\begin{align}
    {\rm Gal}(\overline{\mathbb{Q}}/K)\to {\rm Gal}(K^{ab}/K)\xrightarrow{\rho_{0}}G(\mathbb{A}_{f})\hookrightarrow {\rm GL}(T(X)_{\mathbb{A}_{f}}),\notag 
\end{align}
for  a continuous homomorphism $\rho_{0}$.
\\ (2)   The diagram 
    \begin{align}\label{galoiscmt}\xymatrix{\mathbb{A}^{\times}_{K}\ar[rr]^{{\rm art}_{K}}\ar[d]_{r_{X}}&& {\rm Gal}(K^{ab}/K)\ar[d]^{\rho_0} 
    \\ G(\mathbb{A}_{f})\ar[rr]&&G(\mathbb{A}_{f})/G(\mathbb{Q})}\end{align}
    is commutative. 
\end{thm}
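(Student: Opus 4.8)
The plan is to reduce the statement to the complex-multiplication theorem already established (Theorem \ref{cmt}), applied in the situation where the conjugate cubic fourfold $X^{\tau}$ is canonically identified with $X$ through the fixed $\mathbb{Q}$-model $X_{0}$. Concretely, since $X=X_{0,\mathbb{C}}$ with $X_{0}$ defined over $\mathbb{Q}$, any $\tau\in{\rm Aut}(\mathbb{C}/\mathbb{Q})$ yields a canonical isomorphism of $\mathbb{C}$-schemes $c_{\tau}:X^{\tau}\xrightarrow{\sim}X$ (the conjugate $X^{\tau}$ is cut out by the $\tau$-twist of the defining equations of $X_{0}$, which is the equation itself because it has rational coefficients); hence $c_{\tau}^{\ast}$ is a Hodge isometry $H^{4}(X,\mathbb{Z}(2))\to H^{4}(X^{\tau},\mathbb{Z}(2))$ respecting the splitting into algebraic and transcendental parts. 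The one external input I would invoke is the standard compatibility between the Galois action and ${\rm Aut}(\mathbb{C})$-conjugation (rigidity of \'etale cohomology, cf. \cite{ss}; it is used in the same way, for the Shimura variety, in Step 1 of \S\ref{secmain3}): for $\sigma\in{\rm Gal}(\overline{\mathbb{Q}}/K)$ and any lift $\tau\in{\rm Aut}(\mathbb{C}/K)$ (such a lift exists since $\mathbb{C}$ is algebraically closed), the operator $\rho(\sigma)$ on $T(X)_{\mathbb{A}_{f}}$ defined via (\ref{basechange}) coincides, after transport along $c_{\tau}$, with the conjugation map $\tau:T(X)_{\mathbb{A}_{f}}\to T(X^{\tau})_{\mathbb{A}_{f}}$ appearing in Theorem \ref{cmt}.

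Next, fix $\sigma\in{\rm Gal}(\overline{\mathbb{Q}}/K)$, choose $s\in\mathbb{A}_{K}^{\times}$ with ${\rm art}_{K}(s)=\sigma|_{K^{ab}}$ (possible since the Artin map is surjective), and lift $\sigma$ to $\tau\in{\rm Aut}(\mathbb{C}/K)$. Theorem \ref{cmt} provides a unique Hodge isometry $\eta:T(X)_{\mathbb{Q}}\to T(X^{\tau})_{\mathbb{Q}}$ with $(\eta\otimes\mathbb{A}_{f})\circ r_{X}(s)=\tau$ on $T(X)_{\mathbb{A}_{f}}$. Setting $\bar\eta:=(c_{\tau}^{-1})^{\ast}\circ\eta$, a $\mathbb{Q}$-Hodge isometry of $T(X)_{\mathbb{Q}}$, the compatibility of the previous paragraph gives
\begin{align}\label{keycmtref}
\rho(\sigma)=\bar\eta\cdot r_{X}(s)\quad\text{in }{\rm GL}(T(X)_{\mathbb{A}_{f}}).
\end{align}
I then claim $\bar\eta\in G(\mathbb{Q})$: since $X$ is rank-$21$, hence of CM type, by Theorem \ref{zrfun} the group $G={\rm MT}(T(X))={\rm SO}_{K}(T(X))$ is a nontrivial torus; being contained in ${\rm SO}(T(X),\psi)$, which is one-dimensional because ${\rm rk}\,T(X)=2$, we get $G={\rm SO}(T(X),\psi)$. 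Any $\mathbb{Q}$-Hodge isometry of $T(X)$ preserves the two lines $T(X)^{1,-1}$ and $T(X)^{-1,1}$, hence has determinant $1$, hence lies in ${\rm SO}(T(X),\psi)(\mathbb{Q})=G(\mathbb{Q})$; thus $\bar\eta\in G(\mathbb{Q})$. As $r_{X}(s)\in G(\mathbb{A}_{f})$ by construction of the reciprocity morphism, (\ref{keycmtref}) gives $\rho(\sigma)\in G(\mathbb{Q})\cdot G(\mathbb{A}_{f})=G(\mathbb{A}_{f})$.

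Finally, $\rho|_{{\rm Gal}(\overline{\mathbb{Q}}/K)}$ is then a continuous homomorphism into the abelian group $G(\mathbb{A}_{f})$, so it factors through ${\rm Gal}(\overline{\mathbb{Q}}/K)^{ab}={\rm Gal}(K^{ab}/K)$, yielding the continuous $\rho_{0}$ of (1). For (2), given $s\in\mathbb{A}_{K}^{\times}$ take a lift $\sigma\in{\rm Gal}(\overline{\mathbb{Q}}/K)$ of ${\rm art}_{K}(s)$, so $\rho_{0}({\rm art}_{K}(s))=\rho(\sigma)$; then (\ref{keycmtref}) reads $\rho_{0}({\rm art}_{K}(s))=\bar\eta\cdot r_{X}(s)$ with $\bar\eta\in G(\mathbb{Q})$, which is exactly the commutativity of (\ref{galoiscmt}) in $G(\mathbb{A}_{f})/G(\mathbb{Q})$. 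I expect the main obstacle to be the compatibility invoked in the first paragraph — verifying, with the conventions of the paper, that $\rho(\sigma)$ is the ${\rm Aut}(\mathbb{C})$-conjugation $\tau$ of Theorem \ref{cmt} transported by $c_{\tau}$; this is formal and already implicit in \S\ref{secmain3}, and once it is granted everything else is a direct application of Theorem \ref{cmt} together with the elementary identification ${\rm MT}(T(X))={\rm SO}(T(X),\psi)$.
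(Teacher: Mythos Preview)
Your proof is correct and follows essentially the same approach as the paper: both reduce to Theorem \ref{cmt} by lifting $\sigma$ to $\tau\in{\rm Aut}(\mathbb{C}/K)$ and then using the $\mathbb{Q}$-model to identify $X^{\tau}$ with $X$, obtaining $\rho(\sigma)=\bar\eta\cdot r_{X}(s)$ with $\bar\eta\in G(\mathbb{Q})$. You are in fact more explicit than the paper on two points it leaves implicit---the transport along $c_{\tau}$ and the verification that the resulting Hodge isometry lies in $G(\mathbb{Q})$ (via $G={\rm SO}(T(X),\psi)$ in rank $2$)---while the paper simply asserts ``there exists a $\mathbb{Q}$-Hodge isometry $\eta\in G(\mathbb{Q})$.''
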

\begin{proof}
    (1) We first  show that the image $\rho({\rm Gal}(\overline{\mathbb{Q}}/K))$ is contained in $G(\mathbb{A}_{f})$. Fix $\sigma\in {\rm Gal}(\overline{\mathbb{Q}}/K)$. Choose an automorphism $\overline{\sigma}\in {\rm Aut}(\mathbb{C}/K)$ such that  $\overline{\sigma}|_{\overline{\mathbb{Q}}}=\sigma$. Let $s\in \mathbb{A}^{\times}_{K}$ be an id\'ele with ${\rm art}_{K}(s)=\sigma|_{K^{ab}}$. By Theorem \ref{cmt}, there exists a $\mathbb{Q}$-Hodge isometry $\eta\in G(\mathbb{Q})$ such that $\rho(\sigma)=\eta\circ (r_{X}(s))$. Since $r_{X}(s)\in G(\mathbb{A}_{f})$, it follows that $\rho(\sigma)\in G(\mathbb{A}_{f})$.
    \\\indent By Theorem \ref{zrfun}, $G(\mathbb{A}_{f})$ is commutative, and hence $\rho$ factors through ${\rm Gal}(K^{ab}/K)$. This yields a continuous homomorphism $\rho_{0}$.
    \\(2) By the argument above, the element    $\rho(\sigma)$ and $r_{X}(s)$ differ by  an element $\eta\in G(\mathbb{Q})$. This is precisely the statement that the diagram (\ref{galoiscmt}) commutes. 
\end{proof}
 
\begin{cor}\label{u}
    Under the assumptions of Theorem \ref{refcmt}, there exists a unique locally constant homomorphism 
    \begin{align}
        u:\mathbb{A}^{\times}_{K}\to G(\mathbb{Q})=\{x\in K^{\times}| ~x\overline{x}=1\}\subset K^{\times} \notag 
    \end{align}
   such that  $\rho_{0}\circ {\rm art_{K}}=u\cdot  r_{X}$,
   where $\cdot$ dentes the product in $G(\mathbb{A}_{f})$.
\end{cor}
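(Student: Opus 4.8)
The plan is to read $u$ off from the commutative diagram (\ref{galoiscmt}) of Theorem \ref{refcmt}(2). Fix $s\in\mathbb{A}^{\times}_{K}$ and set $\sigma={\rm art}_{K}(s)\in{\rm Gal}(K^{ab}/K)$. By Theorem \ref{refcmt}(2), the elements $\rho_{0}(\sigma)$ and $r_{X}(s)$ of $G(\mathbb{A}_{f})$ have the same image in $G(\mathbb{A}_{f})/G(\mathbb{Q})$; since $G$ is commutative by Theorem \ref{zrfun}, this says precisely that $\rho_{0}(\sigma)\, r_{X}(s)^{-1}\in G(\mathbb{Q})$, so I would simply put
\[
u(s):=\rho_{0}({\rm art}_{K}(s))\cdot r_{X}(s)^{-1}\in G(\mathbb{Q})=\{x\in K^{\times}\mid x\overline{x}=1\}.
\]
As $\rho_{0}\circ{\rm art}_{K}$ and $r_{X}$ are homomorphisms into the abelian group $G(\mathbb{A}_{f})$, the map $u$ is again a homomorphism, and by construction $\rho_{0}\circ{\rm art}_{K}=u\cdot r_{X}$ in $G(\mathbb{A}_{f})$. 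Uniqueness is then formal: any $u'$ with $u'\cdot r_{X}=\rho_{0}\circ{\rm art}_{K}$ agrees with $u$ after cancelling $r_{X}(s)$ in $G(\mathbb{A}_{f})$, which is permissible because $G(\mathbb{Q})\hookrightarrow G(\mathbb{A}_{f})$ is injective.

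The only point that needs a genuine argument is that $u$ is locally constant, and for this I would combine two observations. First, $u\colon\mathbb{A}^{\times}_{K}\to G(\mathbb{A}_{f})$ is continuous: ${\rm art}_{K}$ is continuous, $\rho_{0}$ is continuous by Theorem \ref{refcmt}(1), $r_{X}=r({\rm MT}(h_{X}),\mu_{h_{X}})$ is continuous since it arises by passing to adelic points in a morphism of algebraic groups (as recorded in $\S$\ref{rx}), and multiplication and inversion in $G(\mathbb{A}_{f})$ are continuous. Second, $G(\mathbb{Q})$ is discrete in $G(\mathbb{A}_{f})$: writing $G(\mathbb{A}_{f})\subset(K\otimes_{\mathbb{Q}}\mathbb{A}_{f})^{\times}$, it suffices to know that $K^{\times}$ is discrete in $(K\otimes_{\mathbb{Q}}\mathbb{A}_{f})^{\times}$, which holds because an element of $K^{\times}$ that is a unit at every finite place lies in $\mathcal{O}_{K}^{\times}$, a finite group as $K$ is imaginary quadratic (Proposition \ref{21fun}(1)). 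Since a continuous map whose image is contained in a discrete subgroup is locally constant, $u$ is locally constant, which completes the proof.

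I expect the discreteness of $G(\mathbb{Q})$ inside $G(\mathbb{A}_{f})$ to be the only substantive ingredient; everything else is a formal rearrangement of the reciprocity square of Theorem \ref{refcmt}, which itself rests on the complex-multiplication theorem (Theorem \ref{cmt}) for rank-$21$ cubic fourfolds.
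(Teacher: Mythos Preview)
Your proof is correct and follows the same approach as the paper, which simply states that this corollary ``is just a reformulation of the commutativity of the diagram (\ref{galoiscmt}).'' You have supplied the details the paper omits, in particular the verification of local constancy via continuity together with the discreteness of $G(\mathbb{Q})$ in $G(\mathbb{A}_{f})$ (which indeed relies on $K$ being imaginary quadratic so that $\mathcal{O}_{K}^{\times}$ is finite).
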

\begin{proof}
    This is just a reformulation of the commutativity of the diagram (\ref{galoiscmt}).
\end{proof}

        \subsection{Construction of the algebraic Hecke character $\psi$}
        Let $X/\mathbb{C}$ be a rank-21 cubic fourfold  with reflex field $K\subset\mathbb{C}$. Suppose that $X$ is defined over $\mathbb{Q}$, and  let $\overline{\mathbb{Q}}$ be the algebraic closure of $\mathbb{Q}$ in $\mathbb{C}$. In particular, $K\subset \overline{\mathbb{Q}}$. 
        \\\indent Let  $D$ be the positive integer such that $-D$ is the discriminat of $K$. Define
        \begin{align}\varphi:=\Big(\frac{-D}{-}\Big) \notag 
        \end{align}
        to be the quadratic Dirichlet character associated with $K$, given by Kronecker symbol.
        \begin{prop}\label{frob}For every  prime number $p\notmid D$, the following conditions are equivalent:
        \\(1) $\varphi(p)=1$.
        \\(2) $p$ splits in $K$. 
        \\(3) The geometric Fronenius ${\rm Frob}_{p}\in {\rm Gal}(\overline{\mathbb{Q}}/\mathbb{Q})$ lies in ${\rm Gal}(\overline{\mathbb{Q}}/K)$.
             \end{prop}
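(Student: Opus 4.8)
The plan is to establish the cycle $(1)\Leftrightarrow(2)\Leftrightarrow(3)$ by purely classical number theory: $(1)\Leftrightarrow(2)$ is the splitting law for primes in a quadratic field, and $(2)\Leftrightarrow(3)$ is the standard dictionary between Frobenius elements and splitting behaviour in an abelian extension.

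\emph{Step 1 ($(1)\Leftrightarrow(2)$).} Since $-D=\mathrm{disc}(K)$ is a fundamental discriminant we have $K=\mathbb{Q}(\sqrt{-D})$, and $\varphi$ is precisely the quadratic Dirichlet character cut out by $K$. For a prime $p\notmid D$ the extension $K/\mathbb{Q}$ is unramified at $p$, and by the classical splitting law (Dedekind's factorisation criterion applied to the minimal polynomial of a $\mathbb{Z}$-algebra generator of $\mathcal{O}_K$; see e.g.\ \cite[Ch.\ IV]{lang}) the prime $p$ splits in $K$ if and only if $-D$ is a square in $\mathbb{F}_p$ for odd $p$, respectively $-D\equiv 1\bmod 8$ for $p=2$ (a case which forces $D\equiv 3\bmod 4$). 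In each case this condition is exactly $\varphi(p)=1$ by the definition of the Kronecker symbol (for odd $p$ it is the equality of the Kronecker and Legendre symbols; for $p=2$ it is a finite congruence check). Hence $\varphi(p)=1\iff p$ splits in $K$.

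\emph{Step 2 ($(2)\Leftrightarrow(3)$).} Because $K/\mathbb{Q}$ is abelian, ${\rm Gal}(\overline{\mathbb{Q}}/K)$ is normal in ${\rm Gal}(\overline{\mathbb{Q}}/\mathbb{Q})$, so the assertion ``${\rm Frob}_p\in{\rm Gal}(\overline{\mathbb{Q}}/K)$'' does not depend on the choice of a Frobenius lift at $p$ (a priori defined only up to conjugacy and up to a prime of $\overline{\mathbb{Q}}$ above $p$). Under the restriction isomorphism ${\rm Gal}(\overline{\mathbb{Q}}/\mathbb{Q})/{\rm Gal}(\overline{\mathbb{Q}}/K)\xrightarrow{\sim}{\rm Gal}(K/\mathbb{Q})$, any lift of ${\rm Frob}_p$ maps to the Frobenius at $p$ in ${\rm Gal}(K/\mathbb{Q})$, which is trivial exactly when the residue extension at a prime of $K$ above $p$ is trivial, i.e.\ when $p$ splits completely in $K$; since $[K:\mathbb{Q}]=2$, this is the same as $p$ splitting in $K$. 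Thus ${\rm Frob}_p\in{\rm Gal}(\overline{\mathbb{Q}}/K)$ if and only if $p$ splits in $K$, which gives $(2)\Leftrightarrow(3)$ and closes the chain.

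\emph{On the main difficulty.} There is essentially no obstacle here; the statement is classical bookkeeping. The only point requiring a word of care is the well-posedness of condition $(3)$ — the symbol ${\rm Frob}_p$ is defined only up to conjugacy — and this is precisely why one invokes the abelianness of $K/\mathbb{Q}$, equivalently the normality of ${\rm Gal}(\overline{\mathbb{Q}}/K)$. The remaining work, the identification of the explicit splitting condition with $\varphi(p)=1$ at the prime $2$, is a finite case-check.
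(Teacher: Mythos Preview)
Your proof is correct and follows essentially the same route as the paper: $(1)\Leftrightarrow(2)$ is the classical splitting law for quadratic fields, and $(2)\Leftrightarrow(3)$ is the observation that ${\rm Frob}_p$ restricts to the identity in ${\rm Gal}(K/\mathbb{Q})$ exactly when the residue degree is $1$. Your version is slightly more careful in noting why condition $(3)$ is well-posed despite ${\rm Frob}_p$ being defined only up to conjugacy, but the underlying argument is the same.
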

             \begin{proof}
                 The equivalence $(1)\Leftrightarrow (2)$ is well known (for example, see \cite[Chapter 3, Section 8, Theorem 1]{bosh}). Since ${\rm Gal}(K/\mathbb{Q})=\{\pm 1\}$ and ${\rm Frob}_{p}^{-1}|_{K}=\Big(\frac{K/\mathbb{Q}}{p}\Big)$, it suffices to show that 
                 \begin{align}
                     \Big(\frac{K/\mathbb{Q}}{p}\Big)=1\Leftrightarrow (1), \notag 
                 \end{align}
                 where $\Big(\frac{K/\mathbb{Q}}{p}\Big)\in {\rm Gal}(K/\mathbb{Q})$ denotes the Artin symbol. 
                 \\\indent Indeed, let $\mathfrak{p}$ be a prime of  $K$ lying over $p$, and let $D(\mathfrak{p}/p)$ be the decomposition group. Then $\Big(\frac{K/\mathbb{Q}}{p}\Big)\in {\rm Gal}(K/\mathbb{Q})$ is an inverse image of the arthmetic Frobenius$x\mapsto x^{p}$ under the canonical surjection $D(\mathfrak{p}/p)\to {\rm Gal}(k(\mathfrak{p})/\mathbb{F}_{p})$, where $k(\mathfrak{p})$ is the residue field. Since $p\notmid D$, the prime $p$ is unramified in the extension $K/\mathbb{Q}$, so  this surjection  is in fact an isomorphism. If  $f$ denotes the residue degree of  $\mathfrak{p}|p$, then  $\# D(\mathfrak{p}/p)=f$. Hence 
                 \begin{align}\Big(\frac{K/\mathbb{Q}}{p}\Big)=1 \Leftrightarrow f=1, \notag 
                 \end{align}which is exactly condition (1). 
             \end{proof}
      \begin{prop}\label{valloni}Let $X/\mathbb{C}$ be a rank-21 cubic fourfold with  its reflex field $K\subset\mathbb{C}$. Let  $r_{X}:\mathbb{A}^{\times}_{K}\to {\rm MT}(T_{X})(\mathbb{A}_{f})\subset {\rm SO}(T_{X})(\mathbb{A}_{f})$ be the homomorphism defined in  \S \ref{rx}. For every $s\in \mathbb{A}^{\times}_{E}$, the action $r_{X}(s): T_{X, \mathbb{A}_{f}}\to T_{X, \mathbb{A}_{f}}$ is given by the multiplication by $\overline{s_{f}}/s_{f}$, where $s_{f}$ is the finite part of $s$ and $\overline{s}$ denotes the complex conjugation.
      \end{prop}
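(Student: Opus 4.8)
The plan is to compute the reciprocity morphism $r_X = r\bigl(\mathrm{MT}(T_X),\mu_{h_X}\bigr) = \mathrm{Nm}_{K/\mathbb{Q}}\circ\mathrm{Res}_{K/\mathbb{Q}}(\mu_{h_X})$ of \S\ref{rx} directly, exploiting the explicit shape of the Mumford--Tate group and of the Hodge cocharacter in the rank-$21$ case, where $\dim_K T(X) = 1$.

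First I would fix the linear algebra. Since $X$ is rank-$21$ we have $\dim_{\mathbb{Q}} T(X) = 2$, so $T(X)_{\mathbb{Q}}$ is a one-dimensional $K$-vector space; a choice of basis identifies it with $K$, with $K = \mathrm{End}_{\mathrm{Hdg}}(T(X))$ acting by multiplication and with the polarization $\psi$ satisfying $\psi(ax,y) = \psi(x,\bar a y)$, where $\bar{\ }$ denotes the complex conjugation of $K$. By Theorem \ref{zrfun} the Mumford--Tate group is then $\mathrm{MT}(T_X) = \mathrm{SO}_K(T_X) = \{a\in K^{\times} : a\bar a = 1\} =: U^1_K \subset \mathrm{Res}_{K/\mathbb{Q}}\mathbb{G}_{m,K}$, and $U^1_K(\mathbb{A}_f)$ acts on $T(X)_{\mathbb{A}_f}\cong\mathbb{A}_{K,f}$ by multiplication. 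The second preparatory step is to identify $\mu_{h_X}$ over the reflex field $K$. Let $\iota\colon K\hookrightarrow\mathbb{C}$ be the reflex embedding. The $K$-action splits $T(X)_{\mathbb{Q}}\otimes_{\mathbb{Q}} K$ into an $\iota$-eigenline and a $\bar\iota$-eigenline, which base-change over $\iota$ to $T^{1,-1}$ and $T^{-1,1}$ respectively, by the very definition of $\varepsilon$ in \S2.5; the $\iota$-eigenline identifies $U^1_{K,K}$ with $\mathbb{G}_{m,K}$. With the sign conventions of \S2.1 the cocharacter $\mu_{h_X}$ acts on $T^{1,-1}$ by $z\mapsto z^{-1}$, so under this identification $\mu_{h_X}\colon\mathbb{G}_{m,K}\to U^1_{K,K}\cong\mathbb{G}_{m,K}$ is $z\mapsto z^{-1}$.

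Then I would run the adelic computation. Unwinding $\mathrm{Res}_{K/\mathbb{Q}}$ and $\mathrm{Nm}_{K/\mathbb{Q}}$ for the quadratic extension $K/\mathbb{Q}$ via the isomorphism $K\otimes_{\mathbb{Q}} K\cong K\times K$ --- under which complex conjugation on the first factor becomes the coordinate swap --- one finds $\mathrm{Res}_{K/\mathbb{Q}}(\mu_{h_X})(s) = (s^{-1},s)\in U^1_K(\mathbb{A}_K)$ and hence $\mathrm{Nm}_{K/\mathbb{Q}}(s^{-1},s) = \bar s/s \in U^1_K(\mathbb{A}_{\mathbb{Q}})$ for $s\in\mathbb{A}_K^{\times}$; projecting onto the finite part gives $r_X(s) = \overline{s_f}/s_f$, which acts on $T(X)_{\mathbb{A}_f}\cong\mathbb{A}_{K,f}$ by multiplication, as asserted. (Alternatively one may push $\mu_{h_X}$ into the big torus $\mathrm{Res}_{K/\mathbb{Q}}\mathbb{G}_{m,K}$, where it becomes the difference of the two standard cocharacters attached to $\iota$ and $\bar\iota$, and use the additivity of $r$ in the cocharacter together with the classical reflex-norm computation for the CM type $\{\iota\}$ of the imaginary quadratic field $K$, whose reflex field is $K$ itself; see \cite{val}.)

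The step I expect to be the main obstacle is the orientation bookkeeping in the second paragraph: one must be certain that $T^{1,-1}$, and not $T^{-1,1}$, is the $\iota$-eigenspace for the chosen reflex embedding $\iota$ --- this is exactly what the definition of $\varepsilon$ and the identification of the reflex field (as in Lemma \ref{21}) encode --- and that the conventions of \S2.1 genuinely produce the exponent $-1$ on $T^{1,-1}$; an error in either place replaces the answer by $s_f/\overline{s_f}$. The remaining evaluation of $\mathrm{Nm}_{K/\mathbb{Q}}\circ\mathrm{Res}_{K/\mathbb{Q}}(\mu_{h_X})$ on idèles is elementary but must likewise be carried out with the correct normalization of $K\otimes_{\mathbb{Q}} K\cong K\times K$ so that the norm of $(s^{-1},s)$ comes out as $\bar s/s$ rather than $s/\bar s$.
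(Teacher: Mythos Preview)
Your proposal is correct and follows essentially the same route as the paper: identify $\mu_{h_X}$ inside $\mathrm{Res}_{K/\mathbb{Q}}\mathbb{G}_{m,K}$ as $z\mapsto(z^{-1},z)$ and then take the norm $N_{K/\mathbb{Q}}$ to obtain $\bar s/s$. Your write-up is in fact more careful than the paper's about the eigenspace and sign bookkeeping, but the underlying computation is identical.
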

      \begin{proof}
      Let $\mu_{h}:\mathbb{G}_{m, K}\to MT(T(X))_{K}$ be the cocharacter associated with the $\mathbb{Q}$-HS $T(X)$. By the definition of $\mu_{h}$ and the inclusion ${\rm MT}(T(X))\subset {\rm Res}_{K/\mathbb{Q}}(\mathbb{G}_{m, K})$, the map $\mu_{h}$ is described by 
      \begin{align}
   \mathbb{G}_{m}(K)=  K^{\times}\ni     z\mapsto (z^{-1}, z)\in K^{\times}\times K^{\times}={\rm Res}_{K/\mathbb{Q}}(\mathbb{G}_{m, K})(K).     \notag  \end{align}
  Hence   $r({T(X), \mu_{h}})$ is given by:
    \begin{align}
        {\rm Res}_{K/\mathbb{Q}}(\mathbb{G}_{m, K})(\mathbb{Q})=K^{\times}\ni z\mapsto \overline{z}/z=N_{K/\mathbb{Q}}(\mu_{h}(z))\in \{x\in K^{\times}|~x\overline{x}=1\}={\rm MT}(T(X))(\mathbb{Q}), \notag 
    \end{align}
    as required. 
      \end{proof}
      \begin{prop}\label{tatetwist}Let $l$ be a prime, and let 
      \begin{align}\chi_{l}:{\rm Gal}(\overline{\mathbb{Q}}/\mathbb{Q})\to\mathbb{Q}_{l}^{\times} \notag 
      \end{align} be the $l$-adic cyclotomic character. If $p\neq l$ is a prime, then $\chi_{l}$ is unramifeid at $p$, and  
\begin{align}\chi_{l}({\rm Frob}_{\mathfrak{p}})=1/p. \notag 
\end{align}
In particular, after fixing an isomorphism $\overline{\mathbb{Q}_{l}}\cong \mathbb{C}$, $\chi_{l}$ corresponds to the algebraic Hecke character of infinite type $(-1, -1)$:
      \begin{align}
          |N_{K/\mathbb{Q}}(-)|: \mathbb{A}^{\times}_{K}\xrightarrow{N_{K/\mathbb{Q}}}\mathbb{A}^{\times}_{\mathbb{Q}}\xrightarrow{|-|_{\mathbb{A}_{\mathbb{Q}}}}\mathbb{R}^{\times}_{>0}. \notag
      \end{align} 
      \end{prop}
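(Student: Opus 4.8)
The plan is to reduce the statement to the Galois action on roots of unity and then translate it into the idelic language via class field theory.

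First, recall that $\chi_{l}$ factors through $\mathrm{Gal}(\mathbb{Q}(\mu_{l^{\infty}})/\mathbb{Q})\cong\varprojlim_{n}(\mathbb{Z}/l^{n}\mathbb{Z})^{\times}=\mathbb{Z}_{l}^{\times}$, and that for $p\neq l$ the cyclotomic field $\mathbb{Q}(\mu_{l^{n}})$ is unramified at $p$ (its discriminant over $\mathbb{Q}$ is a power of $l$). Hence the inertia subgroup at $p$ acts trivially on every $\mu_{l^{n}}$, so $\chi_{l}$ is unramified at $p$. To evaluate $\chi_{l}(\mathrm{Frob}_{\mathfrak{p}})$, fix a prime $\mathfrak{p}$ of $\overline{\mathbb{Q}}$ above $p$; since $p\neq l$, reduction modulo $\mathfrak{p}$ identifies $\mu_{l^{n}}(\overline{\mathbb{Q}})$ with $\mu_{l^{n}}(\overline{\mathbb{F}_{p}})$ compatibly with the arithmetic Frobenius, which raises roots of unity to the $p$-th power. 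Therefore the arithmetic Frobenius at $p$ has $\chi_{l}$-value $p$, and the geometric Frobenius $\mathrm{Frob}_{\mathfrak{p}}$, being its inverse, satisfies $\chi_{l}(\mathrm{Frob}_{\mathfrak{p}})=p^{-1}=1/p$.

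For the Hecke-character interpretation, I would use the dictionary between algebraic Hecke characters and their $l$-adic avatars, normalizing the Artin reciprocity maps geometrically (uniformizers to geometric Frobenii). The computation above says precisely that the $l$-adic realization $\chi_{l}$ of the Tate twist $\mathbb{Q}_{l}(1)$ has Frobenius value $1/p=|p|_{p}$ at every $p\neq l$; equivalently, $\chi_{l}\circ\mathrm{art}_{\mathbb{Q}}$ agrees with the adelic absolute value $|\cdot|_{\mathbb{A}_{\mathbb{Q}}}$ on each finite idele that is a uniformizer at $p$ and trivial elsewhere. Hence, after the fixed isomorphism $\overline{\mathbb{Q}_{l}}\cong\mathbb{C}$, $\chi_{l}$ is the $l$-adic avatar of the norm Hecke character $\|\cdot\|_{\mathbb{A}_{\mathbb{Q}}}=|\cdot|_{\mathbb{A}_{\mathbb{Q}}}$ of $\mathbb{Q}$. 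Because reciprocity is compatible with the norm --- $\mathrm{art}_{\mathbb{Q}}\circ N_{K/\mathbb{Q}}$ equals the composite of $\mathrm{art}_{K}$ with the restriction $\mathrm{Gal}(K^{ab}/K)\to\mathrm{Gal}(\mathbb{Q}^{ab}/\mathbb{Q})$ --- the restriction $\chi_{l}|_{\mathrm{Gal}(\overline{\mathbb{Q}}/K)}$ is the $l$-adic avatar of $\|\cdot\|_{\mathbb{A}_{\mathbb{Q}}}\circ N_{K/\mathbb{Q}}=|N_{K/\mathbb{Q}}(-)|$, which is exactly the asserted Hecke character. Finally, its archimedean component, evaluated at the unique complex place of $K$, is $z\mapsto|N_{\mathbb{C}/\mathbb{R}}(z)|=z\bar{z}$; in the sign normalization of $\S$\ref{pre}, where a Hodge structure of type $(p,q)$ has $h(z)$ acting by $z^{-p}\bar{z}^{-q}$, this is the infinity type $(-1,-1)$, consistent with $\mathbb{Q}(1)$ being pure of Hodge--Tate type $(-1,-1)$.

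The argument involves no analytic or geometric input; the only point requiring care is to keep the normalizations consistent throughout --- arithmetic versus geometric Frobenius, the geometric normalization of the reciprocity map, and the precise meaning of ``$\chi_{l}$ corresponds to $|N_{K/\mathbb{Q}}(-)|$'', i.e. the notion of the $l$-adic avatar of an algebraic Hecke character together with the role of the chosen isomorphism $\overline{\mathbb{Q}_{l}}\cong\mathbb{C}$ and the conventional sign in the infinity type.
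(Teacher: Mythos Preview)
Your proof is correct and follows essentially the same approach as the paper: both argue unramification via the discriminant of the cyclotomic fields and compute $\chi_{l}(\mathrm{Frob}_{\mathfrak{p}})=1/p$ from the arithmetic Frobenius acting as the $p$-th power map on $l$-power roots of unity. Your treatment is in fact more thorough than the paper's --- you work with the full tower $\mathbb{Q}(\mu_{l^{\infty}})$ rather than just $\mathbb{Q}(\zeta_{l})$, and you spell out the Hecke-character correspondence and the infinity type $(-1,-1)$, whereas the paper's proof stops after the Frobenius computation and leaves the ``In particular'' unaddressed.
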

      \begin{proof}We may assume $l\neq 2$. 
          Let $\zeta_{l}\in\overline{\mathbb{Q}}$ be a primitive $l$-th root of unity.  Then the discriminant of $\mathbb{Q}(\zeta_{l})$ is $(-1)^{(l-1)/2}l^{l-2}$. If $p\neq l$, then $p$ is unramified in $\mathbb{Q}(\zeta_{l})$. Since the restriction of $\chi_{l}$ to ${\rm Gal}(\overline{\mathbb{Q}}/\mathbb{Q}(\zeta_{l}))$ is trivial, the charcter $\chi_{l}$ is unramified at $p$. 
          \\\indent Let $\mu_{l^{n}}(\mathbb{\overline{Q}})$ be the group of $l^{n}$-th roots of unity in $\overline{\mathbb{Q}}$.  Galois theory gives
          \begin{align}
              {\rm Gal}(\mathbb{Q}(\zeta_{l})/\mathbb{Q})\cong (\mathbb{Z}/l\mathbb{Z})^{\times}, \notag 
          \end{align} and the Artin symbol $\big(\frac{\mathbb{Q}(\zeta_{l})/\mathbb{Q}}{p}\big)$ corresponds to the arithmetic Frobenius. Thus 
          \begin{align}\chi_{l}({\rm Frob}_{p}^{-1})=p. \notag 
          \end{align}
      \end{proof}
       \begin{thm}\label{hecke}
          Fix an isomorphism $\iota: \overline{\mathbb{Q}_{l}}\cong \mathbb{C}$ of fields.  Then there exists a unique algebraic Hecke character $\psi:\mathbb{A}^{\times}_{K}\to \mathbb{C}$ such that 
          \\(1) the infnite type of $\psi$ is $(2, 0)$; 
\\(2)  For every prime $p\notmid lDM$, the representation $\rho_{l}$ is unramified at $p$, where $M$ is the norm of the conductor of $\psi$. Moreover, if $\mathfrak{p}| p$, then \begin{align}\label{trace}
            {\rm Tr}(  \rho_{l}(1)({\rm Frob}_{p}))&=\begin{cases}
\chi(u_\mathfrak{p})+\chi(\overline{u_{\mathfrak{p}}}), &p\textnormal{ : split},
                \\ 0, &p:\textnormal{inert},
            \end{cases}
             \\  \label{det}{\rm det}(\rho_{l}(1)({\rm Frob}_{p}))&=\begin{cases}
\chi(u_\mathfrak{p})\chi(\overline{u_{\mathfrak{p}}}),&p\textnormal{ : split},
                     \\ -\chi(u_\mathfrak{p}), &p:\textnormal{inert},
                 \end{cases}
             \end{align} 
             where $\rho_{l}(1)=\rho_{l}\otimes\chi_{l}$ is the Tate twist  and  $u_{\mathfrak{p}}\in \mathcal{O}_{K, \mathfrak{p}}$ is a uniformizer.
       \end{thm}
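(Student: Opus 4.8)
The plan is to manufacture $\psi$ out of the complex multiplication already in hand: to algebraize the abelian $l$-adic character $\rho_l(1)|_{{\rm Gal}(\overline{\mathbb{Q}}/K)}$ by means of the explicit reciprocity law of Corollary \ref{u}, and then to extract the Frobenius formulas (\ref{trace})--(\ref{det}) from the structure of $\rho_l$ as a dihedral representation of ${\rm Gal}(\overline{\mathbb{Q}}/\mathbb{Q})$. Write $G_K={\rm Gal}(\overline{\mathbb{Q}}/K)$, $G={\rm MT}(T(X))$, and $K_l=K\otimes_{\mathbb{Q}}\mathbb{Q}_l$. Since $X$ is rank-$21$, it is of CM type (Proposition \ref{21fun}(1)), so Theorem \ref{zrfun} shows $T(X)_{\mathbb{Q}}$ to be free of rank one over $K$; hence $T(X)_{\mathbb{Q}_l}\cong K_l$ as a $K_l$-module and, by Theorem \ref{refcmt}(1), $\rho_l|_{G_K}$ is a continuous character $G_K^{\rm ab}\to G(\mathbb{Q}_l)\subset K_l^{\times}$. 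Composing with ${\rm art}_K$ and feeding in Corollary \ref{u} together with Proposition \ref{valloni} gives, for every $s\in\mathbb{A}_K^{\times}$, the identity
\[
(\rho_l\circ{\rm art}_K)(s)=u(s)\cdot(\overline{s_l}/s_l)\in K_l^{\times},
\]
where $u:\mathbb{A}_K^{\times}\to\{x\in K^{\times}\mid x\overline{x}=1\}$ is a locally constant homomorphism. Being locally constant with discrete target, $u$ is trivial on the connected group $(K\otimes\mathbb{R})^{\times}\cong\mathbb{C}^{\times}$ and on $\mathcal{O}_{K,\mathfrak{q}}^{\times}$ for all but finitely many $\mathfrak{q}$, so it has a finite conductor.

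I would then algebraize. Using Proposition \ref{tatetwist} to evaluate $\chi_l\circ{\rm art}_K$ on the local units at $l$ (where it restricts to a power of $N_{K_l/\mathbb{Q}_l}$), one checks that $\rho_l(1)\circ{\rm art}_K=(\rho_l\otimes\chi_l)\circ{\rm art}_K$, restricted to $\mathcal{O}_{K,l}^{\times}$, is an \emph{algebraic} character of infinity type $(2,0)$, the locally constant factor $u$ contributing nothing there. By the standard correspondence between algebraic Hecke characters of a fixed infinity type and continuous $l$-adic id\'ele class characters that are algebraic of that type at $l$, after fixing $\iota:\overline{\mathbb{Q}_l}\xrightarrow{\sim}\mathbb{C}$ there is then a \emph{unique} algebraic Hecke character $\psi:\mathbb{A}_K^{\times}\to\mathbb{C}^{\times}$ of infinity type $(2,0)$ whose $l$-adic avatar is $\rho_l(1)|_{G_K}$; let $M$ be the norm of its conductor. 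This settles (1) and the uniqueness assertion.

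For (\ref{trace}) and (\ref{det}): the reciprocity identity above shows $\rho_l|_{G_K}$ is unramified at every prime of $K$ away from $l$ and the conductor of $u$, so $\rho_l$ is unramified at every rational prime $p\nmid lDM$ (equivalently, by smooth proper base change, at every $p\neq l$ of good reduction of $X_0$). Fix such a $p$ and a prime $\mathfrak{p}\mid p$ with uniformizer $u_{\mathfrak{p}}$. If $p$ splits in $K$, then ${\rm Frob}_p\in G_K$ (Proposition \ref{frob}), $T(X)_{\mathbb{Q}_l}$ decomposes as $K_{\mathfrak{p}}\oplus K_{\overline{\mathfrak{p}}}$, and by the reciprocity identity the two eigenvalues of $\rho_l(1)({\rm Frob}_p)$ are $\psi(u_{\mathfrak{p}})$ and $\psi(u_{\overline{\mathfrak{p}}})=\overline{\psi(u_{\mathfrak{p}})}$, giving the split cases. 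If $p$ is inert, then ${\rm Frob}_p\notin G_K$ whereas ${\rm Frob}_p^{2}$ is the Frobenius of the unique, degree-two prime above $p$; since $\rho_l|_{G_K}$ is abelian and, up to a finite-order twist, $\det\rho_l=\varphi\cdot\chi_l^{\,m}$ for the quadratic character $\varphi$ of $K/\mathbb{Q}$ and a suitable integer $m$, the representation $\rho_l$ is dihedral, i.e.\ induced from the character underlying $\psi$, so ${\rm Frob}_p$ acts by an off-diagonal involution scaled by $\psi(u_{\mathfrak{p}})$; hence ${\rm Tr}(\rho_l(1)({\rm Frob}_p))=0$ and $\det(\rho_l(1)({\rm Frob}_p))=\varphi(p)\,\psi(u_{\mathfrak{p}})=-\psi(u_{\mathfrak{p}})$.

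The main obstacle will be the inert-prime analysis of the last step. Corollary \ref{u} and Theorem \ref{refcmt} describe $\rho_l$ only on $G_K$, so one must bootstrap to the full group ${\rm Gal}(\overline{\mathbb{Q}}/\mathbb{Q})$: verifying that $\rho_l$ is genuinely induced from the character underlying $\psi$ rather than from a twist of it, identifying $\det\rho_l$ sharply enough to pin down the quadratic contribution $\varphi$, and getting the sign $\det(\rho_l(1)({\rm Frob}_p))=-\psi(u_{\mathfrak{p}})$ exactly right. All of this rests on the fact that $u$ takes values in the number field $K$ itself, and not merely in $\mathbb{A}_{K,f}$; this is exactly the content of Corollary \ref{u} (hence of the complex multiplication theorem, Theorem \ref{cmt}), and without it the algebraization producing $\psi$ could not even be started. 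A secondary point demanding care is the bookkeeping of infinity-type conventions, so that the Tate twist $\rho_l\mapsto\rho_l(1)$ places $\psi$ in type exactly $(2,0)$ and not, say, $(-2,0)$.
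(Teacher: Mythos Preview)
Your approach is essentially the paper's, with two organizational differences. First, the paper builds the character in two steps: it sets $\psi_0:=u\cdot r$, where $r(s)$ is the archimedean component of $\overline{s}/s$, obtaining an explicit Hecke character of infinity type $(1,-1)$ directly from Corollary~\ref{u} and Proposition~\ref{valloni}, and then defines $\psi:=|N_{K/\mathbb{Q}}|\cdot\psi_0$, invoking Proposition~\ref{tatetwist} to match the Tate twist. You instead twist by $\chi_l$ first and appeal to the abstract correspondence between algebraic Hecke characters and $l$-adic characters; the paper's route is more constructive but both yield the same $\psi$.

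Second, and more substantively, your inert-prime analysis is more elaborate than necessary. The paper neither verifies that $\rho_l$ is induced nor computes $\det\rho_l$ over all of ${\rm Gal}(\overline{\mathbb{Q}}/\mathbb{Q})$. It simply observes that ${\rm Frob}_p^2\in{\rm Gal}(\overline{\mathbb{Q}}/K)$ when $p$ is inert (Proposition~\ref{frob}) and that ${\rm art}_K(u_{\mathfrak p})={\rm Frob}_p^2|_{K^{ab}}$, so the reciprocity identity from Corollary~\ref{u} gives $\rho_l({\rm Frob}_p)^2=\psi_0(u_{\mathfrak p})$ immediately; the characteristic polynomial $T^2-\psi_0(u_{\mathfrak p})$ then drops out. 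This bypasses the obstacle you flagged: no separate identification of $\det\rho_l$ with $\varphi\cdot\chi_l^m$ and no induced-representation argument are needed. (The paper's passage from $\rho_l({\rm Frob}_p)^2=\psi_0(u_{\mathfrak p})$ to the characteristic polynomial does tacitly use that the trace vanishes, which is a shadow of the dihedral structure you invoke explicitly; so your route is not wrong, just longer.)
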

       \begin{proof}
            We first construct an algebraic Hecke character $\psi_{0}$  over $K$ of infinite type $( 1, -1)$ satisfying (\ref{trace}), (\ref{det}) for ${\rm Tr}(\rho_{l}({\rm Frob}_{p}))$ and ${\rm det}(\rho_{l}({\rm Frob}_{p}))$, respectively.
           \\\indent Define a continuous character $r : \mathbb{A}^{\times}_{K}\to \mathbb{C}^{\times}$ by 
           \begin{align}  \mathbb{A}^{\times}_{K}\xrightarrow{\overline{s}/s}\mathbb{A}^{\times}_{K}\xrightarrow{\textnormal{projection}}\mathbb{C}^{\times}. \notag 
           \end{align}
          Let $u:\mathbb{A}^{\times}_{K}\to K^{\times}$ be the locally constant homomorphism constructed in Corollary \ref{u}.  The product 
          \begin{align}\psi_{0}:=ur\notag
          \end{align}is an algebraic Hecke character of infinite type $(1, -1)$. For every $p\notmid lDM$ and $\mathfrak{p}|p$, we will show that $\psi_{0} (u_{\mathfrak{p}})$ satisfies (\ref{trace}) and (\ref{det}) for $\rho_{l}$.
          \\\indent Let $p\notmid lDM$  and  $\mathfrak{p}\subset K$ be a prime  above $p$. By Corollary \ref{u}, we have 
          \begin{align}\label{rhoar}
              \rho_{l}\circ {\rm art}_{K}=u
          \end{align}
          on $K_{\mathfrak{p}}$. Since $p\notmid M$, the character $\psi_{0}$ is unramified at $\mathfrak{p}$. By (\ref{rhoar}), it follows that $\rho_{l}$ is unramified at $\mathfrak{p}$. Because  $p\notmid D$, the prime $p$ is unramified in $K/\mathbb{Q}$, hence $\rho_{l}$ is unramified at $p$. 
            \\\indent  If $p$ splits, i.e. $(p)=\mathfrak{p}\overline{\mathfrak{p}}$, then  \begin{align}\label{norm}p^{-1}=\pi_{\mathfrak{p}}\overline{\pi_{\mathfrak{p}}}=N_{K_{\mathfrak{p}}/\mathbb{Q}_{l}}(\pi_{\mathfrak{p}}). 
           \end{align}
         By Proposition \ref{frob}, ${\rm Frob}_{p}\in{\rm Gal}(\overline{\mathbb{Q}}/K)$. Thus the relation (\ref{norm}) implies ${\rm art}_{K}(u_{\mathfrak{p}})={\rm Frob}_{p}|_{{\rm Gal}(K^{ab}/K)}$. Since $\mathfrak{p}\notmid l$, the $l$-component of $(\pi_{\mathfrak{p}}/\overline{\pi_{\mathfrak{p}}})_{f}$ is just 1. By (\ref{rhoar}), 
         \begin{align}
             \rho_{l}({\rm Frob}_{p})=\psi_{0}(u_{\mathfrak{p}}). \notag 
         \end{align}
          On the other hand, we have  $1=\psi_{0, \infty}^{-1}(p)=\psi_{0, fin}(p)=\psi_{0}(u_{\mathfrak{p}})\psi_{0}(\overline{u_\mathfrak{p}})$, where $\psi_{0, fin}$ and $\psi_{0, \infty}$ denote the finite and  infinite part of $\psi_{0}$, respectively. Thus we obtain $\overline{\psi_{0}(u_{\mathfrak{p}})}=\psi_{0}({\overline{u_{\mathfrak{p}}}})$.   Hence $\psi_{0}(u_{\mathfrak{p}})$ and $\psi_{0}(\overline{u_{\mathfrak{p}}})$ are the eigenvalues of $\rho_{l}({\rm Frob}_{p})$, giving (\ref{trace}) and (\ref{det}).
          \\\indent If $p$ is inert, then,  by Proposition \ref{frob}, ${\rm Frob}_{p}^{2}\in{\rm Gal}(\overline{\mathbb{Q}}/K)$. Since \begin{align}\notag
p^{-2}=\pi_{\mathfrak{p}}^{2}=N_{K_{\mathfrak{p}}/\mathbb{Q}_{l}}(\pi_{\mathfrak{p}}), 
          \end{align}
          we obtain ${\rm art}_{K}(u_{\mathfrak{p}})={\rm Frob}_{p}^{2}|_{{\rm Gal}(K^{ab}/K)}$. Using (\ref{rhoar}), we find
          \begin{align}
              \rho_{l}({\rm Frob}_{p})^{2}=\rho_{l}({\rm Frob}_{p}^{2})=\psi_{0}(u_{\mathfrak{p}}). \notag
          \end{align}
         Thus the characteristic polynomial of $\rho_{l}({\rm Frob}_{p})$ is  
         \begin{align}
             T^{2}-\psi_{0}(u_{\mathfrak{p}}),\notag
             \end{align}and the coefficients give  (\ref{trace}) and (\ref{det}). 
         \\\indent Finally, define
         \begin{align}\notag \psi:=|N_{K/\mathbb{Q}}(-)|\cdot \psi_{0}:\mathbb{A}^{\times}_{K}\to \mathbb{C}^{\times}.
         \end{align} Since $|N_{K/\mathbb{Q}}(-)|$ has infinite type $(-1, -1)$, the infinite type of $\psi$ is $(2, 0)$. By Proposition \ref{tatetwist}, $|N_{K/\mathbb{Q}}(-)|$ corresponds to the $l$-adic cyclotomic character $\chi_{l}$. Hence $\psi$ satisfies (\ref{trace}) and (\ref{det}) for $\rho_{l}(1)$.
       \end{proof}
       
       \subsection{The proof of Theorem \ref{modular}} In this section  we give the proof of Theorem \ref{modular}.
        \\\indent Let $\mathfrak{m}\subset \mathcal{O}_{K}$ be the conductor of $\psi$. Then $\psi$ corresponds to the classical Hecke character
        \begin{align}
         \widetilde{\psi}:   \{\textnormal{fractional ideals of}~K~\textnormal{prime to}~\mathfrak{m}\}\to \mathbb{C}^{\times},\notag 
         ~\mathfrak{p}\mapsto\psi(u_{\mathfrak{p}}),
        \end{align}
        where $u_\mathfrak{p}\subset\mathcal{O}_{K, \mathfrak{p}}$ is a uniformizer. Since $\psi$ has infinite type $(2, 0)$, we have 
        \begin{align}\label{clasic}
            \widetilde{\psi}((\alpha))=\psi_{fin}(\alpha)=\psi^{-1}_{\infty}(\alpha)=\alpha^{2}, ~ (\alpha\in K^{\times}, \alpha\equiv1~\textnormal{mod}~ \mathfrak{m}).
        \end{align}
        Let $M$ be the norm of $\mathfrak{m}$, and define a Dirichlet character    
        \begin{align}
        \eta:(\mathbb{Z}/M\mathbb{Z})^{\times}\ni    a\mapsto \widetilde{\psi}((a))/a^{2}\in\mathbb{C}.
        \end{align} By the relation (\ref{clasic}), the character $\eta$ is well-defined. Put $\varepsilon=\eta\varphi$, where $\varphi$ is the quadratic Dirichlet character associated with $K/\mathbb{Q}$. 
        \\\indent Define the $q$-series $g=\sum^{\infty}_{n=1}c_{n}q^{n}$ by
        \begin{align}
            g=\sum_{\mathfrak{a}\subset\mathcal{O}_{K}:~\textnormal{ideal prime to }~\mathfrak{m}}\widetilde{\psi}(\mathfrak{a})q^{N(\mathfrak{a})},~(q=e^{2\pi i\tau})
        \end{align}
        where $N(\mathfrak{a})$ is the norm of $\mathfrak{a}$.
        \begin{thm}(Shimura \cite[Lemma 3]{sh})
            The series $g$ is a cusp form of weight $3$ and character $\varepsilon$ on $\Gamma_{1}(DM)$.
        \end{thm}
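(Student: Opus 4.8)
The plan is to recognize $g$ as a finite $\mathbb{C}$-linear combination of classical theta series attached to binary positive-definite quadratic forms twisted by a harmonic (spherical) polynomial of degree $2$, and then to apply the standard transformation and cuspidality theorems for such series; this is in substance the content of \cite[Lemma 3]{sh}, which one may also simply invoke.

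First I would decompose the defining series over the ideal class group of $K$. Choosing for each ideal class an integral representative $\mathfrak{b}$ prime to $\mathfrak{m}$, the integral ideals prime to $\mathfrak{m}$ in a fixed class are parametrized, up to the finite unit group $\mathcal{O}_{K}^{\times}$, by the nonzero elements $\alpha$ of a fixed such $\mathfrak{b}$ lying in prescribed residue classes modulo $\mathfrak{m}\mathfrak{b}$, via $\mathfrak{a}=(\alpha)\mathfrak{b}^{-1}$. Using $N(\mathfrak{a})=N_{K/\mathbb{Q}}(\alpha)/N(\mathfrak{b})$ together with \eqref{clasic} and its ray-class refinement $\widetilde{\psi}((\alpha))=\chi(\alpha)\,\alpha^{2}$ for a finite-order character $\chi$ of $(\mathcal{O}_{K}/\mathfrak{m})^{\times}$, the series $g$ becomes a finite combination of sums of the form $\sum_{\alpha\in\mathfrak{b}}\chi(\alpha)\,\alpha^{2}\,q^{\,N_{K/\mathbb{Q}}(\alpha)/N(\mathfrak{b})}$, with $\alpha$ restricted to the prescribed residues.

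Next I would identify each inner sum as a theta series with harmonic coefficients. The lattice $\mathfrak{b}$ has rank $2$, the form $Q(\alpha)=N_{K/\mathbb{Q}}(\alpha)/N(\mathfrak{b})$ is positive-definite of discriminant $-D$ up to scaling, and $P(\alpha)=\alpha^{2}$ (for the fixed embedding $K\hookrightarrow\mathbb{C}$) is a spherical polynomial of degree $2$ for $Q$, because $z\mapsto z^{2}$ is holomorphic and hence harmonic on $K\otimes_{\mathbb{Q}}\mathbb{R}\cong\mathbb{C}$ equipped with the Euclidean structure proportional to $Q$. By the classical transformation theory of theta series attached to a quadratic form and a spherical polynomial, subject to congruence conditions and a Dirichlet twist (Schoeneberg, Hecke; see also \cite{sh}), each inner sum is a modular form on some $\Gamma_{1}(\ell)$ with $\ell\mid DM$, of weight $\tfrac{1}{2}\cdot 2+\deg P=3$, and of nebentypus $\bigl(\tfrac{-D}{\,\cdot\,}\bigr)$ times the Dirichlet character induced by $\chi$. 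Summing over the finitely many classes and matching the characters---using $\eta(a)=\widetilde{\psi}((a))/a^{2}$ and $\varepsilon=\eta\varphi$---exhibits $g$ as an element of $M_{3}(\Gamma_{1}(DM),\varepsilon)$. Cuspidality is then immediate: a theta series attached to a nonzero spherical polynomial of positive degree vanishes at every cusp, so $g$ is a cusp form. (Equivalently, $\widetilde{\psi}$ has nontrivial infinite type and is not the pullback of a Dirichlet character along $N_{K/\mathbb{Q}}$, so the automorphic representation of $\mathrm{GL}_{2}(\mathbb{A}_{\mathbb{Q}})$ attached to $\widetilde{\psi}$ by automorphic induction is cuspidal, with a holomorphic discrete series of weight $3$ at $\infty$ and the expected conductor and central character.)

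The main obstacle I anticipate is the precise computation of the level and the nebentypus in the second step: correctly propagating the conductor $\mathfrak{m}$ and the field discriminant $-D$ through the class-group decomposition, the congruence conditions, and the exact theta transformation formula, and checking that the resulting characters glue across the ideal classes into $\varepsilon=\eta\varphi$ on $\Gamma_{1}(DM)$. This is exactly the bookkeeping performed in \cite[Lemma 3]{sh}, on which the whole statement rests.
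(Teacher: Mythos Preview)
The paper gives no proof of this statement; it is stated purely as a citation of Shimura \cite[Lemma 3]{sh} and used as a black box. Your proposal is therefore strictly more detailed than the paper's treatment: you correctly sketch the standard argument behind Shimura's lemma (decomposition over ideal classes, identification with theta series attached to the positive-definite binary form $N_{K/\mathbb{Q}}(\alpha)/N(\mathfrak{b})$ twisted by the degree-$2$ spherical polynomial $\alpha^{2}$, and the Hecke--Schoeneberg transformation law), and you yourself note that one may simply invoke the reference. Since the paper does exactly that and nothing more, there is nothing to compare beyond observing that your sketch is a faithful outline of what \cite[Lemma 3]{sh} actually contains.
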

        
   \begin{thm}\label{coef}(Ribet \cite[Corollary 3.5]{rib})
       There exists a unique newform $f=\sum^{\infty}_{n=1}a_{n}q^{n}$ of weight $3$, character $\varepsilon$, and level dividing DM such that 
       \begin{align}a_{p}=c_{p}\notag 
       \end{align}for all $p\notmid DM$. Moreover, $a_{p}=0$ when $\varphi(p)=-1$; in particular, $f$ has complex multiplication by $\varphi$. 
   \end{thm}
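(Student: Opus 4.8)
The plan is to realize the $q$-series $g=\sum_{n\geq1}c_{n}q^{n}$ as a Hecke eigenform attached to the classical Hecke character $\widetilde{\psi}$, and then to feed it into the Atkin--Lehner--Li theory of newforms to extract the unique newform $f$ with $a_{p}=c_{p}$ for all $p\notmid DM$; the complex multiplication of $f$ will then fall out of the description of $g$ as a sum over integral ideals of $\mathcal{O}_{K}$, combined with Proposition \ref{frob}.

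First I would check that $g$ is a normalized Hecke eigenform (note $c_{1}=\widetilde{\psi}((1))=1$) for every $T_{p}$ with $p\notmid DM$. This is formal from the multiplicativity of $\widetilde{\psi}$ on ideals: writing $c_{p}=\sum_{\mathfrak{p}\mid p}\widetilde{\psi}(\mathfrak{p})$ and grouping integral ideals of $\mathcal{O}_{K}$ by their norm yields the recursion $c_{p^{r+1}}=c_{p}\,c_{p^{r}}-\varepsilon(p)\,p^{2}\,c_{p^{r-1}}$, i.e. the Euler factor $1-c_{p}p^{-s}+\varepsilon(p)p^{2-2s}$ proper to weight $3$, with $\varepsilon(p)=\eta(p)\varphi(p)$ coming from the relation $\widetilde{\psi}((a))=a^{2}\eta(a)$ of (\ref{clasic}) together with the splitting law for $\varphi$. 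By the theorem of Shimura quoted above, $g$ lies in the finite-dimensional space $S_{3}(\Gamma_{1}(DM),\varepsilon)$, so it is a genuine cuspidal eigenform away from $DM$.

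Next, by the theory of newforms $g$ is a linear combination of forms $h(d\tau)$ for a single newform $h$ of weight $3$, character $\varepsilon$, and level $N_{0}$ dividing $DM$; I set $f:=h$, so that $a_{p}=c_{p}$ for all $p\notmid DM$, and strong multiplicity one gives uniqueness of $f$ among newforms of level dividing $DM$ with these eigenvalues. For the complex multiplication: when $\varphi(p)=-1$ the prime $p$ is inert in $K$ by Proposition \ref{frob}, so there is no integral ideal of $\mathcal{O}_{K}$ of norm $p$, whence $c_{p}=0$ (consistently with $\mathrm{Tr}\,\rho_{l}(1)({\rm Frob}_{p})=0$ in (\ref{trace})). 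Therefore $a_{p}=\varphi(p)\,a_{p}$ for every $p\notmid DM$, which by strong multiplicity one forces $f=f\otimes\varphi$; that is, $f$ has complex multiplication by $\varphi$.

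The step I expect to demand the most care is not conceptual but bookkeeping: one must pin down that the eigenform $g$ has nebentypus exactly $\varepsilon=\eta\varphi$ (and not merely $\eta$) and level exactly dividing $DM$, so that Atkin--Lehner--Li applies with precisely the stated data. This rests on combining the relation (\ref{clasic}) for $\widetilde{\psi}$ on principal ideals with the definitions of $\eta$ and of the quadratic character $\varphi$ attached to $K/\mathbb{Q}$; once it is in place, the assertion is exactly Ribet's Corollary 3.5 applied to $\psi$.
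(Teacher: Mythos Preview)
The paper does not supply its own proof of this statement: it is quoted verbatim as a result of Ribet \cite[Corollary~3.5]{rib}, with no argument given beyond the citation. Your write-up is essentially the standard proof behind that corollary --- Shimura's theorem puts $g$ in $S_{3}(\Gamma_{1}(DM),\varepsilon)$, the ideal-theoretic definition of $g$ makes it a simultaneous eigenform for the $T_{p}$ with $p\nmid DM$, Atkin--Lehner--Li/strong multiplicity one then produces the unique newform $f$ with matching Hecke eigenvalues, and the vanishing $c_{p}=0$ at inert primes (no ideal of norm $p$) gives $a_{p}=\varphi(p)a_{p}$, hence CM by $\varphi$. There is nothing to compare against in the paper itself; your approach is correct and is the expected one.
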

   Via a fixed isomorphism $\iota:\overline{\mathbb{Q}_{l}}\cong \mathbb{C}$, the $\mathbb{Q}_{l}$-algebra  $K\otimes \mathbb{Q}_{l}$ embeds in $\overline{\mathbb{Q}_{l}}$. 
   \begin{thm}\label{ribet}(Ribet \cite[Theorem 2.1 and Corollary 2.4]{rib}) Let $l$ be a prime. Then there exists a continuous representation 
       $\rho_{f, l} : {\rm Gal}(\overline{\mathbb{Q}}/\mathbb{Q})\to {\rm GL}_{2}(K\otimes \mathbb{Q}_{l})$ with the following property:
       \\(1) If $p\notmid lN$, then $\rho_{l}$ is unramified at $p$, where $N$ is the level of $f$.
       \\(2) If $p\notmid lN$, then 
       \begin{align}
           {\rm Tr}(\rho_{f, l}({\rm Frob}_{p}))=c_{p}, ~~~
           {\rm det}(\rho_{f, l}({\rm Frob}_{p}))=\varepsilon(p)p^{2}. 
       \end{align}
       Such a representation $\rho_{f, l}$ is unique up to isomorphism. 
   \end{thm}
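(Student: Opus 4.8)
The plan is to prove the statement by the classical construction of the $l$-adic representation attached to a CM modular form, specialized to the present $f$. By Theorem \ref{coef} the newform $f$ has complex multiplication by $\varphi$, and its Hecke eigenvalues $c_{p}$ are the values of the classical Hecke character $\widetilde{\psi}$ of $K$ built from $\psi$; the natural candidate for $\rho_{f, l}$ is therefore the induction from ${\rm Gal}(\overline{\mathbb{Q}}/K)$ to ${\rm Gal}(\overline{\mathbb{Q}}/\mathbb{Q})$ of the $l$-adic avatar of $\psi$. First I would pass from the algebraic Hecke character $\psi : \mathbb{A}^{\times}_{K}\to\mathbb{C}^{\times}$ of infinite type $(2, 0)$ to its $l$-adic realization: using the fixed isomorphism $\iota : \overline{\mathbb{Q}_{l}}\cong\mathbb{C}$ and the embedding $K\otimes\mathbb{Q}_{l}\hookrightarrow\overline{\mathbb{Q}_{l}}$, one modifies the components of $\psi$ above $l$ by the algebraic part $z\mapsto z^{2}$ to obtain a continuous character of $\mathbb{A}^{\times}_{K}/K^{\times}$ with values in $(K\otimes\mathbb{Q}_{l})^{\times}$, and composes with ${\rm art}_{K}^{-1}$ to get a continuous character
\begin{align}
\chi_{\psi, l} : {\rm Gal}(\overline{\mathbb{Q}}/K)\longrightarrow (K\otimes\mathbb{Q}_{l})^{\times}, \notag
\end{align}
unramified at every prime $\mathfrak{p}$ of $K$ prime to $l$ and to the conductor $\mathfrak{m}$ of $\psi$, with $\chi_{\psi, l}({\rm Frob}_{\mathfrak{p}}) = \widetilde{\psi}(\mathfrak{p})$ there.

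Next I would set $\rho_{f, l} := {\rm Ind}_{{\rm Gal}(\overline{\mathbb{Q}}/K)}^{{\rm Gal}(\overline{\mathbb{Q}}/\mathbb{Q})}\chi_{\psi, l}$, a continuous $2$-dimensional representation over $K\otimes\mathbb{Q}_{l}$, and verify the asserted properties. Since ${\rm Gal}(\overline{\mathbb{Q}}/K)$ is normal of index $2$, $\rho_{f, l}$ is unramified at every rational prime unramified in $K/\mathbb{Q}$ over which $\chi_{\psi, l}$ is unramified; the conductor–discriminant formula identifies the Artin conductor of $\rho_{f, l}$ with $D\cdot N_{K/\mathbb{Q}}(\mathfrak{m})$ up to the $l$-part, which is the level $N$ of $f$ (Theorem \ref{coef}), so $\rho_{f, l}$ is unramified outside $lN$, giving (1). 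For the trace in (2): if $p$ splits, $(p) = \mathfrak{p}\overline{\mathfrak{p}}$ and ${\rm Frob}_{p}\in{\rm Gal}(\overline{\mathbb{Q}}/K)$, so $\rho_{f, l}|_{{\rm Gal}(\overline{\mathbb{Q}}/K)} = \chi_{\psi, l}\oplus\chi_{\psi, l}^{\sigma}$ and ${\rm Tr}(\rho_{f, l}({\rm Frob}_{p})) = \widetilde{\psi}(\mathfrak{p}) + \widetilde{\psi}(\overline{\mathfrak{p}}) = c_{p}$ by the definition of $c_{p}$ as a sum over ideals of norm $p$; if $p$ is inert, ${\rm Frob}_{p}\notin{\rm Gal}(\overline{\mathbb{Q}}/K)$ and the induced-character formula gives ${\rm Tr}(\rho_{f, l}({\rm Frob}_{p})) = 0 = c_{p}$. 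For the determinant I would apply the general identity ${\rm det}\,{\rm Ind}_{{\rm Gal}(\overline{\mathbb{Q}}/K)}^{{\rm Gal}(\overline{\mathbb{Q}}/\mathbb{Q})}\chi_{\psi, l} = \varphi\cdot(\chi_{\psi, l}\circ{\rm Ver})$ with ${\rm Ver}$ the transfer; evaluating at ${\rm Frob}_{p}$ and using ${\rm Ver}({\rm Frob}_{p}) = {\rm Frob}_{\mathfrak{p}}{\rm Frob}_{\overline{\mathfrak{p}}}$ in the split case and ${\rm Ver}({\rm Frob}_{p}) = {\rm Frob}_{(p)}$ in the inert case, together with $\widetilde{\psi}((p)) = \eta(p)p^{2}$ from the definition of $\eta$, gives ${\rm det}(\rho_{f, l}({\rm Frob}_{p})) = \varphi(p)\eta(p)p^{2} = \varepsilon(p)p^{2}$ in both cases.

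For uniqueness I would note that $\chi_{\psi, l}\ne\chi_{\psi, l}^{\sigma}$, because the infinite type $(2, 0)$ of $\psi$ is not invariant under complex conjugation; hence $\rho_{f, l}$ is irreducible, in particular semisimple, and a semisimple $l$-adic representation is determined up to isomorphism by the traces of Frobenius on a set of primes of density one, by the Chebotarev density theorem.

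The step I expect to demand the most care is the bookkeeping of normalizations rather than any conceptual difficulty: one must fix compatible conventions for arithmetic versus geometric Frobenius and for ${\rm art}_{K}$, pin down the precise twist relating the archimedean and $l$-adic avatars of $\psi$, keep track of whether one works with $\rho_{f, l}$ or its Tate twist as in Theorem \ref{hecke}, and treat the coefficient ring $K\otimes\mathbb{Q}_{l}$ uniformly over the places of $K$ above $l$, so that the determinant comes out exactly $\varepsilon(p)p^{2}$ and the conductor exactly $N$. All of these are carried out in Ribet's paper, which the statement cites.
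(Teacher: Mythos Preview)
The paper does not supply its own proof of this theorem: it is stated as a citation of Ribet \cite[Theorem 2.1 and Corollary 2.4]{rib} and used as a black box in the proof of Theorem \ref{modular}. So there is no proof in the paper to compare against.

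That said, your proposal is a correct and self-contained argument for the particular $f$ at hand, and it is worth noting that it takes a \emph{different} route from Ribet's. Ribet's Theorem 2.1 constructs $\rho_{f,l}$ for an arbitrary newform of weight $\geq 2$ via Deligne's realization in the \'etale cohomology of Kuga--Sato varieties; it does not use any CM hypothesis. You instead exploit the fact, available here by Theorem \ref{coef}, that $f$ has CM by $K$ with associated Hecke character $\psi$, and build $\rho_{f,l}$ directly as ${\rm Ind}_{K}^{\mathbb{Q}}\chi_{\psi,l}$. This is the classical shortcut for CM forms: it avoids all the geometry and gives the trace, determinant, and conductor by elementary group theory (induced characters, the transfer, conductor--discriminant). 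What Ribet's approach buys is generality (it works for all newforms, CM or not) and a canonical motivic origin for $\rho_{f,l}$; what your approach buys is that it is elementary and transparent, and in this paper's context---where $\psi$ has already been constructed in Theorem \ref{hecke}---it makes the appeal to \cite{rib} essentially unnecessary. Your closing caveat about normalizations (arithmetic vs.\ geometric Frobenius, the sign of ${\rm art}_{K}$, the precise $l$-adic avatar) is well placed; in particular the paper's Remark following Theorem \ref{ribet} already flags the arithmetic/geometric Frobenius discrepancy with Ribet's conventions, and your argument must be aligned with the paper's choice.
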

   \begin{rmk}
       Ribet's statement \cite{rib} are formulated using arithmetic Frobenius. We state  the Theorem using geometric Frobenius, which corresponds to taking the dual representation. 
   \end{rmk}
  \textbf{The proof of Theorem \ref{modular}:}
   Let $p\notmid lDM$ be a prime. 
   \\\indent Suppose that $p$ splits in $K$. Then $p=\mathfrak{p}\overline{\mathfrak{p}}$ for  prime  $\mathfrak{p}\subset K$. In particular, $p$ splits completely in $K$. \\\indent By Theorem \ref{hecke} and Theorem \ref{ribet},  we have 
   \begin{align}
       a_{p}=\widetilde{\psi}(\mathfrak{p})+\widetilde{\psi}(\overline{\mathfrak{p}})= \psi(u_{\mathfrak{p}})+\psi(\overline{u_{\mathfrak{p}}})  = {\rm Tr}(\rho_{l}(1)({\rm Frob}_{p})), \notag 
   \end{align}
       and 
       \begin{align}
           \varepsilon(p)p^{2}=& \eta(p)\varphi(p)p^{2}=\widetilde{\psi}(p) =\psi(u_{\mathfrak{p}})\psi(\overline{u_{\mathfrak{p}}})= {\rm det}(\rho_{l}({\rm Frob}_{p})). \notag 
       \end{align}
        Assume that $p$ is inert. By Proposition \ref{frob}, Theorem \ref{coef}, and Theorem \ref{hecke}, 
        \begin{align}
            a_{p}=0={\rm Tr}(\rho_{l}(1)({\rm Frob}_{p}), \notag 
        \end{align}
        and 
        \begin{align}
            \varepsilon(p)p^{2}=&\eta(p)\varphi(p)p^{2} = -\psi(p) \notag 
            = {\rm det}(\rho_{l}(1)({\rm Frob}_{p})). \notag 
        \end{align}
        By the Chebotarev's density theorem, we have
        \begin{align}
            \rho_{l}^{ss}\cong \rho_{f, l}(-1), \notag
        \end{align}
        where $\rho_{l}^{ss}$ denotes the semisimplification of $\rho_{l}$. Therefore, 
        \begin{align}
            L(\rho_{l}, s)=L(f, s-1). \notag 
        \end{align}
    
\end{document}